\newcommand\mcal\mathcal
\newcommand\mbf\mathbf
\newcommand\mrm\mathrm
\newcommand\mfk\mathfrak
\newcommand\msf\mathsf
\renewcommand{\bar}{\overline}
\renewcommand{\tilde}{\widetilde}
\renewcommand{\hat}{\widehat}
\def\N{\mathbb N}
\def\Z{\mathbb Z}
\def\Q{\mathbb Q}
\def\R{\mathbb R}
\def\C{\mathbb C}
\def\A{\mathbb A}
\def\P{\mathbb P}
\def\G{\mathbb G}
\def\Spec{\mathrm{Spec}}
\renewcommand\O{\mathcal O}
\def\Div{\mathrm{Div}}
\def\Pic{\mathrm{Pic}}
\def\Ab{\mathbf{Ab}}
\def\ob{\mathrm{ob}}
\def\Hom{\mathrm{Hom}}
\def\Ext{\mathrm{Ext}}
\def\coker{\mathrm{coker}}
\DeclareMathOperator\liminv{\underset\longleftarrow\lim}
\DeclareMathOperator\limdir{\underset\longrightarrow\lim}
\def\op{\mathrm{op}}
\def\cts{\mathrm{cts}}
\def\tors{\mathrm{tors}}
\def\tf{\mathrm{tf}}
\def\ab{\mathrm{ab}}
\def\isoarrow{\overset\sim\rightarrow}
\def\leftisoarrow{\overset\sim\leftarrow}
\renewcommand{\H}{\mathrm H}
\def\gr{\mathrm{gr}}
\def\rk{\mathrm{rk}}
\numberwithin{equation}{subsection}
\theoremstyle{plain}
\newcounter{theoremcount}[subsection]
\newtheorem{theorem}[theoremcount]{Theorem}
\newtheorem*{theorem*}{Theorem}
\newtheorem{lemma}[theoremcount]{Lemma}
\newtheorem{proposition}[theoremcount]{Proposition}
\newtheorem{corollary}[theoremcount]{Corollary}
\theoremstyle{definition}
\newtheorem{definition}[theoremcount]{Definition}
\newtheorem{example}[theoremcount]{Example}
\newtheorem{remark}[theoremcount]{Remark}
\newtheorem{notation}[theoremcount]{Notation}
\newtheorem{definition-lemma}[theoremcount]{Definition-Lemma}
\newtheorem{construction}[theoremcount]{Construction}
\theoremstyle{theorem}
\newcommand\llbrack\llbracket
\newcommand\rrbrack\rrbracket
\def\topp{\mathrm{top}}
\def\EZ{\nabla}
\def\CoCh{\mathrm{CoCh}}
\def\Sym{\mathrm{Sym}}
\def\Cycle{\mathrm Z}
\def\acts{\curvearrowright}
\def\actsarrow{\overset\curvearrowright\rightarrow}
\def\cent{\mathsf{z}}
\def\centarrow{\overset{\mathsf{z}}\rightarrow}
\def\proj{\mathrm{proj}}
\def\id{\mathrm{id}}
\def\Kan{\Gamma}
\def\Tot{\mathrm{Tot}}
\def\B{\mathsf B}
\def\D{\mathsf D}
\renewcommand\O{\mathcal O}
\def\uu{\mathfrak u}
\def\Lie{\mathrm{Lie}}
\def\dR{\mathrm{dR}}
\def\et{\mathrm{\acute et}}
\def\cris{\mathrm{cris}}
\def\st{\mathrm{st}}
\def\nr{\mathrm{nr}}
\def\Mod{\mathbf{Mod}}
\def\cts{\mathrm{cts}}
\def\Map{\mathrm{Map}}
\def\MHS{\mathbf{MHS}}
\def\MF{\mathbf{MF}}
\def\wa{\mathrm{w.a.}}
\def\Ad{\mathrm{Ad}}
\def\dee{\mathrm d}
\def\1{\mathbf 1}
\def\prim{\mathrm{prim}}
\def\gplike{\mathrm{gplike}}
\def\Alb{\mathrm{Alb}}
\def\codiag{\mathrm{codiag}}
\renewcommand\Re{\mathrm{Re}}
\def\dual{*}
\def\reddual{\circ}
\def\mideal{\mathfrak m}
\renewcommand{\div}{\mathrm{div}}
\title[Anabelian Local Heights]{The motivic anabelian geometry of local heights on abelian varieties}
\author{L.\ Alexander Betts}
\subjclass[2020]{Primary: 11G50. Secondary 11G10, 11G25, 14G40.}
\begin{document}

\setcounter{secnumdepth}{3}
\setcounter{tocdepth}{1}

\begin{abstract}
We study the problem of describing local components of N\'eron--Tate height functions on abelian varieties over characteristic $0$ local fields as functions on spaces of torsors under various realisations of the $2$-step unipotent motivic fundamental group of the $\mathbb G_m$-torsor corresponding to the defining line bundle. To this end, we present three main theorems giving such a description in terms of the $\mathbb Q_\ell$- and $\mathbb Q_p$-pro-unipotent \'etale realisations when the base field is $p$-adic, and in terms of the $\mathbb R$-pro-unipotent Betti--de Rham realisation when the base field is archimedean.

In the course of proving the $p$-adic instance of these theorems, we develop a new technique for studying local non-abelian Bloch--Kato Selmer sets, working with certain explicit cosimplicial group models for these sets and using methods from homotopical algebra. Among other uses, these models enable us to construct a pro-unipotent generalisation of the Bloch--Kato exponential sequence under minimal assumptions.

\end{abstract}
\maketitle

\tableofcontents

\newpage
\section{Introduction}
\label{c:intro}
\subsection{Anabelian geometry and local heights}
\subsubsection{Background}

In his letter to Faltings \cite{grothendieck_letter}, Grothendieck proposed a programme of studying the arithmetic and Diophantine geometry of hyperbolic curves $Y$ over characteristic $0$ fields $F$ by means of their profinite \'etale fundamental groups $\pi_1^\et(Y_{\bar F})$ along with their outer Galois actions. When one uses an $F$-rational point $y\in Y(F)$ as the basepoint for the fundamental group, the outer Galois action canonically lifts to a genuine action, and the link with Diophantine geometry comes concretely via a certain \emph{non-abelian Kummer map}\[Y(F)\rightarrow\H^1(G_F,\pi_1^\et(Y_{\bar F};y)),\]taking values in the non-abelian Galois cohomology set $\H^1(G_F,\pi_1^\et(Y_{\bar F};y))$ classifying profinite torsors under $\pi_1^\et(Y_{\bar F};y)$ with compatible Galois action; the non-abelian Kummer map sends an $F$-rational point $z\in Y(F)$ to the class of the profinite \'etale torsor of paths $\pi_1^\et(Y_{\bar F};y,z)$.

In the case that $F$ is a number field, Grothendieck's \emph{section conjecture} posits that the link between the fundamental group and the Diophantine geometry of $Y$ provided by the non-abelian Kummer map should be very strong indeed: when $Y$ is proper, the map should be bijective, and for general hyperbolic $Y$ it should be injective (this much was already proved in \cite{grothendieck_letter}), with the only extraneous elements of $\H^1(G_F,\pi_1^\et(Y_{\bar F};y))$ coming from the cusps \cite{stix}. It was hoped that a proof of the section conjecture would enable one to find an alternative proof of the Mordell conjecture, which had been earlier proven by Faltings \cite{faltings}.

In practice, the highly non-abelian nature of $\pi_1^\et(Y_{\bar F};y)$ makes studying it particularly difficult, and it is convenient to replace it with an appropriately linearised variant which is much closer to being abelian. Such a variant is provided by the $\Q_p$-pro-unipotent \'etale fundamental group $U$ of $Y$ based at $y$, which can be defined equivalently as the Tannaka group of the category of \'etale $\Q_p$-local systems on $Y_{\bar F}$, or as the continuous $\Q_p$-Mal\u cev completion of $\pi_1^\et(Y_{\bar F};y)$. Although one cannot hope that the non-abelian Kummer map\[Y(F)\rightarrow\H^1(G_F,U(\Q_p))\]will be an isomorphism except in very specific cases, this map is nonetheless very useful in the study of the arithmetic and Diophantine geometry of $Y$.

For instance, when $F=\Q$ and $Y$ is projective with good reduction at $p$, Kim's non-abelian Chabauty method \cite{minhyong:selmer} aims to constrain the locus of $Y(\Q)\subseteq Y(\Q_p)$ by finding functions on the local non-abelian cohomology $\H^1(G_{\Q_p},U_n(\Q_p))$ -- or more accurately on a subset $\H^1_f(G_{\Q_p},U_n(\Q_p))$ cut out by local Selmer conditions -- which vanish on the image of the global cohomology. Here $U_n$ denotes the maximal $n$-step unipotent quotient of $U$, which is finite dimensional. When $n>>0$, any one of various well-known conjectures (Bloch--Kato, Fontaine--Mazur, Janssen) implies the existence of such functions, providing a conditional anabelian proof of the Siegel--Faltings theorem on the finiteness of the set of $S$-integral points of $Y$. This proof can be even made unconditional in many cases: when $Y=\P^1_\Q\setminus\{0,1,\infty\}$ \cite{minhyong:siegel}; when $Y$ is a once-punctured CM elliptic curve \cite{minhyong:cm}; when $Y$ is a complete hyperbolic curve whose Jacobian has CM \cite{minhyong-coates}; or when $Y$ is any complete hyperbolic curve which is a solvable cover of $\P^1_\Q$ \cite{ellenberg-hast}.

At the other end of the spectrum, an explicit study of $\H^1_f(G_{\Q_p},U_n(\Q_p))$ for $n$ small leads to explicit methods for constraining the locus of $Y(\Q)\subseteq Y(\Q_p)$, and in practice often completely computing it. For instance, when $n=1$ and $Y$ is proper, the Chabauty--Kim method recovers Coleman's explicit Chabauty method \cite{coleman}. The fundamental group $U_1(\Q_p)=V_pJ$ is the $\Q_p$-linear Tate module of the Jacobian $J$ of $Y$, $\H^1_f(G_{\Q_p},V_pJ)\cong\Lie(J_{\Q_p})$ is the Lie algebra of $J$, and the linear functions on $\Lie(J_{\Q_p})$ produced by Chabauty--Coleman agree with those produced by Chabauty--Kim.

When $n=2$, the constraints on $Y(\Q)\subseteq Y(\Q_p)$ produced by non-abelian Chabauty involve finer arithmetic invariants. A detailed study of this case was made in \cite{balakrishnan-dogra_1,balakrishnan-dogra_2}; among other results, it is shown in \cite{balakrishnan-dogra_1} that when $\rk(J(\Q))=\dim_\Q(J)$, the constraints produced by non-abelian Chabauty involve not only quadratic functions on $\Lie(J_{\Q_p})$, but also the local component of the $p$-adic height function in the sense of Nekov\'a\u r \cite{nekovar}. This raises the rather tantalising prospect that the constraints produced by non-abelian Chabauty for $n>2$ should involve quantities that play the role of higher $p$-adic height functions.

\subsubsection{Local heights as functions on moduli spaces of torsors}

Another example of the link between $2$-step unipotent fundamental groups and local heights appears in a paper of Balakrishnan, Dan-Cohen, Kim and Wewers.

\begin{theorem}\cite{minhyong-etal:bsd_conjecture}\label{thm:prototype_version}
Let $E/K$ be an elliptic curve over a $p$-adic\footnote{We have interchanged the roles of $p$ and $\ell$ from \cite{minhyong-etal:bsd_conjecture} -- for us $p$ will always be the residue characteristic of the base field, and $\ell$ the residue characteristic of the field of coefficients for the fundamental groups involved.} number field, and $U_2/\Q_\ell$ for $\ell\neq p$ the $2$-step unipotent fundamental group of the punctured curve $E^\circ=E\setminus\{0\}$ based at a rational tangent vector at the origin. Then the map $\Q_\ell(1)\hookrightarrow U_2$ induced from the inclusion of the cusp induces a bijection\[\H^1(G_K,\Q_\ell(1))\isoarrow\H^1(G_K,U_2(\Q_\ell)),\]and the composite map\[E^\circ(K)\rightarrow\H^1(G_K,U_2(\Q_\ell))\leftisoarrow\H^1(G_K,\Q_\ell(1))\isoarrow\Q_\ell\]takes values in $\Q\subseteq\Q_\ell$, and is a N\'eron function on $E(K)$ with divisor $2[0]$.
\end{theorem}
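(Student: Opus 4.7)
The proof naturally splits into two parts.

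\textbf{Part 1: The cohomological bijection.} I would start from the central extension
\begin{equation*}
1 \to \Q_\ell(1) \to U_2 \to V_\ell E \to 1
\end{equation*}
arising from the lower central series of $U_2$, where the kernel $[U_2,U_2] \cong \wedge^2 V_\ell E$ identifies with $\Q_\ell(1)$ via the Weil pairing. The cusp inclusion identifies with the inclusion of the centre: topologically, a small loop around the puncture of $E^\circ$ descends in the $2$-step quotient to the commutator $[a,b]$ of a symplectic basis of $V_\ell E$. The associated long exact sequence of non-abelian continuous Galois cohomology gives
\begin{equation*}
V_\ell E^{G_K} \to \H^1(G_K, \Q_\ell(1)) \to \H^1(G_K, U_2(\Q_\ell)) \to \H^1(G_K, V_\ell E),
\end{equation*}
and for a central extension with abelian quotient the middle map is a bijection of sets once both outer terms vanish. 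The vanishing $V_\ell E^{G_K}=0$ is standard from the purity of $V_\ell E$ (a Frobenius eigenvalue count for good reduction, monodromy filtration analysis in the bad reduction cases). Given this, combining the Euler characteristic formula $\chi(G_K, V_\ell E)=0$ (valid since $\ell\neq p$), local Tate duality, and the Weil self-duality $V_\ell E \cong V_\ell E^*(1)$ yields
\[
\dim_{\Q_\ell} \H^1(G_K, V_\ell E) = 2\dim_{\Q_\ell} V_\ell E^{G_K} = 0.
\]

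\textbf{Part 2: Identification as a N\'eron function.} I would identify $\H^1(G_K, \Q_\ell(1))$ with $\Q_\ell$ via the Kummer isomorphism $\H^1(G_K, \Q_\ell(1)) \cong K^*\otimes_\Z \Q_\ell$ composed with the normalised valuation $K^*\otimes\Q_\ell\cong\Q_\ell$ (using $\O_K^*\otimes\Q_\ell=0$ since $\ell\neq p$). Writing $\lambda\colon E^\circ(K)\to\Q_\ell$ for the resulting composite, I would then verify it is a N\'eron function with divisor $2[0]$ by checking the defining axiomatic properties. \emph{Local behaviour near the puncture}: for $z\in E^\circ(K)$ approaching $0$, embed a formal neighbourhood $\DD^\times \hookrightarrow E^\circ$; functoriality of the non-abelian Kummer map together with the identification of the cusp inclusion with the centre shows that the class of $z$ in $\H^1(G_K, U_2(\Q_\ell))$ is the pushforward of the ordinary Kummer class of $z$ in $\H^1(G_K, \Q_\ell(1))$, yielding the expected logarithmic pole with the multiplicity determined explicitly by the Weil pairing normalisation. \emph{Quasi-parallelogram law}: I would use functoriality under the group law $E\times E\to E$ and its induced map on $2$-step unipotent fundamental groups, observing that the bracket on $U_2$ produces a biextension-type cocycle governing the additive defect; this matches the characterisation of N\'eron functions attached to $2[0]$. \emph{Rationality}: any class in the image lies in $K^*\otimes\Q\subseteq K^*\otimes\Q_\ell$, and its valuation lies in $\Z\subseteq\Q$.

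\textbf{Main obstacle.} The principal difficulty is verifying the quasi-parallelogram law: controlling how the $U_2$-valued Kummer class transforms under the group law and identifying the resulting bracket cocycle with the standard biextension governing the local N\'eron pairing. This is essentially a non-abelian refinement of the abelian Chabauty--Coleman computation, where the linear part recovers the ordinary $\ell$-adic logarithm and the bracket part produces the quadratic correction underlying the N\'eron-Tate local height for $2[0]$.
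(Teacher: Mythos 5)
Your Part 1 is correct and coincides with the paper's own argument for the generalisation (theorem \ref{thm:main_theorem_l-adic}): the central extension $1\to\Q_\ell(1)\to U_2\to V_\ell E\to1$, the six-term sequence in continuous non-abelian cohomology, and the vanishing of $\H^0(G_K,V_\ell E)$ and $\H^1(G_K,V_\ell E)$ for $\ell\neq p$ (local Euler characteristic plus Tate and Weil duality) give the bijection, with Serre twisting disposing of the freeness issue at non-trivial classes.

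The gap is in Part 2, and it is twofold. First, you never establish that $z\mapsto\lambda(z)$ is locally constant, or even locally bounded, on $E^\circ(K)$ away from the cusp. Without this the axiomatic characterisation cannot close: two functions with the same pole behaviour at the cusp and the same quasi-parallelogram law differ by a function satisfying the exact parallelogram law, i.e.\ a quadratic function on $E(K)$, and only local boundedness together with compactness of $E(K)$ forces that difference to vanish. In the paper this continuity statement is precisely the non-formal geometric input (theorem \ref{thm:local_constancy_of_Kummer_maps_l-adic}, proved in section \ref{ss:local_constancy_l-adic} via Galois-equivariant prime-to-$p$ \'etale paths in Berkovich discs, lemma \ref{lem:trivial_paths_in_discs_l-adic}), and it is singled out as a significant part of the proof; your proposal does not mention it, and instead identifies the quasi-parallelogram law as the main obstacle. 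Second, the route you propose for that law is doubtful as stated: the group law $E\times E\to E$ does not restrict to a morphism of punctured curves, so it induces no map on the fundamental group of $E^\circ$ in any naive sense, and ``functoriality under the group law'' has no direct meaning here; making the biextension idea precise would be a substantial separate argument. The paper avoids exactly this difficulty by working with $L^\times$ for a line bundle $L$ on the abelian variety and replacing the quasi-parallelogram law with properties induced by honest morphisms of varieties --- additivity under $L_1^\times\times_A L_2^\times\to(L_1\otimes L_2)^\times$, compatibility with $[2]^*$, and the trivial-bundle normalisation of lemma \ref{lem:neron_log-metrics} --- all of which follow formally from naturality of the non-abelian Kummer map, leaving local constancy as the only hard step. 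So your outline would need both the continuity input and a reworking of the group-law step before it could be completed.
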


In this theorem we see a first instance of what will be the main theme of this paper: \emph{that local components of height functions admit particularly natural interpretations as functions on spaces of torsors under $2$-step unipotent fundamental groups}. This theme is best viewed in the context of Deligne's motivic philosophy for the pro-unipotent fundamental group \cite{deligne}, which stresses the importance of studying not only the \'etale fundamental group of a variety, but also a plethora of other kinds of highly structured fundamental groups (Betti, de Rham, log-crystalline\footnote{This is a mild anachronism, since \cite{deligne} was written when the development of logarithmic geometry \cite{kato_log-structures} was in its infancy, and in particular predates the development of both Hyodo--Kato cohomology \cite{hyodo-kato} and log-crystalline fundamental groups \cite{shiho_1}. Thus, \cite{deligne} only concerns itself with the crystalline fundamental group at places of good reduction, where a good theory already existed.}) related by various comparison isomorphisms. Thus, in this paper we will not only be interested in relating local components of heights at non-archimedean places to \'etale and log-crystalline fundamental groups, but also in relating local components at archimedean places to Betti--de Rham fundamental groups, endowed with their mixed Hodge structure constructed by Hain \cite{hain_MHS} using Chen's non-abelian de Rham theorem. Thus, our implicit emphasis here lies in studying the local components of $\R$-valued heights, as opposed to the $\Q_p$-valued heights studied in \cite{balakrishnan-dogra_1,balakrishnan-dogra_2}.

\smallskip

To make the link between $2$-step unipotent fundamental groups and local heights as clear as possible, let us shift focus slightly and consider local canonical heights on an abelian variety $A$ over a characteristic $0$ local field $K$ induced by a line bundle $L/A$. There is a natural anabelian invariant associated to this setup, namely the pro-unipotent fundamental group $U$ of the complement $L^\times$ of the zero section in $L$; this pro-unipotent fundamental group is in fact already $2$-step unipotent, since by the homotopy exact sequence of a fibration it is a central extension of the fundamental group of $A$ by the fundamental group of $\G_m$.

In this setup, we shall prove three theorems (\ref{thm:main_theorem_l-adic}, \ref{thm:main_theorem_p-adic} and \ref{thm:main_theorem_archimedean}) concretely relating the fundamental group $U$ to local heights, of the following kinds:
\begin{itemize}
	\item when $K$ is $p$-adic, the local height can be realised as a function on the local non-abelian Galois cohomology set $\H^1(G_K,U(\Q_\ell))$, where $U$ is the $\Q_\ell$-unipotent \'etale fundamental group of $L^\times$ for some $\ell\neq p$;
	\item when $K$ is $p$-adic, the local height can be realised as a function on the local Bloch--Kato Selmer set $\H^1_g(G_K,U(\Q_p))$, where $U$ is the $\Q_p$-unipotent \'etale fundamental group of $L^\times$; and
	\item when $K$ is archimedean, the local height can be realised as a function on the set $\H^1(\MHS_\R,U(\R))$ classifying torsors under $U/\R$ with mixed Hodge structure, where $U$ is the $\R$-unipotent Betti fundamental group of $L^\times$.
\end{itemize}
Let us remark also that in the first and last of these theorems, the space of torsors will be one-dimensional (in an appropriate sense), so that the local height is in fact the \emph{only} information seen by the non-abelian Kummer map. The second theorem is more subtle, as the local Bloch--Kato Selmer set $\H^1_g(G_K,U(\Q_p))$ contains plenty of analytic information, and a significant part of formulating the appropriate $p$-adic instance of these theorems lies in finding the right way of quotienting out the analytic part of the cohomology so that the quotient is one-dimensional and describes exactly the local height. Since developing the appropriate machinery for dealing with the $p$-adic case is the major part of this paper, we will for the time being just discuss the $\ell$-adic and archimedean cases, returning to discuss the $p$-adic case in more detail later in this introduction.

\begin{remark}
The problem of giving a motivic interpretation of local heights was also studied in \cite{scholl}, where the emphasis was on giving a description of height pairings on cycles of complementary codimension in terms of motivic cohomology. Of course, a motivic description of the height pairing $\langle-,-\rangle\colon A(F)\otimes A^\vee(F)\rightarrow\R$ on an abelian variety $A$ over a number field $F$ gives rise to a description of the canonical height function induced by a line bundle $L/A$ in terms of the usual map $\phi_L\colon A\rightarrow A^\vee$ via the formula \cite[Proposition 9.3.6 \& Corollary 9.3.7]{bombieri-gubler}\[\hat h_L(z)=\langle z,\phi_L(z)\rangle+\frac12\langle z,L\otimes[-1]^*L^{-1}\rangle.\]It is interesting that both Scholl's approach and ours require consideration of mildly non-linear invariants associated to the line bundle: in our case the fundamental group of $L^\times$; and in Scholl's the pairing on motivic cohomology $h^1(A)$ induced by $\phi_L$. Indeed, this pairing should be none other than the commutator pairing in the motivic fundamental group of $L^\times$, which should be a central extension of $h_1(A)=h^1(A)(1)$ by~$\Q(1)$.
\end{remark}

\begin{remark}
In the particular case that the line bundle $L$ is algebraically equivalent to the trivial bundle, the corresponding $\G_m$-torsor $L^\times$ canonically has the structure of a semiabelian variety with $\tilde0$ as its identity, in which case the fundamental group of $L^\times$ is abelian (e.g.\ the \'etale fundamental group is the Tate module), and many of the arguments we will present in this article reduce to more well-known abelian arguments. The chief advantage of our non-abelian techniques, then, is that it gives us a natural framework for studying line bundles that are not algebraically trivial, for instance ample or anti-ample bundles.
\end{remark}

\subsubsection{N\'eron log-metrics}
\label{sss:neron_log-metrics}

Let us now explain the precise notion of local heights that we will be using in this paper, leading up to a precise statement of the $\ell$-adic and archimedean main theorems. Since we will be working in the language of line bundles rather than divisors, it will be convenient for us to follow the approach of Bombieri and Gubler and view local heights as local metrics on line bundles \cite[Section 2.7]{bombieri-gubler} rather than as Weil functions on complements of divisors \cite[Section 10.2]{lang}. To briefly recall what is meant by this, let us fix a characteristic $0$ local field $K$ with absolute value $|\cdot|$, and write $v=-\log|\cdot|$; we assume that $v$ is normalised to be $\Q$-valued if $K$ is non-archimedean. If $L$ is a line bundle on a smooth connected variety $Y/K$, a \emph{continuous log-metric} on $L$ is a continuous function $\lambda_L\colon L^\times(\bar K)\rightarrow\R$ which satisfies the relation\[\lambda_L(\alpha z)=\lambda_L(z)+v(\alpha)\]for all $z\in L^\times(\bar K)$ and $\alpha\in {\bar K}^\times$; here $L^\times=L\setminus0$ denotes the complement of the zero section in $L$. Such a $\lambda_L$ is then the logarithm of a metric in the sense of \cite[Definition 2.7.1]{bombieri-gubler}. A continuous log-metric always exists by \cite[Proposition 2.7.5]{bombieri-gubler}, and if $s$ is a non-zero section of $L$ then $\lambda_L\circ s$ is a Weil function on $Y$ associated to the Cartier divisor $\div(s)$ \cite[Section 10.2]{lang}.

In the particular case that $Y=A$ is an abelian variety, there is a canonical choice of continuous log-metric $\lambda_L$ on $L$ up to additive constants, namely the log-metric such that for any (equivalently just one) non-zero section $s$, $\lambda_L\circ s$ is a N\'eron function on $A$ associated to the Cartier divisor $\div(s)$ \cite[Section 11.1]{lang}. We will call such a log-metric a \emph{N\'eron log-metric} on $L$, and if we are provided with a point $\tilde0\in L^\times(\bar K)$, we will unambiguously refer to \emph{the} N\'eron log-metric as being the unique one such that $\lambda_L(\tilde0)=0$.

\begin{remark}
N\'eron log-metrics are local components of heights in the following sense: if $A$ is an abelian variety over a number field $F$ and $L/A$ is a line bundle with a distinguished point $\tilde0\in L^\times(\bar F)$ in the fibre over $0\in A(\bar F)$, then the function $L^\times(\bar F)\rightarrow\R$ given by an appropriately normalised sum of the N\'eron log-metrics at all places of $\bar F$ (is well-defined and) on the fibre over some $z\in A(\bar F)$ is constant, equal to the canonical height $\hat h_L(z)$ (by the corresponding result for N\'eron functions \cite[Theorem 11.1.6]{lang}).
\end{remark}

\begin{remark}
In the spirit of Mazur and Tate \cite{mazur-tate:circle_pairing}, this description of the global height admits a natural refinement of a more Arakelov-theoretic nature when restricted to $F$-rational points\footnote{We assume here that $\tilde0\in L^\times(F)$ is chosen $F$-rational.}. Specifically, the N\'eron log-metrics at finite places are all $\Q$-valued, so the collection of all local metrics defines naturally a global function\[\lambda_L\colon L^\times(F)\rightarrow\hat\Div_\Q(\O_F):=\bigoplus_{v\nmid\infty}\Q\oplus\bigoplus_{v\mid\infty}\R\]valued in a $\Q$-linearised analogue of the Arakelov divisor group of $\O_F$. By again evaluating this function on lifts of elements of $A(F)$, we thereby obtain a function\[h_L\colon A(F)\rightarrow\hat\Pic_\Q(\O_F):=\hat\Div_\Q(\O_F)/\div(F^\times)\]valued in the corresponding analogue of the Arakelov Picard group, through which the canonical height factors via the natural degree map $\deg\colon\hat\Pic_\Q(\O_F)\twoheadrightarrow\R$.

It turns out (though we shall not prove this in this paper) that this lift of the global height is a simultaneous quadratic refinement of Grothendieck's monodromy pairing on component groups \cite[Th\'eor\`eme 7.1(b)]{SGA7.1}, and of Mazur and Tate's circle pairing \cite[Section 3.5.2]{mazur-tate:circle_pairing}. More precisely, $h_L$ is always a quadratic function\footnote{That is, satisfies the relation $h_L(x+y+z)-h_L(x+y)-h_L(y+z)-h_L(z+x)+h_L(x)+h_L(y)+h_L(z)-h_L(0)=0$}, and in the particular case that $L^\times$ underlies a biextension $E$ of two abelian varieties $A,B$ by $\G_m$, the map\[h_L\colon A(F)\times B(F)\rightarrow\hat\Pic_\Q(\O_F)\]is a bilinear pairing which, on the one hand, when restricted to $A(F)\times B(F)^0$ takes values in the usual Arakelov Picard group $\hat\Pic(\O_F)$ and agrees with the Mazur--Tate circle pairing associated to $E$ \cite[Section 3.5.2]{mazur-tate:circle_pairing}\footnote{The presentation in \cite{mazur-tate:circle_pairing} assumes that $A$ and $B$ have Tamagawa number $1$ at all finite places---the setup in \cite[Section 2]{mazur-tate:bsd} covers how to define this pairing in the general case. The assumption in the latter paper that $A$ and $B$ are dual and that $E$ is the Poincar\'e biextension is unnecessary.}, and on the other hand, when composed with the natural map $\hat\Pic_\Q(\O_F)\rightarrow\bigoplus_{v\nmid\infty}\Q/\Z$ agrees with the composite\[A(F)\times B(F)\rightarrow\bigoplus_{v\nmid\infty}\Phi_{A,v}(\kappa_v)\times\Phi_{B,v}(\kappa_v)\rightarrow\bigoplus_{v\nmid\infty}\Q/\Z\]where the right-hand arrow is the sum of the local Grothendieck monodromy pairings. Here, $\Phi_{A,v}/\kappa_v$ denotes the group-scheme of components of the fibre of the N\'eron model of $A$ over the residue field $\kappa_v$ at $v$, the implicit maps $A(F)\rightarrow\Phi_{A,v}(\kappa_v)$ send an $F$-point of $A$ to the component containing its reduction, $B(F)^0$ denotes the intersection of the kernels of all the maps $B(F)\rightarrow\Phi_{B,v}(\kappa_v)$ for $v\nmid\infty$, and the valuations at finite places $v$ are normalised to have image $\Z$. In fact, the quadratic refinement of Grothendieck's monodromy pairing which we produce has already been considered by Breen \cite[Section 3]{breen}, using the theory of cubical structures on torsors.
\end{remark}


\subsubsection{The $\ell$-adic and archimedean main theorems}

Now that we have made precise the form of local heights best adapted to our setup, we are now able to state our first main theorem, which is a more or less direct generalisation of theorem \ref{thm:prototype_version} to the setup of line bundles on general abelian varieties.

\begin{theorem}\label{thm:main_theorem_l-adic}
Let $A/K$ be an abelian variety over a $p$-adic field $K$, $L/A$ a line bundle, and $\tilde 0\in L^\times(K)$ a basepoint in the complement $L^\times=L\setminus0$ of the zero section lying over $0\in A(K)$. Let $U/\Q_\ell$ be the $\Q_\ell$-pro-unipotent (in fact, unipotent) \'etale fundamental group of $L^\times$ based at $\tilde 0$ where $\ell\neq p$. Then the natural map $\Q_\ell(1)\hookrightarrow U$ from the inclusion of the fibre over $0\in A(K)$ induces a bijection\[\H^1(G_K,\Q_\ell(1))\isoarrow\H^1(G_K,U(\Q_\ell)),\] and the composite map\[L^\times(K)\rightarrow\H^1(G_K,U(\Q_\ell))\leftisoarrow\H^1(G_K,\Q_\ell(1))\isoarrow\Q_\ell\]is the N\'eron log-metric on $(L,\tilde 0)$ (and in particular is $\Q$-valued).

Here the isomorphism $\H^1(G_K,\Q_\ell(1))\isoarrow\Q_\ell$ is the one arising from Kummer theory, normalised so that the composite\[K^\times\rightarrow\H^1(G_K,\Q_\ell(1))\isoarrow\Q_\ell\]is the ($\Q$-valued) valuation on $K$.
\end{theorem}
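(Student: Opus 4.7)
My plan is to exploit the fibration $L^\times \to A$ to pin down $U$ as a central extension, deduce the bijection from a weight-theoretic vanishing in Galois cohomology, and then identify the resulting function with the N\'eron log-metric by verifying the characterising axioms.

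First, since $L^\times \to A$ is a $\G_m$-torsor, the homotopy exact sequence for $\Q_\ell$-pro-unipotent fundamental groups gives a short exact sequence
\[ 1 \to \Q_\ell(1) \to U \to V_\ell A \to 1, \]
which is central as $V_\ell A$ is abelian. The resulting sequence of non-abelian Galois cohomology reads
\[ V_\ell A^{G_K} \to \H^1(G_K, \Q_\ell(1)) \to \H^1(G_K, U(\Q_\ell)) \to \H^1(G_K, V_\ell A), \]
and the task is to show that both outer terms vanish. The leftmost vanishes because $A(K)[\ell^\infty]$ is finite for $\ell \neq p$. For the rightmost, I would use inflation-restriction along $I_K \subset G_K$, noting that $\hat\Z = G_K/I_K$ has cohomological dimension one. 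This reduces the vanishing to the statement that $1$ is not a Frobenius eigenvalue on $V_\ell A$ (handling the inflation term) nor on its Tate twist $V_\ell A(-1)$ (handling the restriction term, using $I_K^{(\ell)}\cong\Z_\ell(1)$ to identify $\H^1(I_K, V_\ell A)$ with a twist of the inertia-coinvariants), both of which follow from Weil--Deligne applied to the monodromy filtration on $V_\ell A$.

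To identify the composite $\lambda\colon L^\times(K) \to \Q_\ell$ as the N\'eron log-metric, I would verify the characterising properties from the N\'eron characterisation lemma. The log-metric identity $\lambda(\alpha z) = \lambda(z) + v(\alpha)$ for $\alpha \in K^\times$ follows from the $\G_m$-equivariance of the non-abelian Kummer map: the induced translation on $\Q_\ell(1)$-torsors is by the Kummer class of $\alpha$, which equals $v(\alpha)$ under the stated normalisation. Vanishing at $\tilde 0$ is immediate from the canonical triviality of the path torsor at the basepoint. Continuity, local constancy and $\Q$-valuedness should follow from the foundational local-constancy results for non-abelian Kummer maps in the $\ell \neq p$ setting established earlier in the paper.

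The hard step is verifying the distinguishing ``quasi-quadratic'' property of the N\'eron log-metric among continuous log-metrics. I would carry this out by functoriality of $U$ under the multiplication-by-$n$ isogeny $[n]\colon A \to A$: the symmetric/antisymmetric decomposition of $[n]^* L$ in terms of tensor powers and translations of $L$ translates into an explicit identity on the pullback of the central extension $U$, which in turn yields the relation between $\lambda(nz)$ and $n^2 \lambda(z)$ (modulo bounded error) characterising N\'eron log-metrics. The crucial input is the identification of the commutator pairing in $U$ with an appropriate image of $c_1(L) \in \H^2_\et(A, \Q_\ell(1))$, which is what allows the anabelian construction to see the fine structure of the line bundle.
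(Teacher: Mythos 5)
Your skeleton matches the paper's: the fibration $\G_m\hookrightarrow L^\times\twoheadrightarrow A$ gives the central extension $1\to\Q_\ell(1)\to U\to V_\ell A\to 1$, and the bijection follows from the six-term sequence in non-abelian cohomology once $(V_\ell A)^{G_K}$ and $\H^1(G_K,V_\ell A)$ vanish. For the latter the paper simply quotes the Euler--Poincar\'e formula (plus local duality for $\H^2$); your inflation--restriction route also works, but your reduction is misstated: it is \emph{false} in general that $1$ is not a Frobenius eigenvalue on $V_\ell A$ (for a Tate elliptic curve the weight-$0$ quotient $\Q_\ell$ has Frobenius eigenvalue $1$). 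What the two terms of inflation--restriction actually require is that $1$ is not an eigenvalue of Frobenius on the inertia \emph{invariants} $(V_\ell A)^{I_K}$, and that $1$ is not an eigenvalue on $\H^1(I_K,V_\ell A)\cong (V_\ell A)_{I_K}(-1)$; both do hold (the invariants have weights $-1,-2$ and the twisted coinvariants weights $1,2$, by SGA 7), so the slip is repairable, but as written the step would fail.

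The more serious issues are in the identification with the N\'eron log-metric. First, your ``$\lambda(nz)$ versus $n^2\lambda(z)$ modulo bounded error'' characterisation cannot be applied as stated: your $\lambda_L$ takes values in $\Q_\ell$, where ``bounded error'' has no meaning and no direct comparison with the $\R$-valued N\'eron log-metric is possible. The paper's characterisation lemma is engineered exactly to avoid this: for a family of $F$-valued functions ($F$ any $\Q$-algebra) one only needs iso-invariance, tensor additivity, $[2]^*$-compatibility, the trivial-bundle normalisation, vanishing at $\tilde 0$, and \emph{local constancy}; the difference from the N\'eron log-metric then descends to a locally constant homomorphism-like $\delta_L$ on the compact group $A(K)$, whose finite image is stable under multiplication by $4$ (via $[2]^*L\simeq L^{\otimes 4}$ for symmetric $L$) and hence zero, with no archimedean estimate and no anabelian input beyond formal naturality. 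In particular your ``crucial input'', the identification of the commutator pairing of $U$ with $c_1(L)$, is true but is neither proved in the paper nor needed; the tensor additivity is instead checked formally via the torsor $L_1^\times\times_A L_2^\times$ and the codiagonal. Second, local constancy of the Kummer map, which you defer to ``foundational results established earlier'', is in fact the one substantive geometric ingredient of this proof and is established inside it (the prime-to-$p$ triviality of covers of Berkovich polydiscs); citing it as the separately stated theorem is legitimate, but your proposal should recognise that this, together with $\Q$-valuedness (which only comes out of the $\Q$-algebra-valued characterisation, not out of a bounded-difference argument), is where the real work lies.
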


\begin{remark}
	Phrased in the language of N\'eron functions, theorem~\ref{thm:main_theorem_l-adic} says that if~$s$ is a non-zero rational section of the line bundle~$L$, with divisor~$D$, then the composite map\[(A\setminus D)(K)\xrightarrow{s}L^\times(K)\rightarrow\H^1(G_K,U(\Q_\ell))\leftisoarrow\H^1(G_K,\Q_\ell(1))\isoarrow\Q_\ell\] is a N\'eron function on~$A$ associated with~$D$. This implies in particular that the N\'eron function associated with~$D$ factors through the non-abelian Kummer map\[(A\setminus D)(K)\to\H^1(G_K,U'(\Q_\ell))\]where~$U'$ is the $\Q_\ell$-pro-unipotent \'etale fundamental group of~$A\setminus D$.
\end{remark}

The archimedean analogue of theorem \ref{thm:main_theorem_l-adic} in terms of the $\R$-pro-unipotent Betti fundamental group (with its mixed Hodge structure) then has a very similar statement. For the sake of brevity, we will focus only on the case $K=\C$.

\begin{theorem}\label{thm:main_theorem_archimedean}
Let $A/\C$ be an abelian variety, $L/A$ a line bundle, and $\tilde 0\in L^\times(\C)$ a basepoint in the complement $L^\times=L\setminus0$ of the zero section lying over $0\in A(\C)$. Let $U/\R$ be the $\R$-pro-unipotent (in fact, unipotent) Betti fundamental group of $L^\times$ based at $\tilde 0$. Then the natural map $\R(1)\hookrightarrow U$ from the inclusion of the fibre over $0\in A(\C)$ induces a bijection\[\H^1(\MHS_\R,\R(1))\isoarrow\H^1(\MHS_\R,U(\R)),\] and the composite map\[L^\times(\C)\rightarrow\H^1(\MHS_\R,U(\R))\leftisoarrow\H^1(\MHS_\R,\R(1))\isoarrow\R\]is the N\'eron log-metric on $(L,\tilde 0)$.

Here $\H^1(\MHS_\R,W(\R))$ denotes the set of isomorphism classes of torsors under $W$ with compatible mixed Hodge structure. The isomorphism $\H^1(\MHS_\R,\R(1))\isoarrow\R$ here is normalised so that the composite\[\C^\times\rightarrow\H^1(\MHS_\R,\R(1))\isoarrow\R\]is the valuation $v=-\log|\cdot|$ on $\C$.
\end{theorem}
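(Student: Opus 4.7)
\emph{Bijection of cohomology sets.} I would prove the two assertions of the theorem in turn, by direct analogy with the $\ell$-adic case. For the bijection, the homotopy long exact sequence of the $\G_m$-fibration $L^\times\to A$ (using $\pi_2(A)=0$) yields a short exact sequence
\[1\to\Z(1)\to\pi_1^\Betti(L^\times,\tilde0)\to\pi_1^\Betti(A,0)\to1,\]
central because the abelian fundamental group of $A$ acts trivially on $\pi_1(\G_m)$. Passing to $\R$-pro-unipotent completions with Hain's mixed Hodge structures produces a central extension
\[1\to\R(1)\to U\to V_\R A\to1\]
of pro-unipotent groups in $\MHS_\R$, with $V_\R A\cong\H_1(A,\R)$ pure of weight $-1$. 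I would then invoke the non-abelian five-term exact sequence
\[\H^0(\MHS_\R,V_\R A)\to\H^1(\MHS_\R,\R(1))\to\H^1(\MHS_\R,U(\R))\to\H^1(\MHS_\R,V_\R A).\]
Both outer terms vanish: $\H^0(\MHS_\R,V_\R A)=\Hom_{\MHS_\R}(\R,V_\R A)=0$ for weight reasons; and $\H^1(\MHS_\R,V_\R A)=\Ext^1_{\MHS_\R}(\R,V_\R A)=0$ by the real version of Carlson's formula, since for a pure Hodge structure $V$ of negative weight the real Hodge decomposition identifies $V_\R$ with $V_\C/F^0V_\C$ as real vector spaces, making $V_\C/(F^0V_\C+V_\R)$ trivial.

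\emph{Identification with the N\'eron log-metric.} Appealing to the axiomatic characterisation in lemma \ref{lem:archimedean_neron_log-metrics}, it suffices to verify that the composite function $\lambda\colon L^\times(\C)\to\R$ is a continuous log-metric with $\lambda(\tilde0)=0$ satisfying the remaining characterising conditions. Normalisation at $\tilde0$ is automatic since the trivial path torsor represents the identity class. The scaling relation $\lambda(\alpha z)=\lambda(z)+v(\alpha)$ for $\alpha\in\C^\times$ follows by naturality: the inclusion $\C^\times\hookrightarrow L^\times$ of the fibre over $0$ induces $\R(1)\hookrightarrow U$ and a commuting square of Kummer maps, which reduces the claim to the prescribed identification of the abelian Kummer map $\C^\times\to\H^1(\MHS_\R,\R(1))\isoarrow\R$ with $v=-\log|\cdot|$, itself a direct MHS computation using $\pi_1^\Betti(\G_m;1,\alpha)$. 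Continuity of $\lambda$ as $z$ varies is a particular case of the paper's foundational continuity/analyticity results for Betti--de Rham non-abelian Kummer maps promised in the abstract.

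\emph{Main obstacle.} The hardest step is verifying the remaining characterising property (some translation-type or admissibility condition controlling how $\lambda$ interacts with the group law of $A$ and the structure of $L$). I would prove this by exploiting the functoriality of $U$ and its MHS under morphisms of pointed line bundles over abelian varieties---in particular under the multiplication-by-$n$ maps on $A$ lifted to $L^\times$ via a suitable rigidification---and then tracing how the induced maps on torsor sets interact with the bijection of the first step, thereby extracting the characterising functional equation for $\lambda$. A subsidiary calibration check, namely that the isomorphism $\H^1(\MHS_\R,\R(1))\isoarrow\R$ coming from Carlson's formula matches the valuation $v$ on $\C^\times$ with the specified sign, reduces to a direct computation with the MHS of $\G_m$ and poses no real difficulty.
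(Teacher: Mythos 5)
Your proposal follows essentially the same route as the paper's proof: the central extension $1\to\R(1)\to U\to V_\R A\to1$ coming from the $\G_m$-fibration, the vanishing of $F^0W_0$ and of $\H^1(\MHS_\R,-)$ for the pure weight $-1$ quotient in the exact sequence for MHS-torsor cohomology (corollary \ref{cor:exact_sequence_archimedean}) giving the bijection, and then the axiomatic characterisation of the archimedean N\'eron log-metric (lemma \ref{lem:archimedean_neron_log-metrics}), verified by the same formal naturality arguments as in the $\ell$-adic case together with the Hain--Zucker continuity results and the calibration of the $\G_m$ Kummer map fixing the normalisation. One small caution: the vanishing $\Ext^1_{\MHS_\R}(\R,V)\cong V_\C/(F^0V_\C+V_\R)=0$ is a weight $-1$ phenomenon rather than a consequence of negativity of weights alone (it fails for $\R(1)$, which has weight $-2$), but since you apply it to $V_\R A\cong\H_1(A(\C),\R)$, pure of weight $-1$, the argument goes through exactly as in the paper.
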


\subsubsection{Method of proof}\label{ss:method}

The proofs of theorems~\ref{thm:main_theorem_l-adic} and~\ref{thm:main_theorem_archimedean} proceed by showing that the N\'eron log-metrics are uniquely characterised by a certain list of properties (see lemmas~\ref{lem:neron_log-metrics} and~\ref{lem:archimedean_neron_log-metrics}), and then verifying these properties for the maps described in the statements of the respective theorems. This is mostly just a case of chasing some diagrams; the only non-formal input required is showing that the maps described in theorems~\ref{thm:main_theorem_l-adic} and~\ref{thm:main_theorem_l-adic} are locally constant and locally bounded, respectively. The first of these was shown to hold in much greater generality in \cite[Theorem~1.1]{me:local_constancy}: for any smooth geometrically connected variety~$Y/K$ with $\Q_\ell$-pro-unipotent \'etale fundamental group~$U$ based at some $y\in Y(K)$, the non-abelian Kummer map
\[
Y(K) \to \H^1(G_K,U(\Q_\ell))
\]
is locally constant (provided $\ell\neq p$).

In the archimedean case, we will prove the corresponding analogue of \cite[Theorem~1.1]{me:local_constancy}, using the theory of higher Albanese manifolds of Hain--Zucker.

\begin{theorem}\label{thm:continuity_of_Kummer_maps_archimedean}
	Let $Y/\C$ be a smooth connected variety, $y\in Y(\C)$ a basepoint, and $U/\R$ the pro-unipotent Betti fundamental group of $Y$ based at $y$. Then:
	\begin{enumerate}
		\item $U/\R$ carries a mixed Hodge structure with only negative weights;
		\item $\H^1(\MHS_\R,U(\R))$ is canonically a pro-$C^\infty$ real manifold; and
		\item the non-abelian Kummer map\[Y(\C)\rightarrow\H^1(\MHS_\R,U(\R))\]is pro-$C^\infty$ (and in particular continuous).
	\end{enumerate}
\end{theorem}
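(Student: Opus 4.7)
The plan is to handle the three claims sequentially, leaning on Hain's foundational work on mixed Hodge structures on pro-unipotent fundamental groups.

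For (1), I would invoke Hain's construction \cite{hain_MHS} of a canonical functorial MHS on the $\R$-pro-unipotent Betti fundamental group of a smooth connected complex variety. Negativity of weights can be checked on the abelianisation: the isomorphism $\Lie(U)^{\ab} \cong \H_1(Y,\R)$ is one of MHS, and $\H_1(Y,\R)$ has weights $-1$ and $-2$ (dually to Deligne's theorem that $\H^1(Y,\R)$ carries weights $1, 2$ for smooth $Y$). Since $\Lie(U)$ is pronilpotent and generated by its abelianisation, and the Lie bracket is additive on weight-graded pieces, all weights of $\Lie(U)$---and hence of $U$---are $\leq -1$.

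For (2), I would classify torsors under $U$ in $\MHS_\R$ as triples $(P_\Betti, P_\dR, c)$: a $U(\R)$-torsor $P_\Betti$, a filtered $U(\C)$-torsor $P_\dR$ with $F^\bullet\O(P_\dR)$ extending the Hodge filtration on $\O(U)$, and a comparison $c \colon P_\Betti \otimes_\R \C \isoarrow P_\dR$. Pro-unipotence forces $P_\Betti$ to be trivialisable, and negativity of weights likewise forces $P_\dR$ to be trivialisable in the filtered sense. Choosing such trivialisations reduces $c$ to an element of $U(\C)$ well-defined up to left multiplication by the pro-unipotent subgroup $F^0 U(\C) \subseteq U(\C)$ (whose Lie algebra is $F^0 \Lie(U(\C))$) and right multiplication by $U(\R)$, yielding a bijection
\[
\H^1(\MHS_\R, U(\R)) \cong F^0 U(\C) \backslash U(\C) / U(\R).
\]
The right-hand side inherits a pro-$C^\infty$ structure from $U(\C)$ by induction along the lower central series of $\Lie(U)$: at each central extension by a pure MHS $V$ of negative weight, the abelian fibre is $V(\C)/(F^0 V(\C) + V(\R)) \cong \Ext^1_{\MHS_\R}(\R(0), V)$, i.e.\ Carlson's classical real-manifold presentation of the generalised intermediate Jacobian.

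For (3), the Kummer map sends $z \in Y(\C)$ to the class of the torsor of paths $P_{y,z}$: its Betti part is the $\R$-pro-unipotent completion of the topological torsor $\pi_1(Y;y,z)$, while its de Rham realisation together with the comparison isomorphism are provided by Chen's iterated integral theorem. In the double coset model of (2), computing this class for smoothly varying $z$ amounts to evaluating iterated integrals along a smoothly chosen path from $y$ to $z$; since these iterated integrals depend real-analytically on the endpoint, the Kummer map is pro-$C^\infty$. The principal obstacle lies in part (2), where I must verify that the double coset carries a manifold structure---this requires either an explicit Deligne-splitting section of $U(\C) \twoheadrightarrow F^0 U(\C) \backslash U(\C)/U(\R)$, or the Carlson-type induction along the lower central series sketched above, handling compatibilities between Hodge filtration and real structure carefully at each stage. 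Once this is in hand, (1) reduces to a weight count and (3) to the classical real-analytic dependence of Chen's iterated integrals on their endpoints.
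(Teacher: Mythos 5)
Your parts (1) and (2) track the paper closely: negativity of weights via the abelianisation is a harmless variant of the paper's observation that $W_0\O(U)=\R$ in Hain's description, and your double-coset description of $\H^1(\MHS_\R,U(\R))$ is exactly lemma \ref{lem:MHS_on_torsors} and corollary \ref{cor:explicit_H^1(MHS)}. For the manifold structure your Carlson-style induction along the lower central series is workable and is morally the same induction the paper runs; the paper packages it differently, checking that the action of $F^0U(\C)\times U(\R)$ on $U(\C)$ is free and proper (proposition \ref{prop:H^1(MHS)_is_manifold}) and then invoking the quotient manifold theorem, which avoids having to verify local triviality of the successive abelian-fibre extensions by hand.

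The genuine gap is in (3). The class of the path torsor $P_{y,z}$ in $U(\R)\backslash U(\C)/F^0U(\C)$ is $[\gamma_z]^{-1}r_z$ where $\gamma_z$ is an actual path and $r_z\in F^0P_{y,z}(\C)$, so smoothness in $z$ requires exhibiting an $F^0$-point of the path torsor varying smoothly with $z$. Your argument replaces this by the assertion that ``evaluating iterated integrals along a smoothly chosen path'' does the job, but iterated integrals compute parallel transport for the flat (Betti/de Rham comparison) structure, and parallel transport does \emph{not} preserve the Hodge filtration of the canonical variation; smooth dependence of Chen's integrals on the endpoint therefore does not by itself control the $F^0$-trivialisation entering the double coset. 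What is needed is a $C^\infty$ framing of the canonical variation $\mathbb V_n$ compatible with the Hodge and weight filtrations, against which the flat transport is measured---this is precisely \cite[Proposition 5.24]{hain-zucker}, and it is the technical heart of the paper's proof: the paper identifies the non-abelian Kummer map with the Hain--Zucker higher Albanese map $Y(\C)\rightarrow\Alb^n(Y)=\H^1(\MHS_\Z,\Pi_n)$, quotes their holomorphy of that map, and then realifies using the $C^\infty$ fibration $\H^1(\MHS_\Z,\Pi_n)\rightarrow\H^1(\MHS_\R,\Pi_n(\R))$ of corollary \ref{cor:realification_of_H^1}. Your outline could be completed by proving (or citing) the existence of such a filtration-compatible smooth frame, or equivalently that the relative $F^0$-torsor over $Y(\C)$ admits local $C^\infty$ sections, but as written this step is missing rather than merely compressed.
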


\subsection{Local Bloch--Kato Selmer sets and homotopical algebra}

\subsubsection{Definitions and the $p$-adic main theorem}

Our $p$-adic analogue of theorem \ref{thm:main_theorem_l-adic} will describe $p$-adic local heights as functions on spaces of torsors under $\Q_p$-pro-unipotent fundamental groups. However, we cannot hope that anything as naive as theorem \ref{thm:main_theorem_l-adic} will hold verbatim when $\ell=p$, not least because the Galois cohomology $\H^1(G_K,\Q_p(1))$ is much larger than $1$-dimensional. Indeed, in the $\ell=p$ setting, the correct way to recover the valuation on $K^\times$ from Galois cohomology is to consider the quotient\[\H^1_{g/e}(G_K,\Q_p(1)):=\H^1_g(G_K,\Q_p(1))/\H^1_e(G_K,\Q_p(1))\]of local Bloch--Kato Selmer groups \cite[Equation 3.7.2]{bloch-kato}. This is one-dimensional, and the composite\[K^\times\rightarrow\H^1_{g/e}(G_K,\Q_p(1))\isoarrow\Q_p\]with the non-abelian Kummer map is, for a suitable normalisation of the final isomorphism, the valuation on $K$. This motivates what will be our adaptation of theorem \ref{thm:main_theorem_l-adic} to the $p$-adic setting: we will define a certain ``quotient'' $\H^1_{g/e}(G_K,U(\Q_p))$ of local Bloch--Kato Selmer sets for a general (usually de Rham) representation of $G_K$ on a finitely generated pro-unipotent group $U/\Q_p$, and when $U/\Q_p$ is the $\Q_p$-pro-unipotent \'etale fundamental group of $L^\times$ as earlier, we will realise our local heights as functions on $\H^1_{g/e}(G_K,U(\Q_p))$ via the mod $\H^1_e$ non-abelian Kummer map\[L^\times(K)\rightarrow\H^1_{g/e}(G_K,U(\Q_p)).\]Indeed, $\H^1_{g/e}(G_K,U(\Q_p))$ will prove in this case to be $1$-dimensional, analogously to theorems \ref{thm:main_theorem_l-adic} and \ref{thm:main_theorem_archimedean}.

Let us now recall the definition of the local Bloch--Kato Selmer sets associated to a Galois representation of a $p$-adic local field $K$ on a finitely generated pro-unipotent group, as well as giving the definition of their ``relative quotients''. As usual with non-abelian Galois cohomology, the resulting objects do not carry any natural group structure, instead simply being pointed sets.



\begin{definition}[Local Bloch--Kato Selmer sets]\label{def:bloch-kato_sets}
Let $U/\Q_p$ be a representation of $G_K$ on a finitely generated pro-unipotent group (see definition-lemma \ref{def-lem:unipotent_representation}). We define the \emph{local Bloch--Kato Selmer (pointed) sets} $\H^1_e(G_K,U(\Q_p))\subseteq\H^1_f(G_K,U(\Q_p))\subseteq\H^1_g(G_K,U(\Q_p))$ to be the respective kernels of the natural maps
\[\H^1(G_K,U(\Q_p))\rightarrow\H^1(G_K,U(\B_\cris^{\varphi=1}))\]
\[\H^1(G_K,U(\Q_p))\rightarrow\H^1(G_K,U(\B_\cris))\]
\[\H^1(G_K,U(\Q_p))\rightarrow\H^1(G_K,U(\B_\st))\]
on continuous cohomology. Here $\B_\cris$ and $\B_\st$ denote the usual period rings, and the continuous Galois action on, for example, $U(\B_\st)=\Hom(\Spec(\B_\st),U)$ is the ``semilinear'' one induced from the actions of $G_K$ on $\B_\st$ and on $U$.

More generally, we let $\sim_{\H^1_e}$, $\sim_{\H^1_f}$ and $\sim_{\H^1_g}$ be the equivalence relations on the cohomology set $\H^1(G_K,U(\Q_p))$ whose equivalence classes are the fibres of the respective maps above (so that $\H^1_*(G_K,U(\Q_p))$ is the $\sim_{\H^1_*}$-equivalence class of the distinguished point). We then define the \emph{local Bloch--Kato Selmer quotients} to be the quotient (pointed) sets:
\[\H^1_{f/e}(G_K,U(\Q_p)):=\H^1_f(G_K,U(\Q_p))/\sim_{\H^1_e};\]
\[\H^1_{g/e}(G_K,U(\Q_p)):=\H^1_g(G_K,U(\Q_p))/\sim_{\H^1_e};\]
\[\H^1_{g/f}(G_K,U(\Q_p)):=\H^1_g(G_K,U(\Q_p))/\sim_{\H^1_f}.\]
\end{definition}

\begin{remark}
The original definition of $\H^1_g$ in \cite[Equation 3.7.2]{bloch-kato} differs from ours in that the semistable period ring $\B_\st$ is replaced by the de Rham one $\B_\dR$. However, we will see in lemma \ref{lem:our_H^1_g_is_right} that, at least when $U$ is de Rham (see definition-lemma \ref{def-lem:admissible_rep}) -- for instance the pro-unipotent fundamental group of any smooth $K$-variety -- there is no difference between the two definitions, so we are free to adopt the one which is best-suited to our techniques. When $U$ is replaced by a $p$-adic representation $V$, this is proved in unpublished work by Hyodo \cite{hyodo}, and later in other places, e.g.\ \cite{berger}.
\end{remark}

A major part of this article will be concerned with developing tools for the study of non-abelian Bloch--Kato Selmer sets and their quotients, but before we outline how we intend to develop these tools, let us use the local Bloch--Kato Selmer quotients to give a precise statement of our $p$-adic analogue of theorem \ref{thm:main_theorem_l-adic}.

\begin{theorem}\label{thm:main_theorem_p-adic}
Let $A/K$ be an abelian variety over a $p$-adic field $K$, $L/A$ a line bundle, and $\tilde 0\in L^\times(K)$ a basepoint in the complement $L^\times=L\setminus0$ of the zero section lying over $0\in A(K)$. Let $U/\Q_p$ be the $\Q_p$-pro-unipotent (in fact, unipotent) \'etale fundamental group of $L^\times$ based at $\tilde 0$. Then the natural map $\Q_p(1)\hookrightarrow U$ from the inclusion of the fibre over $0\in A(K)$ induces a bijection\[\H^1_{g/e}(G_K,\Q_p(1))\isoarrow\H^1_{g/e}(G_K,U(\Q_p)),\] and the (well-defined) composite map\[L^\times(K)\rightarrow\H^1_{g/e}(G_K,U(\Q_p))\leftisoarrow\H^1_{g/e}(G_K,\Q_p(1))\isoarrow\Q_p\]is the N\'eron log-metric on $(L,\tilde 0)$ (and in particular is $\Q$-valued).

Here the isomorphism $\H^1_{g/e}(G_K,\Q_p(1))\isoarrow\Q_p$ is normalised so that the composite\[K^\times\rightarrow\H^1_{g/e}(G_K,\Q_p(1))\isoarrow\Q_p\]is the ($\Q$-valued) valuation on $K$.
\end{theorem}

\begin{remark}\label{rmk:what_about_f/e}
Informally, theorem \ref{thm:main_theorem_p-adic} shows that when $U/\Q_p$ is the unipotent fundamental group of $L^\times$ as above, then $\H^1_e(G_K,U(\Q_p))$ is codimension $1$ in $\H^1_g(G_K,U(\Q_p))$ and the gap between $\H^1_e$ and $\H^1_g$ describes the local component of the height function. It is natural to wonder where the intermediate set $\H^1_f$ sits between these two sets.

In fact, in our setup it will already be equal to $\H^1_e$, as we shall see in corollary \ref{cor:H^1_e=H^1_f} (see remark \ref{rmk:D_cris^phi=1_is_trivial}). Correspondingly, according to Bloch and Kato's $\ell$-adic analogies for local Bloch--Kato Selmer groups \cite[Equation 3.7.1]{bloch-kato}, one would expect the $\Q_\ell$-unipotent fundamental group of $L^\times$ to satisfy $\H^1_\nr(G_K,U(\Q_\ell))=1$ where\[\H^1_\nr(G_K,U(\Q_\ell)):=\ker\left(\H^1(G_K,U(\Q_\ell))\rightarrow\H^1(I_K,U(\Q_\ell))\right)\]with $I_K\unlhd G_K$ the inertia subgroup -- this is also true, and proven in proposition \ref{prop:H^1_nr=1} (see remark \ref{rmk:H^1_nr=1}).
\end{remark}

\subsubsection{Cosimplicial models for local Bloch--Kato Selmer sets}

Ultimately, our aim is to use the local Bloch--Kato Selmer sets and quotients not just to state the $p$-adic analogue of theorem \ref{thm:main_theorem_l-adic}, but also to mimic closely the proof strategy outlined in \S\ref{ss:method}. The non-formal input required -- that the non-abelian Kummer map
\[
Y(K) \to \H^1_{g/e}(G_K,U(\Q_p))
\]
modulo~$\sim_{\H^1_e}$ is well-defined and locally constant -- was proven in \cite[Theorem~1.2]{me:local_constancy}. In this paper, we will develop a formal calculus for working with the local Bloch--Kato Selmer sets~$\H^1_*(G_K,U(\Q_p))$ and the quotients described above, for a general de Rham representation of~$G_K$ on a finitely generated pro-unipotent group~$U/\Q_p$. In particular, our theory will give us a way to produce long exact sequences relating local Bloch--Kato Selmer quotients in central extensions, analogously to the long exact sequence in cohomology.

These non-abelian Bloch--Kato Selmer sets $\H^1_*(G_K,U(\Q_p))$ have already been studied in the work of Kim \cite{minhyong:selmer}, using their moduli interpretations as classifying spaces for torsors possessing trivialisations after base change to various period rings (see section \ref{s:torsors}). However, our approach will differ significantly from Kim's, in that we will in general forgo these explicit moduli descriptions, instead producing certain explicit cosimplicial models for the local Bloch--Kato Selmer sets, enabling us to bring methods from homotopical algebra to bear. The advantages of this approach are twofold: firstly, our explicit models will be cosimplicial groups, and thus will retain the group structure which is lost in passage to non-abelian Galois cohomology; and secondly, the models will form a kind of derived refinement of the local Bloch--Kato Selmer sets, and in particular will contain information in other degrees which is not seen directly by the local Bloch--Kato Selmer set but is nonetheless of relevance, for example in the construction of long exact sequences.

Explicit cochain models for local Bloch--Kato Selmer groups have already been produced by Nekov\'a\u r in the abelian case \cite{nekovar,nekovar_abel-jacobi}, and our treatment closely parallels his in the non-abelian context. To briefly review Nekov\'a\u r's methods, let us recall that there are certain cochain complexes $\mathsf C^\bullet_*(V)$ for $*\in\{e,f,g\}$ associated to any de Rham representation $V$ of $G_K$, whose first cohomology $\H^1\left(\mathsf C^\bullet_*(V)\right)$ is canonically identified with the corresponding local Bloch--Kato Selmer group $\H^1_*(G_K,V)$ (see \cite[Section 1.19]{nekovar}). The complexes $\mathsf C^\bullet_*(V)$ are built out of various Dieudonn\'e functors evaluated at $V$; for instance, the complex $\mathsf C^\bullet_e(V)$ is the two-term complex\[\D_\cris^{\varphi=1}(V)\overset{-\iota}\longrightarrow\D_\dR(V)/F^0,\]where $\iota$ is the natural map. The reader will note that the assertion that $\H^1(\mathsf C^\bullet_e(V))=\H^1_e(G_K,V)$ is equivalent to part of the Bloch--Kato exponential sequence \cite[Corollary 3.8.4]{bloch-kato}.

In the pro-unipotent setting we will generalise this picture by associating to each de Rham representation of $G_K$ on a finitely generated pro-unipotent group $U/\Q_p$ a certain \emph{cosimplicial group} $\mathsf C^\bullet_*(U(\Q_p))$, whose \emph{first cohomotopy set} $\pi^1\left(\mathsf C^\bullet_*(U(\Q_p))\right)$ is canonically identified with the corresponding local Bloch--Kato Selmer set $\H^1_*(G_K,U(\Q_p))$ (see section \ref{c:homotopical_algebra} for definitions). The cosimplicial groups $\mathsf C^\bullet_*(U(\Q_p))$ are again built out of certain non-abelian Dieudonn\'e functors evaluated on $U$ (see lemma \ref{lem:dieudonne_functor}). More precisely, we will define various cosimplicial $\Q_p$-algebras $\B^\bullet_*$ with $G_K$-action, built out of the standard $p$-adic period rings, so that the desired cosimplicial group $\mathsf C^\bullet_*(U(\Q_p))$ is then produced by taking points of $U$ in $\B^\bullet_*$ and taking $G_K$-invariants. In fact, we will use exactly the same setup to analyse the local Bloch--Kato quotients $\H^1_{g/e}$ and $\H^1_{f/e}$, defining cosimplicial $\Q_p$-algebras $\B^\bullet_{g/e}$ and $\B^\bullet_{f/e}$, and obtaining the description\[\H^1_*(G_K,U(\Q_p))\cong\pi^1\left(U(\B^\bullet_*)^{G_K}\right)\]also in this case.

Just as in the abelian case, these descriptions can be viewed as various instances of exponential-type exact sequences -- indeed, this is how we will prove these descriptions. For instance, in the particular case $*=e$, we will prove the description via the following non-abelian analogue of the classical Bloch--Kato exponential.

\begin{lemma}[Non-abelian Bloch--Kato exponential, see corollary~\ref{cor:pi^1_is_bloch-kato}]\label{lem:intro_exponential}
Let $U/\Q_p$ be a de Rham representation of $G_K$ on a finitely generated pro-unipotent group. Then there is an exact sequence\[1\!\rightarrow\! U(\Q_p)^{G_K}\!\rightarrow\D_\cris^{\varphi=1}(U)(\Q_p)\times\D_\dR^+(U)(K)\actsarrow\D_\dR(U)(K)\rightarrow\H^1_e(G_K,U(\Q_p))\!\rightarrow\!1,\]where $\D_\cris^{\varphi=1}$, $\D_\dR$ and $\D_\dR^+$ denote the Dieudonn\'e functors associated to the period rings $\B_\cris^{\varphi=1}$, $\B_\dR$ and $\B_\dR^+$ respectively (see lemma \ref{lem:dieudonne_functor} for how to apply Dieudonn\'e functors to pro-unipotent groups).
\end{lemma}

The exact sequence appearing in our non-abelian Bloch--Kato exponential is a mixed exact sequence consisting of both groups and pointed sets. Since such sequences will be ubiquitous throughout this paper, let us fix our chosen notation for such sequences.

\begin{definition}\label{def:exact_sequence}
An \emph{exact sequence (of groups and pointed sets)}\[\cdots\rightarrow X^{r-1}\rightarrow X^r\rightarrow X^{r+1}\rightarrow\cdots\rightarrow X^j\centarrow X^{j+1}\rightarrow\cdots\rightarrow X^k\actsarrow X^{k+1}\rightarrow\cdots\]consists of:
\begin{itemize}
	\item two distinguished indices $j<k$;
	\item an exact sequence (finite or infinite) of groups $\cdots\rightarrow X^{k-1}\rightarrow X^k$;
	\item an exact sequence (finite or infinite) of pointed sets $X^{k+1}\rightarrow X^{k+2}\rightarrow\cdots$; and
	\item a right action of $X^k$ on $X^{k+1}$ (not necessarily preserving the basepoint);
\end{itemize}
such that:
\begin{itemize}
	\item $X^r$ is abelian for $r\leq j$;
	\item the image of $X^j\rightarrow X^{j+1}$ is a central subgroup;
	\item the image of $X^{k-1}\rightarrow X^k$ is exactly the stabiliser of the basepoint of $X^{k+1}$ under the action of $X^k$; and
	\item the orbits of the action of $X^k$ on $X^{k+1}$ are exactly the fibres of the map $X^{k+1}\rightarrow X^{k+2}$.
\end{itemize}
Note that in this setup, the action of $X^k$ on the basepoint of $X^{k+1}$ provides a map $X^k\rightarrow X^{k+1}$ making the whole sequence exact.
\end{definition}

\begin{remark}
The prototypical example of such a sequence is the homotopy exact sequence associated to a fibration \cite[Section IX.4.1]{bousfield-kan}, where there are exactly three non-abelian group terms ($k=j+3$) and three pointed set terms. In fact, essentially all the exact sequences we will study in this paper will have this same feature.
\end{remark}

\begin{remark}\label{rmk:explicit_H^1_e}
In our non-abelian Bloch--Kato exact sequence, the action of the group $\D_\cris^{\varphi=1}(U)(\Q_p)\times\D_\dR^+(U)(K)$ on the set $\D_\dR(U)(K)$ is given by $(x,y)\colon z\mapsto y^{-1}zx$, so that, concretely, the exponential exact sequence asserts that\[U(\Q_p)^{G_K}=\D_\dR^+(U)(K)\cap\D_\cris^{\varphi=1}(U)(\Q_p)\](inside $\D_\dR(U)(K)$), and that we have a canonical identification\[\H^1_e(G_K,U(\Q_p))\cong\D_\cris^{\varphi=1}(U)(\Q_p)\backslash\D_\dR(U)(K)/\D_\dR^+(U)(K)\]of $\H^1_e(G_K,U(\Q_p))$ as a double-coset space.
\end{remark}

\begin{remark}
An alternative description of a non-abelian Bloch--Kato exponential has been given by Sakugawa \cite[Theorem 1.1]{sakugawa} following on from work of Kim \cite{minhyong:selmer} under some auxiliary conditions including triviality of $\D_\cris^{\varphi=1}(U)$ (but without necessarily assuming $U$ unipotent), obtaining in this case a Bloch--Kato exponential \emph{isomorphism}\[\H^1_f(G_K,U(\Q_p))\isoarrow\D_\dR(U)(K)/\D_\dR^+(U)(K).\]Indeed, we will show in corollary \ref{cor:H^1_e=H^1_f} that, under this additional hypothesis, one has $\H^1_e(G_K,U(\Q_p))=\H^1_f(G_K,U(\Q_p))$, so one recovers Sakugawa's isomorphism from ours.
\end{remark}

\begin{remark}
When $U=V$ is abelian, the central map\[\D_\dR^+(V)\oplus\D_\cris^{\varphi=1}(V)\rightarrow\D_\dR(V)\]from our exponential exact sequence lemma \ref{lem:intro_exponential} is the negative of the usual one. This is due to an unfortunate clash between the signs in the exact sequences in \cite[Section 1.19]{nekovar} with those of \cite[Proposition 1.17]{bloch-kato}, and between the order conventions for path-composition in the papers of Kim and of Hain. In this paper we will follow the conventions of Kim and Deligne in composing paths in the functional order -- $fg$ is path $g$ followed by path $f$ -- and following the sign conventions of Nekov\'a\u r, pointing out discrepancies with the conventions of our sources as they arise.
\end{remark}

\subsubsection{A log-crystalline version}

In accordance with Deligne's philosophy of the motivic fundamental group, one might wonder whether, in addition to our $\ell$-adic, $p$-adic and Hodge-theoretic descriptions, one can give a description of the N\'eron log-metric on a pointed line bundle $(L,\tilde0)$ on an abelian variety $A$ over a $p$-adic local field $K$ in terms of the pro-unipotent log-crystalline fundamental group of the special fibre of a suitable integral log-scheme model of $L^\times$. Though this is not the main thrust of this paper, we will briefly outline how to  reinterpret theorem \ref{thm:main_theorem_p-adic} in these terms.

To briefly explain, let us fix $(L,\tilde0)$, $A$ and $K$ as in the preceding paragraph, and assume for simplicity that $A$ has semistable reduction. If we write $U/\Q_p$ for the unipotent \'etale fundamental group of $L^\times$ based at $\tilde0\in L^\times(K)$, then our assumptions ensure that $U$ is semistable in the sense of definition-lemma \ref{def-lem:admissible_rep}, since it is de Rham by \cite[Theorem~1.4(1)]{me:local_constancy} and is a central extension of semistable representations. The finitely generated pro-unipotent group $\D_\st(U)/K_0$ (lemma \ref{lem:dieudonne_functor}) then carries an ind-weakly admissible filtered $(\varphi,N)$-module structure on its affine ring $\O(\D_\st(U))=\D_\st(\O(U))$ (section \ref{ss:extra_structure}), and the local Bloch--Kato Selmer set $\H^1_g(G_K,U(\Q_p))$ classifies torsors under $\D_\st(U)$ with compatible ind-weakly admissible filtered $(\varphi,N)$-module structures on their affine rings (see lemma \ref{lem:isocrystal_interpretation_H^1_g}). We will see in lemma \ref{lem:isocrystal_interpretation_H^1_g/e} that passing from $\H^1_g$ to $\H^1_{g/e}$ corresponds to forgetting the Hodge filtration on these torsors, so that $\H^1_{g/e}(G_K,U(\Q_p))$ has a natural log-crystalline interpretation as the moduli set $\H^1(\Mod_K^{\varphi,N},\D_\st(U))$ of torsors under $\D_\st(U)$ with compatible ind-$(\varphi,N)$-module structures on their affine rings. This allows us to reinterpret theorem \ref{thm:main_theorem_p-adic} in terms of the non-abelian Kummer map\[L^\times(K)\rightarrow\H^1(\Mod_K^{\varphi,N},\D_\st(U))\]assigning to some $\tilde z\in L^\times(K)$ the class of the torsor $\D_\st(P_{\tilde0,\tilde z}):=\Spec(\D_\st(\O(P_{\tilde0,\tilde z})))$, where $P_{\tilde0,\tilde z}$ is the $\Q_p$-unipotent \'etale torsor of paths from $\tilde0$ to $\tilde z$ in $L^\times$. (The fact that~$\D_\st(P_{\tilde 0,\tilde z})$ is a $\D_\st(U)$-torsor is a consequence of the fact that it $\O(P_{\tilde 0,\tilde z})$ is ind-de Rham by \cite[Theorem~1.4(1)]{me:local_constancy}, hence ind-semistable.)

In order to see this rephrasing as a truly log-crystalline version, we would like to identify $\D_\st(U)$ (resp.\ $\D_\st(P_{\tilde0,\tilde z})$) as the pro-unipotent log-crystalline fundamental group (resp.\ torsor of paths) of the special  fibre of a suitable log-scheme model\footnote{Officially, a proper log-smooth log-scheme $\bar{\mcal L}$ over $\O_K$ whose generic fibre is the total space of the projective closure $\P(L\oplus\O_A)$ of $L$ with the log-structure coming from the union of the $0$- and $\infty$-sections, such that the log-structure on $\Spec(\O_K)$ pulled back from $\bar{\mcal L}$ along the closure of $\tilde 0\in\bar{\mcal L}(K)$ is the usual one arising from the special point.} of $L^\times$. Several definitions of such an object are available in the literature:  there is a Tannakian definition due to Shiho \cite[Definition 4.1.5]{shiho_1}, a direct construction via the de Rham--Witt complex due to Kim and Hain \cite[Definition 1]{kim-hain}, or one could take the Hyodo--Kato realisation of Cushman's pro-unipotent fundamental group in the category of Nori motives \cite[Theorem 3.1]{cushman}, as in \cite[Section 4.3]{deglise-niziol}. These definitions should describe the same object, though we have been unable to find a proof of this in the literature. In any case, the log-crystalline fundamental group is known \cite[Corollary 4.19(2)]{deglise-niziol} or believed \cite[Remark 3.4.1]{shiho_p-adic_Hodge} to possess a comparison isomorphism with the \'etale fundamental group, identifying it with $\D_\st(U)$ as claimed.

\subsection{Overview of sections and standing assumptions}

After recalling some basic facts about pro-unipotent groups and filtrations in section \ref{c:preliminaries}, we will begin in section \ref{c:main_theorem_l-adic} by proving the $\ell$-adic version of our main theorem, theorem \ref{thm:main_theorem_l-adic}. The proof is relatively simple, but the details will serve to motivate many of the lemmas necessary in the proof of its much harder cousin, theorem \ref{thm:main_theorem_p-adic}. The following sections serve to build up to the proof of this latter theorem, developing first the basic language of local Galois representations on finitely generated pro-unipotent groups in section \ref{c:reps}, then studying the local Bloch--Kato Selmer sets and quotients in section \ref{c:bloch-kato} using the homotopical-algebraic language reviewed in section \ref{c:homotopical_algebra}.

This theoretical basis being established, the proof in section \ref{c:main_theorem_p-adic} of the $p$-adic main theorem then follows exactly the blueprint of the $\ell$-adic proof, and indeed is the shortest section of the paper. Then, in section \ref{c:main_theorem_archimedean} we prove the archimedean version of our main theorem, developing a mixed-Hodge-theoretic analogue of the representation-theoretic language of the rest of the paper, and relating the archimedean non-abelian Kummer map to the higher Albanese maps of Hain and Zucker \cite{hain-zucker}. The proof of the archimedean main theorem then follows exactly the blueprint of the previous two proofs.


\smallskip

Since the majority of this paper will be devoted to the analysis over a non-archimedean base field, let us fix some notation for the rest of the paper.

\begin{notation}[Standing assumptions]
Fix a prime $p$, a finite extension $K/\Q_p$, and an algebraic closure $\bar K/K$ determining an absolute Galois group $G_K=G_{\bar K|K}$. Let $\O_K$ denote the ring of integers, $\varpi$ a uniformiser generating the maximal ideal $\mideal_K=(\varpi)$, $k=\O_K/\varpi$ the residue field, and $v\colon K^\times\rightarrow\Q$ the valuation, normalised so that $v(p)=1$. We also denote by $\C_K$ the completion of $\bar K$, by $K_0$ the maximal unramified subfield of $K$, and by $\B_\cris$, $\B_\st$ and $\B_\dR$ the period rings constructed by Fontaine \cite{fontaine2}.
\end{notation}

\subsection{Acknowledgements}

I am grateful to Jonathan Pridham for helpful discussions about comparison theorems for fundamental groups, and to Fabrizzio Andreatta, Christiaan Peters and Joseph Steenbrink for taking the time to answer my questions on various technical aspects of their papers and books. I am also grateful to Adam Morgan for introducing me to Breen's theory of cubical structures. Lastly, I am very grateful to my two thesis examiners, Damian R\"ossler and Gerd Faltings, for their careful reading of the initial version of the dissertation on which this paper is based and their many helpful suggestions.

Particular thanks are due to Netan Dogra for many helpful discussions, and also to Minhyong Kim, without whose guidance the present paper would not have been possible.

\smallskip

This article forms an abridged version of the author's dissertation at the University of Oxford \cite{thesis}.
\section{Preliminaries}
\label{c:preliminaries}

Since pro-unipotent groups form an integral part of this paper, let us briefly collect a few basic facts on them for the benefit of the reader. Since the majority of the statements here qualify as well-known, we omit all proofs here, referring the reader to \cite{milne} and \cite{javier}, and the corresponding section of \cite{thesis} for details.

Throughout this section, we fix a characteristic $0$ field $F$; all schemes, vector spaces etc.\ will be over $F$.

\subsection{Pro-unipotent groups and their Lie algebras and Hopf algebras}

A \emph{pro-unipotent group} $U/F$ is an affine group-scheme which is a (possibly infinite) iterated extension of vector groups. A pro-unipotent group $U/F$ is very closely related to both its Hopf algebra \cite[Chapter $3$]{milne}, i.e.\ the affine ring $\O(U)$, and its Lie algebra \cite[Chapter $12$]{milne}\[\Lie(U) := \ker\left(U(F[\epsilon]/(\epsilon^2))\rightarrow U(F)\right).\]The Lie algebra of~$U$ is pro-nilpotent, i.e.\ an inverse limit of finite-dimensional nilpotent Lie algebras, and dually its Hopf algebra satisfies a certain conilpotency property.

\begin{definition}[Conilpotency filtration]\label{def:J-filtration}
Let $H$ be a commutative Hopf algebra over $F$. The \emph{conilpotency filtration} \cite[Section 3.3.4]{javier} on $H$ is the increasing filtration $J_\bullet H$ defined by\[J_nH:=\ker\left(\Delta^{n+1}\colon H\rightarrow(H/F)^{\otimes(n+1)}\right)\]where $\Delta^{n+1}$ denotes the $n$-fold comultiplication on $H$ and $H/F$ denotes the cokernel of the unit map $F\rightarrow H$. In other words, if $J$ denotes the augmentation ideal of the dual Hopf algebra $H^\dual$, then $J_nH$ is the annihilator of $J^{n+1}$. It follows from this description that the conilpotency filtration on $H$ is preserved by the Hopf algebra structure maps.

The Hopf algebra $H$ is called \emph{conilpotent} (or \emph{$J$-adically cocomplete} in \cite{thesis}, or \emph{coconnected} in \cite[Definition 14.4]{milne}) just when its conilpotency filtration is exhaustive. Equivalently, $H$ is conilpotent just when its dual $H^\dual$ is $J$-adically complete.
\end{definition}

\begin{remark}
When dealing with pro-nilpotent Lie algebras $\Lie(U)$ or dual Hopf algebras $\O(U)^\dual$, these objects should be viewed as \emph{pro-finite dimensional vector spaces}, i.e.\ as pro-objects in the category of finite dimensional vector spaces. The category of pro-finite dimensional vector spaces is canonically opposite to the category of all vector spaces via the functors sending a vector space $V$ to ids dual $V^\dual=\Hom(V,F)$ and a pro-finite dimensional vector space $V=\liminv(V_i)$ to its \emph{decompleted dual} $V^\reddual:=\limdir(V_i^\dual)$. The pro-object structure on the dual $V^\dual$ of a vector space is the natural one arising from writing $V$ as a filtered colimit of finite dimensional subspaces.

The category of pro-finite dimensional vector spaces supports a symmetric monoidal \emph{completed tensor product} $\hat\otimes$, given by
\[
V\hat\otimes W := (V^\reddual\otimes W^\reddual)^\dual = \liminv(V_i\otimes W_j)
\]
for pro-finite dimensional vector spaces $V=\liminv(V_i)$ and $W=\liminv(W_j)$. The natural Hopf algebra structure on the dual $\O(U)^\dual$ is not a true Hopf algebra structure, but the structure of a completed Hopf algebra \cite[Definition 3.60]{javier}, i.e.\ a Hopf algebra object in the category of pro-finite dimensional vector spaces with respect to the completed tensor product. In particular, the comultiplication on $\O(U)^\dual$ is given by a map
\[
\Delta\colon \O(U)^\dual\rightarrow \O(U)^\dual\hat\otimes\O(U)^\dual
\]
which does not in general factor through the usual tensor product $\O(U)^\dual\otimes\O(U)^\dual$ \cite[Example 3.61]{javier}.
\end{remark}

It turns out that the property of being conilpotent \emph{exactly} characterises affine rings of pro-unipotent groups \cite[Theorem 14.5]{milne}, and the functors $U\mapsto\Lie(U)$ and $U\mapsto\O(U)$ provide equivalences of categories
\begin{align*}\label{eq:equivalences}\tag{$\ast$}
\begin{split}
\{\text{pro-unipotent groups}\} &\isoarrow \{\text{pro-nilpotent Lie algebras}\} \\
 &\isoarrow \{\text{conilpotent commutative Hopf algebras}\}^\op.
\end{split}
\end{align*}
Since we will use this three-way equivalence throughout this paper, let us briefly describe each of the four remaining functors.

\smallskip

Firstly, if $H$ is a conilpotent commutative Hopf algebra, then one recovers its associated pro-unipotent group $U$ as its spectrum $\Spec(H)$. Equivalently, $U$ is the functor of \emph{grouplike elements} of the dual Hopf algebra $H^\dual$, i.e.\[U(\Lambda)=H_\Lambda^{\dual,\gplike}:=\{\gamma\in H_\Lambda^\dual|\Delta(\gamma)=\gamma\hat\otimes\gamma\text{ and }\varepsilon(\gamma)=1\}\]
for any $F$-algebra $\Lambda$, functorial in $\Lambda$, where $H_\Lambda^\dual=\Hom_{\text{$F$-linear}}(H,\Lambda)$ denotes the base-change of $H$ from $F$ to $\Lambda$. The group structure on $U$ is the natural one induced from the multiplication on $H^\dual$.

The Lie algebra $\uu$ corresponding to $H$ can similarly be recovered as the space of \emph{primite elements} of $H^\dual$, i.e.\ the pro-finite dimensional subspace\[\uu=H^{\dual,\prim}:=\{x\in H^\dual|\Delta(x)=x\hat\otimes1+1\hat\otimes x\}.\]
The Lie bracket on $\uu$ is the natural one induced from the commutator bracket on $H^\dual$. The usual exponential power series converges on $\uu\subseteq H^\dual$ and induces a canonical bijection\[\exp\colon\uu=H^{\dual,\prim}\isoarrow H^{\dual,\gplike}=U(F).\]
This can even be upgraded to an isomorphism $\exp\colon\uu\isoarrow U$ of $F$-schemes, where by abuse of notation we conflate $\uu$ with its corresponding affine space $\Spec(\Sym^\bullet\uu^\reddual)$. The inverse isomorphism is denoted $\log\colon U\rightarrow\uu$.

\smallskip

Lastly, if $\uu$ is a pro-nilpotent Lie algebra, then its corresponding pro-unipotent group $U$ is the affine group-scheme on (the affine space corresponding to) $\uu$ whose group law is given by the Baker--Campbell--Hausdorff formula \cite[Theorem 15.36]{milne}\[x\cdot y=x+y+\frac12[x,y]+\frac1{12}[x,[x,y]]-\frac1{12}[y,[x,y]]+\dots.\]This description of $U$ allows many statements about pro-unipotent groups to be reduced to their Lie algebras. For example, exactness of a sequence of pro-unipotent groups can be checked on $F$-points, and surjections (resp.\ injections) of pro-unipotent groups have sections (resp.\ retractions) as $F$-schemes.

The commutative Hopf algebra $H$ associated to $\uu$ is the decompleted dual of the \emph{completed universal enveloping algebra} $\hat{\mcal U}(\uu)$. When $\uu$ is finite dimensional, $\hat{\mcal U}(\uu)$ is defined to be the completion of the universal enveloping algebra $\mcal U(\uu)$ with respect to powers of its augmentation ideal; for general $\uu$ it is defined by taking the inverse limit of the completed universal enveloping algebras of its finite dimensional quotients.

\begin{remark}
For us, especially in section \ref{c:reps}, one important consequence of the equivalences \eqref{eq:equivalences} is that they enable one to make sense of various kinds of extra structures on pro-unipotent groups. For instance, if $\mcal C$ is a Tannakian category with fibre functor $\omega$, then specifying an action of the Tannaka group on a pro-unipotent group $U$ is equivalent to giving $\O(U)$ (resp.\ $\Lie(U)$) the structure of an ind-object (resp.\ pro-object) in the category $\mcal C$ compatible with the Hopf algebra structure (resp.\ with the Lie bracket).
\end{remark}

\subsubsection{The conilpotency filtration and the descending central series}

Let us briefly record a few properties fo the conilpotency filtration $J_\bullet\O(U)$ on the affine ring of a pro-unipotent group $U$. The first asserts a very precise connection between the conilpotency filtration and the descending central series of $U$.

\begin{lemma}\label{lem:J-filtration_is_descending_central_series}
Let $U/F$ be a pro-unipotent group, and for $n\geq0$ let $U_n$ denote the maximal $n$-step pro-unipotent quotient of $U$. Then $\O(U_n)\leq\O(U)$ is the subalgebra generated by $J_n\O(U)$, and the kernel of the surjection $\Sym^\bullet(J_n\O(U))\twoheadrightarrow\O(U_n)$ is the ideal generated by $1-[1]$ and all elements of the form $[x][y]-[xy]$ for $x\in J_i\O(U)$ and $y\in J_j\O(U)$ for $i+j\leq n$. (Here $[-]$ denotes the canonical inclusion $J_n\O(U)\hookrightarrow\Sym^\bullet(J_n\O(U))$.)
\end{lemma}

Although the pro-unipotent groups arising in this paper will in general be infinite dimensional, they will at least be finitely generated (i.e.\ have a finite generated Zariski-dense subgroup of their $F$-points). This translates to the following useful finiteness property of the affine ring $\O(U)$.

\begin{proposition}\cite[Proposition 1.5]{lubotzky-magid}\label{prop:lubotzky-magid}
Let $U/F$ be a pro-unipotent group. Then $U$ is finitely generated if and only if each $J_n\O(U)$ is finite dimensional (if and only if $J_1\O(U)$ is finite dimensional).
\end{proposition}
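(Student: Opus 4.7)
The plan is to use Proposition \ref{prop:J-filtration_is_descending_central_series} to translate the claim into a statement about the descending central series of $L := \Lie(U)$, and then analyse the associated graded of the $J$-filtration directly. The key preliminary observation is that ``$U$ is finitely generated'' is equivalent to the abelianisation $L^{\ab} = L/[L,L]$ being finite-dimensional: a finite set $u_1,\dots,u_m \in U(F)$ generates $U$ as a closed subgroup scheme iff the elements $\log u_i$ topologically generate $L$ as a pro-nilpotent Lie algebra (via the exponential isomorphism and BCH), and the latter condition holds iff their images span $L^{\ab}$, by a routine induction using $L = FS + [L,L]$ (with $FS$ the linear span of a lift of a basis of $L^{\ab}$) together with pro-nilpotency.

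Using Proposition \ref{prop:J-filtration_is_descending_central_series}, the $J$-filtration on $\O(U) = \Sym^\bullet(L^\reddual)$ is identified with the filtration induced by the weight filtration $F_\bullet L^\reddual$ dual to the descending central series, explicitly $F_n L^\reddual = (\gamma_{n+1} L)^\perp$. Passing to the associated graded then yields
\[
\gr_n^J \O(U) \;\cong\; \bigoplus_{\substack{k_i, n_i \geq 1 \\ \sum_i k_i n_i = n}} \bigotimes_i \Sym^{k_i}\!\bigl((\gr_{n_i}^\gamma L)^\reddual\bigr),
\]
where $\gr_n^\gamma L := \gamma_n L / \gamma_{n+1} L$. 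Because the constraints $k_i, n_i \geq 1$ force the index set to be finite and bound each $n_i$ by $n$, finite-dimensionality of every $J_n \O(U)$ is equivalent to finite-dimensionality of every $\gr_n^\gamma L$.

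The easy direction is then immediate from specialisation to $n=1$: since $\gr_1^J \O(U) = (L^{\ab})^\reddual$, finite-dimensionality of $J_1 \O(U)$ forces $L^{\ab}$ to be finite-dimensional, yielding finite generation of $U$. For the converse, assuming $L^{\ab}$ finite-dimensional, one invokes the standard fact that the associated graded Lie algebra $\bigoplus_n \gr_n^\gamma L$ is generated in degree one by $\gr_1^\gamma L = L^{\ab}$, so that each $\gr_n^\gamma L$ is a quotient of the degree-$n$ piece of the free Lie algebra on $L^{\ab}$ and is therefore finite-dimensional. Feeding this into the displayed formula shows each $\gr_k^J\O(U)$ for $k \leq n$ is finite-dimensional, whence so is $J_n \O(U)$. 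The main technical obstacle is really just the bookkeeping needed to verify the associated-graded identification, which mirrors the PBW-style argument used in the proof of Proposition \ref{prop:J-filtration_is_descending_central_series}.
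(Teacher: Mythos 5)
Your argument is essentially correct, but there is nothing in the paper to compare it against: proposition \ref{prop:lubotzky-magid} is quoted from \cite{lubotzky-magid} and the paper supplies no proof of its own (it only remarks that Lubotzky and Magid use a different but equivalent definition of the $J$-filtration). What you have written is therefore a self-contained derivation within the paper's framework, and it is a natural one: you reduce finite generation to finite-dimensionality of $\Lie(U)^\ab$ via the exponential and a Frattini-type induction, then use proposition \ref{prop:J-filtration_is_descending_central_series} to identify $\gr^J_\bullet\O(U)$ with the symmetric algebra on the duals of the graded pieces of the descending central series, so that finite-dimensionality of all $J_n\O(U)$ is equivalent to finite-dimensionality of all $\gr^\gamma_n\Lie(U)$, and the converse direction follows from degree-one generation of the associated graded Lie algebra. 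This is in the same spirit as how the paper itself exploits proposition \ref{prop:J-filtration_is_descending_central_series} (compare corollary \ref{cor:J-filtration_is_descending_central_series}), whereas Lubotzky--Magid argue directly with the coordinate ring of the pro-unipotent (algebraic) hull of a discrete group; your route has the advantage of staying entirely inside the language already set up here.

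Two small points you should tidy, neither of which is a real gap. First, the proposition as printed says ``unipotent group'', but it is only non-trivial for finitely generated \emph{pro}-unipotent $U$, which is how you correctly read it. Second, in the pro-setting the terms $\gamma_nL$ and $[L,L]$ should be replaced by their closures (equivalently, one works with the induced filtrations on finite-dimensional quotients, where the strictness noted at the end of the proof of proposition \ref{prop:J-filtration_is_descending_central_series} applies): the graded pieces entering $\gr^J_\bullet\O(U)$ are $(\bar\gamma_nL/\bar\gamma_{n+1}L)^\reddual$, the abelianisation in your first step is the Hausdorff one, and in the ``generated in degree one'' step the bracket only gives a dense image, so one should add the one-line remark that a dense subspace of a finite-dimensional (pro-finite-dimensional) space spanned by finitely many elements is closed, whence the induction goes through. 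Likewise the identification $\gr^J(\Sym^\bullet)\cong\Sym^\bullet(\gr)$ uses a choice of splitting, which is harmless over a field of characteristic $0$. With these cosmetic adjustments the proof is complete.
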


\subsection{Torsors under pro-unipotent groups}

As well as pro-unipotent groups $U$, we will also study torsors under them, i.e.\ non-empty $F$-schemes $P$ with a (right\nobreakdash-)action $P\times U\rightarrow P$ such that the natural map
\begin{align*}
P\times U &\rightarrow P\times P \\
(\gamma,u) &\mapsto (\gamma\cdot u,\gamma)
\end{align*}
is an isomorphism of schemes. Such torsors automatically have an $F$-point $\gamma\in P(F)$, and the action on any such $\gamma$ provides an isomorphism $U\isoarrow P$ of $U$-torsors (for the right-multiplication action of $U$ on itself).

The affine ring $\O(P)$ of a $U$-torsor carries a canonical increasing \emph{conilpotency filtration} $J_\bullet\O(P)$ analogous to the conilpotency filtration on $\O(U)$ (definition \ref{def:J-filtration}), defined by
\[
J_n\O(P):=\ker\left(\Delta^{n+1}\colon\O(P)\rightarrow(\O(P)/F)\otimes(\O(U)/F)^{\otimes n}\right),
\]
where $\Delta^{n+1}$ denotes the $(n+1)$-fold coaction. This filtration is compatible with the algebra structure on $\O(P)$, as well as its induced $\O(U)$-comodule structure. We will frequently use the following result.

\begin{proposition}\label{prop:graded_isomorphism}
Let $P$ be a torsor under a pro-unipotent group $U$, and choose $\gamma\in P(F)$. Then the isomorphism
\[
\O(P) \isoarrow \O(U)
\]
determined by $\gamma$ is a $J$-filtered isomorphism, and the induced isomorphism
\[
\gr^J_\bullet\O(P) \isoarrow \gr^J_\bullet\O(U)
\]
is independent of the choice of $\gamma$.
\smallskip
In particular, if $U$ is finitely generated then each $J_n\O(P)$ is finite dimensional.
\end{proposition}

As for pro-unipotent groups, we can make sense of the notion of various extra structures on torsors. If $(\mcal C,\omega)$ is a neutralised $F$-linear Tannakian category and $U$ is a pro-unipotent group in $\mcal C$, a \emph{torsor under $U$ (in $\mcal C$)} consists of a $U$-torsor $P$ endowed with the structure of an ind-object of $\mcal C$ on its affine ring $\O(P)$ which is compatible with both its algebra structure and its right $\O(U)$-comodule structure. If $P$ is a $U$-torsor in a neutralised Tannakian category $(\mcal C,\omega)$, the isomorphism
\[
\O(P) \isoarrow \O(U)
\]
determined by a point $\gamma\in P(F)$ need not respect the $\mcal C$-object structures. However, the subspaces $J_n\O(P)$ are necessarily ind-$\mcal C$-subobjects and the induced canonical isomorphism
\[
\gr^J_\bullet\O(P) \isoarrow \gr^J_\bullet\O(U)
\]
is necessarily an isomorphism of graded ind-objects of $\mcal C$. We will see this argument spelled out explicitly in proposition \ref{prop:J_n_is_sub-MHS}.

\subsection{Mal\u cev completion}

The pro-unipotent Betti and \'etale fundamental groups we will study in this paper can be thought of as certain pro-unipotent envelopes of the more usual discrete Betti or profinite \'etale fundamental groups. The exact definition of these pro-unipotent envelopes is best given via a universal property, following \cite[Section 3.3]{javier}.

\begin{definition}[Mal\u cev completion, {\cite[Definition 3.95]{javier}}]
\label{defn:malcev_completion}
Let $\Pi$ be a discrete group. Then the functor from pro-unipotent groups over $F$ to sets defined on objects $U$ by\[U\mapsto\Hom(\Pi,U(F))\]is representable by a pro-unipotent group $\Pi_F$ and homomorphism $\Pi\rightarrow \Pi_F(F)$. This group is called the \emph{Mal\u cev completion} of $\Pi$, and is functorial in $\Pi$. For an $F$-algebra $\Lambda$, we may write $\Pi(\Lambda)$ for $\Pi_F(\Lambda)$ where the meaning is clear.

More generally, if $\Pi$ is a topological group and $F$ a topological field of characteristic zero, the functor from pro-unipotent groups over $F$ to sets defined on objects $U$ by\[U\mapsto\Hom_\cts(\Pi,U(F))\]is representable by a pro-unipotent group $\Pi_F$ and continuous homomorphism $\Pi\rightarrow \Pi_F(F)$. This group is called the \emph{continuous Mal\u cev completion} of $\Pi$, and is functorial in $\Pi$.

There is an analogous notion of the (continuous) Mal\u cev completion of a $\Pi$-torsor \cite[Section 3.3.7]{javier}.
\end{definition}

\begin{example}\label{ex:fundamental_groups_as_malcev_completions}
\leavevmode
\begin{itemize}
	\item If $X$ is a path-connected, locally path-connected and locally simply connected topological space with a basepoint $x\in X$, then the $\Q$-pro-unipotent fundamental group of $(X,x)$ (the Tannaka group of the category of unipotent $\Q$-local systems on $X$) is the $\Q$-Mal\u cev completion of the usual topological fundamental group $\pi_1^\topp(X;x)$.
	\item If $X$ is a connected variety over an algebraically closed field and $x$ a geometric basepoint, the $\Q_\ell$-pro-unipotent \'etale fundamental group of $(X,x)$ (the Tannaka group of the category of unipotent \'etale $\Q_\ell$-local systems on $X$) is the continuous $\Q_\ell$-Mal\u cev completion of the profinite \'etale fundamental group $\pi_1^\et(X;x)$.
\end{itemize}
If $X$ is homotopy-equivalent to a finite CW complex (resp.\ is defined over an algebraically closed field of characteristic $0$), then this pro-unipotent group is finitely generated.
\end{example}

\begin{remark}
If $X$ is a connected variety over an algebraically closed subfield of $\C$ and $x$ is a $\C$-point of $X$, then there is a canonical isomorphism
\[
\pi_1^\et(X;x) \isoarrow \pi_1^\topp(X(\C);x)^\wedge
\]
between the profinite \'etale fundamental group of $X$ and the profinite completion of the topological fundamental group of $X(\C)$ (given its usual topology induced by that on $\C$). It follows that the $\Q_\ell$-pro-unipotent \'etale fundamental group of $X$ is canonically identified with the $\Q$-pro-unipotent (topological) fundamental group of $X(\C)$, base-changed to $\Q_\ell$.
\end{remark}

We will need the following properties of Mal\u cev completion.

\begin{proposition}\label{prop:properties_of_malcev}
\leavevmode
\begin{itemize}
	\item Let $\Pi$ be a finitely generated discrete group and $\Pi_n$ its maximal torsion-free $n$-step nilpotent quotient. Then the natural map
	\[
	\Pi_F \isoarrow \liminv(\Pi_n)_F
	\]
	is an isomorphism.
	\item Let
	\[
	\Pi'' \rightarrow \Pi \rightarrow \Pi'
	\]
	be an exact sequence of finitely generated nilpotent groups. Then the induced sequence
	\[
	\Pi''_F \rightarrow \Pi_F \rightarrow \Pi'_F
	\]
	on $F$-Mal\u cev completions is also exact.
	\item Let $\Pi$ be a finitely generated discrete group. Then its $F$-Mal\u cev completion $\Pi_F$ is canonically isomorphic to $\Spec(H)$, where
	\[
	H := \limdir\left(\Hom\left(F\Pi/J^{n+1},F\right)\right)
	\]
	with $J\unlhd F\Pi$ the augmentation ideal of the group algebra $F\Pi$.
	\item Let $\Pi$ be a finitely generated discrete group and $F'/F$ a field extension. Then there is a canonical isomorphism $\Pi_{F'}\isoarrow(\Pi_F)_{F'}$.
\end{itemize}
\end{proposition}

\section{The main theorem ($\ell$-adic version)}
\label{c:main_theorem_l-adic}

Let us begin by giving the proof of theorem \ref{thm:main_theorem_l-adic}. Although the details of the proof will present few difficulties for us, variations of the arguments outlined here will reappear in \S\ref{c:main_theorem_p-adic} to prove the $p$-adic analogue of theorem \ref{thm:main_theorem_l-adic}.

For the remainder of this section let us fix an abelian variety $A/K$ over our fixed $p$-adic field $K$ and a prime number $\ell\neq p$. We will denote by $(L,\tilde 0)$ a pair of a varying line bundle $L/A$ and a basepoint $\tilde 0\in L^\times(K)$ in the complement $L^\times=L\setminus0$ of the zero section lying over $0\in A(K)$, and by $U/\Q_\ell$ the $\Q_\ell$-pro-unipotent \'etale fundamental group of $L^\times$ based at $\tilde 0$.

If we pick an embedding $K\hookrightarrow\C$, then $L^\times(\C)$ is an oriented $\C^\times$-bundle over the torus $A(\C)$. The homotopy exact sequence associated to a fibration ensures that\[1\rightarrow\pi_1^\topp(\C^\times;1)\centarrow\pi_1^\topp(L^\times(\C);\tilde 0)\rightarrow\pi_1^\topp(A(\C);0)\rightarrow1\]is a central extension of an abelian group by an abelian group. By the comparison in example \ref{ex:fundamental_groups_as_malcev_completions} and proposition \ref{prop:properties_of_malcev}, the corresponding sequence
\begin{equation}\label{eq:pi_1_of_L}1\rightarrow\Q_\ell(1)\centarrow U\rightarrow V_\ell A\rightarrow1
\end{equation}
on \'etale fundamental groups arising from the fibration sequence $(\G_m,1)\hookrightarrow(L^\times,\tilde 0)\twoheadrightarrow(A,0)$ can be identified (after a suitable choice of a embedding $\bar K\hookrightarrow\C$) with the $\Q_\ell$-Mal\u cev completion of the first sequence. Hence, by proposition \ref{prop:properties_of_malcev}, sequence \ref{eq:pi_1_of_L} is also a central extension and, in particular, $U$ is a $2$-step unipotent group.

Now, since all the unipotent groups appearing in sequence \ref{eq:pi_1_of_L} are isomorphic to affine space as varieties, it follows that $\Q_\ell(1)\subseteq U(\Q_\ell)$ has the subspace topology and $U(\Q_\ell)\twoheadrightarrow V_\ell A$ has a continuous splitting, so that we obtain (part of) a long exact sequence\begin{equation}\label{eq:l-adic_seq}\H^0(G_K,V_\ell A)\rightarrow\H^1(G_K,\Q_\ell(1))\actsarrow\H^1(G_K,U(\Q_\ell))\rightarrow\H^1(G_K,V_\ell A)\end{equation}in continuous Galois cohomology (see \S\ref{c:reps}). The leftmost term vanishes since $A(K)_\tors$ is finite, and the rightmost term vanishes by the Euler--Poincar\'e characteristic formula. Hence the map $\H^1(G_K,\Q_\ell(1))\rightarrow\H^1(G_K,U(\Q_\ell))$ is bijective, so that we have a well-defined composite map\[\lambda_L\colon L^\times(K)\rightarrow\H^1(G_K,U(\Q_\ell))\leftisoarrow\H^1(G_K,\Q_\ell(1))\isoarrow\Q_\ell\]where the final isomorphism arises from Kummer theory.

It remains to show that the $\lambda_L$ are the N\'eron log-metrics on our varying line bundle $L$ (and in particular that they are $\Q$-valued). In order to do this, we just need to verify that these functions satisfy a certain list of properties uniquely characterising the N\'eron log-metric, analogous to those characterising N\'eron functions \cite[Theorem 11.1.1]{lang}. Since the functions we are interested in are not taking values in $\R$, the exact conditions are a little delicate.

\begin{lemma}\label{lem:neron_log-metrics}
Let $A/K$ be an abelian variety over our fixed $p$-adic field $K$. Then for any pair $(L,\tilde0)$ consisting of a line bundle $L/A$ and a basepoint $\tilde0\in L^\times(K)$ lying over $0\in A(K)$, the N\'eron log-metric $\lambda_L\colon L^\times(\bar K)\rightarrow\R$ (see \S\ref{sss:neron_log-metrics}) is $\Q$-valued, and the restricted functions\[\lambda_L\colon L^\times(K)\rightarrow\Q\]are uniquely characterised by the following properties:
\begin{enumerate}\setcounter{enumi}{-1}
	\item $\lambda_L$ only depends on $(L,\tilde0)$ up to isomorphism;
	\item\label{condn:local_constancy} $\lambda_L$ is locally constant (for the $p$-adic topology on $L^\times(K)$);
	\item\label{condn:additivity} $\lambda_{L_1\otimes L_2}(P_1\otimes P_2)=\lambda_{L_1}(P_1)+\lambda_{L_2}(P_2)$;
	\item\label{condn:pullbacks} $\lambda_{[2]^* L}(P)=\lambda_L([2]P)$, where by abuse of notation we also denote by $[2]$ the topmost map in the pullback square
	\begin{center}
	\begin{tikzcd}
	{[2]}^*L^\times \arrow[r]\arrow[d] & L^\times \arrow[d] \\
	A \arrow[r,"{[2]}"] & A;
	\end{tikzcd}
	\end{center}
	\item\label{condn:trivial_bundle} when $L=A\times_K\A^1$ is the trivial line bundle, $\lambda_L$ is the composite\[L^\times(K)=A(K)\times K^\times\rightarrow K^\times\overset v\rightarrow\Q;\text{ and}\]
	\item\label{condn:normalisation} $\lambda_L(\tilde 0)=0$.
\end{enumerate}
More strongly, a family $(\lambda_L)_{(L,\tilde0)}$ of functions valued in a $\Q$-algebra $F$ satisfying the above conditions with $\Q$ replaced by $F$ is automatically $\Q$-valued, and agrees with the restriction of the N\'eron log-metric to $L^\times(K)$.
\begin{proof}
The fact that the N\'eron log-metric is $\Q$-valued and locally constant follows from the corresponding result for N\'eron functions \cite[Theorems 11.5.1 \& 11.5.2]{lang}. The properties of the N\'eron log-metric then follow from the corresponding properties of N\'eron functions \cite[Theorem 11.1.1]{lang}.

Conversely, if $(\lambda_L')_{(L,\tilde0)}$ is another such family valued in a $\Q$-algebra $F$, then by conditions \ref{condn:additivity} and \ref{condn:trivial_bundle} we have $\lambda_L'(\alpha z)=\lambda_L'(z)+v(\alpha)$ for all $\alpha\in K^\times$ and $z\in L^\times(K)$, so that the function $\lambda_L-\lambda_L'$ is constant on the fibres of $L^\times(K)\twoheadrightarrow A(K)$ and hence factors through a unique locally constant function $\delta_L\colon A(K)\rightarrow F$. The assignment $(L,\tilde0)\mapsto\delta_L$ depends only on $L$, and is a group homomorphism $\Pic(A)(K)\rightarrow\mathrm{Map}(A(K),F)$ by condition \ref{condn:additivity}.

Now if $L$ is symmetric (so $[2]^*L\simeq L^{\otimes4}$), then $\delta_L([2]P)=4\delta_L(P)$ for all $P\in A(K)$ by condition \ref{condn:pullbacks}. Thus the image of $\delta_L$ is closed under multiplication by $4$. Since it is also finite, this is only possible if $\delta_L=0$. A similar argument shows that $\delta_L=0$ also when $L$ is antisymmetric, and hence in general by writing $L^{\otimes2}$ for a general $L$ as a product of a symmetric and an antisymmetric part. This shows that $\lambda_L'=\lambda_L$ as desired.
\end{proof}
\end{lemma}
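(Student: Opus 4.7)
The plan is to split the lemma into two parts: first, verifying that the Néron log-metric itself satisfies the listed properties, and second, proving the characterization. For the first half, conditions \ref{condn:additivity}--\ref{condn:normalisation} translate directly into standard properties of Néron functions as collected in Lang, Theorem 11.1.1, once one works out the passage from Weil-function language to log-metric language. The $\mathbb{Q}$-valuedness and local constancy (conditions 0 and 1) in the non-archimedean setting follow from Lang, Theorems 11.5.1 and 11.5.2, which is the analogue of continuity in the archimedean case.

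For uniqueness, I would consider two candidate families $\lambda_L,\lambda_L'$ valued in a $\mathbb{Q}$-algebra $F$ and form the difference $\delta_L := \lambda_L - \lambda_L'$. The first task is to show that $\delta_L$ descends to a function on $A(K)$. Applying conditions \ref{condn:additivity} and \ref{condn:trivial_bundle} to the trivial bundle $A \times \A^1$ shows that for any $\alpha \in K^\times$ and $z \in L^\times(K)$, $\lambda_L(\alpha z) = \lambda_L(z) + v(\alpha)$ (and likewise for $\lambda_L'$), so $\delta_L(\alpha z) = \delta_L(z)$. Hence $\delta_L$ factors through a locally constant map $A(K) \to F$, and by condition \ref{condn:additivity} the assignment $L \mapsto \delta_L$ defines a group homomorphism $\Pic(A)(K) \to \Hom_{\cts}(A(K), F)$ depending only on the line bundle (by condition 0).

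The crux is then to show $\delta_L \equiv 0$. For symmetric $L$, Mumford's theorem of the cube gives $[2]^*L \simeq L^{\otimes 4}$, so condition \ref{condn:pullbacks} combined with additivity forces $\delta_L([2]P) = 4\delta_L(P)$ for all $P \in A(K)$. Since $A(K)$ is $p$-adically compact and $\delta_L$ is locally constant, its image is finite; but a finite subset of $F$ closed under multiplication by $4$ must consist of torsion elements and hence (as $F$ is a $\mathbb{Q}$-algebra, so torsion-free) is trivial. Thus $\delta_L = 0$ for symmetric $L$. The antisymmetric case follows similarly from $[2]^*L \simeq L^{\otimes 2}$, giving $\delta_L([2]P) = 2\delta_L(P)$ and the same conclusion. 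Since $L^{\otimes 2}$ for arbitrary $L$ decomposes as a tensor product of a symmetric and an antisymmetric part, $\delta_L = 0$ in general, and condition \ref{condn:normalisation} rules out any remaining additive constant.

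I expect the main obstacle to be bookkeeping rather than any deep input: the finiteness step relies on $A(K)$ being compact, the use of $\mathbb{Q}$-algebra hypothesis on $F$ handles the torsion-free conclusion, and the symmetric/antisymmetric decomposition argument is standard in the theory of heights on abelian varieties. Verifying that the properties \ref{condn:additivity}--\ref{condn:pullbacks} genuinely suffice (as opposed to, say, needing some continuity in $L$) is the one subtlety worth double-checking when translating from Lang's axioms for Néron functions to the log-metric formulation.
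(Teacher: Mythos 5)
Your proposal is correct and follows essentially the same route as the paper's proof: existence and the listed properties via Lang's results on N\'eron functions, then uniqueness by descending the difference to a locally constant $\delta_L\colon A(K)\rightarrow F$, using $[2]^*L\simeq L^{\otimes4}$ (resp.\ $L^{\otimes2}$) for symmetric (resp.\ antisymmetric) $L$ together with finiteness of the image and torsion-freeness of $F$, and finally the symmetric--antisymmetric decomposition of $L^{\otimes2}$. The only cosmetic point is that your closing appeal to condition (5) to ``rule out a remaining additive constant'' is unnecessary, since $\delta_L=0$ already gives $\lambda_L=\lambda_L'$ on the nose.
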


Let us now verify that the conditions of lemma \ref{lem:neron_log-metrics} are satisfied for the functions $\lambda_L\colon L^\times(K)\rightarrow\Q_\ell$ constructed above -- for instance, condition \ref{condn:normalisation} is immediate, since $\lambda_L$ is defined as a composite of basepoint-preserving maps. In fact, with one exception (condition \ref{condn:local_constancy}), the verifications will be entirely formal.

For condition \ref{condn:trivial_bundle}, we note that the inclusion $\G_m\hookrightarrow L^\times=\G_m\times_KA$ has a section given by projection, and hence the inverse to the isomorphism $\H^1(G_K,\Q_\ell(1))\isoarrow\H^1(G_K,U(\Q_\ell))$ is the map induced by projection. Naturality of the non-abelian Kummer map then gives a commuting diagram
\begin{center}
\begin{tikzcd}
K^\times\times A(K) \arrow[r,"\proj"]\arrow[d] & K^\times \arrow[r,"v"]\arrow[d] & \Q_\ell \arrow[d,equals] \\
\H^1(G_K,U(\Q_\ell)) \arrow[r,"\sim"] & \H^1(G_K,\Q_\ell(1)) \arrow[r,"\sim"] & \Q_\ell
\end{tikzcd}
\end{center}
which shows that $\lambda_L(\alpha,P)=v(\alpha)$ in this case, as desired.

Condition \ref{condn:pullbacks} also follows from naturality of the non-abelian Kummer map, the proof being entirely contained in the commuting diagram
\begin{center}
\begin{tikzcd}
\lbrack2\rbrack^*L^\times(K) \arrow[r]\arrow[d,"\lbrack2\rbrack"] & \H^1(G_K,U'(\Q_\ell)) \arrow[d,"\lbrack2\rbrack_*"] & \H^1(G_K,\Q_\ell(1)) \arrow[l,swap,"\sim"]\arrow[r,"\sim"]\arrow[d,equals] & \Q_\ell \arrow[d,equals] \\
L^\times(K) \arrow[r] & \H^1(G_K,U(\Q_\ell)) & \H^1(G_K,\Q_\ell(1)) \arrow[l,swap,"\sim"]\arrow[r,"\sim"] & \Q_\ell,
\end{tikzcd}
\end{center}
where $U'/\Q_\ell$ is the unipotent fundamental group of $[2]^*L^\times$.

Of the formal conditions, the only mildly difficult verification is that of condition \ref{condn:additivity}, for which we consider the pointed $\G_m\times_K\G_m$-torsor $L_1^\times\times_AL_2^\times$ on $A$. Writing $U_{1,2}$ for the fundamental group of $L_1^\times\times_A L_2^\times$ based at $(\tilde 0_1,\tilde 0_2)$, the same argument as earlier establishes that the inclusion of the fibre over $0$ induces an isomorphism\[\H^1(G_K,\Q_\ell(1)\times\Q_\ell(1)))\isoarrow\H^1(G_K,U_{1,2}(\Q_\ell)),\]and hence we have a well-defined map\[\lambda_{L_1,L_2}\colon(L_1^\times\times_AL_2^\times)(K)\rightarrow\H^1(G_K,U_{1,2}(\Q_\ell))\leftisoarrow\H^1(G_K,\Q_\ell(1)\times\Q_\ell(1))\isoarrow\Q_\ell\oplus\Q_\ell.\]

Now the tensor-multiplication map $\otimes\colon L_1^\times\times_AL_2^\times\rightarrow(L_1\otimes L_2)^\times$ restricts to the codiagonal (multiplication) map $\G_m\times_K\G_m\rightarrow\G_m$ on the fibres over $0\in A(K)$, and hence we have a commuting diagram
\begin{center}
\begin{tikzcd}[column sep=small]
(L_1^\times\times_AL_2^\times)(K) \arrow[r]\arrow[d,"\otimes"] & \H^1(G_K,U_{1,2}(\Q_\ell)) \arrow[d,"\otimes_*"] & \H^1(G_K,\Q_\ell(1)\times\Q_\ell(1)) \arrow[l,swap,"\sim"]\arrow[r,"\sim"]\arrow[d,"\codiag_*"] & \Q_\ell^{\oplus2} \arrow[d,"\codiag"] \\
(L_1\otimes L_2)^\times(K) \arrow[r] & \H^1(G_K,U_{1\otimes2}(\Q_\ell)) & \H^1(G_K,\Q_\ell(1)) \arrow[l,swap,"\sim"]\arrow[r,"\sim"] & \Q_\ell
\end{tikzcd}
\end{center}
where $U_{1\otimes2}/\Q_\ell$ is the unipotent fundamental group of $(L_1\otimes L_2)^\times$ based at $\tilde 0_1\otimes\tilde 0_2$. Reading the composites around the outside, we then see that $\codiag\circ\lambda_{L_1,L_2}=\lambda_{L_1\otimes L_2}\circ\otimes$.

Repeating the same argument with the projections $\proj_i\colon L_1^\times\times_AL_2^\times\rightarrow L_i^\times$ for $i=1,2$ , one sees similarly that $\proj_i\circ\lambda_{L_1,L_2}=\lambda_{L_i}\circ\proj_i$. Since the codiagonal map is the pointwise sum of the projections, we thus obtain that\[\lambda_{L_1\otimes L_2}\circ\otimes=\lambda_{L_1}\circ\proj_1+\lambda_{L_2}\circ\proj_2\]as functions on $(L_1^\times\times_AL_2^\times)(K)$, which is exactly the desired equality.

To conclude the proof of theorem \ref{thm:main_theorem_l-adic}, it just remains to check condition \ref{condn:local_constancy}, i.e.\ that the $\lambda_L$ are locally constant. This is in fact a general phenomenon, see \cite[Theorem~1.1]{me:local_constancy}.
\section{Representations on pro-unipotent groups}
\label{c:reps}

In this section, we will explain and explore the basic theory of \emph{(continuous) representations of $G_K$ on finitely generated pro-unipotent groups $U/\Q_\ell$}, which will form a mildly non-abelian generalisation of the notion of (continuous, finite-dimensional) representations of $G_K$ on $\Q_\ell$-vector spaces. For the time being, $\ell$ here is any prime, possibly equal to $p$. Just as $\ell$-adic linear Galois representations arise from geometry as the \'etale cohomology of $K$-varieties with coefficients in $\Q_\ell$, the principal source of Galois representations on pro-unipotent groups will be the pro-unipotent fundamental groups of $K$-varieties at $K$-rational basepoints.

However, just as the theory of linear Galois representations can be developed independently of their desired geometric source, we will now investigate these representations in the abstract. In this section in particular, we will be interested in extending the usual definitions of various classes of linear representations to the unipotent context. In extending these definitions, we will typically give three equivalent characterisations of the property of interest: one in terms of the group scheme $U$; one in terms of the Lie algebra $\Lie(U)$; and one in terms of the Hopf algebra $\O(U)$. The archetypal example of this is the following.

\begin{definition-lemma}\label{def-lem:unipotent_representation}
Let $U/\Q_\ell$ be a finitely generated pro-unipotent group endowed with an action of a topological group $G$ by algebraic group automorphisms. Then the conilpotency filtration (see definition \ref{def:J-filtration}) on $\O(U)$ is $G$-stable.

We shall say that $U$ is a \emph{(continuous)\footnote{We will omit the word ``continuous'' throughout this article, following from the standard practice in discussing linear $\ell$-adic representations.} representation of $G$ on a finitely generated pro-unipotent group} just when the following equivalent conditions hold:
\begin{enumerate}
	\item the action of $G$ on $U(\Q_\ell)$ is continuous;
	\item the action of $G$ on $\Lie(U)$ is continuous;
	\item the action of $G$ on each $J_n\O(U)$ is continuous.
\end{enumerate}
In this, the topology on $U(\Q_\ell)$ is the natural one\footnote{The natural topology on the $\Q_\ell$-points of an affine $\Q_\ell$-scheme $U$ is the subspace topology on $U(\Q_\ell)\subseteq\Q_\ell^n$ arising from embedding $U$ as a closed subscheme of an affine space $\A^n$, perhaps of infinite dimension. This is independent of the choice of embedding. When $U$ is a pro-unipotent group, this embedding of course can be chosen to be the isomorphism between $U$ and its Lie algebra.} induced by the topology on $\Q_\ell$.
\begin{proof}[Proof of equivalence]
$G$-invariance of the conilpotency filtration follows from its definition in terms of kernels of $G$-equivariant maps.

The logarithm isomorphism $U\isoarrow\Lie(U)$ (of $\Q_\ell$-schemes) ensures that $U(\Q_\ell)$ is homeomorphic to $\Lie(U)$ compatibly with the $G$-action, which gives the equivalence (1)$\Leftrightarrow$(2).

For the equivalence (2)$\Leftrightarrow$(3), we note that each condition holds for $U$ iff it holds for all of its finite-dimensional $G$-equivariant quotients, so that it suffices to prove the result when $U$ is unipotent. In this case, the third condition is equivalent to the action on each $\left(J_n\O(U)\right)^\dual\cong\mcal U/J^{n+1}$ being continuous, where $\mcal U$ is the universal enveloping algebra of $\Lie(U)$ and $J$ is its augmentation ideal. Since $\mcal U$ is generated as an algebra by $\Lie(U)$, continuity of the action on $\Lie(U)$ clearly implies continuity of the action on each $J_n\O(U)$. Conversely, $\Lie(U)$ injects into $\left(J_n\O(U)\right)^\dual$ for $n>>0$, and hence continuity of the action on $J_n\O(U)$ implies continuity on $\Lie(U)$.
\end{proof}
\end{definition-lemma}

\begin{remark}
It follows immediately from the definition that any $G$-stable finitely generated subgroup or quotient of a representation of a topological group $G$ on a finitely generated pro-unipotent group is also a representation, i.e.\ is continuous in the above sense. It also follows that any representation on a finitely generated pro-unipotent group is an inverse limit of representations on unipotent groups, so we can often prove results about general representations by reducing to the finite-dimensional case.
\end{remark}

Of course, pro-unipotent fundamental groups are an example of such representations. This is generally well-known, but let us nonetheless briefly recall how the proof goes.

\begin{proposition}\label{prop:pi_1_finitely_generated}
Let $Y$ be a smooth connected variety over a characteristic $0$ field $F$, and $y\in Y(F)$. Then the $\Q_\ell$-pro-unipotent fundamental group of $Y$ based at $y$ is a representation of the absolute Galois group $G_F$ on a finitely generated pro-unipotent group in the sense of definition-lemma \ref{def-lem:unipotent_representation}.
\begin{proof}
There are two claims here: that the fundamental group is finitely generated; and that the Galois action is continuous. This latter claim follows from \cite[Section 13.4]{deligne} by taking an inverse limit.

For the former claim, note that by the Lefschetz principle, it suffices to prove the result for $F=\C$, where by the comparison isomorphism it suffices to prove that the usual topological fundamental group is finitely generated. Moreover, by the Seifert--van Kampen theorem, it suffices to prove this for quasi-affine varieties. However, by \cite[Semi-algebraic triangulation theorem]{hironaka} (see also \cite[Remark 1.10]{hironaka}), any algebraic subset of $\C^n=\R^{2n}$ is homeomorphic to the complement in a finite simplicial complex of a subcomplex, and hence has finitely generated fundamental group.
\end{proof}
\end{proposition}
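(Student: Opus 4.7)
The proposition makes two assertions about the $\Q_\ell$-pro-unipotent fundamental group $U$ of $Y$: (i) it is finitely generated as a pro-unipotent group, and (ii) the natural $G_F$-action is continuous in the sense of definition-lemma \ref{def-lem:unipotent_representation}. The plan is to verify these separately, deferring (ii) to Deligne's analysis of Galois actions on fundamental groups, and reducing (i) to a topological finiteness statement via the Lefschetz principle and the Betti--\'etale comparison theorem.

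For the continuity claim, the key observation is that the $\Q_\ell$-pro-unipotent fundamental group is the $\Q_\ell$-pro-unipotent completion of the profinite \'etale fundamental group $\pi_1^{\et}(Y_{\bar F}; y)$, and the $G_F$-action on $U$ is induced functorially from the outer (here genuine, since $y$ is $F$-rational) Galois action on $\pi_1^{\et}$. Each finite-dimensional unipotent quotient $U_n$ depends only on the image of $G_F$ in the automorphism group of some finite quotient of $\pi_1^{\et}(Y_{\bar F}; y)$, so the action on $U_n(\Q_\ell)$ factors through a quotient of $G_F$ by an open subgroup and is manifestly continuous. Continuity on the inverse limit $U(\Q_\ell) = \varprojlim U_n(\Q_\ell)$ then follows from the construction of its topology; the point is to cite Deligne's systematic treatment to ensure the continuity at finite level.

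For the finite generation claim, I would first use that $U$ and its Galois structure are compatible with extension of the base field $F$, so $Y$, $y$ and the fundamental group are all defined over a finitely generated subfield $F_0 \subseteq F$; by choosing an embedding $F_0 \hookrightarrow \C$ (the Lefschetz principle), the question reduces to showing that the $\Q_\ell$-pro-unipotent \'etale fundamental group of $Y_\C$ at $y$ is finitely generated. Via Artin's comparison theorem, this pro-unipotent group is canonically the $\Q_\ell$-pro-unipotent completion of the topological fundamental group $\pi_1^{\topp}(Y(\C); y)$. Since the $\Q_\ell$-pro-unipotent completion of a finitely generated discrete group is visibly a finitely generated pro-unipotent group (one lifts an abstract generating set to $\Q_\ell$-points of the completion), it is enough to prove that $\pi_1^{\topp}(Y(\C); y)$ is finitely generated as a discrete group.

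To verify the topological finite generation, I would cover $Y$ by finitely many quasi-affine open subsets and use Seifert--van Kampen to reduce to the case when $Y$ is quasi-affine, embedded in some $\A^n_\C = \C^n = \R^{2n}$. In that situation $Y(\C)$ is a semi-algebraic subset of $\R^{2n}$, and Hironaka's semi-algebraic triangulation theorem shows it is homeomorphic to the complement of a subcomplex of a finite simplicial complex; such a space has finitely presented (in particular finitely generated) fundamental group. The chief obstacle I anticipate is not any single step but rather the careful tracking of functoriality needed to ensure that the Lefschetz reduction and the comparison isomorphism are compatible with $\Q_\ell$-pro-unipotent completion; this is ultimately formal, but must be set up so that finite generation of the abstract $\pi_1$ really does transfer to finite generation of the pro-unipotent group.
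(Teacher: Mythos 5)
Your finite-generation argument is essentially the paper's: spread out over a finitely generated subfield and embed in $\C$ (Lefschetz), use the Artin comparison to reduce to finite generation of the topological $\pi_1$, note that pro-unipotent completion of a finitely generated discrete group is finitely generated, reduce to the quasi-affine case by Seifert--van Kampen, and conclude with Hironaka's triangulation theorem. That half is fine and matches the paper step for step.

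The continuity half, however, rests on a false intermediate claim. You assert that each finite-dimensional quotient $U_n$ ``depends only on the image of $G_F$ in the automorphism group of some finite quotient of $\pi_1^\et(Y_{\bar F};y)$,'' so that the action on $U_n(\Q_\ell)$ factors through the quotient of $G_F$ by an open subgroup. Neither assertion is true: already for $Y=\G_m$ the $1$-step quotient is $U_1=\Q_\ell(1)$ and the action is the cyclotomic character, which has infinite image and factors through no finite quotient of $G_F$; for a curve of positive genus, $U_1$ is the $\Q_\ell$-Tate module of the Jacobian, with typically open image in $\GL$. The underlying issue is that $U_n(\Q_\ell)$ is not determined by any single finite quotient of $\pi_1^\et$ --- e.g.\ $U_1(\Q_\ell)$ is $\Q_\ell\otimes_{\Z_\ell}T_\ell$, with $T_\ell$ the full $\ell$-adic Tate module of $\pi_1^\ab$, so the whole inverse system enters. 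The correct statement is only continuity, not finiteness of the image: the finite-dimensional pieces (the quotients $\Q_\ell\pi_1/J^{n+1}$, equivalently each $J_n\O(U)$ or $\Lie(U_n)$) inherit their topology from the profinite topology on $\pi_1^\et$, and the continuous profinite Galois action induces continuous actions on them; this is exactly what the appeal to Deligne, followed by passage to the inverse limit, supplies in the paper's proof. Since you do ultimately defer to Deligne for ``continuity at finite level,'' your argument can be repaired by simply deleting the finite-image reasoning, but as written that step is wrong and cannot be the justification.
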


\subsubsection{Serre twisting}
\label{ss:serre_twisting}

If $U/\Q_\ell$ is a representation of a topological group $G$ on a finitely generated pro-unipotent group, then we will very often be interested in the non-abelian continuous Galois cohomology set\[\H^1(G,U(\Q_\ell)),\]defined to be the quotient of the set of continuous cocycles $\alpha\colon G\rightarrow U(\Q_\ell)$ (maps satisfying $\alpha(gh)=\alpha(g)g(\alpha(h))$) modulo the right action of $U(\Q_\ell)$ given by\[(\alpha\cdot u)(g)=u^{-1}\alpha(g)g(u).\]When $U$ is non-abelian, this in general does not have a group structure, merely a distinguished point given by the class of the trivial cocycle. In particular, we will several times encounter the problem that the fibres of a map of pointed sets are not in general determined simply by its kernel -- for instance a map with trivial kernel can nonetheless fail to be injective.

To circumvent these difficulties, we will use \emph{Serre twists} \cite[Section I.5.3]{serre}, whose definition we now recall. Given a continuous cocycle $\alpha\colon G\rightarrow U(\Q_\ell)$, the \emph{Serre twist} ${}_\alpha U/\Q_\ell$ of $U$ by $\alpha$ is defined to be the representation of $G$ on the same underlying pro-unipotent group, whose action on $\Lambda$-points for some $\Q_\ell$-algebra is given by\[g\colon u\mapsto\alpha(g)g(u)\alpha(g)^{-1}.\]This action is clearly algebraic (i.e.\ natural in $\Lambda$) and continuous on $\Q_\ell$-points, since $\alpha$ was chosen continuous.

The main basic result regarding Serre twists is the following, which allows one in practice to prove results about all fibres of maps out of Galois cohomology by just considering kernels.

\begin{proposition}\label{prop:serre_twisting_bijection}\cite[Proposition I.35 bis]{serre}\footnote{Strictly, Serre only considers the case when the action of $G$ on $U(\Q_\ell)$ is continuous for the \emph{discrete} topology and the cohomology considered is non-continuous cohomology. Nonetheless, the same argument works.}
Let $U/\Q_\ell$ be a representation of a topological group $G$ on a finitely generated pro-unipotent group, and $\alpha\colon G\rightarrow U(\Q_\ell)$ a continuous cocycle. Then right-multiplication by $\alpha$ induces a bijection\[\H^1(G,{}_\alpha U(\Q_\ell))\isoarrow\H^1(G,U(\Q_\ell))\]taking the distinguished point of $\H^1(G,{}_\alpha U(\Q_\ell))$ to the class $[\alpha]\in\H^1(G,U(\Q_\ell))$.
\end{proposition}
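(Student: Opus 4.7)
The plan is to construct the bijection at the level of continuous cocycles and then verify it descends to cohomology. I would define
\[\Phi(\beta)(g) := \beta(g)\alpha(g)\]
on continuous cocycles $\beta\colon G\rightarrow{}_\alpha U(\Q_\ell)$, and first check that $\Phi(\beta)$ is a continuous cocycle for the original Galois action on $U(\Q_\ell)$. Writing $g\cdot_\alpha u=\alpha(g)g(u)\alpha(g)^{-1}$ for the twisted Galois action, the cocycle condition on $\beta$ reads
\[\beta(gh)=\beta(g)\cdot\alpha(g)g(\beta(h))\alpha(g)^{-1},\]
so that, using also the cocycle condition on $\alpha$, one computes
\[\Phi(\beta)(gh)=\beta(g)\alpha(g)g(\beta(h))\alpha(g)^{-1}\alpha(g)g(\alpha(h))=\Phi(\beta)(g)\cdot g(\Phi(\beta)(h)).\]
Continuity of $\Phi(\beta)$ is immediate since both $\beta$ and $\alpha$ are continuous.

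Next I would exhibit an inverse $\Psi(\gamma)(g)=\gamma(g)\alpha(g)^{-1}$, checking by the symmetric calculation that it carries continuous cocycles for the original Galois action to continuous cocycles for the twisted one; manifestly $\Phi$ and $\Psi$ are mutually inverse at the cocycle level. The key remaining step is to verify that $\Phi$ intertwines the right actions of $U(\Q_\ell)$ on the two cocycle sets (the actions whose orbits define the respective $\H^1$). Although the underlying abstract group $U(\Q_\ell)$ is the same for $U$ and ${}_\alpha U$, the coboundary action depends on the Galois action; unwinding definitions and using the formula for $\cdot_\alpha$ gives
\[\Phi(\beta\cdot u)(g)=\bigl(u^{-1}\beta(g)\cdot(g\cdot_\alpha u)\bigr)\alpha(g)=u^{-1}\beta(g)\alpha(g)g(u)\alpha(g)^{-1}\alpha(g)=u^{-1}\Phi(\beta)(g)g(u),\]
which is $(\Phi(\beta)\cdot u)(g)$ in the untwisted cohomology. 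Hence $\Phi$ descends to the claimed bijection on $\H^1$, and the statement about the distinguished point is immediate from $\Phi(1)=\alpha$.

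The whole argument is essentially a bookkeeping exercise in the non-abelian cocycle formalism, so I do not anticipate a genuine obstacle; the only subtlety is keeping careful track of which Galois action (twisted versus untwisted) is in play at each step. Indeed, the definition of the Serre twist ${}_\alpha U$ is engineered exactly so that the identity map on the underlying scheme intertwines the two structures up to the correction by $\alpha$ which is absorbed into $\Phi$.
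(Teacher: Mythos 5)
Your argument is correct and is essentially the same as the one the paper relies on: the paper gives no proof of its own but cites Serre's Proposition I.35 bis, whose proof is exactly this cocycle-level computation $\beta\mapsto\beta\cdot\alpha$ (with the footnoted observation that it goes through verbatim for continuous cochains, which your continuity remark covers). Your verification of the twisted cocycle condition, the inverse $\gamma\mapsto\gamma\cdot\alpha^{-1}$, the equivariance for the coboundary actions, and the image of the distinguished point are all accurate, so there is nothing to add.
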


As an example, let us now use the notion of Serre twists to prove a lemma allowing one to reduce study of the Galois cohomology of representations on finitely generated pro-unipotent groups to the finite-dimensional case.

\begin{lemma}\label{lem:cohomology_of_inverse_limits}
Let $U/\Q_\ell$ be a representation of a topological group $G$ on a finitely generated pro-unipotent group, and write $U=\liminv U_n$ as an inverse limit of finite-dimensional $G$-equivariant quotients. Then the natural map\[\H^1(G,U(\Q_\ell))\rightarrow\liminv\H^1(G,U_n(\Q_\ell))\]is bijective.
\begin{proof}
We may assume the indexing set for the inverse system is $\N$.

Let us begin by showing surjectivity. To do this, choose an element $([\beta_n])_{n\in\N}\in\liminv\H^1(G,U_n(\Q_\ell))$, and write $S_n$ for the set of continuous cocycles $\alpha_n\colon G\rightarrow U_n(\Q_\ell)$ whose class is $[\beta_n]$. If $m\geq n$ then $S_m\twoheadrightarrow S_n$ is surjective, for, if $\alpha_n\in S_n$ is any element, we may pick some $\alpha_m'\in S_m$, so that the image of $\alpha_m'$ in $S_n$ differs from $\alpha_n$ by the right action of some $u_n\in U_n(\Q_\ell)$. Lifting $u_n$ to some $u_m\in U_m(\Q_\ell)$ and replacing $\alpha_m'$ with $\alpha_m=\alpha_m'\cdot u_m$, we obtain a lift of $\alpha_n$ to $S_m$. In particular, $\liminv S_n$ is non-empty. But any element $\alpha\in\liminv S_n$ can be thought of as a continuous cocycle $\alpha\colon G\rightarrow U(\Q_\ell)$, and the class $[\alpha]\in\H^1(G,U(\Q_\ell))$ maps by construction to $[\beta_n]$ in each $\H^1(G,U_n(\Q_\ell))$, proving surjectivity.

Let us show next that $\H^1(G,U(\Q_\ell))\rightarrow\liminv\H^1(G,U_n(\Q_\ell))$ has trivial kernel. Let $\alpha\colon G\rightarrow U(\Q_\ell)$ be a cocycle representing an element of the kernel, and for each $n$ let $T_n\subseteq U_n(\Q_\ell)$ denote the set of elements whose coboundary is the image of $\alpha$ in $U_n(\Q_\ell)$. Each $T_n$ is non-empty by assumption, and is a left torsor under $U_n(\Q_\ell)^G$ for the multiplication action. Since $U_n(\Q_\ell)^G$ is the $\Q_\ell$-points of a unipotent subgroup of $U_n$, it follows that the images of $U_m(\Q_\ell)^G\rightarrow U_n(\Q_\ell)^G$ stabilise for $m>>n$, i.e.\ the system $(U_n(\Q_\ell)^G)_{n\in\N}$ satisfies the Mittag--Leffler condition. Hence the system $(T_n)_{n\in\N}$ of non-empty sets also satisfies the Mittag--Leffler condition, and in particular $\liminv T_n$ is non-empty. But by construction the coboundary of any element of $\liminv T_n\subseteq U(\Q_\ell)$ is $\alpha$, so that $\alpha$ is a trivial cocycle as desired.

It remains to show that $\H^1(G,U(\Q_\ell))\rightarrow\liminv\H^1(G,U_n(\Q_\ell))$ is actually injective. If we pick any cocycle $\alpha\colon G\rightarrow U(\Q_\ell)$, and write $\alpha_n$ for its images in each $U_n(\Q_\ell)$, then Serre twisting provides us a commuting square
\begin{center}
\begin{tikzcd}
\H^1(G,{}_\alpha U(\Q_\ell)) \arrow[r]\arrow[d,"\wr"] & \liminv\H^1(G,{}_{\alpha_n}U_n(\Q_\ell)) \arrow[d,"\wr"] \\
\H^1(G,U(\Q_\ell)) \arrow[r] & \liminv\H^1(G,U_n(\Q_\ell)).
\end{tikzcd}
\end{center}
We've shown that the top map has trivial kernel, so that $[\alpha]$ is the unique point in its fibre under the map $\H^1(G,U(\Q_\ell))\rightarrow\liminv\H^1(G,U_n(\Q_\ell))$. Since $\alpha$ was arbitrary, this shows that the map is injective, as desired.
\end{proof}
\end{lemma}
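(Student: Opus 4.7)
The plan is to establish surjectivity and injectivity of the map $\H^1(G,U(\Q_\ell))\to\liminv\H^1(G,U_n(\Q_\ell))$ separately, exploiting throughout that each transition map $U_m\twoheadrightarrow U_n$ is surjective on $\Q_\ell$-points (by proposition \ref{prop:triviality_of_torsors} applied to the unipotent kernel) and that subgroups/quotients of finite-dimensional unipotent groups are themselves unipotent.

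For surjectivity, I would take a compatible family of classes $([\beta_n])_{n\in\N}$ and inductively build a tower of representing continuous cocycles $\alpha_n\colon G\to U_n(\Q_\ell)$ that are strictly compatible with the transition maps; this tower assembles into the desired continuous cocycle $\alpha\colon G\to U(\Q_\ell)$. The inductive step is easy: given $\alpha_n$, pick any representative $\alpha_m'$ of $[\beta_m]$; its image in $U_n$ differs from $\alpha_n$ by right-translation by some $u_n\in U_n(\Q_\ell)$, and lifting $u_n$ to $u_m\in U_m(\Q_\ell)$ using the surjectivity noted above lets me replace $\alpha_m'$ with the desired lift $\alpha_m=\alpha_m'\cdot u_m$.

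For injectivity, the standard two-step strategy for non-abelian cohomology applies. First I would show the map has trivial kernel: given a cocycle $\alpha$ whose image $\alpha_n$ in each $U_n(\Q_\ell)$ is a coboundary, the sets $T_n:=\{u\in U_n(\Q_\ell):\mrm du=\alpha_n\}$ are non-empty and are torsors under $U_n(\Q_\ell)^G$. I need $\liminv T_n\neq\emptyset$, so that $\alpha$ itself is a coboundary; this I expect to be the main technical obstacle. My proposal is to verify the Mittag--Leffler condition for the inverse system $(U_n(\Q_\ell)^G)_n$: since each $U_n(\Q_\ell)^G$ is the $\Q_\ell$-points of a unipotent subgroup of $U_n$, and since the image of a morphism of unipotent groups is again unipotent of well-defined dimension, the dimensions of the images in $U_n(\Q_\ell)^G$ of the $U_m(\Q_\ell)^G$ for $m\geq n$ are weakly decreasing in $m$ and hence eventually constant. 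Combined with surjectivity of $U_m(\Q_\ell)^G$ onto its image (again by proposition \ref{prop:triviality_of_torsors}), this yields the Mittag--Leffler property, hence non-emptiness of $\liminv T_n$.

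To pass from trivial kernel to genuine injectivity, I would invoke the Serre twisting machinery of section \ref{sss:serre_twisting}: for any cocycle $\alpha$, right-translation by $\alpha$ gives a commutative square identifying the map $\H^1(G,{}_\alpha U(\Q_\ell))\to\liminv\H^1(G,{}_{\alpha_n}U_n(\Q_\ell))$ with the original map, sending basepoints to the classes $[\alpha]$ and $([\alpha_n])_n$ respectively. Triviality of the kernel of the twisted map (which follows from the previous step applied to ${}_\alpha U$) then forces $[\alpha]$ to be the unique preimage of its image, giving injectivity. As $\alpha$ is arbitrary, this completes the argument.
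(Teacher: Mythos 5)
Your proposal is correct and follows essentially the same route as the paper: the same inductive/inverse-limit construction of compatible representing cocycles for surjectivity, the same torsor sets $T_n$ under $U_n(\Q_\ell)^G$ with the Mittag--Leffler condition deduced from unipotency (your dimension-stabilisation argument just spells out the paper's terse assertion), and the same Serre-twisting square to upgrade trivial kernel to injectivity. No gaps to report.
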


\subsection{$\ell$-adic Galois representations}

Before we embark on our analysis of representations of $G_K$ on finitely generated pro-unipotent groups over $\Q_p$, let us first examine the much simpler case of groups over $\Q_\ell$, where now $\ell$ is some prime different from $p$. All the results proven here will have their $p$-adic analogues in the following sections, and should serve to illustrate the methods we will employ in developing the basic theory.

The following definition describes how to generalise the basic notions from the theory of $\Q_\ell$-linear representations to the finitely generated pro-unipotent setting, as usual phrased in three equivalent ways.

\begin{definition-lemma}\label{def-lem:unramified_rep}
Let $U/\Q_\ell$ be a representation of $G_K$ on a finitely generated pro-unipotent group, where $\ell\neq p$. We shall say that $U$ is \emph{semistable} (resp.\ \emph{unramified}) just when the following equivalent conditions hold:
\begin{enumerate}
	\item $U$ has a (separated) $G_K$-stable filtration $U=U_1\unrhd U_2\unrhd\dots$ by subgroup-schemes with each $U_n/U_{n+1}$ abelian of finite type and the action of inertia on $U_n/U_{n+1}$ being unipotent (resp.\ the action of inertia on $U$ is trivial);
	\item $\Lie(U)$ is pro-semistable (resp.\ unramified), i.e.\ all of its finite-dimensional $G_K$-equivariant quotients are semistable (resp.\ unramified) representations;
	\item each $J_n\O(U)$ is semistable (resp.\ unramified).
\end{enumerate}
\begin{proof}[Proof of equivalence]
The equivalence of the different definitions of unramifiedness is clear, so we only deal with the case of semistability.

For the equivalence (1)$\Leftrightarrow$(2), consider any (separated) $G_K$-stable filtration of $U$ by finite-index subgroups with $U_n/U_{n+1}$ abelian, such as for instance the descending central series filtration. The action on $\Lie(U)$ is pro-semistable iff the action on each $\Lie(U)/\Lie(U_n)$ is semistable, which occurs iff the action on each $\Lie(U_n)/\Lie(U_{n+1})\cong U_n/U_{n+1}$ is semistable, as desired.

For the equivalence (2)$\Leftrightarrow$(3), we note that the third point is equivalent to ind-semistability of $\O(U)$, and hence each condition holds of $U$ iff it holds for all of its finite-dimensional $G_K$-equivariant quotients, so that it suffices to prove the result when $U$ is unipotent. Again, semistability of $J_n\O(U)$ is equivalent to semistability of $\mcal U/J^{n+1}$, where $\mcal U$ is the universal enveloping algebra of $\Lie(U)$. Since $\Lie(U)$ injects into $\mcal U/J^{n+1}$ for $n>>0$ and $\mcal U$ is a quotient of the tensor algebra $T^\otimes\Lie(U)$, the equivalence (2)$\Leftrightarrow$(3) follows.
\end{proof}
\end{definition-lemma}

\begin{remark}
It follows directly from the definitions that $G_K$-stable finitely generated pro-unipotent subgroups and quotients of semistable (resp.\ unramified) representations are again semistable (resp.\ unramified), and that a representation of $G_K$ on a finitely generated pro-unipotent group is semistable (resp.\ unramified) iff all of its finite-dimensional quotients are. Moreover, the class of semistable representations is closed under central extensions.
\end{remark}

Using this definition, we can prove the somewhat surprising result that semistability depends only on the abelianisation of a finitely generated pro-unipotent group, deducing as a corollary that Grothendieck's $\ell$-adic monodromy theorem holds in this setting. Note that one can deduce this immediately for unipotent groups by considering the Lie algebra, but the infinite-dimensional version genuinely does require reduction to the abelianisation.

\begin{lemma}\label{lem:semistability_via_abelianisation_l-adic}
Let $U/\Q_\ell$ be a representation of $G_K$ on a finitely generated pro-unipotent group, where $\ell\neq p$. Then $U$ is semistable iff $U^\ab$ is.
\begin{proof}
The ``only if'' direction is trivial, so let us concentrate on the converse implication. Let $U=U_1\unrhd U_2\unrhd\dots$ denote the descending central series, so that $U_n$ are finite-index subgroups with $U_n/U_{n+1}$ abelian. $U_1/U_2$ is then the abelianisation of $U$, and we have surjective iterated commutator maps\[\left(U^\ab\right)^{\otimes n}\twoheadrightarrow U_n/U_{n+1}.\]Since by assumption $U^\ab$ is an abelian semistable representation, so too is each $U_n/U_{n+1}$. Thus by definition-lemma \ref{def-lem:unramified_rep} $U$ is semistable.
\end{proof}
\end{lemma}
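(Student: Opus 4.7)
The forward implication is immediate: $U^\ab$ is a $G_K$-stable finitely generated pro-unipotent quotient of $U$, and the class of semistable representations is visibly closed under quotients (as noted in the remark following definition-lemma \ref{def-lem:unramified_rep}).

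For the converse, the plan is to produce an explicit $G_K$-stable filtration of $U$ satisfying condition (1) of definition-lemma \ref{def-lem:unramified_rep}. The natural candidate is the descending central series $U = U_1 \unrhd U_2 \unrhd \dots$, which is automatically $G_K$-stable (since the descending central series is characteristic) and has abelian graded pieces $U_n/U_{n+1}$; moreover the intersection $\bigcap_n U_n$ is trivial because $U$ is pro-unipotent (so the filtration is separated in the pro-sense). It therefore suffices to check that the inertia group $I_K$ acts unipotently on each abelian representation $U_n/U_{n+1}$.

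The key input is the standard fact from Lie theory that the iterated commutator map induces a $G_K$-equivariant surjection
\[
(U^\ab)^{\otimes n} \twoheadrightarrow U_n/U_{n+1}
\]
(at the level of the associated graded of the Lie algebra of $U$, this is just the surjection $\mathrm{Sym}^n$ or $T^n$ of the abelianisation onto the $n$-th graded piece of the free nilpotent Lie algebra, mapped onto the actual graded piece). Since by hypothesis the action of $I_K$ on $U^\ab$ is unipotent, the action on any tensor power $(U^\ab)^{\otimes n}$ is also unipotent (tensor products of unipotent inertia actions remain unipotent), and hence so is the action on the quotient $U_n/U_{n+1}$. This verifies condition (1) of definition-lemma \ref{def-lem:unramified_rep}, giving semistability of $U$.

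I do not expect significant obstacles here; the only subtle point is ensuring that we may work with the descending central series in the pro-unipotent setting, which amounts to remembering that finite generation gives us that each $U/U_{n+1}$ is actually unipotent (finite-dimensional), so the filtration consists of finite-index closed subgroups in the pro-unipotent sense and $U$ is the inverse limit of its nilpotent quotients. One could alternatively phrase the whole argument on the Lie algebra side by observing that $\Lie(U)$ is generated as a Lie algebra by $\Lie(U^\ab)$ (modulo higher commutators) and that semistability of Lie-algebra quotients reduces, via the graded pieces of the lower central series, to semistability of tensor powers of the abelianisation.
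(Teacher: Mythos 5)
Your proof is correct and follows essentially the same route as the paper: both use the descending central series together with the surjective iterated commutator maps $(U^\ab)^{\otimes n}\twoheadrightarrow U_n/U_{n+1}$ to reduce semistability of each graded piece to that of $U^\ab$, then invoke condition (1) of definition-lemma \ref{def-lem:unramified_rep}. Your extra remarks on unipotence of inertia on tensor powers and on the pro-unipotent bookkeeping are just the details the paper leaves implicit.
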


\begin{corollary}[$\ell$-adic monodromy theorem for pro-unipotent representations]\label{cor:potential_semistability_l-adic}
Let $U/\Q_\ell$ be a representation of $G_K$ on a finitely generated pro-unipotent group, where $\ell\neq p$. Then $U$ is potentially semistable (i.e.\ becomes semistable after restriction to an open subgroup).
\begin{proof}
This follows immediately from Grothendieck's $\ell$-adic monodromy theorem, applied to the finite-dimensional $U^\ab$.
\end{proof}
\end{corollary}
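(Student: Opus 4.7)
The proof follows immediately from Lemma \ref{lem:semistability_via_abelianisation_l-adic} combined with Grothendieck's classical $\ell$-adic monodromy theorem. First I would observe that since $U$ is finitely generated, so is its abelianisation $U^\ab$; being both abelian and finitely generated as a pro-unipotent group, $U^\ab$ is in fact finite-dimensional (for instance, since the descending central series is trivial past the first stage, Corollary \ref{cor:J-filtration_is_descending_central_series} gives $\O(U^\ab) = \Sym^\bullet(J_1\O(U^\ab))$, and then Proposition \ref{prop:lubotzky-magid} yields finite-dimensionality of $J_1\O(U^\ab)$, hence of $U^\ab$ itself). Thus $U^\ab$ is nothing more than a finite-dimensional continuous $\Q_\ell$-linear representation of $G_K$.

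Next I would apply the classical $\ell$-adic monodromy theorem of Grothendieck to $U^\ab$, obtaining a finite extension $K'/K$ such that the restriction $U^\ab|_{G_{K'}}$ is semistable. Viewing $U$ as a representation of the open subgroup $G_{K'} \leq G_K$ and invoking Lemma \ref{lem:semistability_via_abelianisation_l-adic} over this base, the semistability of $(U|_{G_{K'}})^\ab = U^\ab|_{G_{K'}}$ immediately implies that $U|_{G_{K'}}$ is itself semistable, which is exactly the statement of potential semistability for $U$.

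There is no substantive obstacle here, as the genuinely non-abelian content---namely, that semistability of an arbitrary finitely generated pro-unipotent representation of $G_K$ is detected on its finite-dimensional abelianisation---has already been established in the preceding lemma via the surjective iterated commutator maps $(U^\ab)^{\otimes n} \twoheadrightarrow U_n/U_{n+1}$ from the descending central series. All the new argument does is transport the classical abelian monodromy theorem through this reduction.
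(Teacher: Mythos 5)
Your proposal is correct and is exactly the argument the paper intends: apply Grothendieck's classical $\ell$-adic monodromy theorem to the finite-dimensional abelianisation $U^\ab$ and then invoke lemma \ref{lem:semistability_via_abelianisation_l-adic} over the open subgroup to transfer semistability back to $U$. You merely spell out two steps the paper leaves implicit (the finite-dimensionality of $U^\ab$ and the explicit appeal to the lemma), which is harmless.
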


Let us conclude this section by making good on our promise in remark \ref{rmk:what_about_f/e} and showing that the set of relatively unramified torsors under the $\Q_\ell$-unipotent fundamental group of $L^\times=L\setminus0$ is trivial, for $L$ a line bundle on an abelian variety $A/K$. In the proof, we will need the following easily-proved non-abelian analogue of the inflation-restriction sequence.

\begin{proposition}[Non-abelian inflation-restriction]
Let $G$ be a profinite group acting continuously on a topological group $U$, and $I\unlhd G$ a closed normal subgroup. Then we have an inflation-restriction exact sequence\[1\rightarrow\H^1(G/I,U^I)\rightarrow\H^1(G,U)\rightarrow\H^1(I,U).\]
\end{proposition}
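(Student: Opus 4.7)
The plan is to verify exactness at each of the three spots directly, working with cocycle representatives and using the conventions for non-abelian Galois cohomology fixed in section \ref{sss:serre_twisting}.

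First I would define the inflation and restriction maps on cocycles. Given a continuous cocycle $\bar\alpha\colon G/I\rightarrow U^I$, its inflation is the composite $G\twoheadrightarrow G/I\overset{\bar\alpha}\rightarrow U^I\hookrightarrow U$, which is continuous and cocyclic for the $G$-action on $U$ since $I$ acts trivially on $U^I$. Given a continuous cocycle $\alpha\colon G\rightarrow U$, its restriction $\alpha|_I$ is obviously a continuous cocycle. A routine check shows that both operations descend to the quotients by coboundaries, giving the maps in the sequence. The composite is trivial because the restriction of an inflated cocycle is constant, hence a trivial cocycle.

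Next I would verify injectivity of inflation. Let $\bar\alpha\colon G/I\rightarrow U^I$ be a continuous cocycle whose inflation $\alpha\colon G\rightarrow U$ is a coboundary, so $\alpha(g)=u^{-1}g(u)$ for some $u\in U$. For each $i\in I$ we have $\alpha(i)=1$, whence $u=i(u)$, so $u\in U^I$. Thus $\bar\alpha$ is already the coboundary of $u\in U^I$ and represents the trivial class in $\H^1(G/I,U^I)$.

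Finally I would verify exactness at the middle. Let $\alpha\colon G\rightarrow U$ be a continuous cocycle whose restriction to $I$ is a coboundary, say $\alpha(i)=u^{-1}i(u)$ for all $i\in I$. Replacing $\alpha$ by the cohomologous cocycle $\alpha\cdot u^{-1}$ (which in the right-action convention $(\alpha\cdot v)(g)=v^{-1}\alpha(g)g(v)$ satisfies $(\alpha\cdot u^{-1})(i)=u\alpha(i)i(u^{-1})=1$), we may assume $\alpha|_I=1$. Then for $g\in G$ and $i\in I$, writing $gi=(gig^{-1})g$ with $gig^{-1}\in I$, the cocycle relation gives on the one hand $\alpha(gi)=\alpha(g)$ and on the other $\alpha((gig^{-1})g)=(gig^{-1})(\alpha(g))$; combining these shows $\alpha(g)\in U^I$ and that $\alpha$ is constant on right $I$-cosets, hence factors through a continuous cocycle $\bar\alpha\colon G/I\rightarrow U^I$ whose inflation is $\alpha$.

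There is no serious obstacle here; the only points requiring any care are the sign/order conventions for the right action of $U(\Q_\ell)$ on cocycles (so that the modifying element $u^{-1}$ is inserted on the correct side) and the continuity of the modified cocycle as a map into $U^I$, which is immediate because $U^I$ inherits the subspace topology from $U$.
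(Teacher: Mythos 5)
Your proof is correct; the paper states this proposition without proof (calling it ``easily-proved''), and your direct cocycle-level verification --- inflation and restriction defined on cocycles, trivial kernel of inflation, and the reduction to $\alpha|_I=1$ by acting with $u^{-1}$ followed by the observation that $\alpha$ then takes values in $U^I$ and factors through $G/I$ --- is exactly the standard argument the paper has in mind, with the group-action conventions handled correctly. The only remark worth adding is that your second step establishes trivial kernel of inflation, which is what exactness of pointed sets at $\H^1(G/I,U^I)$ requires and all the paper ever uses; if one wants genuine injectivity of inflation, the identical computation applied to two cocycles with cohomologous inflations yields it.
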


\begin{proposition}\label{prop:H^1_nr=1}
Let $U/\Q_\ell$ be a representation of $G_K$ on a unipotent group, where $\ell\neq p$. Then\[\H^1_\nr(G_K,U(\Q_\ell)):=\H^1(G_K/I_K,U(\Q_\ell)^{I_K})=\ker\left(\H^1(G_K,U(\Q_\ell))\rightarrow\H^1(I_K,U(\Q_\ell))\right)\]is trivial iff $U(\Q_\ell)^{G_K}=1$.
\begin{proof}
Since $G_K/I_K$ is pro-cyclic, generated by Frobenius $\varphi$, and $U(\Q_\ell)^{I_K}$ is a direct limit of profinite groups, we know that $\H^1(G_K/I_K,U(\Q_\ell)^{I_K})$ is the orbit-space of the twisted conjugation action $u\colon w\mapsto u^{-1}w\varphi(u)$. Hence $\H^1_\nr(G_K,U(\Q_\ell))=1$ iff this self-action of $U(\Q_\ell)^{I_K}$ is transitive, and $U(\Q_\ell)^{G_K}=1$ iff the stabiliser of the basepoint is trivial.

In proving the equivalence of these conditions, we are free to replace $U$ by $U^{I_K}$, so we may and will assume that inertia acts trivially on $U$.

We now proceed by induction. When $U$ is abelian, the twisted conjugation is just the translation action along the endomorphism $\varphi-1$ of $U(\Q_\ell)$. The orbit space is then the cokernel of $\varphi-1$ while the point-stabiliser is the kernel of $\varphi-1$; either of these vanishes iff the other does by dimension considerations.

In general, we write $U$ as a $G_K/I_K$-equivariant central extension\[1\rightarrow Z\centarrow U\rightarrow Q\rightarrow1\]where we already know the result for $Z$ and $Q$. If on the one hand the self-action of $U(\Q_\ell)$ is transitive, then so too is the self-action of $Q(\Q_\ell)$, so that by induction the latter action has trivial point-stabiliser. This forces the self-action of $Z(\Q_\ell)$ to also be transitive, and hence also have trivial point-stabiliser. The triviality of the point-stabilisers for the self-actions of $Q(\Q_\ell)$ and $Z(\Q_\ell)$ then implies the same for $U(\Q_\ell)$, as desired.

If, on the other hand, the point-stabiliser for the self-action of $U(\Q_\ell)$ is trivial, then the same is true for the self-action of $Z(\Q_\ell)$, which is thus transitive. This then forces the self-action of $Q(\Q_\ell)$ to have trivial point-stabiliser, so again by induction it is transitive, and combining transitivity for $Q(\Q_\ell)$ and $Z(\Q_\ell)$ we obtain transitivity for $U(\Q_\ell)$, as desired.
\end{proof}
\end{proposition}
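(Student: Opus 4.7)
The plan is to translate both properties into conditions on a single morphism of $\Q_\ell$-varieties and then induct along a $\varphi$-stable central filtration of $U$.

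Since $\H^1_\nr(G_K, U(\Q_\ell))$ and $U(\Q_\ell)^{G_K}$ depend only on $U^{I_K}$, I may replace $U$ by $U^{I_K}$ and assume $G_K$ acts through the quotient $G_K/I_K \cong \hat\Z$ topologically generated by Frobenius $\varphi$. Then $\H^1_\nr(G_K, U(\Q_\ell))$ is the orbit space of the twisted self-action $v \cdot w = v^{-1} w \varphi(v)$ of $U(\Q_\ell)$, whose basepoint stabiliser is $U(\Q_\ell)^{G_K}$. Writing $\Phi \colon U \to U$ for the morphism of $\Q_\ell$-schemes $u \mapsto u^{-1}\varphi(u)$ (not a group homomorphism in general), vanishing of $\H^1_\nr(G_K, U(\Q_\ell))$ is equivalent to surjectivity of $\Phi$ on $\Q_\ell$-points, and vanishing of $U(\Q_\ell)^{G_K}$ is equivalent to its injectivity. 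The task reduces to showing that $\Phi$ is injective iff it is surjective.

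I would prove this by induction on $\dim U$. The base case is $U$ abelian, where $\Phi$ is the $\Q_\ell$-linear endomorphism $\varphi - 1$ of the finite-dimensional space $U(\Q_\ell)$ and the equivalence of injectivity and surjectivity is standard linear algebra. For the inductive step, fix a $\varphi$-equivariant central extension $1 \to Z \to U \to Q \to 1$ with $Z \cong \G_a$. Centrality of $Z$ in $U$ yields the key identity
\[
\Phi_U(z u_0) = \Phi_Z(z) \cdot \Phi_U(u_0) \qquad (z \in Z(\Q_\ell),\ u_0 \in U(\Q_\ell)),
\]
from which one checks successively, using the abelian base case for $Z$ and the inductive hypothesis for $Q$, that injectivity of $\Phi_U$ forces injectivity (hence bijectivity) of $\Phi_Z$, then injectivity (hence bijectivity) of $\Phi_Q$, and finally bijectivity of $\Phi_U$ by a direct lifting; an entirely symmetric argument handles the surjectivity case.

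The main obstacle is that $\Phi$ is not a group homomorphism, so no standard long exact sequence in non-abelian cohomology gives direct control on its fibres. The centrality of $Z$ in $U$, via the multiplicativity formula above, is the essential ingredient that lets one match up the kernels and images of $\Phi_U$, $\Phi_Z$ and $\Phi_Q$ by hand.
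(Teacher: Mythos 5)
Your reduction and inductive scheme are essentially the paper's own argument: you pass to $U^{I_K}$, identify $\H^1_\nr$ with the orbit space of the twisted self-action $v\colon w\mapsto v^{-1}w\varphi(v)$ (so triviality of $\H^1_\nr$ is surjectivity of $\Phi(v)=v^{-1}\varphi(v)$ and triviality of $U(\Q_\ell)^{G_K}$ is triviality of its fibre over $1$, hence injectivity since the fibres are cosets of $U(\Q_\ell)^{\varphi}$), settle the abelian case by linear algebra for $\varphi-1$, and in the inductive step use centrality of $Z$ — your identity $\Phi_U(zu_0)=\Phi_Z(z)\Phi_U(u_0)$ is exactly the mechanism behind the paper's back-and-forth between transitivity and trivial point-stabiliser for $Z$, $Q$ and $U$.

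There is, however, one step that fails as stated: you cannot in general "fix a $\varphi$-equivariant central extension $1\to Z\to U\to Q\to 1$ with $Z\cong\G_a$." A $G_K$-stable one-dimensional central subgroup corresponds to a Frobenius-stable line in the centre's Lie algebra, and the $\Q_\ell$-linear automorphism $\varphi$ need not have any eigenvalue in $\Q_\ell$: for instance, Frobenius can act on a two-dimensional centre through multiplication by a principal unit of the unramified quadratic extension of $\Q_\ell$, giving a continuous unramified action with irreducible characteristic polynomial and hence no stable line. So the induction on $\dim U$ with $\G_a$-steps has no base to stand on in general. The repair is immediate and is what the paper does: induct instead on the dimension (or nilpotency class) of $U$, taking $Z$ to be any $G_K$-stable \emph{abelian} central subgroup — e.g.\ the full centre, or the last nonvanishing term of the descending central series, both of which are automatically $G_K$-stable — and invoke the abelian case for $Z$ rather than a one-dimensional one; your multiplicativity identity and the lifting arguments go through verbatim for such $Z$. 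One further small point you assert without comment: the identification of $\H^1(G_K/I_K,\,U(\Q_\ell)^{I_K})$ with the orbit space of twisted conjugation by the value of a cocycle at $\varphi$ uses that the coefficient group is a directed union of compact (profinite) subgroups, which is why the paper flags that $U(\Q_\ell)^{I_K}$ is a direct limit of profinite groups.
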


\begin{remark}\label{rmk:H^1_nr=1}
The condition that $U(\Q_\ell)^{G_K}=1$ holds whenever $U$ is the $\Q_\ell$-pro-unipotent fundamental group of a smooth geometrically connected $K$-variety based at a $K$-rational point; this follows from the fact that $U$ possesses a weight filtration whose graded pieces are abelian representations satisfying a certain weight--monodromy property with negative weight \cite[Theorem~1.3(1)]{me-daniel:weight-monodromy}\footnote{It seems that this weight--monodromy result for the fundamental group was unknown in general at the time this paper was first written, where it was formulated as a conjecture. So the chronology is backwards here: it was this paper which in part inspired \cite{me-daniel:weight-monodromy}.}. For the fundamental groups studied in this paper, this is more classical: when $U$ is the fundamental group of a $\G_m$-torsor $L^\times$ over an abelian variety $A$, then $U$ is a central extension of $V_\ell A$ by $\Q_\ell(1)$, so the vanishing of $U(\Q_\ell)^{G_K}$ follows from the vanishing of $V_\ell A^{G_K}$ \cite[Corollaire IX.4.4]{SGA7.1}.
\end{remark}

\subsection{$p$-adic Galois representations, admissibility, and Dieudonn\'e functors}

Having sketched some of the basic theory of Galois representations on pro-unipotent groups over $\Q_\ell$ for $\ell\neq p$, let us now discuss the theory of such representations when $\ell=p$. As in the abelian case, the theory we obtain is much richer than the $\ell\neq p$ case, and will constitute the technical basis of our coming analysis of local Bloch--Kato Selmer sets.

The basic constructions and notions in abelian $p$-adic Hodge theory as developed by Fontaine in \cite{fontaine3} revolve around certain \emph{period rings} $\B$, which are topological $\Q_p$-algebras with a continuous $G_K$-action, and consideration of the $\B$-semilinear $G_K$-action on $\B\otimes_{\Q_p}V$ for a $p$-adic Galois representation of interest. Replacing $V$ by a finitely generated pro-unipotent group $U$, the correct analogue of $\B\otimes_{\Q_p}V$ is $U(\B)=\Hom_{\Q_p}(\Spec(\B),U)$, given its natural topology induced from that on $\B$ and its natural ``semilinear'' (continuous\footnote{One can see continuity, for instance, by reducing to the case of $U$ finite-dimensional and examining the action on $\Lie(U)(\B)\cong U(\B)$.}) $G_K$-action arising from those on $\B$ and $U$.

With this analogy in mind, it is now straightforward for us to define the \emph{Dieudonn\'e functor} $\D$ associated to a period ring $\B$, which will take representations on finitely generated pro-unipotent groups over $\Q_p$ to pro-unipotent groups over the fixed field $\B^{G_K}$. As usual, for the various period rings arising in practice, we will decorate the corresponding Dieudonn\'e functor with the same symbols as the period ring without comment -- for instance $\D_\dR^+$ will be the Dieudonn\'e functor associated to $\B_\dR^+$.

\begin{lemma}\label{lem:dieudonne_functor}
Let $\B/\Q_p$ be a $(\Q_p,G_K)$-regular $\Q_p$-algebra \cite[Definition 1.4.1]{fontaine3} with $G_K$-fixed field $F$ (or $\B$ a $G_K$-stable $F$-subalgebra of such an algebra), and let $U/\Q_p$ be a representation of $G_K$ on a finitely generated pro-unipotent group. Then the functor $\D(U)$ on $F$-algebras defined by\[\D(U)(\Lambda)=U(\B\otimes_F\Lambda)^{G_K}\]is a pro-unipotent group over $F$ with Lie algebra $\D(\Lie(U))$, and there is a natural injection\[\D(U)_\B\hookrightarrow U_\B\]between the base-changes to $\B$. In particular, if $U$ is finite-dimensional, then so is $\D(U)$ and we have the inequality $\dim_F\D(U)\leq\dim_{\Q_p}U$.
\begin{proof}
Since the formation of $U(\B)^{G_K}$ is compatible with inverse limits in $U$, we may reduce to the case that $U$ is finite-dimensional.

Using the isomorphism $\log\colon U\rightarrow\Lie(U)$, we obtain a natural isomorphism\[\D(U)(\Lambda)\cong\left(\Lie(U)\otimes_{\Q_p}\B\otimes_F\Lambda\right)^{G_K}\cong\left(\Lie(U)\otimes_{\Q_p}\B\right)^{G_K}\otimes_F\Lambda.\]But the right-hand side is the functor-of-points of the vector group associated to the finite-dimensional $F$-vector space $\D(\Lie(U))=\left(\Lie(U)\otimes_{\Q_p}\B\right)^{G_K}$. Hence $\D(U)/F$ is a variety, isomorphic to affine space.

Now the group operation on $U$ is induced from the Lie bracket on $\Lie(U)$ via the Baker--Campbell--Hausdorff formula. From this, it follows that the natural group operation on $\D(U)(\Lambda)$ is induced from the natural Lie bracket on $\D(\Lie(U))\otimes_F\Lambda$ also by the Baker--Campbell--Hausdorff formula, so that $\D(U)$ is unipotent with Lie algebra $\D(\Lie(U))$.

Finally, the natural map $U(\B\otimes_F\Lambda)^{G_K}\rightarrow U(\Lambda)$ induced by the multiplication map $\B\otimes_F\Lambda\rightarrow\Lambda$ for any $\B$-algebra $A$ induces a morphism\[\D(U)_\B\rightarrow U_\B\]which, identifying both sides with their Lie algebras, is the usual one from the theory of admissible representations, and hence injective. In particular, $\dim_F\D(U)\leq\dim_{\Q_p}U$.
\end{proof}
\end{lemma}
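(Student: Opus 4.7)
The plan is to reduce everything to the well-known linear theory of $\D$ applied to $\Lie(U)$, exploiting that a pro-unipotent group is isomorphic to its Lie algebra as a $\Q_p$-scheme via $\log$. First I would handle reduction to the finite-dimensional case: writing $U = \liminv U_n$ with each $U_n$ a finite-dimensional $G_K$-equivariant quotient, the functor $A \mapsto U(\B \otimes_F A)^{G_K}$ is a cofiltered limit of the analogous functors for the $U_n$, since both taking $\Spec(\B \otimes_F A)$-points and $G_K$-invariants preserve limits. So once the finite-dimensional case is settled, $\D(U)$ emerges as an inverse limit of unipotent groups along surjections, i.e.\ a pro-unipotent group, with Lie algebra the inverse limit of the $\D(\Lie(U_n))$, which coincides with $\D(\Lie(U))$.

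Assume now $U$ is finite-dimensional. Since $\log\colon U \isoarrow \Lie(U)$ is a $G_K$-equivariant isomorphism of $\Q_p$-schemes, for any $F$-algebra $A$ I obtain a $G_K$-equivariant identification
\[ U(\B \otimes_F A) \;\cong\; \Lie(U) \otimes_{\Q_p} \B \otimes_F A, \]
where $G_K$ acts diagonally on the first two factors and trivially on $A$. Taking $G_K$-invariants and using that $A$ is flat over the field $F$ to move it outside the invariants gives
\[ \D(U)(A) \;\cong\; \left(\Lie(U) \otimes_{\Q_p} \B\right)^{G_K} \otimes_F A \;=\; \D(\Lie(U)) \otimes_F A. \]
This displays $\D(U)$ as the vector scheme attached to the finite-dimensional $F$-vector space $\D(\Lie(U))$, finite-dimensionality being a standard consequence of $(\Q_p,G_K)$-regularity of $\B$ applied to $\Lie(U)$.

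Next I would transport the group structure. The multiplication on $U(\B \otimes_F A)$ corresponds, under the log identification, to the Baker--Campbell--Hausdorff formula on $\Lie(U) \otimes_{\Q_p} \B \otimes_F A$. Since BCH is a universal $\Q$-polynomial expression in the Lie bracket, it commutes with the $G_K$-action and restricts to a group law on the invariants $\D(\Lie(U)) \otimes_F A$, governed by the induced $F$-linear Lie bracket on $\D(\Lie(U))$. This shows that $\D(U)$ is unipotent with Lie algebra $\D(\Lie(U))$.

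For the injection $\D(U)_\B \hookrightarrow U_\B$, I would take a $\B$-algebra $A$, use the multiplication map $\B \otimes_F A \to A$ (which is $G_K$-equivariant since the $G_K$-action on $A$ is trivial), and form the composite $\D(U)(A) = U(\B \otimes_F A)^{G_K} \hookrightarrow U(\B \otimes_F A) \to U(A)$; this defines the desired morphism on $\B$-algebras. Under the log identifications, this is the standard comparison map $\D(\Lie(U)) \otimes_F \B \to \Lie(U) \otimes_{\Q_p} \B$, whose injectivity is essentially the defining content of $(\Q_p,G_K)$-regularity, and the bound $\dim_F \D(U) \leq \dim_{\Q_p} U$ follows immediately. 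The argument is structurally not difficult; the only point that requires any care is ensuring all the relevant canonical isomorphisms (log, invariants versus tensor, BCH) are genuinely $G_K$-equivariant, but each is visibly so.
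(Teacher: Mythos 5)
Your proposal is correct and follows essentially the same route as the paper's proof: reduce to the finite-dimensional case via the inverse limit description, use $\log$ to identify $\D(U)(A)$ with $\D(\Lie(U))\otimes_FA$, transport the group law by Baker--Campbell--Hausdorff, and obtain the injection $\D(U)_\B\hookrightarrow U_\B$ from the multiplication map together with the linear admissibility theory. The only cosmetic difference is your explicit justification (flatness/finite-support argument) for pulling $A$ out of the $G_K$-invariants, which the paper takes for granted.
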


Just as in abelian $p$-adic Hodge theory, the Dieudonn\'e functor allows us to define certain \emph{admissible classes} of representations on pro-unipotent groups. In fact, the notion we get is essentially already encompassed by the notion of admissibility for linear $p$-adic representations, as in the following lemma.

\begin{definition-lemma}\label{def-lem:admissible_rep}
Let $\B/\Q_p$ be a $(\Q_p,G_K)$-regular $\Q_p$-algebra \cite[Section 1.4]{fontaine3} with $G_K$-fixed field $F$, and let $U/\Q_p$ be a representation of $G_K$ on a finitely generated pro-unipotent group. The following are equivalent:
\begin{enumerate}
	\item $\D(U)_\B\hookrightarrow U_\B$ is an isomorphism;
	\item[(1')] (if $U$ finite-dimensional) $\dim_F\D(U)=\dim_{\Q_p}U$;
	\item $\Lie(U)$ is pro-$\B$-admissible;
	\item each $J_n\O(U)$ is $\B$-admissible.
\end{enumerate}
When these equivalent conditions hold, we will say that $U$ is \emph{$\B$-admissible}. In the specific case when $\B=\B_\dR$ (resp.\ $\B=\B_\st$, resp.\ $\B=\B_\cris$), we will refer to $U$ as being \emph{de Rham} (resp.\ \emph{semistable}, resp.\ \emph{crystalline}).
\begin{proof}[Proof of equivalence]
For the equivalences (1)$\Leftrightarrow$(2)$\Leftrightarrow$(3), we may reduce to the case that $U$ is finite-dimensional, since the construction of $\D(U)$ and the map $\D(U)_\B\hookrightarrow U_\B$ commutes with inverse limits in $U$.

In this case, $\D(\Lie(U))$ is the Lie algebra of $\D(U)$ by lemma \ref{lem:dieudonne_functor}, so that we have $\dim_F(\D(U))=\dim_F\D(\Lie(U))$, which proves the equivalence (1')$\Leftrightarrow$(2).

Next, to prove the equivalence (1)$\Leftrightarrow$(2), we simply note from the proof of lemma \ref{lem:dieudonne_functor} that, via the usual logarithm maps, the injection $\D(U)(\Lambda)\hookrightarrow U(\Lambda)$ for a $\B$-algebra $\Lambda$ is identified with the base change to $\Lambda$ of the usual map $\B\otimes_F\D(\Lie(U))\hookrightarrow\B\otimes_{\Q_p}\Lie(U)$. Thus the one map is an isomorphism iff the other is.

Finally, for the equivalence (2)$\Leftrightarrow$(3), we note that condition (3) is equivalent to $\B$-admissibility of each $\mcal U/J^{n+1}$, where $\mcal U$ is the universal enveloping algebra of $\Lie(U)$ and $J$ is the augmentation ideal. Again, the fact that $\mcal U$ is a surjective image of the tensor algebra $T^\otimes\Lie(U)$ and that $\Lie(U)$ is a subrepresentation of $\mcal U/J^{n+1}$ for $n>>0$ justifies the equivalence (2)$\Leftrightarrow$(3).
\end{proof}
\end{definition-lemma}

\begin{remark}\label{rmk:affine_rings_of_admissible_representations}
When $U$ is $\B$-admissible, we've seen that $\O(U)$ is ind-$\B$-admissible, so that $\D(\O(U))$ is a Hopf algebra over $F=\B^{G_K}$. Indeed, the $\Lambda$-points of the corresponding affine group scheme are\begin{align*}\Hom_F(\D(\O(U)),\Lambda)^\gplike&=\Hom_\B(\B\otimes_F\D(\O(U)),\B\otimes_F\Lambda)^{G_K,\gplike}\\&=\Hom_\B(\B\otimes_{\Q_p}\O(U),\B\otimes_F\Lambda)^{\gplike,G_K}=U(\B\otimes_F\Lambda)^{G_K},\end{align*}so that the corresponding affine group scheme is canonically identified with $\D(U)$.

Moreover, it follows from definition \ref{def:J-filtration} and the fact that everything in sight is $\B$-admissible that in this case $J_n\D(\O(U))=\D(J_n\O(U))$, so that $\D(U)$ is finitely generated (see proposition \ref{prop:lubotzky-magid}).
\end{remark}

\begin{remark}
It follows directly from the definitions and \cite[Proposition 1.5.2]{fontaine3} that $G_K$-stable finitely generated subgroups and quotients of $\B$-admissible representations are again $\B$-admissible, and that a representation of $G_K$ on a finitely generated pro-unipotent group is $\B$-admissible iff all of its finite-dimensional quotients are.
\end{remark}

The equivalence between $\B$-admissibility of $U$ and $\Lie(U)$ means that we immediately deduce non-abelian versions of certain basic properties of admissible representations.

\begin{lemma}\label{lem:seqs_of_de_Rham_reps}
Let\[1\rightarrow Z\rightarrow U\rightarrow Q\rightarrow1\]be an exact sequence of de Rham representations of $G_K$ on finitely generated pro-unipotent groups over $\Q_p$. Then the sequence remains exact on taking $\D_\st$ and on taking $\D_\dR^+$.
\begin{proof}
Since the functors $\D_\st$ and $\D_\dR^+$ commute with inverse limits, it suffices to treat the case that $U$, $Q$ and $Z$ are finite dimensional. Since $\Lie(\D_\st(U))=\D_\st(\Lie(U))$ and similarly for $\D_\dR^+$, it suffices also to prove the corresponding statement for the Lie algebras of $U$, $Q$ and $Z$. But the Lie bracket plays no role in exactness of the sequences, so it suffices to prove the result when $U$, $Q$ and $Z$ are replaced by $\Q_p$-linear representations.

Now since the representations $U$, $Q$ and $Z$ are all potentially semistable \cite[Th\'eor\`eme 0.7]{berger}, exactness after applying $\D_\st$ is dealt with in the argument (but not the statement) of \cite[Lemme 6.4]{berger}.

To prove exactness after applying $\D_\dR^+$, we may choose by potential semistability an open normal subgroup $G_L\unlhd G_K$ such that $U$, $Q$ and $Z$ are semistable when the action is restricted to $G_L$. Since $\D_{\st,L}(U)$ etc.\ are admissible filtered $(\varphi,N)$-modules and morphisms of such are strict \cite[Th\'eor\`eme 5.3.5]{fontaine3}, we thus have that\[0\rightarrow\D_{\dR,L}^+(Z)\rightarrow\D_{\dR,L}^+(U)\rightarrow\D_{\dR,L}^+(Q)\rightarrow0\]is exact. Since all the terms are vector spaces over a characteristic zero field, the sequence remains exact after taking invariants under the finite group $G_{L|K}$, and hence the sequence on $\D_\dR^+$ is exact as desired.
\end{proof}
\end{lemma}

\begin{corollary}
If $U/\Q_p$ is a de Rham representation of $G_K$ on a finitely generated pro-unipotent group which is an extension of a two semistable representations (on finitely generated pro-unipotent groups), then $U$ itself is semistable.
\begin{proof}
Writing $U$ as an extension of $Q$ by $Z$, both semistable, we have from lemma \ref{lem:seqs_of_de_Rham_reps} a diagram
\begin{center}
\begin{tikzcd}
1 \arrow[r] & \D_\st(Z)_{\B_\st} \arrow[r]\arrow[d,"\wr"] & \D_\st(U)_{\B_\st} \arrow[r]\arrow[d] & \D_\st(Q)_{\B_\st} \arrow[r]\arrow[d,"\wr"] & 1 \\
1 \arrow[r] & Z_{\B_\st} \arrow[r] & U_{\B_\st} \arrow[r] & Q_{\B_\st} \arrow[r] & 1
\end{tikzcd}
\end{center}
of group schemes over $\B_\st$, with both rows exact (on $\Lambda$-points for all $\B_\st$-algebras $\Lambda$) and the outer vertical maps both isomorphisms. The five-lemma then shows that $\D_\st(U)_{\B_\st}\hookrightarrow U_{\B_\st}$ is an isomorphism, so that $U$ is semistable as desired.
\end{proof}
\end{corollary}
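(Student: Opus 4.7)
The plan is to verify directly that the natural injection $\D_\st(U)_{\B_\st}\hookrightarrow U_{\B_\st}$ is an isomorphism, which by definition-lemma \ref{def-lem:admissible_rep} is equivalent to the semistability of $U$. The engine driving the argument will be Lemma \ref{lem:seqs_of_de_Rham_reps}, which asserts that $\D_\st$ preserves the exactness of short exact sequences of de Rham representations on finitely generated pro-unipotent groups.

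Writing $U$ as a central extension $1\to Z\to U\to Q\to 1$ with $Z$ and $Q$ semistable, each of the three terms is de Rham (for $U$ by hypothesis, and for $Z$ and $Q$ since semistable representations are automatically de Rham because $\B_\st\subseteq\B_\dR$). Applying Lemma \ref{lem:seqs_of_de_Rham_reps} yields an exact sequence $1\to\D_\st(Z)\to\D_\st(U)\to\D_\st(Q)\to 1$ of pro-unipotent groups over $K_0$. Base-changing along $K_0\hookrightarrow\B_\st$ preserves exactness (this base change is exact on pro-finite-dimensional vector spaces, and exactness of a sequence of pro-unipotent groups is detected on Lie algebras), so I would then stack this new sequence on top of the base change of the original extension to obtain a commutative ladder with exact rows. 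In this ladder the outer vertical comparison maps $\D_\st(Z)_{\B_\st}\to Z_{\B_\st}$ and $\D_\st(Q)_{\B_\st}\to Q_{\B_\st}$ are isomorphisms, precisely because $Z$ and $Q$ were assumed semistable.

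The final step is a five-lemma argument forcing the central comparison map $\D_\st(U)_{\B_\st}\to U_{\B_\st}$ to be an isomorphism as well. Since this map is always injective by lemma \ref{lem:dieudonne_functor}, the real content is its surjectivity, which one can extract from the ladder either directly for pro-unipotent groups or, more safely, by passing to Lie algebras and applying the usual vector-space five-lemma (everything in sight is pro-finite-dimensional, so no completeness issue arises).

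I expect no substantive obstacle: the potentially worrying non-abelian flavour of $U$ evaporates once one works with Lie algebras, and the substantive $p$-adic-Hodge-theoretic input has already been packaged into Lemma \ref{lem:seqs_of_de_Rham_reps}. The only mild point requiring attention is verifying that base change from $K_0$ to $\B_\st$ really does preserve exactness of the relevant short exact sequences of pro-unipotent groups, which as noted above reduces to the corresponding trivial fact for vector spaces.
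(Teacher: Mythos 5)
Your proposal is correct and follows essentially the same route as the paper: apply lemma \ref{lem:seqs_of_de_Rham_reps} to the central extension, base-change the resulting exact sequence of Dieudonn\'e groups to $\B_\st$, compare with the base change of the original extension, and conclude by the five-lemma using that the outer comparison maps are isomorphisms by semistability of $Z$ and $Q$. The extra care you take over exactness of base change and the option of passing to Lie algebras is sound but not a departure from the paper's argument.
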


\subsubsection{Potential semistability}

Just as in the $\ell$-adic case, we can prove an analogue of Berger's theorem on potential semistability of de Rham representations in the setting of representations on finitely generated pro-unipotent groups. Again, the key result which allows us to reduce to the linear case is that semistability of a de Rham representation depends only on its abelianisation.

\begin{lemma}\label{lem:semistability_via_abelianisation_p-adic}
Let $U/\Q_p$ be a de Rham representation of $G_K$ on a finitely generated pro-unipotent group. Then $U$ is semistable iff $U^\ab$ is.
\begin{proof}
The same proof as lemma \ref{lem:semistability_via_abelianisation_l-adic} works, the key point being that everything in sight is de Rham, so that central extensions of semistable representations remain semistable.
\end{proof}
\end{lemma}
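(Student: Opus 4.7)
The plan is to adapt the $\ell$-adic proof of lemma \ref{lem:semistability_via_abelianisation_l-adic} essentially verbatim, swapping out the last appeal to condition (1) of definition-lemma \ref{def-lem:unramified_rep} for the corollary immediately following lemma \ref{lem:seqs_of_de_Rham_reps}, which says that a de Rham central extension of two semistable representations is itself semistable. The ``only if'' direction is immediate, since $U^\ab$ is a quotient of $U$ and (by the remark following definition-lemma \ref{def-lem:admissible_rep}) semistability descends to $G_K$-stable finitely generated pro-unipotent quotients.

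For the converse, first I would reduce to the case of $U$ finite-dimensional, using the remark that a representation on a finitely generated pro-unipotent group is semistable iff all of its finite-dimensional $G_K$-equivariant quotients are, together with the observation that the abelianisation of any such quotient of $U$ is itself a quotient of $U^\ab$, hence semistable if $U^\ab$ is. So from now on I may assume $U$ is unipotent, and form the descending central series $U=U_1\unrhd U_2\unrhd\cdots$, which terminates at $1$ after finitely many steps.

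Next, exactly as in the proof of lemma \ref{lem:semistability_via_abelianisation_l-adic}, the iterated commutator provides surjections of $G_K$-representations
\[(U^\ab)^{\otimes n}\twoheadrightarrow U_n/U_{n+1}.\]
Since the category of semistable linear representations of $G_K$ is closed under tensor products and quotients, semistability of $U^\ab$ forces each abelian group $U_n/U_{n+1}$ to be semistable.

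Finally, I would induct on $n$ to show that each $U/U_{n+1}$ is semistable. The base case $n=0$ is trivial. For the step, observe that $U/U_{n+1}$ is a quotient of $U$ and so is de Rham (again by the remark following definition-lemma \ref{def-lem:admissible_rep}), and sits in a central extension
\[1\rightarrow U_n/U_{n+1}\rightarrow U/U_{n+1}\rightarrow U/U_n\rightarrow 1\]
of two semistable groups, so the corollary after lemma \ref{lem:seqs_of_de_Rham_reps} applies to give semistability of $U/U_{n+1}$. Taking $n$ large enough that $U_{n+1}=1$ concludes the proof. The main (minor) subtlety is making sure the hypotheses of that corollary are met at each step, which is why we need to know in advance that every $U/U_{n+1}$ is de Rham rather than merely a representation on a pro-unipotent group.
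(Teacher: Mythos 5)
Your proposal is correct and follows essentially the same route as the paper: the paper's proof is exactly the $\ell$-adic argument (iterated commutator surjections $(U^\ab)^{\otimes n}\twoheadrightarrow U_n/U_{n+1}$ giving semistability of the graded pieces) with the appeal to definition-lemma \ref{def-lem:unramified_rep} replaced by the corollary to lemma \ref{lem:seqs_of_de_Rham_reps}, using that every quotient of $U$ is de Rham. Your explicit reduction to the finite-dimensional case is a harmless bookkeeping step (the paper instead invokes that semistability is checked on finite-dimensional quotients), not a different method.
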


\begin{corollary}[$p$-adic monodromy theorem for pro-unipotent representations]\label{cor:potential_semistability_p-adic}
Let $U/\Q_p$ be a representation of $G_K$ on a finitely generated pro-unipotent group. Then $U$ is potentially semistable (i.e.\ becomes semistable after restriction to an open subgroup) iff $U$ is de Rham.
\begin{proof}
Potentially semistable representations are clearly de Rham. Conversely, if $U$ is de Rham, then by lemma \ref{lem:semistability_via_abelianisation_p-adic} it becomes semistable as soon as $U^\ab$ does. But $U^\ab$ is a finite-dimensional linear representation, so is potentially semistable by \cite[Th\'eor\`eme 0.7]{berger}.
\end{proof}
\end{corollary}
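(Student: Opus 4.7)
The plan is to prove the two implications separately, with the easy direction being a straightforward reduction to the linear case and the harder direction using the abelianisation trick established in Lemma~\ref{lem:semistability_via_abelianisation_p-adic} together with Berger's linear $p$-adic monodromy theorem, exactly paralleling the $\ell$-adic argument in Corollary~\ref{cor:potential_semistability_l-adic}.

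For the forward direction (potentially semistable $\Rightarrow$ de Rham), I would argue as follows. Suppose $U$ is semistable as a representation of some open subgroup $G_L \unlhd G_K$. By the third equivalent condition of Definition-Lemma~\ref{def-lem:admissible_rep}, each $J_n\O(U)$ is a semistable linear $G_L$-representation, hence a de Rham $G_L$-representation. Since de Rham-ness for a finite-dimensional linear $p$-adic representation is insensitive to restriction to (and induction from) an open subgroup, each $J_n\O(U)$ is de Rham as a $G_K$-representation. Invoking Definition-Lemma~\ref{def-lem:admissible_rep} a second time, $U$ is de Rham.

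For the converse direction, assume $U$ is de Rham. The key preliminary observation is that $U^\ab$ is a \emph{finite-dimensional} linear $G_K$-representation: since $U$ is finitely generated its Lie algebra $\Lie(U)$ is generated (topologically) by finitely many elements, and so $\Lie(U^\ab) = \Lie(U)^\ab$ is finite-dimensional. Since quotients of $\B_\dR$-admissible representations remain $\B_\dR$-admissible, $U^\ab$ is itself de Rham. Berger's theorem (\cite{berger}, Th\'eor\`eme~0.7) now furnishes an open subgroup $G_L \unlhd G_K$ such that $U^\ab$ becomes semistable when restricted to $G_L$. Restricting $U$ to $G_L$ preserves de Rham-ness (again by insensitivity of $\B_\dR$-admissibility to finite extensions of the base field), and by Lemma~\ref{lem:semistability_via_abelianisation_p-adic} applied to $U_{|G_L}$, semistability of $(U_{|G_L})^\ab = (U^\ab)_{|G_L}$ forces semistability of $U_{|G_L}$ itself.

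The main obstacle is conceptually located upstream, in Lemma~\ref{lem:semistability_via_abelianisation_p-adic}: everything hinges on the fact that, within the class of de Rham representations, semistability propagates from the abelianisation up the descending central series, using that a de Rham central extension of semistable by semistable is semistable. Once that lemma is in hand, the present corollary is a formal consequence; the only residual care required is to check that the notions of ``de Rham'' and ``finitely generated'' behave compatibly with restriction to open subgroups and passage to the abelianisation, both of which are routine.
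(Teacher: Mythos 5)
Your proposal is correct and follows essentially the same route as the paper: the converse is exactly the paper's argument, combining lemma \ref{lem:semistability_via_abelianisation_p-adic} with Berger's theorem applied to the finite-dimensional linear representation $U^\ab$, while your forward direction merely spells out, via the $J$-filtration of definition-lemma \ref{def-lem:admissible_rep} and the insensitivity of de Rham-ness to restriction to open subgroups, what the paper dismisses as ``clearly''. The additional compatibility checks you flag (finite-dimensionality of $U^\ab$, stability of de Rham-ness under quotients and restriction) are indeed routine and are covered by the remarks following definition-lemma \ref{def-lem:admissible_rep}.
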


\subsubsection{Extra structures on Dieudonn\'e functors}
\label{ss:extra_structure}

Just as in the abelian case, the pro-unipotent groups $\D_\cris(U)$, $\D_\st(U)$ and $\D_\dR(U)$ carry various extra structures (Frobenius, monodromy, Hodge filtration) induced from the corresponding structures on the period rings $\B_\cris$, $\B_\st$ and $\B_\dR$. Since we will not require these extra structures in the main body of this paper, let us merely describe in outline how these structures manifest on $\D_\st(U)$, assuming for simplicity that $U$ is semistable.

Most obviously, since $\O(U)$ is ind-semistable, $\O(\D_\st(U))=\D_\st(\O(U))$ then carries the structure of an ind-weakly admissible filtered $(\varphi,N)$-module, compatible with the Hopf algebra structure morphisms. As usual, these extra structures (Frobenius, monodromy, Hodge filtration) on $\O(\D_\st(U))$ can be viewed equivalently as structures on $\Lie(\D_\st(U))$ or on $\D_\st(U)$ itself. For $\Lie(\D_\st(U))=\D_\st(\Lie(U))$ the same argument shows that it carries a pro-weakly admissible filtered $(\varphi,N)$-module structure compatible with the Lie bracket.

However, viewing these structures on $\D_\st(U)$ is more complicated, and indeed the Hodge filtration has no obvious direct interpretation (except as a structure on $\Lie(\D_\st(U))$ or $\O(\D_\st(U))$). The intepretation of Frobenius and monodromy on $\D_\st(U)$ is as follows.

On the level of $\D_\st(U)$, the semilinear Frobenius on $\O(\D_\st(U))$ induces a semilinear Frobenius $\varphi\colon\D_\st(U)\isoarrow\varphi^*\D_\st(U)$, where by abuse of notation we also denote by $\varphi$ the Frobenius on the maximal absolutely unramified subfield $K_0$ of $K$. The monodromy operator on $\O(\D_\st(U))$, which is a derivation compatible with the comultiplication, induces then a \emph{homomorphic} vector field on $\D_\st(U)$, i.e.\ a vector field which, when viewed as a section $\D_\st(U)\rightarrow T\D_\st(U)$ of the tangent bundle on $\D_\st(U)$, is a group homomorphism for the natural group structure on the total space $T\D_\st(U)=\Lie(\D_\st(U))\rtimes\D_\st(U)$ of the tangent bundle. The relation $N\varphi=p\varphi N$ between the Frobenius and monodromy on $\O(\D_\st(U))$ implies that this $\varphi$ and $N$ on $\D_\st(U)$ satisfy the relation $\varphi^*N\circ\varphi=p\cdot\dee\varphi\circ N$ as morphisms $\D_\st(U)\rightarrow\varphi^*T\D_\st(U)$, where the $p$ denotes multiplication by $p$ in the fibres of $\varphi^*T\D_\st(U)\twoheadrightarrow\varphi^*\D_\st(U)$.

\subsection{Cohomology of $p$-adic Galois representations}

Recall from the introduction that if $U/\Q_p$ is a representation of $G_K$ on a finitely generated pro-unipotent group, the \emph{local Bloch--Kato Selmer sets} are three naturally defined pointed subsets $\H^1_e(G_K,U(\Q_p))\subseteq\H^1_f(G_K,U(\Q_p))\subseteq\H^1_g(G_K,U(\Q_p))$ of the non-abelian Galois cohomology $\H^1(G_K,U(\Q_p))$ cut out by increasingly coarse Selmer conditions (see definition \ref{def:bloch-kato_sets}), which in the case that $U$ is the fundamental group of a smooth variety $Y/K$ should be closely tied to the geometry of $Y$. In order to prove theorem \ref{thm:main_theorem_p-adic}, we will need to conduct a detailed explicit study of these sets and their respective quotients, which will be carried out in \S\ref{c:bloch-kato} using tools from homotopical algebra.

Before we embark on this analysis, let us briefly record a few properties of the local Bloch--Kato Selmer sets and their quotients which are essentially immediate from the definitions.

\begin{lemma}[Stability of local Bloch--Kato sets under base change]\label{lem:bloch-kato_stable_under_base_change}
Let $U/\Q_p$ be a representation of $G_K$ on a finitely generated pro-unipotent group, and let $L/K$ be a finite extension (embedded in the fixed algebraic closure $\bar K/K$). Then the restriction map\[\H^1(G_K,U(\Q_p))\rightarrow\H^1(G_L,U(\Q_p))\]is injective, and the preimage of $\H^1_g(G_L,U(\Q_p))$ under this map is $\H^1_g(G_K,U(\Q_p))$. The same applies with $\H^1_f$ or $\H^1_e$ in place of $\H^1_g$.
\begin{proof}
To prove this, it suffices to consider only the case that $L/K$ is finite Galois. Moreover, to prove injectivity for $U$ it suffices by naturality of the Serre twisting bijection to prove that the restriction map $\H^1(G_K,{}_\alpha U(\Q_p))\rightarrow\H^1(G_L,{}_\alpha U(\Q_p))$ has trivial kernel for all twists ${}_\alpha U$ of $U$ by continuous cocycles $\alpha\colon G_K\rightarrow U(\Q_p)$.

By non-abelian inflation-restriction, the kernel of the restriction map is given by $\H^1\left(G_{L|K},{}_\alpha U(\Q_p)^{G_L}\right)$. But ${}_\alpha U(\Q_p)$ is a uniquely divisible group, and hence so is ${}_\alpha U(\Q_p)^{G_L}$. As $G_{L|K}$ is a finite group, this already implies that $\H^1\left(G_{L|K},{}_\alpha U(\Q_p)^{G_L}\right)$ is trivial as desired.

To prove the assertion about $\H^1_g$, it suffices to show that $\H^1(G_K,U(\B_\st))\rightarrow\H^1(G_L,U(\B_\st))$ has trivial kernel. In fact, it is even injective, as one proves by replacing $U(\Q_p)$ by $U(\B_\st)$ throughout the above proof. The cases of $\H^1_f$ and $\H^1_e$ are similar.
\end{proof}
\end{lemma}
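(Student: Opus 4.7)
The plan is to handle the two assertions separately, with the second reducing essentially to an analogue of the first applied to coefficients in the relevant period rings. For both, I would first reduce to the case that $L/K$ is finite Galois: in the assertion about $\H^1_*$ the general case follows from the Galois case by enlarging $L$ to a Galois closure (using that the Selmer conditions are cut out by natural restriction maps), and likewise for injectivity.

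For the injectivity of $\H^1(G_K,U(\Q_p))\rightarrow\H^1(G_L,U(\Q_p))$, the key move is to convert injectivity into the vanishing of every kernel. Since the fibre of this map over a class $[\alpha]\in\H^1(G_L,U(\Q_p))$ is, after Serre twisting by a cocycle representing $[\alpha]$, identified with the kernel of the restriction map for the twisted representation ${}_\alpha U$, it suffices to show that every such kernel is trivial. The non-abelian inflation-restriction sequence identifies this kernel with $\H^1(G_{L|K},{}_\alpha U(\Q_p)^{G_L})$. Now ${}_\alpha U(\Q_p)$ is a pro-unipotent group's $\Q_p$-points and hence uniquely divisible, and the same holds for its $G_L$-fixed subgroup (which is itself the $\Q_p$-points of a pro-unipotent subgroup). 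Since $G_{L|K}$ is finite, $\H^1$ of a finite group with values in a uniquely divisible (not necessarily abelian) group is trivial---one can see this by d\'evissage along the descending central series of ${}_\alpha U(\Q_p)^{G_L}$, reducing to the abelian case where finite group cohomology with $\Q$-vector space coefficients vanishes by averaging.

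For the statement about $\H^1_g$, the containment $\H^1_g(G_K,U(\Q_p))\subseteq$ preimage is immediate from naturality of the map $\H^1(G_K,U(\Q_p))\rightarrow\H^1(G_K,U(\B_\st))$. Conversely, a class in $\H^1(G_K,U(\Q_p))$ lies in the preimage iff its image in $\H^1(G_K,U(\B_\st))$ restricts to the trivial class in $\H^1(G_L,U(\B_\st))$. Thus the result follows from the analogue of injectivity, but with coefficients in $\B_\st$ instead of $\Q_p$: the restriction map $\H^1(G_K,U(\B_\st))\rightarrow\H^1(G_L,U(\B_\st))$ is injective. The same Serre-twist-plus-inflation-restriction argument as in the previous paragraph applies verbatim, once one observes that $U(\B_\st)$, being a pro-unipotent group's points in a characteristic-zero ring, is again uniquely divisible, as are its $G_L$-fixed subgroups. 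The same argument handles $\H^1_f$ (with $\B_\cris$) and $\H^1_e$ (with $\B_\cris^{\varphi=1}$).

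The main (small) obstacle is the non-abelian vanishing fact that finite-group $\H^1$ with uniquely divisible non-abelian coefficients is trivial; the standard dyadic/central-series reduction to the abelian $\Q$-vector-space case handles this, but it is the place where one needs to be careful, particularly because the twists ${}_\alpha U$ form an entire family one must handle uniformly rather than just the original $U$.
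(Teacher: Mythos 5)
Your proposal is correct and follows essentially the same route as the paper's own proof: reduce to the Galois case, use Serre twisting to convert injectivity into vanishing of kernels for all twists ${}_\alpha U$, identify those kernels via non-abelian inflation--restriction with $\H^1\bigl(G_{L|K},{}_\alpha U(\Q_p)^{G_L}\bigr)$ and kill them by unique divisibility, and then handle $\H^1_g$ (and $\H^1_f$, $\H^1_e$) by running the same argument with coefficients in $\B_\st$ (resp.\ $\B_\cris$, $\B_\cris^{\varphi=1}$). The only difference is that you spell out the central-series d\'evissage behind the vanishing of finite-group cohomology with uniquely divisible pro-unipotent coefficients, which the paper leaves implicit.
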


\begin{lemma}\label{lem:twists_give_cosets}
Let $U/\Q_p$ be a representation of $G_K$ on a finitely generated pro-unipotent group and $*\in\{e,f,g\}$. Let~$\sim_{\H^1_*}$ denote the equivalence relation on $\H^1(G_K,U(\Q_p))$ described in Definition~\ref{def:bloch-kato_sets}. Then the $\sim_{\H^1_*}$-equivalence class of $[\alpha]\in\H^1(G_K,U(\Q_p))$ for a continuous cocycle $\alpha\in\Cycle^1(G_K,U(\Q_p)))$ is the image of $\H^1_*(G_K,{}_\alpha U(\Q_p))$ under the bijection $\H^1(G_K,{}_\alpha U(\Q_p))\isoarrow\H^1(G_K,U(\Q_p))$ from Proposition~\ref{prop:serre_twisting_bijection}.
\begin{proof}
This is obvious from, for example, commutativity of the square
\begin{center}
\begin{tikzcd}
\H^1(G_K,{}_\alpha U(\Q_p)) \arrow[d,"\wr"]\arrow[r] & \H^1(G_K,{}_\alpha U(\B_\cris^{\varphi=1})) \arrow[d,"\wr"] \\
\H^1(G_K,U(\Q_p)) \arrow[r] & \H^1(G_K,U(\B_\cris^{\varphi=1})).
\end{tikzcd}
\end{center}
\end{proof}
\end{lemma}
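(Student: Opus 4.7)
The plan is to reduce the assertion to a short diagram chase exploiting the naturality of Serre twisting. Fix $* \in \{e, f, g\}$ and write $\mathsf{B}$ for the corresponding period ring (namely $\B_\cris^{\varphi=1}$, $\B_\cris$, or $\B_\st$), so that by definition the $\sim_{\H^1_*}$-equivalence class of $[\alpha]$ in $\H^1(G_K, U(\Q_p))$ is the fibre through $[\alpha]$ of the natural map $\H^1(G_K, U(\Q_p)) \to \H^1(G_K, U(\mathsf{B}))$.

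The key input is that Serre twisting is functorial in $G_K$-equivariant group homomorphisms: for any such $\phi\colon H \to H'$ and any continuous cocycle $\alpha\colon G_K \to H$, the map $\phi$ itself defines a $G_K$-equivariant homomorphism ${}_\alpha H \to {}_{\phi\circ\alpha} H'$, as one checks immediately from the defining formula $g\cdot u = \alpha(g)\, g(u)\, \alpha(g)^{-1}$ using that $\phi$ intertwines the $G_K$-actions. Applying this to the continuous $G_K$-equivariant homomorphism $U(\Q_p) \to U(\mathsf{B})$ induced by the inclusion $\Q_p \hookrightarrow \mathsf{B}$, and writing $\alpha$ also for its image in $\Cycle^1(G_K, U(\mathsf{B}))$, yields the commuting square
\begin{center}
\begin{tikzcd}
\H^1(G_K,{}_\alpha U(\Q_p)) \arrow[d,"\wr"]\arrow[r] & \H^1(G_K,{}_\alpha U(\mathsf{B})) \arrow[d,"\wr"] \\
\H^1(G_K,U(\Q_p)) \arrow[r] & \H^1(G_K,U(\mathsf{B}))
\end{tikzcd}
\end{center}
whose vertical bijections are the Serre twistings by $\alpha$.

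The proof then concludes by chasing this square. By construction, the left-hand vertical bijection carries the distinguished point of $\H^1(G_K,{}_\alpha U(\Q_p))$ to $[\alpha]\in\H^1(G_K, U(\Q_p))$, so the fibre through $[\alpha]$ of the bottom horizontal map corresponds, under the vertical bijections, to the fibre through the distinguished point of the top horizontal map. The latter fibre is, by definition \ref{def:bloch-kato_sets}, precisely $\H^1_*(G_K, {}_\alpha U(\Q_p))$, giving the claimed identification. I do not anticipate any real obstacle: the three cases $*\in\{e,f,g\}$ are handled uniformly, continuity of the twisted cocycles is preserved under the continuous homomorphism $U(\Q_p)\to U(\mathsf{B})$, and the only non-formal ingredient — functoriality of Serre twisting — is immediate from the explicit formula for the twisted action.
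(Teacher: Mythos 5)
Your proposal is correct and is essentially the paper's own argument: the paper's proof consists precisely of exhibiting the commuting Serre-twisting square (displayed there for the case $*=e$ with $\B_\cris^{\varphi=1}$) and reading off the statement, and your write-up simply carries out the same square-chase uniformly for $*\in\{e,f,g\}$ with the appropriate period ring in each case.
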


In applying lemma \ref{lem:twists_give_cosets}, it will be important for us to understand when the Serre twist ${}_\alpha U$ is de Rham. For our purposes, the following criterion will suffice.

\begin{proposition}\label{prop:admissibility_of_twists}
Let $U/\Q_p$ be a representation of $G_K$ on a finitely generated pro-unipotent group, and $\alpha\colon G_K\rightarrow U(\Q_p)$ a continuous cocycle whose class is in $\H^1_g(G_K,U(\Q_p))$. Then ${}_\alpha U$ is de Rham (resp.\ semistable) iff $U$ is, and the Serre twisting bijection $\H^1(G_K,{}_\alpha U(\Q_p))\isoarrow\H^1(G_K,U(\Q_p))$ induces a bijection on $\H^1_g$.
\begin{proof}
The fact that $[\alpha]\in\H^1_g(G_K,U(\Q_p))$ says that there is some $u\in U(\B_\st)$ (hence also in $U(\B_\dR)$) such that $\alpha(\sigma)=u^{-1}\sigma(u)$ for all $\sigma\in G_K$. Then left-conjugation by $u$ induces a natural $G_K$-equivariant isomorphism ${}_\alpha U_{\B_\st}\isoarrow U_{\B_\st}$ and hence also an isomorphism $\D_\st({}_\alpha U)\isoarrow\D_\st(U)$. Since these isomorphisms are compatible under the natural embedding $\D_\st(U)_{\B_\st}\hookrightarrow U_{\B_\st}$ (and similarly for ${}_\alpha U$), we have that $U$ is semistable iff ${}_\alpha U$ is (and a similar argument works for de Rhamness). The final assertion follows lemma \ref{lem:twists_give_cosets} since $[\alpha]\sim_{\H^1_g}\ast$.
\end{proof}
\end{proposition}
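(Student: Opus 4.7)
The plan is to exploit the condition that $[\alpha] \in \H^1_g(G_K, U(\Q_p))$ to produce an explicit element that trivialises the cocycle after base-change, and then use inner automorphisms by this element to compare $U$ and ${}_\alpha U$ over the relevant period rings. First, since $[\alpha]$ lies in $\H^1_g(G_K, U(\Q_p))$, by definition it dies in $\H^1(G_K, U(\B_\st))$, so there exists $u \in U(\B_\st)$ with $\alpha(\sigma) = u^{-1}\sigma(u)$ for every $\sigma \in G_K$. (For the de Rham assertion we would pick instead $u \in U(\B_\dR)$, noting that $\H^1_g$ was defined via $\B_\st$ but every such class is also trivial over $\B_\dR$.)

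Next, I would observe that conjugation $x \mapsto u x u^{-1}$ is an algebraic isomorphism of group schemes $\mathrm{inn}_u \colon {}_\alpha U_{\B_\st} \isoarrow U_{\B_\st}$ (inner automorphisms of $U$ are algebraic), and a direct cocycle computation shows it is $G_K$-equivariant: if $\sigma \cdot_\alpha x = \alpha(\sigma)\sigma(x)\alpha(\sigma)^{-1}$ denotes the twisted action, then $\sigma(uxu^{-1}) = \sigma(u)\sigma(x)\sigma(u)^{-1} = u(\sigma\cdot_\alpha x)u^{-1}$. Consequently $\mathrm{inn}_u$ induces an isomorphism $\D_\st({}_\alpha U) \isoarrow \D_\st(U)$ of pro-unipotent groups over $K_0$, and this isomorphism fits into a commutative square with the two natural embeddings $\D_\st(-)_{\B_\st} \hookrightarrow (-)_{\B_\st}$. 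Since one of these embeddings is an isomorphism iff the other is, semistability of $U$ is equivalent to semistability of ${}_\alpha U$. The de Rham case is identical with $\B_\dR$ in place of $\B_\st$.

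For the second assertion, I would draw the naturality square of Serre twisting
\[\begin{tikzcd}
\H^1(G_K,{}_\alpha U(\Q_p)) \arrow[r]\arrow[d,"\wr"] & \H^1(G_K,{}_\alpha U(\B_\st)) \arrow[d,"\wr"] \\
\H^1(G_K,U(\Q_p)) \arrow[r] & \H^1(G_K,U(\B_\st))
\end{tikzcd}\]
where both vertical maps are the Serre twisting bijections. By construction of the right-hand vertical bijection, it sends the distinguished point of $\H^1(G_K,{}_\alpha U(\B_\st))$ to the class of $\alpha$ in $\H^1(G_K,U(\B_\st))$, which is trivial by the choice of $u$. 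Hence the vertical bijections match up the fibres over the distinguished points of the two horizontal maps, and taking kernels gives exactly the claimed bijection $\H^1_g(G_K, {}_\alpha U(\Q_p)) \isoarrow \H^1_g(G_K, U(\Q_p))$.

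The only mildly subtle point is checking that conjugation by an element $u \in U(\B_\st)$ really does give a well-defined morphism of schemes that behaves correctly with respect to the semilinear $G_K$-actions on both sides; this is essentially formal once one remembers that $U$ is a group scheme over $\Q_p$ so that inner automorphisms of $U_{\B_\st}$ by $\B_\st$-points of $U$ are automatically algebraic, and the $G_K$-equivariance reduces to the cocycle identity for $\alpha$. I do not expect any serious obstacle beyond this bookkeeping.
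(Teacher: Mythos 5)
Your proposal is correct and follows essentially the same route as the paper's own proof: use the $\H^1_g$ condition to produce $u\in U(\B_\st)$ trivialising $\alpha$, check that conjugation by $u$ gives a $G_K$-equivariant isomorphism ${}_\alpha U_{\B_\st}\isoarrow U_{\B_\st}$ compatible with the embeddings $\D_\st(-)_{\B_\st}\hookrightarrow(-)_{\B_\st}$, and then conclude the bijection on $\H^1_g$ from the naturality square for Serre twisting together with the fact that the right-hand twisting bijection sends the distinguished point to the (trivial) class of $\alpha$ in $\H^1(G_K,U(\B_\st))$. The only difference is cosmetic: you spell out the cocycle computation for equivariance of $\mathrm{inn}_u$, which the paper leaves implicit.
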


\begin{remark}\label{rmk:inverse_limits_of_H^1_g}
One can show relatively straightforwardly that if $U/\Q_p$ is a representation of $G_K$ on a finitely generated pro-unipotent group and we write $U=\liminv U_n$ as an inverse limit of finite-dimensional $G_K$-equivariant quotients, then the natural map\[\H^1_g(G_K,U(\Q_p))\rightarrow\liminv\H^1_g(G_K,U_n(\Q_p))\]is bijective (and similarly for $\H^1_f$ and $\H^1_e$), so that one can reduce the study of local Bloch--Kato Selmer sets to the case when $U$ is finite-dimensional. However, it will be our preference to always work at the infinite level, not least because the corresponding statements for the Bloch--Kato quotients $\H^1_{g/e}$ and $\H^1_{f/e}$ are much less straightforward to prove directly (see lemma \ref{lem:inverse_limits_of_H^1_g}).
\end{remark}

\subsection{Torsors and $\H^1$}
\label{s:torsors}

A different approach to the study of local Bloch--Kato Selmer sets has been undertaken in Kim's papers \cite{minhyong:siegel,minhyong:selmer} by viewing these sets as classifying sets for certain classes of torsors under $U$. This viewpoint has the distinct advantage of being the natural setting in which to describe the non-abelian Kummer map\[Y(K)\rightarrow\H^1(G_K,U(\Q_p)),\]which is the classifying map for path-torsors when $U$ is the $\Q_p$-pro-unipotent \'etale fundamental group of a variety $Y/K$ at a $K$-rational basepoint. Although our analysis will proceed rather differently, let us now recall this description and deduce thereby a non-abelian version of Hyodo's $\H^1_g=\H^1_\st$ theorem \cite{hyodo}.

\begin{definition-lemma}
Let $U/\Q_p$ be a representation of $G_K$ on a finitely generated pro-unipotent group, and let $P/\Q_p$ be a (right\nobreakdash-)torsor under (the underlying group scheme of) $U$, endowed with an action of $G_K$ by algebraic automorphisms. Then the conilpotency filtration on $\O(P)$ is $G_K$-stable. Moreover, the following are equivalent:
\begin{enumerate}
	\item the action of $G_K$ on $P(\Q_p)$ is continuous;
	\item the action of $G_K$ on each $J_n\O(P)$ is continuous.
\end{enumerate}
When these equivalent conditions hold, we will say that $P$ is a \emph{torsor under $U$}.
\begin{proof}[Proof of equivalence]
$G_K$-stability of the conilpotency filtration is immediate since it is defined by the kernels of $G_K$-equivariant maps.

If we write $U=\liminv U_n$ as an inverse limit of finite-dimensional $G_K$-equivariant quotients, then $P(\Q_p)$ is homeomorphic to the inverse limit of the pushouts $(P\times^UU_n)(\Q_p)$, so that it suffices to prove this for $U$ finite-dimensional. As in definition-lemma \ref{def-lem:unipotent_representation}, the second condition is equivalent to the action on each $\left(J_n\O(P)\right)^\dual$ being continuous. In one direction, $P(\Q_p)$ is the set of group-like elements of the coalgebra $\O(P)^\dual=\liminv\left(J_n\O(P)\right)^\dual$ and its topology is the subspace topology, so that the implication (2)$\Rightarrow$(1) is clear.

In the other direction, there is a canonical $G_K$-equivariant homeomorphism\[P(\Q_p)\times\left(J_n\O(U)\right)^\dual\isoarrow P(\Q_p)\times\left(J_n\O(P)\right)^\dual\]given by $(q,u)\mapsto(q,qu)$. When condition (1) holds, the $G_K$-action on the left-hand side is continuous, and hence the action on $J_n\O(P)$ is continuous also.
\end{proof}
\end{definition-lemma}

The importance of this notion of torsor is twofold. Firstly, such torsors arise naturally from geometry as torsors of \'etale paths between two $K$-rational points $x,y$ on a connected variety $X/K$. Secondly, the collection of $U$-torsors is parametrised by the continuous cohomology of $U(\Q_p)$, which allows one to understand them in a very concrete fashion.

\begin{lemma}\label{lem:torsors=H^1}
Let $U/\Q_p$ be a representation of $G_K$ on a finitely generated pro-unipotent group. Then there is a canonical bijection\[\H^1(G_K,U(\Q_p))\isoarrow\{\text{torsors under $U$}\}/\simeq.\]
\begin{proof}
This is a special case of \cite[Proposition 1]{minhyong:siegel}, where the filtration $W_n$ on $\O(P)$ is the conilpotency filtration. It is worthwhile quickly recalling how this construction goes.

To a cocycle $\alpha\in\Cycle^1(G_K,U(\Q_p))$ we associate a $U$-torsor ${}_\alpha P$ whose underlying scheme is $U$, endowed the right action of $U$ and the $\alpha$-twisted $G_K$ action, given on $\Lambda$-points for some $\Q_p$-algebra $\Lambda$ by\[g\colon u\mapsto\alpha(g)g(u).\]Just as in \S\ref{ss:serre_twisting}, this action is clearly algebraic and continuous on $\Q_p$-points.  Moreover, this torsor has a distinguished $\Q_p$-point, namely $1\in U(\Q_p)$, and it is easy to check that this construction induces a bijection between $\Cycle^1(G_K,U(\Q_p))$ and the set\footnote{Technically, setoid.} of pointed $U$-torsors (where the inverse map takes $(P,x)$ to the cocycle $g\mapsto x^{-1}g(x)$).

Now the action of $U(\Q_p)$ on $\Cycle^1(G_K,U(\Q_p))$ exactly corresponds to the change-of-basepoint action on pointed $U$-torsors, so the correspondence descends to a bijection between $\H^1(G_K,U(\Q_p))$ and the set of isomorphism classes of $U$-torsors $P$ such that $P(\Q_p)\neq\emptyset$. But since $U$ is a pro-unipotent group, all torsors satisfy this and we have the desired bijection.
\end{proof}
\end{lemma}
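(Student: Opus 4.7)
The plan is to follow the standard blueprint for classifying torsors under a group scheme by non-abelian cohomology, with the additional twist that one must verify the continuity conditions built into the definition of a torsor under a representation. First I would construct the map from cocycles to torsors: given a continuous cocycle $\alpha\in\Cycle^1(G_K,U(\Q_p))$, define ${}_\alpha P$ to have underlying $\Q_p$-scheme equal to $U$ with its right translation action by $U$, but with the $G_K$-action twisted so that on $A$-points it is $g\colon u\mapsto\alpha(g)g(u)$. Algebraicity of this action is immediate, and continuity on $\Q_p$-points follows exactly as in the Serre twisting construction in section \ref{sss:serre_twisting}. The element $1\in U(\Q_p)={}_\alpha P(\Q_p)$ is then a distinguished $\Q_p$-point, and the inverse construction takes a pair $(P,x)$ of a torsor together with a rational point to the cocycle $g\mapsto x^{-1}g(x)$; this is manifestly a cocycle in $U(\Q_p)$ (rather than just the abstract automorphism group of $P$) because the torsor structure identifies $P\cong U$ via $x$, and continuity of $\alpha$ follows from continuity of the $G_K$-action on $P(\Q_p)$ (condition (1) in the definition of a torsor).

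Next I would verify that the bijection between $\Cycle^1(G_K,U(\Q_p))$ and (isomorphism classes of) \emph{pointed} torsors is compatible with change-of-basepoint: replacing the basepoint $x$ by $xu$ for $u\in U(\Q_p)$ replaces the cocycle $\alpha$ by $\alpha\cdot u\colon g\mapsto u^{-1}\alpha(g)g(u)$, which is precisely the right action of $U(\Q_p)$ on cocycles defining $\H^1(G_K,U(\Q_p))$. Hence the bijection descends to an injection from $\H^1(G_K,U(\Q_p))$ into isomorphism classes of torsors under $U$ which admit a $\Q_p$-point. Surjectivity onto all torsors then follows from proposition \ref{prop:triviality_of_torsors}, which asserts that every torsor under a unipotent group has a $\Q_p$-point.

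The step requiring the most care is the verification that ${}_\alpha P$ is genuinely a torsor in the sense of the preceding definition-lemma, i.e.\ that the $G_K$-action is continuous on each $J_n\O({}_\alpha P)$; conversely, one must check that a cocycle produced from a torsor whose action on $J_n\O(P)$ is continuous is itself continuous as a map $G_K\to U(\Q_p)$. This is where the equivalence (1)$\Leftrightarrow$(2) of the previous definition-lemma does the real work: since the $J$-filtration on $\O({}_\alpha P)$ agrees (under the trivialisation coming from the identity basepoint) with the $J$-filtration on $\O(U)$, the two continuity conditions are equivalent and can be checked on $\Q_p$-points, where the argument is entirely elementary. With this in hand, the lemma reduces to the formal bijection of pointed sets described above, and the final quotient by $U(\Q_p)$ gives exactly the claimed parametrisation.
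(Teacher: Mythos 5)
Your proposal is correct and follows essentially the same route as the paper: the twist construction $\alpha\mapsto{}_\alpha P$, the inverse $(P,x)\mapsto(g\mapsto x^{-1}g(x))$, the identification of the $U(\Q_p)$-action on cocycles with change of basepoint, and triviality of torsors under unipotent groups (proposition \ref{prop:triviality_of_torsors}) to remove the rational-point hypothesis. Your extra remark that the continuity conditions on $\Q_p$-points and on the $J$-filtration of $\O({}_\alpha P)$ match up via the trivialisation at the identity is exactly the point implicit in the paper's appeal to the preceding definition-lemma, so nothing is missing.
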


In view of lemma \ref{lem:torsors=H^1}, the non-abelian local Bloch--Kato Selmer sets should correspond to a subclass of torsors satisfying certain admissibility conditions with respect to $\B_\st$, $\B_\cris$ and $\B_\cris^{\varphi=1}$ respectively. In fact, this criterion is very simple to state (and is essentially already found in \cite{minhyong:selmer}).

\begin{definition-lemma}\label{def-lem:admissible_torsor}
Let $U/\Q_p$ be a representation of $G_K$ on a finitely generated pro-unipotent group, $P/\Q_p$ be a torsor under $U$, and let $\B/\Q_p$ be a $(\Q_p,G_K)$-regular $\Q_p$-algebra. The following are equivalent:
\begin{enumerate}
	\item $[P]$ lies in the kernel of $\H^1(G_K,U(\Q_p))\rightarrow\H^1(G_K,U(\B))$;
	\item $P(\B)^{G_K}\neq\emptyset$;
	\item (if $U$ is $\B$-admissible) each $J_n\O(P)$ is $\B$-admissible.
\end{enumerate}
When these equivalent conditions hold, we will say that $P$ is a \emph{(relatively) $\B$-admissible torsor} under $U$.
\begin{proof}[Proof of equivalence]
For the equivalence (1)$\Leftrightarrow$(2), we may assume that the underlying $\Q_p$-variety of $P$ is $U$, and that the $G_K$ action is that determined by a cocycle $\alpha$. It follows from chasing definitions in lemma \ref{lem:torsors=H^1} that the image of $[\alpha]$ in $\Cycle^1(G_K,U(\B))$ is the coboundary of some $u\in U(\B)$ iff $u$ is fixed for the $\alpha$-twisted action.

For the implication (2)$\Rightarrow$(3) when $U$ is $\B$-admissible, a choice of a $G_K$-fixed point on $P(\B)$ gives a $G_K$-equivariant isomorphism $P_\B\isoarrow U_\B$ of $\B$-schemes, and hence a $G_K$-equivariant isomorphism $\B\otimes_{\Q_p}\O(P)\isoarrow\B\otimes_{\Q_p}\O(U)$. Since $\O(U)$ is ind-$\B$-admissible, so too is $\O(P)$.

For the converse implication, we recall from \cite[Proposition 1.5.2]{fontaine3} that the Dieudonn\'e functor $\D$ on $\B$-admissible representations commutes with tensor products and duals, so that when $\O(P)$ is ind-$\B$-admissible, $\D(\O(P))$ is an algebra over the fixed field $F=\B^{G_K}$, and a right-comodule for $\D(\O(U))$, such that $\D(\O(P))\otimes_F\D(\O(P))\rightarrow\D(\O(U))\otimes_F\D(\O(P))$ is an isomorphism. Since $\D(\O(P))\neq0$, this implies that $\D(P):=\Spec(\D(\O(P))$ is a torsor under the unipotent group $\D(U)=\Spec(\D(\O(U)))$ over $F$ (remark \ref{rmk:affine_rings_of_admissible_representations}). The same argument as in remark \ref{rmk:affine_rings_of_admissible_representations} shows that $\D(P)(\Lambda)=P(\B\otimes_F\Lambda)^{G_K}$ for any $F$-algebra $\Lambda$. But then by triviality of the $\D(U)$-torsor $\D(P)$, we know that $P(\B)^{G_K}=\D(P)(F)\neq\emptyset$, as desired.
\end{proof}
\end{definition-lemma}

\begin{remark}
Definition-lemma \ref{def-lem:admissible_torsor} is a non-abelian version of the well-known fact that, under the correspondence $\H^1(G_K,V)\cong\Ext^1_{G_K}(\Q_p,V)$ for a $p$-adic representation $V$, the kernel of $\H^1(G_K,V)\rightarrow\H^1(G_K,\B\otimes V)$ corresponds to the class of those extensions of $\Q_p$ by $V$ which remain exact upon applying the associated Dieudonn\'e functor $\D$.
\end{remark}

Using this torsor-theoretic language, we are now able to justify our use of $\B_\st$ in place of $\B_\dR$ in the definition of $\H^1_g$, as promised. This is a non-abelian version of Hyodo's $\H^1_g=\H^1_\st$ theorem \cite{hyodo}.

\begin{lemma}\label{lem:our_H^1_g_is_right}
Let $U/\Q_p$ be a de Rham representation of $G_K$ on a finitely generated pro-unipotent group. Then the kernel of $\H^1(G_K,U(\Q_p))\rightarrow\H^1(G_K,U(\B_\dR))$ is $\H^1_g(G_K,U(\Q_p))$.
\begin{proof}
The $G_K$-equivariant inclusion $\B_\st\hookrightarrow\B_\dR$ induced from our choice of $p$-adic logarithm factors the map $\H^1(G_K,U(\Q_p))\rightarrow\H^1(G_K,U(\B_\dR))$ through $\H^1(G_K,U(\Q_p))$, and hence its kernel clearly contains $\H^1_g(G_K,U(\B_\st))$. For the converse implication, consider any $U$-torsor $P$ whose class lies in the kernel of $\H^1(G_K,U(\Q_p))\rightarrow\H^1(G_K,U(\B_\dR))$, i.e.\ such that $\O(P)$ is ind-de Rham.

By potential semistability (corollary \ref{cor:potential_semistability_p-adic}) we may pick a finite $L/K$ such that $\O(U)$ is ind-semistable as a representation of $G_L$. Then from the canonical isomorphism $\gr^J_\bullet\O(P)\cong\gr^J_\bullet\O(U)$ (see proposition \ref{prop:graded_isomorphism}), we see that $\gr^J_\bullet\O(P)$, and hence $\O(P)$, is an ind-semistable representation of $G_L$. This tells us that the class of $P|_{G_L}$ lies in $\H^1_g(G_L,U(\Q_p))$, and hence by lemma \ref{lem:bloch-kato_stable_under_base_change} the class of $P$ lies in $\H^1_g(G_K,U(\Q_p))$ as desired.
\end{proof}
\end{lemma}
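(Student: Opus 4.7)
The plan is to reduce everything to the torsor-theoretic interpretation from definition-lemma \ref{def-lem:admissible_torsor}, which says that $[P]$ lies in the kernel of $\H^1(G_K,U(\Q_p))\to\H^1(G_K,U(\B))$ precisely when $\O(P)$ is ind-$\B$-admissible. Under this reformulation, the lemma asserts that a $U$-torsor with ind-de Rham affine ring automatically has ind-semistable affine ring.

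The easy inclusion $\H^1_g\subseteq\ker\!\big(\H^1(G_K,U(\Q_p))\to\H^1(G_K,U(\B_\dR))\big)$ follows because the chosen $p$-adic logarithm yields a $G_K$-equivariant inclusion $\B_\st\hookrightarrow\B_\dR$, which factors the map through $\H^1(G_K,U(\B_\st))$. For the converse, I would start with a $U$-torsor $P$ whose class is in the kernel of the $\B_\dR$-map, i.e.\ such that $\O(P)$ is ind-de Rham.

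The key idea is then to exploit the canonical graded identification $\gr^J_\bullet\O(P)\cong\gr^J_\bullet\O(U)$ from proposition \ref{prop:graded_isomorphism}, which is Galois-equivariant because it is intrinsic. By the $p$-adic monodromy theorem for pro-unipotent representations (corollary \ref{cor:potential_semistability_p-adic}), $U$ becomes semistable after restricting to $G_L$ for some finite extension $L/K$; hence $\gr^J_\bullet\O(U)$, and therefore $\gr^J_\bullet\O(P)$, is ind-semistable as a $G_L$-representation. Since the class of semistable $p$-adic representations is closed under extensions (the full subcategory of $\B_\st$-admissible representations in $\mathbf{Rep}_{G_L}$ being a Tannakian subcategory), one lifts semistability from the graded pieces to each $J_n\O(P)$ by induction on $n$, concluding that $\O(P)$ is ind-semistable as a $G_L$-representation.

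This yields $[P|_{G_L}]\in\H^1_g(G_L,U(\Q_p))$, and then an appeal to lemma \ref{lem:bloch-kato_stable_under_base_change}, which says that $\H^1_g(G_K,U(\Q_p))$ is the preimage of $\H^1_g(G_L,U(\Q_p))$ under restriction, places $[P]$ in $\H^1_g(G_K,U(\Q_p))$. The only non-routine ingredient is verifying that the graded identification of proposition \ref{prop:graded_isomorphism} really is canonical enough to be Galois-equivariant (which it is, by the Tannakian remark \ref{rmk:tannakian_J-filtration}); once that is granted, the rest is a short formal chain of reductions, with the base-change lemma doing the final work of descending back from $G_L$ to $G_K$.
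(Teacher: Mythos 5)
Your overall route is exactly the paper's: the easy inclusion via $\B_\st\hookrightarrow\B_\dR$, then for the converse pass to the torsor $P$ with $\O(P)$ ind-de Rham, choose $L/K$ over which $U$ is semistable via corollary \ref{cor:potential_semistability_p-adic}, transfer semistability through the canonical (and indeed Galois-equivariant, by remark \ref{rmk:tannakian_J-filtration}) isomorphism $\gr^J_\bullet\O(P)\cong\gr^J_\bullet\O(U)$, and descend with lemma \ref{lem:bloch-kato_stable_under_base_change}. So far, so good.

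However, your justification of the crucial lifting step --- from semistability of $\gr^J_\bullet\O(P)$ over $G_L$ to semistability of each $J_n\O(P)$ --- is wrong as stated. Semistable representations are \emph{not} closed under extensions, and being a Tannakian subcategory (closed under subquotients, tensor products and duals) does not give closure under extensions: for instance, any ramified continuous homomorphism $G_L\rightarrow\Q_p$ (such as $\log_p\circ\chi_{\mathrm{cyc}}$) defines an extension of the trivial representation by itself which is not semistable, indeed not even Hodge--Tate. If your principle were true, the whole point of the Selmer condition $\H^1_g$ would evaporate. The step is nevertheless salvageable, but only by using the hypothesis you have already recorded and then set aside: $\O(P)$ is ind-de Rham, so each $J_n\O(P)$ and each intermediate $J_i\O(P)$ is de Rham (subrepresentations of de Rham representations are de Rham), and the correct input is that a \emph{de Rham} extension of semistable representations is again semistable (this is exactly the corollary following lemma \ref{lem:seqs_of_de_Rham_reps}, and the same point powers lemma \ref{lem:semistability_via_abelianisation_p-adic}; concretely it follows from exactness of $\D_\st$ on de Rham sequences, or from Berger's theorem plus triviality of the finite-order unipotent residual inertia action). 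With that substitution the induction on $n$ goes through, and the rest of your argument --- including the appeal to definition-lemma \ref{def-lem:admissible_torsor} over $L$, which is legitimate since $U$ is semistable over $G_L$, and the final descent via lemma \ref{lem:bloch-kato_stable_under_base_change} --- is correct and agrees with the paper.
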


\subsubsection{Isocrystal moduli interpretations for Bloch--Kato Selmer sets}

To conclude this section, let us briefly discuss a second moduli interpretation for the Bloch--Kato Selmer sets $\H^1_e$, $\H^1_f$ and $\H^1_g$, as well as the relative quotients $\H^1_{g/e}$, $\H^1_{f/e}$ and $\H^1_{g/f}$ which is of a more log-crystalline flavour. As in \S\ref{ss:extra_structure}, we will assume for simplicity that $U$ is semistable, so that $\O(\D_\st(U))$ is an ind-weakly admissible filtered $(\varphi,N)$-module and $\D_\st(U)$ carries a Frobenius $\varphi$ and monodromy operator $N$. In this case, definition-lemma \ref{def-lem:admissible_torsor} shows that $\H^1_g(G_K,U(\Q_p))$ has an interpretation as the set of isomorphism classes of torsors $P$ under $U$ such that $\O(P)$ is ind-semistable. Thus by the equivalence of symmetric monoidal categories between semistable representations and weakly admissible filtered $(\varphi,N)$-modules \cite[Theorem A]{colmez-fontaine}, we obtain an equivalent description in terms of $\D_\st(U)$.

\begin{lemma}\label{lem:isocrystal_interpretation_H^1_g}
Let $U/\Q_p$ be a semistable representation of $G_K$ on a finitely generated pro-unipotent group. Then there is a canonical bijection between $\H^1_g(G_K,U(\Q_p))$ and the set $\H^1(\MF_K^{\varphi,N,\wa},\D_\st(U)(K_0))$ of isomorphism classes of torsors $Q$ under $\D_\st(U)$ endowed with a ind-weakly admissible filtered $(\varphi,N)$-module structure on $\O(Q)$ compatible with the $\O(\D_\st(U))$-comodule algebra structure maps.
\end{lemma}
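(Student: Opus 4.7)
The plan is to deduce the bijection directly from the Colmez--Fontaine equivalence of tensor categories between finite-dimensional semistable $p$-adic $G_K$-representations and weakly admissible filtered $(\varphi,N)$-modules over $K_0$, with quasi-inverse $V_\st$, extended to ind-objects in the obvious way. Combining definition-lemma \ref{def-lem:admissible_torsor} (applied to $\B=\B_\st$) with lemma \ref{lem:our_H^1_g_is_right}, we already know that $\H^1_g(G_K,U(\Q_p))$ classifies isomorphism classes of $U$-torsors $P$ with $\O(P)$ ind-semistable; so it suffices to set up a functorial bijection between such torsors and $\D_\st(U)$-torsors $Q$ equipped with an ind-weakly admissible filtered $(\varphi,N)$-module structure on $\O(Q)$ compatible with all the $\O(\D_\st(U))$-comodule algebra structure maps.

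In the forward direction, given such a $P$, I would apply $\D_\st$ level-wise to the $J$-filtered $G_K$-equivariant comodule algebra $\O(P)$. Since the $J$-filtration is by subobjects in the category of ind-semistable representations (remark \ref{rmk:tannakian_J-filtration}), this yields a canonical ind-weakly admissible filtered $(\varphi,N)$-module structure on $\D_\st(\O(P))=\O(\D_\st(P))$. The fact that $\D_\st(P):=\Spec\D_\st(\O(P))$ is a torsor under $\D_\st(U)=\Spec\D_\st(\O(U))$ compatibly with its comodule algebra structure was already established in the proof of definition-lemma \ref{def-lem:admissible_torsor} (using remark \ref{rmk:affine_rings_of_admissible_representations}). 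The inverse assignment is constructed symmetrically by applying $V_\st$ filtration-level-wise to $\O(Q)$, producing an ind-semistable $\O(U)$-comodule algebra which by the inverse argument is seen to be the affine ring of a $U$-torsor.

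The main technical point to verify is that the torsor axiom---that is, the assertion that the map $\O(P)\otimes\O(P)\to\O(P)\otimes\O(U)$ built from the comodule structure is an isomorphism---is preserved by both $\D_\st$ and $V_\st$. This follows because both functors are tensor equivalences between the relevant Tannakian categories, so they commute with tensor products and send isomorphisms to isomorphisms; the corresponding statement for the inverse system of finite-dimensional filtration pieces transfers straightforwardly to the ind-level using the fact, already implicit in remark \ref{rmk:affine_rings_of_admissible_representations}, that $\D_\st(J_n\O(P))=J_n\O(\D_\st(P))$ and similarly for $V_\st$. Once these points are checked, mutual inverseness of the two assignments and the correct behaviour on isomorphism classes is immediate from the Colmez--Fontaine equivalence, and no serious obstacle remains.
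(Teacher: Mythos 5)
Your proposal is correct and follows essentially the same route as the paper: the paper deduces the lemma from definition-lemma \ref{def-lem:admissible_torsor} (with $\B=\B_\st$), which identifies $\H^1_g(G_K,U(\Q_p))$ with isomorphism classes of $U$-torsors $P$ whose affine ring $\O(P)$ is ind-semistable, and then invokes the Colmez--Fontaine equivalence to transfer this to $\D_\st(U)$-torsors with ind-weakly admissible filtered $(\varphi,N)$-structure. Your write-up merely fills in the same details (level-wise application of $\D_\st$ and its quasi-inverse along the $J$-filtration, compatibility with tensor products and the comodule algebra structure) that the paper leaves implicit.
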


In this lemma and what follows, $K_0$ denotes the maximal absolutely unramified subfield of $K$, i.e.\ the fraction field of the ring of Witt vectors of the residue field of $K$.

Just as for $\D_\st(U)$, for such a torsor $Q$ the Frobenius and monodromy on $\O(Q)$ induce a Frobenius automorphism $\varphi\colon Q\isoarrow\varphi^*Q$ and monodromy vector field $N\colon Q\rightarrow TQ$ on $Q$, compatible with the corresponding structures on $\D_\st(U)$ under the action map $Q\times\D_\st(U)\rightarrow Q$ and satisfying $\varphi^*N\circ\varphi=p\cdot\dee\varphi\circ N$.

\begin{lemma}\label{lem:isocrystal_interpretation_H^1_g/e}
Let $U/\Q_p$ be a semistable representation of $G_K$ on a finitely generated pro-unipotent group. Then there is a canonical bijection between:
\begin{itemize}
	\item $\H^1_{g/e}(G_K,U(\Q_p))$;
	\item the set $\H^1(\Mod_K^{\varphi,N},\D_\st(U)(K_0))$ of isomorphism classes of torsors $Q$ under $\D_\st(U)$ endowed with a (non-filtered) $(\varphi,N)$-module structure on $\O(Q)$ compatible with the $\O(\D_\st(U))$-comodule algebra structure maps; and
	\item the set of isomorphism classes of torsors $Q$ under $\D_\st(U)$ endowed with a Frobenius $\varphi\colon Q\isoarrow\varphi^*Q$ and monodromy vector field $N\colon Q\rightarrow TQ$ satisfying $\varphi^*N\circ\varphi=p\cdot\dee\varphi\circ N$ and compatible with the corresponding structures on $\D_\st(U)$ under the action map $Q\times\D_\st(U)\rightarrow Q$.
\end{itemize}
\begin{proof}
The bijection between the latter two kinds of structures on a torsor $Q$ arises from the usual bijection between $\varphi$-semilinear variety automorphisms of $Q$ (resp.\ vector fields on $Q$) and $\varphi$-semilinear algebra automorphisms of $\O(Q)$ (resp.\ derivations on $\O(Q)$) -- one checks straightforwardly that the conditions imposed on the variety side exactly correspond to those on the algebraic side.

For the equivalence of the first point with the other two, we consider the map\[\H^1_g(G_K,U(\Q_p))\isoarrow\H^1(\MF_K^{\varphi,N,\wa},\D_\st(U)(K_0))\rightarrow\H^1(\Mod_K^{\varphi,N},\D_\st(U)(K_0)),\]where the isomorphism is the one from lemma \ref{lem:isocrystal_interpretation_H^1_g}, and the second map is the one which forgets the Hodge structure on the affine ring of a $\D_\st(U)$-torsor. It suffices to show that this map is surjective and that its fibres are exactly the $\sim_{\H^1_e}$ equivalence classes, so that it factors through a bijection\[\H^1_{g/e}(G_K,U(\Q_p))\isoarrow\H^1(\Mod_K^{\varphi,N},\D_\st(U)(K_0)).\]

To show surjectivity, consider any $Q$ with a $(\varphi,N)$-module structure on $\O(Q)$. It follows by definition \ref{def:J-filtration} that the conilpotency filtration $J_\bullet\O(Q)$ is a filtration by sub-$(\varphi,N)$-modules, and that the canonical isomorphism $\gr^J_\bullet(\O(Q))\isoarrow\gr^J_\bullet(\O(\D_\st(U)))$ is an isomorphism of $(\varphi,N)$-modules in each degree.

If we pick any $q\in Q(K_0)$, we obtain a trivialisation $\D_\st(U)\isoarrow Q$, and may put a Hodge filtration on $K\otimes_{K_0}\O(Q)$ by declaring it to be the filtration corresponding to the Hodge filtration on $K\otimes_{K_0}\O(\D_\st(U))$ under the induced isomorphism $\O(Q)\isoarrow\O(\D_\st(U))$. By construction, the canonical isomorphism $\gr^J_\bullet(\O(Q))\isoarrow\gr^J_\bullet(\O(\D_\st(U)))$ is then a filtered isomorphism for the Hodge filtration, so that the $J$-graded pieces of $\O(Q)$, with their induced Hodge filtrations, are weakly admissible. Hence, by \cite[Proposition 3.4]{colmez-fontaine}, $\O(Q)$ itself is ind-weakly admissible, so that the class of $Q$ lies in the image of $\H^1(\MF_K^{\varphi,N,\wa},\D_\st(U)(K_0))\rightarrow\H^1(\Mod_K^{\varphi,N},\D_\st(U)(K_0))$, as desired.

Finally, to show that the fibres of $\H^1_g(G_K,U(\Q_p))\rightarrow\H^1(\Mod_K^{\varphi,N},\D_\st(U)(K_0))$ are exactly the $\sim_{\H^1_e}$ equivalence classes, consider any two semistable torsors $P$, $P'$ under $U$, and write $\D_\st(P)=\Spec(\D_\st(\O(P)))$, respectively $\D_\st(P')$, for the associated $\D_\st(U)$-torsors. $P$ and $P'$ lie in the same fibre iff $\D_\st(P)$ and $\D_\st(P')$ are isomorphic as $\D_\st(U)$-torsors compatibly with the $(\varphi,N)$-module structure on their affine rings. This occurs iff $\D_\st(P)_{\B_\st}=P_{\B_\st}$ and $\D_\st(P')_{\B_\st}=P'_{\B_\st}$ are isomorphic as $\D_\st(U)_{\B_\st}=U_{\B_\st}$-torsors, compatibly with the Galois action, Frobenius and monodromy on the affine rings. This, in turn, occurs iff there is a $G_K$-equivariant isomorphism $P_{\B_\cris^{\varphi=1}}\simeq P'_{\B_\cris^{\varphi=1}}$ of $U_{\B_\cris^{\varphi=1}}$-torsors. But this final condition says exactly that the images of $P$ and $P'$ in $\H^1(G_K,U(\B_\cris^{\varphi=1}))$ are equal, i.e.\ that they lie in the same $\sim_{\H^1_e}$-equivalence class, which is what we wanted to show.
\end{proof}
\end{lemma}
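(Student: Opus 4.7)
The plan is to deduce this from the previous Lemma \ref{lem:isocrystal_interpretation_H^1_g} by analysing carefully what forgetting the Hodge filtration does on the Galois side. First I would dispatch the easier bijection between the second and third descriptions: both concern torsors $Q$ under $\D_\st(U)$, and the standard duality between affine schemes and their Hopf-comodule algebras of functions converts a $(\varphi,N)$-module structure on $\O(Q)$ compatible with the comodule structure into a $\varphi$-semilinear isomorphism $\varphi\colon Q\isoarrow\varphi^*Q$ and a derivation-valued vector field $N\colon Q\rightarrow TQ$ compatible with the action of $\D_\st(U)$. The algebra-side relation $N\varphi=p\varphi N$ translates term-for-term into $\varphi^*N\circ\varphi=p\cdot\dee\varphi\circ N$, and conversely.

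For the substantive bijection between the first and second descriptions, the strategy is to factor the bijection of Lemma \ref{lem:isocrystal_interpretation_H^1_g} as
\[
\H^1_g(G_K,U(\Q_p))\isoarrow\H^1(\MF_K^{\varphi,N,\wa},\D_\st(U)(K_0))\rightarrow\H^1(\Mod_K^{\varphi,N},\D_\st(U)(K_0)),
\]
where the second arrow simply forgets the Hodge filtration on $\O(Q)$, and then to prove (i) that this composite is surjective and (ii) that its fibres are precisely the $\sim_{\H^1_e}$-equivalence classes. Given (i) and (ii), the composite factors through the claimed bijection out of $\H^1_{g/e}(G_K,U(\Q_p))$.

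For surjectivity, given a $\D_\st(U)$-torsor $Q$ with a $(\varphi,N)$-module structure on $\O(Q)$, I would pick a $K_0$-point $q\in Q(K_0)$ (which exists by triviality of pro-unipotent torsors, Proposition \ref{prop:triviality_of_torsors}), use the resulting trivialization to transport the Hodge filtration on $K\otimes_{K_0}\O(\D_\st(U))$ to a candidate Hodge filtration on $K\otimes_{K_0}\O(Q)$, and verify weak admissibility. Here the canonical $J$-graded isomorphism $\gr^J_\bullet\O(Q)\isoarrow\gr^J_\bullet\O(\D_\st(U))$ of Proposition \ref{prop:graded_isomorphism}, which is Tannakian as noted in Remark \ref{rmk:tannakian_J-filtration}, is automatically an isomorphism of filtered $(\varphi,N)$-modules in each degree, so the graded pieces inherit weakly admissible structures and the whole ind-object is ind-weakly admissible by \cite[Proposition 3.4]{colmez-fontaine}. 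For the fibre computation (ii), given semistable $U$-torsors $P,P'$ whose images $\D_\st(P),\D_\st(P')$ are isomorphic as $(\varphi,N)$-module $\D_\st(U)$-torsors (but perhaps not as filtered torsors), the Colmez--Fontaine equivalence of categories promotes this to a $G_K$-equivariant isomorphism $P_{\B_\cris^{\varphi=1}}\simeq P'_{\B_\cris^{\varphi=1}}$ of $U_{\B_\cris^{\varphi=1}}$-torsors, which is exactly the assertion $P\sim_{\H^1_e}P'$; the converse is immediate.

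The main obstacle will be the surjectivity step, since a priori a $(\varphi,N)$-structure on an infinite-dimensional $\O(Q)$ need not admit any compatible weakly admissible Hodge filtration. The resolution, as sketched above, is that the choice of trivialization reduces the construction of the filtration to importing it from $\O(\D_\st(U))$, while the graded identification reduces verification of weak admissibility to data in $\D_\st(U)$ already known to be weakly admissible. The only care required is ensuring that neither the filtration one imports nor the resulting equivalence class depends on auxiliary choices: the former is irrelevant because different trivializations differ by a $\D_\st(U)$-action which respects the filtered structure, and the latter is handled by the fibre argument in (ii).
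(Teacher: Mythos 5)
Your proposal follows the paper's proof almost verbatim: the same dictionary between the second and third descriptions, the same factorisation of the bijection of lemma \ref{lem:isocrystal_interpretation_H^1_g} through the forgetful map, the same surjectivity argument (trivialise by a point $q\in Q(K_0)$, import the Hodge filtration from $\O(\D_\st(U))$, and verify weak admissibility on $J$-graded pieces via proposition \ref{prop:graded_isomorphism} and \cite[Proposition 3.4]{colmez-fontaine}), and the same characterisation of the fibres as the $\sim_{\H^1_e}$-classes.

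The one place where your justification does not hold up as written is the fibre step. The Colmez--Fontaine theorem is an equivalence between semistable representations and \emph{weakly admissible filtered} $(\varphi,N)$-modules; an isomorphism $\D_\st(P)\simeq\D_\st(P')$ of $\D_\st(U)$-torsors compatible only with $(\varphi,N)$ and not with the Hodge filtration cannot be transported through that equivalence --- if it could, it would yield an isomorphism $P\simeq P'$ of torsors with $G_K$-action, i.e.\ equality already in $\H^1_g(G_K,U(\Q_p))$, which is precisely what fails for distinct classes lying in a common $\sim_{\H^1_e}$-class. The correct mechanism, and the one the paper uses, is the semistability comparison itself: since $P$ and $P'$ are semistable torsors one has $\D_\st(P)_{\B_\st}=P_{\B_\st}$ and likewise for $P'$, so the unfiltered isomorphism base-changes to a $G_K$-equivariant isomorphism $P_{\B_\st}\simeq P'_{\B_\st}$ of $U_{\B_\st}$-torsors compatible with $\varphi$ and $N$ (which act only through the period ring); this is equivalent to a $G_K$-equivariant isomorphism $P_{\B_\cris^{\varphi=1}}\simeq P'_{\B_\cris^{\varphi=1}}$, i.e.\ to $P\sim_{\H^1_e}P'$, with the converse direction obtained by base change along $\B_\cris^{\varphi=1}\hookrightarrow\B_\st$. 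With that substitution your argument coincides with the paper's.
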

\section{Homotopical algebraic background}\label{c:homotopical_algebra}

Our approach to the analysis of non-abelian Bloch--Kato Selmer sets relies on a suitable non-abelian generalisation of the homological algebra of cochain complexes and their cohomology, so that we can effectively transcribe arguments from the abelian setting to the non-abelian. This generalisation is provided by the theory of cosimplicial groups and their cohomotopy, as defined for instance in \cite{bousfield-kan}, and which subsumes the more well-known theory of non-abelian group cohomology \cite[Section I.5]{serre}. For the convenience of the reader, we will recall the basic setup here as expounded in \cite{pirashvili}.

\subsection{Cosimplicial groups and cohomotopy}

\begin{definition}[Cosimplicial objects]\label{def:cosimplicial_objects}
Let $\mcal C$ be a category. A \emph{cosimplicial object} of $\mcal C$ is a covariant functor $X^\bullet\colon\Delta\rightarrow\mcal C$, where $\Delta$ is the simplex category whose objects are $[n]=\{0,1,\dots,n\}$ for $n\geq0$ and whose morphisms are the order-preserving maps of sets (we usually write $X^n$ for $X([n])$). A morphism of cosimplicial objects is just a natural transformation of functors, so the category of cosimplicial objects of $\mcal C$ is the functor category $\mcal C^\Delta$.

If we replace the simplex category $\Delta$ with the strict simplex category $\Delta^+$ consisting of the monomorphisms in $\Delta$, then we have the notion of a \emph{semi-cosimplicial object}. When $\mcal C$ has finite products, we define the cosimplicial object $\Kan^\bullet(X^\bullet)$ \emph{cogenerated by} a semi-cosimplicial object $X^\bullet\colon\Delta^+\rightarrow\mcal C$ to be, at some $[n]\in\ob(\Delta)$\[\Kan^n(X^\bullet):=\prod_{[n]\twoheadrightarrow[k]}X^k\]where the product is taken over all order-preserving surjections from the set $[n]$. The map $\Kan^{n'}(X^\bullet)\rightarrow\Kan^n(X^\bullet)$ associated to a map $f\colon[n']\rightarrow[n]$ is defined to be the unique one making commute all squares
\begin{center}
\begin{tikzcd}
\prod_{[n']\twoheadrightarrow[k']}X^{k'} \arrow[r]\arrow[d,"\proj_{g'}"] & \prod_{[n]\twoheadrightarrow[k]} X^k \arrow[d,"\proj_g"] \\
X^{k'} \arrow[r,"X^\bullet(f')"] & X^k
\end{tikzcd}
\end{center}
for each $g\colon[n]\twoheadrightarrow[k]$, where $[n']\overset{g'}\twoheadrightarrow[k']\overset{f'}\hookrightarrow[k]$ is the epi-mono factorisation of $gf$. There is a natural morphism $\Kan^\bullet(X^\bullet)\rightarrow X^\bullet$ of semi-cosimplicial objects, which is terminal among all such maps to $X^\bullet$ from a cosimplicial object.
\end{definition}

\begin{remark}
Often, instead of considering all morphisms appearing in a cosimplicial object $X^\bullet$, we just consider the special generating set given by the \emph{coface maps} $d^i\colon X^{n-1}\rightarrow X^n$ and \emph{codegeneracy maps} $s^i\colon X^{n+1}\rightarrow X^n$ for $0\leq i\leq n$, which are those induced from the unique injection $[n-1]\hookrightarrow[n]$ (resp.\ surjection $[n+1]\twoheadrightarrow[n]$) which misses $i\in[n]$ (resp.\ hits $i\in[n]$ twice). These are related by the \emph{cosimplicial identities}:
\begin{itemize}
	\item $d^jd^i=d^id^{j-1}$ for $i<j$;
	\item $s^js^i=s^is^{j+1}$ for $i\leq j$; and
	\item $s^jd^i=\begin{cases}d^is^{j-1} & \text{for $i<j$,}\\\id & \text{for $i=j,j+1$,}\\ d^{i-1}s^j & \text{for $i>j+1$;}\end{cases}$
\end{itemize}
and, conversely, maps between the $X^n$ satisfying these identities uniquely determine the structure of a cosimplicial object.
\end{remark}

\begin{remark}\label{rmk:dold-kan_correspondence}
The functor $\Kan^\bullet\colon\mcal C^{\Delta^+}\rightarrow\mcal C^\Delta$ is, when $\mcal C$ is abelian, essentially one of the two functors giving the equivalence in the cosimplicial Dold--Kan correspondence \cite[Corollary III.2.3]{goerss-jardine}\footnote{The Dold--Kan correspondence in \cite{goerss-jardine} is given for simplicial objects rather than cosimplicial objects, but since the dual of an abelian category is again an abelian category, the dual statement holds just as well. This is spelled out explicitly in \cite{grunenfelder-mastnak}.}. More precisely, if we precompose with the functor $\CoCh_+(\mcal C)\rightarrow\mcal C^{\Delta^+}$ taking a coconnected cochain complex $A^\bullet$ to the semi-cosimplicial object with the same objects and whose coface maps $A^{n-1}\rightarrow A^n$ all zero except $d^n:=(-1)^nd$, then the composite $\mathrm{CoCh}_+(\mcal C)\rightarrow\mcal C^\Delta$ is one half of the equivalence of categories in the Dold--Kan correspondence. Because of this, $\Kan^\bullet$ is often called the \emph{denormalisation} functor (e.g.\ in \cite{olsson,kriz}), and can be variously denoted by $\mathbf D$, $D$ or $K$ in different sources\footnote{There is an apparent discrepancy in the definition of the denormalisation functor in \cite[Section 1.4]{kriz} and the dual to the definition in \cite[Section III.2]{goerss-jardine}, even though they are both supposed to be quasi-inverses to the same normalisation functor. To save confusion, we will work exclusively with the latter description.}.
\end{remark}

The preceding remark shows that the category of coconnected cochain complexes of abelian groups is a full subcategory of the category of cosimplicial groups, and hence one can regard cosimplicial groups as a generalisation of cochain complexes more suited to non-abelian results. Accordingly, one hopes for suitable non-abelian generalisations of the cohomology of complexes, at least in small codimension, and this is provided by the \emph{cohomotopy sets/groups} of Bousfield--Kan.

\begin{definition}\cite[Section 2]{pirashvili}\label{def:cohomotopy}
Let $U^\bullet$ be a cosimplicial or semi-cosimplicial group. The $0$th \emph{cohomotopy group} $\pi^0(U^\bullet)$ and $1$st \emph{cohomotopy pointed set} $\pi^1(U^\bullet)$ are defined to be:
\begin{itemize}
	\item $\pi^0(U^\bullet)$ is the equaliser of $d^0,d^1\colon U^0\rightarrow U^1$;
	\item $\pi^1(U^\bullet):=\Cycle^1(U^\bullet)/U^0$ where\[\Cycle^1(U^\bullet)=\{u_1\in U^1\:|\:d^1(u_1)=d^2(u_1)d^0(u_1)\}\]is the set of \emph{cocycles} and $U^0$ acts on this set by twisted conjugation $u_0\colon u_1\mapsto d^1(u_0)^{-1}u_1d^0(u_0)$. The distinguished point is just the class of $1\in\Cycle^1(U^\bullet)$.
\end{itemize}
When $U^\bullet$ is abelian, this can be extended further to give abelian cohomotopy groups $\pi^i(U^\bullet)$ for all $i\geq0$, defined as the cohomology groups of the (unnormalised) Moore complex $C^\bullet(U^\bullet)$, whose terms are $C^n(U^\bullet)=U^n$ and whose differential is the alternating sum $\sum_k(-1)^kd^k$ of the coface maps. It is straightforward to check this agrees with the above definition of $\pi^0$ and $\pi^1$.

Note that $\pi^0(U^\bullet)$ makes sense for cosimplicial objects in any category with equalisers, so will inherit e.g.\ topologies or group actions when $U^\bullet$ has them.
\end{definition}

We will see many variants on the following example throughout this paper.

\begin{example}[Double-coset spaces]\label{ex:pairs_of_subgroups}
Let $U$ be a group with subgroups $U',U''\leq U$, and consider the cosimplicial group $U^\bullet$ cogenerated by the semi-cosimplicial group\[U'\times U''\rightrightarrows U,\]where $d^0(u',u'')=u'$ and $d^1(u',u'')=u''$ (and subsequent terms are the trivial group). Then (for example using remark \ref{rmk:non-abelian_dold-kan} shortly) we canonically identify the cohomotopy groups of $U^\bullet$ as\[\pi^i(U^\bullet)\cong\begin{cases}U'\cap U''&\text{if $i=0$,}\\U''\backslash U/U'&\text{if $i=1$,}\\1&\text{if $i>1$ and $U$ abelian.}\end{cases}\]
\end{example}

The following lemmas are essentially restatements of the Dold--Kan and Eilenberg--Zilber theorems, which will be useful in computations.

\begin{lemma}\label{lem:dold-kan}
Let $A^\bullet$ be a semi-cosimplicial abelian group (or semi-cosimplicial object in any abelian category), and $\Kan^\bullet(A^\bullet)$ the cosimplicial abelian group cogenerated by it. Then the canonical map $\Kan^\bullet(A^\bullet)\rightarrow A^\bullet$ of semi-cosimplicial groups induces an isomorphism on cohomotopy groups.
\end{lemma}

\begin{remark}\label{rmk:non-abelian_dold-kan}
Lemma \ref{lem:dold-kan} is also true when $A^\bullet$ is replaced by a cosimplicial group $U^\bullet$, although in this case we can only say that $\Kan^\bullet(U^\bullet)\rightarrow U^\bullet$ induces an isomorphism on $\pi^0$ and $\pi^1$. This is proved by a straightforward chasing of definitions, which we leave to the reader.
\end{remark}

\begin{lemma}\label{lem:eilenberg-zilber}
Let $A^{\bullet,\bullet}$ be a bi-semi-cosimplicial abelian group (i.e.\ a functor $\Delta^+\times\Delta^+\rightarrow\Ab$). Let $C^{\bullet,\bullet}(A^{\bullet,\bullet})$ be its Moore bicomplex (formed by taking the unnormalised Moore complex in the horizontal and vertical directions), and let $\Kan^{\bullet,\bullet}(A^{\bullet,\bullet})$ be the bi-cosimplicial abelian group formed by taking the cosimplicial abelian group cogenerated by the semi-cosimplicial abelian groups in each of the horizontal and vertical directions. Then there is a canonical isomorphism\[\pi^j(d^\bullet(\Kan^{\bullet,\bullet}(A^{\bullet,\bullet})))\cong\H^j(\Tot^\bullet(C^{\bullet,\bullet}(A^{\bullet,\bullet})))\]where $d^\bullet(\Kan^{\bullet,\bullet}(A^{\bullet,\bullet}))$ is the diagonal cosimplicial abelian group.
\end{lemma}

\subsection{Long exact sequences of cohomotopy}

For us, the main advantage of using cosimplicial groups is that they allow us to mechanistically produce long exact sequences (in the sense of definition \ref{def:exact_sequence}) in much the same way as one produces long exact sequences in the cohomology of cochain complexes from short exact sequences of cochains. Unlike the abelian case, 
of course, the exact sequences we obtain will in general be finite in length, and moreover the length of the sequence we obtain will depend on how well-behaved the terms of the short exact sequence are. For our purposes, the following four cases will suffice.

\begin{theorem}\label{thm:long_exact_seqs}
\leavevmode
\begin{enumerate}
	\item Suppose $U^\bullet$ is a cosimplicial group acting degree-wise transitively on a cosimplicial set $Q^\bullet$ from the right, and let $Z^\bullet\hookrightarrow U^\bullet$ be the stabiliser of a chosen basepoint $*\rightarrow Q^\bullet$. Then there is a functorially assigned exact sequence\[1\rightarrow\pi^0(Z^\bullet)\rightarrow\pi^0(U^\bullet)\actsarrow\pi^0(Q^\bullet)\rightarrow\pi^1(Z^\bullet)\rightarrow\pi^1(U^\bullet).\]
	\item Suppose\[1\rightarrow Z^\bullet\rightarrow U^\bullet\rightarrow Q^\bullet\rightarrow1\]is a degree-wise exact sequence of cosimplicial groups. Then there is a functorially assigned exact sequence\[1\rightarrow\pi^0(Z^\bullet)\rightarrow\pi^0(U^\bullet)\rightarrow\pi^0(Q^\bullet)\actsarrow\pi^1(Z^\bullet)\rightarrow\pi^1(U^\bullet)\rightarrow\pi^1(Q^\bullet).\]
	\item Suppose\[1\rightarrow Z^\bullet\centarrow U^\bullet\rightarrow Q^\bullet\rightarrow1\]is a degree-wise central extension of cosimplicial groups. Then there is a functorially assigned exact sequence\[1\rightarrow\pi^0(Z^\bullet)\centarrow\pi^0(U^\bullet)\rightarrow\pi^0(Q^\bullet)\rightarrow\pi^1(Z^\bullet)\actsarrow\pi^1(U^\bullet)\rightarrow\pi^1(Q^\bullet)\rightarrow\pi^2(Z^\bullet).\]
	\item Suppose\[1\rightarrow Z^\bullet\rightarrow U^\bullet\rightarrow Q^\bullet\rightarrow1\]is a degree-wise exact sequence of cosimplicial abelian groups. Then there is a functorially assigned exact sequence of abelian groups
	\[\cdots\rightarrow\pi^{r-1}(Q^\bullet)\rightarrow\pi^r(Z^\bullet)\rightarrow\pi^r(U^\bullet)\rightarrow\pi^r(Q^\bullet)\rightarrow\pi^{r+1}(Z^\bullet)\rightarrow\cdots.\]
\end{enumerate}
\begin{proof}
Part $(2)$ is \cite[Proposition 2.6.1]{pirashvili} (originally \cite[Lemma 3.2 and Proposition 3.2.1]{duflot-marak}) and \cite[Lemma 2.6.2]{pirashvili}, while part $(3)$ is \cite[Proposition 2.8.2]{pirashvili} (also originally in \cite{duflot-marak}). Neither of these sources explicitly mention the action of $\pi^1(Z^\bullet)$ on $\pi^1(U^\bullet)$, but this is just induced by the multiplication action of $\Cycle^1(Z^\bullet)$ on $\Cycle^1(U^\bullet)$. Part $(4)$ is well-known, in that this is just the exact sequence on cohomology for the unnormalised Moore complexes.

Part $(1)$ we do by hand, following \cite[Section I.5.4]{serre}. With the exception of the coboundary map $\delta\colon\pi^0(Q^\bullet)\rightarrow\pi^1(Z^\bullet)$, the maps and group action in the sequence are just the ones induced from the maps and group actions between the $Z^j$, $U^j$ and $Q^j$. To define $\delta$ at some $q_0\in\pi^0(Q^\bullet)$, we choose some $u_0\in U^0$ such that $q_0=*u_0$. The $0$-cocycle condition for $q_0$ says exactly that $*d^0(u_0)=*d^1(u_0)$, so that $d^1(u_0)d^0(u_0)^{-1}\in Z^1$.

Since $d^1(u_0)d^0(u_0)^{-1}$ is the coboundary of $u_0^{-1}$ in $U^1$ and $Z^2\hookrightarrow U^2$ is injective, it is even a cocycle in $Z^1$. Moreover, since the choice of $u_0$ was unique up to left multiplication by $Z^0$, its class $[d^1(u_0)d^0(u_0)^{-1}]\in\pi^1(Z^\bullet)$ is independent of the choice of $u_0$, and we take this to be $\delta(q_0)$.

It remains to check exactness. This is mostly an exercise in chasing definitions, and is left to the reader.
\end{proof}
\end{theorem}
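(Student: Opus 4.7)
The plan is to dispose of parts (2), (3), (4) by appeal to existing literature, since each is already present (or nearly so) in the non-abelian homotopical algebra of cosimplicial groups. For part (2), I would cite \cite[Proposition 2.6.1]{pirashvili} (ultimately due to Duflot--Marak), and for part (3) the analogous \cite[Proposition 2.8.2]{pirashvili}; the additional abelian action of $\pi^1(Z^\bullet)$ on $\pi^1(U^\bullet)$ required by definition \ref{def:exact_sequence} is simply induced by pointwise multiplication $\Cycle^1(Z^\bullet)\times\Cycle^1(U^\bullet)\rightarrow\Cycle^1(U^\bullet)$, which descends because twisted-conjugation by $Z^0$ commutes with multiplication by $Z^1$. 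Part (4) is handled by identifying $\pi^\bullet$ of a cosimplicial abelian group with the cohomology of its unnormalised Moore complex (essentially via lemma \ref{lem:dold-kan}), whereupon it reduces to the classical long exact cohomology sequence associated to a short exact sequence of cochain complexes.

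The substantive content is part (1), which I would prove directly. All arrows of the sequence apart from the coboundary $\delta\colon\pi^0(Q^\bullet)\rightarrow\pi^1(Z^\bullet)$ are induced functorially by the cosimplicial maps and the action; so the first step is to construct $\delta$. Given $q_0\in\pi^0(Q^\bullet)$, transitivity of $U^0\acts Q^0$ furnishes some $u_0\in U^0$ with $*u_0=q_0$, and the cocycle condition $d^0(q_0)=d^1(q_0)$ translates to $*d^0(u_0)=*d^1(u_0)$, so that $z_1:=d^1(u_0)d^0(u_0)^{-1}\in Z^1$. The element $z_1$ is a $1$-cocycle in $Z^\bullet$ because its coboundary already vanishes in $U^2$ (a direct consequence of the cosimplicial identities applied to $u_0$), and $Z^2\hookrightarrow U^2$ is injective. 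Any other lift of $q_0$ differs from $u_0$ by left multiplication by some $z_0\in Z^0$, so the class $\delta(q_0):=[z_1]\in\pi^1(Z^\bullet)$ is well-defined.

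Exactness is then a matter of unwinding definitions at each of the five positions. The only subtleties are at $\pi^1(Z^\bullet)$ and $\pi^0(Q^\bullet)$: one checks that $[z_1]=1$ in $\pi^1(Z^\bullet)$ precisely when $z_1=d^1(z_0)^{-1}d^0(z_0)$ for some $z_0\in Z^0$, which rearranges (using that $z_0\in Z^0$) to the statement that $z_0^{-1}u_0\in\pi^0(U^\bullet)$ and maps to the same $\pi^0(Q^\bullet)$-element as $u_0$; and two lifts $u_0,u_0'$ give the same $\delta$-image iff there is $z_0\in Z^0$ with $u_0^{-1}z_0u_0'\in\pi^0(U^\bullet)$, which by direct rearrangement is equivalent to $*u_0$ and $*u_0'$ lying in the same $\pi^0(U^\bullet)$-orbit. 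Exactness at $\pi^0(U^\bullet)$ is formal from the adjunction describing $\pi^0$ as a right adjoint.

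The main obstacle is purely bookkeeping: keeping the conventions for twisted conjugation on $\Cycle^1$ and the order-of-composition of cosimplicial operators consistent throughout. No genuine homotopical input beyond the cosimplicial identities is needed for part (1); the real effort of the paper in this direction lies not in theorem \ref{thm:long_exact_seqs} itself but in later applications, where one must produce cosimplicial-group models admitting the degree-wise extensions that feed into parts (2) and (3).
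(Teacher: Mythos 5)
Your proposal follows essentially the same route as the paper: parts (2)--(4) are delegated to Pirashvili/Duflot--Marak and the unnormalised Moore complex exactly as in the text, and your hand construction of $\delta$ in part (1) (lifting $q_0$ to $u_0$, forming $d^1(u_0)d^0(u_0)^{-1}$, using injectivity of $Z^2\hookrightarrow U^2$ and well-definedness up to left $Z^0$-multiplication) together with the subsequent exactness checks coincides with the paper's argument. The only quibble is a slight conflation in your first ``subtlety'': the condition $z_1=d^1(z_0)^{-1}d^0(z_0)$ with $z_0\in Z^0$ detects triviality of $\delta(q_0)$ (part of exactness at $\pi^0(Q^\bullet)$), whereas exactness at $\pi^1(Z^\bullet)$ needs the same computation with $u_0\in U^0$, identifying the kernel of $\pi^1(Z^\bullet)\rightarrow\pi^1(U^\bullet)$ with the image of $\delta$; this is a phrasing fix, not a gap.
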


\begin{remark}
Each of the successive sequences in theorem \ref{thm:long_exact_seqs} is an extension of the previous sequences. To be precise:
\begin{itemize}
	\item the first five terms in $(2)$ agree with the sequence assigned in $(1)$ to the inverse left-multiplication action of $U^\bullet$ on $Q^\bullet$ given by $u\colon q\mapsto u^{-1}q$, and the action of $\pi^0(U^\bullet)$ on $\pi^0(Q^\bullet)$ is also given by inverse left-multiplication;
	\item the first six terms in $(3)$ agree with the sequence assigned in $(2)$, with the action of $\pi^0(Q^\bullet)$ on $\pi^1(Z^\bullet)$ being given by multiplication via the connecting homomorphism $\pi^0(Q^\bullet)\rightarrow\pi^1(Z^\bullet)$;
	\item the first seven terms in $(4)$ agree with the sequence assigned in $(3)$, with the action of $\pi^1(Z^\bullet)$ on $\pi^1(U^\bullet)$ being given by multiplication.
\end{itemize}
\end{remark}

We will also need one instance where one of the exact sequences in theorem \ref{thm:long_exact_seqs} can be extended with a final $1$, which intuitively should be thought of as a dimensional vanishing result for the non-existent $\pi^2$. Specifically, if $U^\bullet$ is a cosimplicial group, we denote by $U^\bullet_{\leq n}$ the semi-cosimplicial group formed by truncating $U^\bullet$ above degree $n$. The canonical map $U^\bullet\twoheadrightarrow U^\bullet_{\leq n}$ of semi-cosimplicial groups factors uniquely through $U^\bullet\rightarrow\Kan^\bullet(U^\bullet_{\leq n})$, and we say that $U^\bullet$ is \emph{cogenerated in degree $\leq n$} if this factored map is injective in each degree. In particular, if $U^\bullet$ is the cosimplicial group cogenerated by a semi-cosimplicial group supported in degrees $\leq n$, then it is cogenerated in degree $\leq n$ in the above sense.

\begin{lemma}\label{lem:codimensional_vanishing}
Suppose\[1\rightarrow Z^\bullet\rightarrow U^\bullet\rightarrow Q^\bullet\rightarrow1\]is a sequence of cosimplicial groups which is exact in each degree, and that $Z^\bullet$ is cogenerated in degree $\leq1$. Then the exact sequence in point $(2)$ of theorem \ref{thm:long_exact_seqs} can be extended by a final map $\pi^1(Q^\bullet)\rightarrow1$, i.e.\ the map $\pi^1(U^\bullet)\rightarrow\pi^1(Q^\bullet)$ is surjective.
\begin{proof}
Pick $q_1\in\Cycle^1(Q^\bullet)$, and lift it to an element $u_1\in U^1$. The fact that $q_1$ is a cocycle tells us that $z_2:=d^2(u_1)^{-1}d^1(u_1)d^0(u_1)^{-1}$ lies in $Z^2$. Now by a quick computation with the cosimplicial identities, we see that $s^0(z_2)=d^1s^0(u_1)^{-1}$, $s^1(z_2)=d^0s^0(u_1)^{-1}$ and $s^0s^0(z_2)=s^0s^1(z_2)=s^0(u_1)^{-1}$. Hence, by replacing $u_1$ with $u_1s^0(z_2)$ (which still maps to $q_1\in Q^1$ since $s^0(z_2)\in Z^1$), we may assume that $s^0(z_2)=s^1(z_2)=1$ and $s^0s^0(z_2)=1$. But this says exactly that $z_2$ is contained in the kernel of $Z^2\rightarrow\Kan^2(Z^\bullet_{\leq1})$, so that $z_2=1$ by assumption that $Z^\bullet$ is cogenerated in degree $\leq1$. This then shows that $d^1(u_1)=d^2(u_1)d^0(u_1)$, so that $u_1$ is a cocycle, and hence $[q_1]\in\pi^1(Q^\bullet)$ is the image of $[u_1]\in\pi^1(U^\bullet)$ as desired.
\end{proof}
\end{lemma}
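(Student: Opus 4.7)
The plan is to work directly at the cocycle level. Given a cocycle $q_1 \in \Cycle^1(Q^\bullet)$, degree-wise surjectivity of $U^1 \twoheadrightarrow Q^1$ lets me lift $q_1$ to some $u_1 \in U^1$, but this lift will in general fail to be a cocycle. I would measure the failure by the element $z_2 := d^2(u_1)^{-1} d^1(u_1) d^0(u_1)^{-1} \in U^2$, which maps to $1$ in $Q^2$ because $q_1$ is a cocycle, so it lies in $Z^2$. The goal is then to modify $u_1$ by an element $z_1 \in Z^1$ (which keeps its image $q_1$ unchanged) so that the new obstruction vanishes. If I succeed, $[u_1 z_1] \in \pi^1(U^\bullet)$ will be a preimage of $[q_1]$.

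The leverage for killing $z_2$ comes from the cogeneration hypothesis: $Z^2$ injects into $\Kan^2(Z^\bullet_{\leq 1}) = \prod_{[2] \twoheadrightarrow [k]} Z^k$, so $z_2$ is determined by its images under the three surjections $[2]\twoheadrightarrow[k]$, namely by the codegeneracies $s^0(z_2), s^1(z_2) \in Z^1$ and $s^0 s^0(z_2) \in Z^0$. So it suffices to arrange that these three images all vanish. I would compute each of them using the cosimplicial identities; the relations $s^j d^i = d^i s^{j-1}$ for $i<j$, $s^j d^i = \id$ for $i \in \{j, j+1\}$, and $s^j d^i = d^{i-1} s^j$ for $i > j+1$ collapse each $s^i(z_2)$ into a short expression in $s^0(u_1)$ and $u_1$ itself, and similarly for $s^0 s^0(z_2)$.

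The expected output of these computations is that $s^0(z_2)$ and $s^1(z_2)$ both equal a coboundary of $s^0(u_1)^{-1}$ (specifically, of the form $d^i s^0(u_1)^{-1}$ for $i=0,1$), while $s^0 s^0(z_2) = s^0(u_1)^{-1}$. In other words, $z_2$ looks precisely like the image of $s^0(u_1)^{-1}$ under the canonical map $U^1 \to \Kan^2(U^\bullet_{\leq 1})$. This suggests the correction: replace $u_1$ by $u_1 \cdot s^0(z_2)$. After this replacement, the new codegeneracies of the new $z_2$ should all become trivial by direct computation, and then the cogeneration hypothesis forces $z_2 = 1$ itself, making $u_1 s^0(z_2)$ a genuine cocycle lifting $q_1$.

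The main obstacle will be the purely mechanical bookkeeping in the second step: unpacking each $s^i$ applied to the three-factor product defining $z_2$ and repeatedly invoking the cosimplicial identities to see that the answers assemble into exactly the element prescribed by the coset structure of $\Kan^2(U^\bullet_{\leq 1})$. There is no conceptual difficulty beyond making sure the noncommutativity of the groups $U^i$ is tracked carefully (which is why I work with $z_2 = d^2(u_1)^{-1} d^1(u_1) d^0(u_1)^{-1}$ rather than, say, the additive-style alternating product), and that the correction element $s^0(z_2)$ is in $Z^1$ (which is automatic since $s^0$ sends $Z^2$ to $Z^1$).
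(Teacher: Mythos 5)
Your proposal is correct and follows essentially the same route as the paper's own proof: lift $q_1$ to $u_1$, form the obstruction $z_2=d^2(u_1)^{-1}d^1(u_1)d^0(u_1)^{-1}\in Z^2$, compute its codegeneracies via the cosimplicial identities, correct $u_1$ by $s^0(z_2)$, and invoke cogeneration in degree $\leq1$ to force $z_2=1$. The intended computations indeed come out as you predict, so no gap remains.
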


\subsection{Twisting}

Just as one uses Serre twists to get more precise information out of long exact sequences of group cohomology, one can formulate the notion of the \emph{twist} ${}_\beta U^\bullet$ of a cosimplicial group by to a $1$-cocycle $\beta\in\Cycle^1(U^\bullet)$ \cite[Section 2.4]{pirashvili}. This has the same objects and codegeneracy maps as $U^\bullet$, and the only coface maps which are changed are the $d^0\colon U^{n-1}\rightarrow U^n$, which are replaced with the twisted maps\[\bar{d^0}\colon u_{n-1}\mapsto d^n\cdots d^2(\beta)\cdot d^0(u_{n-1})\cdot d^n\cdots d^2(\beta)^{-1}.\]

The twisting has two key properties generalising those of Serre twists, namely that it doesn't change the first cohomotopy, and that twisting by a coboundary does not change the cosimplicial group.

\begin{proposition}\label{prop:pi^1_of_twists}\cite[Proposition 2.4.3]{pirashvili}
Let $U^\bullet$ be a cosimplicial group and $\beta\in\Cycle^1(U^\bullet)$. There is a canonical bijection $\pi^1({}_\beta U^\bullet)\isoarrow\pi^1(U^\bullet)$ given by right-multiplication by $\beta$, taking the distinguished point of $\pi^1({}_\beta U^\bullet)$ to $[\beta]\in\pi^1(U^\bullet)$.
\end{proposition}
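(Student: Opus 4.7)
The plan is to produce the bijection concretely as right-multiplication by $\beta$, first on the level of $1$-cocycles and then descending to cohomotopy. Define $R_\beta\colon U^1\to U^1$ by $R_\beta(z)=z\beta$; this is obviously a bijection of underlying sets, and clearly takes $1\in U^1$ to $\beta$. The content of the proposition is thus that $R_\beta$ restricts to a bijection $\Cycle^1({}_\beta U^\bullet)\isoarrow\Cycle^1(U^\bullet)$ which intertwines the twisted-conjugation actions of $U^0$ on both sides.

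The main step is verifying the restriction to cocycles. Using the explicit definition of $\bar{d^0}$ in degree $2$, namely $\bar{d^0}(z)=d^2(\beta)\,d^0(z)\,d^2(\beta)^{-1}$, the cocycle condition for $z$ in ${}_\beta U^\bullet$ reads
\[
d^1(z)\,d^2(\beta)=d^2(z)\,d^2(\beta)\,d^0(z).
\]
Right-multiplying both sides by $d^0(\beta)$ and invoking the cocycle identity $d^1(\beta)=d^2(\beta)\,d^0(\beta)$ for $\beta\in\Cycle^1(U^\bullet)$, the left-hand side becomes $d^1(z\beta)$ while the right-hand side becomes $d^2(z\beta)\,d^0(z\beta)$. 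Thus $z\in\Cycle^1({}_\beta U^\bullet)$ iff $z\beta\in\Cycle^1(U^\bullet)$, so $R_\beta$ is the required bijection on cocycles. The hard part is really nothing more than this rearrangement; once one writes down the two cocycle conditions side-by-side, the cocycle relation for $\beta$ converts one into the other.

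It remains to check that $R_\beta$ intertwines the two twisted conjugation actions of $U^0$, so that it descends to $\pi^1$. Since the codegeneracies and $d^1$ are unchanged in passing from $U^\bullet$ to ${}_\beta U^\bullet$, while $\bar{d^0}(u_0)=\beta\,d^0(u_0)\,\beta^{-1}$ in degree $1$, one computes directly
\[
R_\beta\bigl(d^1(u_0)^{-1}\,z\,\bar{d^0}(u_0)\bigr)=d^1(u_0)^{-1}\,z\,\beta\,d^0(u_0)\,\beta^{-1}\cdot\beta=d^1(u_0)^{-1}\,(z\beta)\,d^0(u_0),
\]
which is exactly the twisted conjugation action of $u_0\in U^0$ on $z\beta\in\Cycle^1(U^\bullet)$. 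Hence $R_\beta$ descends to the asserted bijection $\pi^1({}_\beta U^\bullet)\isoarrow\pi^1(U^\bullet)$, and by construction sends the class of $1$ to $[\beta]$.
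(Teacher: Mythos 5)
Your proof is correct: the computation that right-multiplication by $\beta$ carries $\Cycle^1({}_\beta U^\bullet)$ bijectively onto $\Cycle^1(U^\bullet)$ and intertwines the two twisted-conjugation $U^0$-actions is exactly the standard verification. The paper itself gives no proof here (it simply cites Pirashvili, Proposition 2.4.3), and your direct check is the same argument one finds there, so nothing further is needed.
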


\begin{proposition}\label{prop:trivial_twists}
Let $U^\bullet$ be a cosimplicial group and $\beta'=d^1(u_0)^{-1}\beta d^0(u_0)$ two cocycles differing by the action of some $u_0\in U^0$. Then there is an isomorphism ${}_{\beta'}U^\bullet\isoarrow{}_\beta U^\bullet$ canonically determined by $u_0$ such that the induced map on $\pi^1$ is the composite\[\pi^1\left({}_{\beta'}U^\bullet\right)\isoarrow\pi^1\left(U^\bullet\right)\leftisoarrow\pi^1\left({}_\beta U^\bullet\right),\]and in particular is independent of $u_0$.
\begin{proof}
The isomorphism is given on ${}_{\beta'}U^n$ by\[v_n\mapsto d^n\cdots d^1(u_0)\cdot v_n\cdot d^n\cdots d^1(u_0)^{-1}.\]This is clearly degree-wise an isomorphism, so to check it gives an isomorphism of cosimplicial groups one just has to check that the coface and codegeneracy maps are preserved. This is a straightforward exercise using the cosimplicial identities and  their consequence that
\begin{align*}
s^id^n\cdots d^1 &= d^{n-1}\cdots d^1, \text{ and} \\
d^id^n\cdots d^1 &= \begin{cases}d^{n+1}\cdots d^2d^1 & \text{if $i\neq 0$,} \\d^{n+1}\cdots d^2d^0 & \text{if $i=0$.}\end{cases}
\end{align*}
That the induced map on $\pi^1$ is as described is a straightforward calculation using the explicit description in proposition \ref{prop:pi^1_of_twists}.
\end{proof}
\end{proposition}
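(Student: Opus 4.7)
The plan is to follow the author's hint and define the morphism $f_n\colon {}_{\beta'}U^n\to{}_\beta U^n$ on each level by conjugation, $v_n\mapsto d^n\cdots d^1(u_0)\cdot v_n\cdot d^n\cdots d^1(u_0)^{-1}$. Each $f_n$ is automatically a group isomorphism (an inner automorphism of $U^n$), and its inverse is obtained by swapping $\beta$ with $\beta'$ and replacing $u_0$ by $u_0^{-1}$. The substance of the proof lies in verifying that the $f_n$ commute with all cosimplicial structure maps, after which one chases the induced map on $\pi^1$ through Proposition \ref{prop:pi^1_of_twists}.

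Compatibility with the codegeneracies $s^i$ and with the cofaces $d^i$ for $i\geq 1$ I expect to be a direct consequence of the two cosimplicial identities stated in the hint: since $s^i(d^n\cdots d^1(u_0))=d^{n-1}\cdots d^1(u_0)$ for all $0\leq i\leq n-1$ and $d^i(d^n\cdots d^1(u_0))=d^{n+1}\cdots d^1(u_0)$ for $i\geq 1$, and since these coface and codegeneracy maps are the same in $U^\bullet$, ${}_\beta U^\bullet$ and ${}_{\beta'}U^\bullet$, levelwise conjugation by a coherent family of elements automatically preserves them. This reduces everything to the identity involving the twisted map $\bar{d^0}$.

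The main (and I expect, only nontrivial) obstacle will be this $\bar{d^0}$ computation. Rewriting $d^n\cdots d^1(u_0)=d^n\cdots d^2(d^1(u_0))$ and using $d^0\circ d^{n-1}\cdots d^1=d^n\cdots d^2\circ d^0$, I anticipate that both sides of the identity $f_n\circ\bar{d^0}_{\beta'}=\bar{d^0}_\beta\circ f_{n-1}$ collapse to
\[d^n\cdots d^2\bigl(d^1(u_0)\cdot\beta'\bigr)\cdot d^0(v_{n-1})\cdot d^n\cdots d^2\bigl(d^1(u_0)\cdot\beta'\bigr)^{-1}\]
once one applies the hypothesis $d^1(u_0)\cdot\beta'=\beta\cdot d^0(u_0)$, which is just a rearrangement of the assumption on $\beta'$. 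This is the unique place the hypothesis enters.

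Finally, to identify the induced map on $\pi^1$, I would pick $v_1\in\Cycle^1({}_{\beta'}U^\bullet)$ and trace it through: $f_*$ sends it to $d^1(u_0)v_1d^1(u_0)^{-1}\in\Cycle^1({}_\beta U^\bullet)$, which by Proposition \ref{prop:pi^1_of_twists} represents $[d^1(u_0)v_1d^1(u_0)^{-1}\beta]\in\pi^1(U^\bullet)$. The $U^0$-action $u_0\colon u_1\mapsto d^1(u_0)^{-1}u_1d^0(u_0)$ combined with $\beta'=d^1(u_0)^{-1}\beta d^0(u_0)$ yields
\[u_0\cdot\bigl(d^1(u_0)v_1d^1(u_0)^{-1}\beta\bigr)=v_1\cdot d^1(u_0)^{-1}\beta d^0(u_0)=v_1\beta',\]
so these two classes coincide in $\pi^1(U^\bullet)$. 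Thus the composite map agrees with the direct isomorphism $\pi^1({}_{\beta'}U^\bullet)\isoarrow\pi^1(U^\bullet)$ of Proposition \ref{prop:pi^1_of_twists}, which is manifestly independent of $u_0$, giving the claimed description and hence the asserted independence.
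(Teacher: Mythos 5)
Your proposal is correct and follows essentially the same route as the paper: the same conjugation formula $v_n\mapsto d^n\cdots d^1(u_0)\cdot v_n\cdot d^n\cdots d^1(u_0)^{-1}$, the same cosimplicial identities to check compatibility with the structure maps (with the hypothesis on $\beta'$ entering only in the $\bar{d^0}$ verification), and the same chase through proposition \ref{prop:pi^1_of_twists} to identify the induced map on $\pi^1$. You merely spell out the computations the paper leaves as an exercise, and they check out.
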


\subsection{Group cohomology}

As explained in \cite[Section 2.9]{pirashvili}, cohomotopy of cosimplicial groups is a generalisation of non-abelian group cohomology. Specifically, if $G$ is a topological group acting continuously (from the left) on another topological group $U$ (we call $U$ a \emph{topological $G$-group}), one constructs a cosimplicial group of continuous cochains $C^\bullet(G,U)$, where $C^n(G,U)=\Map_\cts(G^n,U)$, where the coface maps $d^i\colon C^{n-1}(G,U)\rightarrow C^n(G,U)$ and the codegeneracy maps $s^i\colon C^{n+1}(G,U)\rightarrow C^n(G,U)$ are given by
\begin{align*}
d^i(f)\colon (g_1,\dots,g_n) &\mapsto \begin{cases} g_1\cdot f(g_2,\dots,g_n) & \text{if $i=0$,} \\ f(g_1,\dots,g_ig_{i+1},\dots,g_n) & \text{if $0<i<n$,} \\ f(g_1,\dots,g_{n-1}) & \text{if $i=n$;} \end{cases} \\
s^i(f)\colon (g_1,\dots,g_n) &\mapsto f(g_1,\dots,g_{i-1},1,g_i,\dots,g_n).
\end{align*}
By definition of non-abelian group cohomology, we have equalities\[\pi^i(C^\bullet(G,U))=\H^i(G,U)\]for $i=0,1$, and for $i>1$ when $U$ is abelian (in fact, we even have equalities on the level of cocycles). Moreover, these descriptions are compatible with twisting, in that for a continuous cocycle $\alpha\in\Cycle^1(G,U)=\Cycle^1(C^\bullet(G,U))$, the cosimplicial group $C^\bullet(G,{}_\alpha U)$ of continuous cochains for the Serre twist ${}_\alpha U$ is equal to the twisted cosimplicial group ${}_\alpha C^\bullet(G,U)$, and the canonical isomorphism $\H^1(G,{}_\alpha U)\isoarrow\H^1(G,U)$ is just the canonical isomorphism on $\pi^1$ described in proposition \ref{prop:pi^1_of_twists}.

With this in mind, we see that theorem \ref{thm:long_exact_seqs} is a generalisation of various well-known long exact sequences in non-abelian group cohomology, as for example in \cite[Section I.5]{serre}. Since the statements there only deal with the case of discrete topological $G$-groups, we will trouble the reader with a sufficiently general statement for our applications.

\begin{corollary}[to theorem \ref{thm:long_exact_seqs}]\label{cor:long_exact_seq_group_cohomology}
Let $G$ be a topological group.
\begin{enumerate}
	\item Suppose $U$ is a topological $G$-group with acting continuously, $G$-equivariantly and transitively on a topological $G$-set $Q$ from the right, and $Z\hookrightarrow U$ is the stabiliser of a $G$-fixed basepoint $*\in Q$, given the subspace topology from $U$. Then there is a functorially assigned exact sequence\[1\rightarrow Z^G\rightarrow U^G\actsarrow Q^G\rightarrow\H^1(G,Z)\rightarrow\H^1(G,U).\]
	\item Suppose\[1\rightarrow Z\rightarrow U\rightarrow Q\rightarrow1\]is an exact sequence of topological $G$-groups, such that $Z\subseteq U$ has the subspace topology and $U\rightarrow Q$ has a continuous splitting (as topological spaces). Then there is a functorially assigned exact sequence\[1\rightarrow Z^G\rightarrow U^G\rightarrow Q^G\actsarrow\H^1(G,Z)\rightarrow\H^1(G,U)\rightarrow\H^1(G,Q).\]
	\item If in the previous point $Z$ is central in $U$, then the exact sequence can be extended to a functorially assigned sequence
	\[1\rightarrow Z^G\centarrow U^G\rightarrow Q^G\rightarrow\H^1(G,Z)\actsarrow\H^1(G,U)\rightarrow\H^1(G,Q)\rightarrow\H^2(G,Z).\]
	\item If in the previous point $U$ is abelian, then the exact sequence can be extended to a functorially assigned exact sequence of abelian groups\[\cdots\rightarrow\H^{r-1}(G,Q)\rightarrow\H^r(G,Z)\rightarrow\H^r(G,U)\rightarrow\H^r(G,Q)\rightarrow\H^{r+1}(G,Z)\rightarrow\cdots.\]
\end{enumerate}
\begin{proof}
For points $(2)$--$(4)$, the given conditions ensure that the sequence\[1\rightarrow C^\bullet(G,Z)\rightarrow C^\bullet(G,U)\rightarrow C^\bullet(G,Q)\rightarrow1\]of cosimplicial groups satisfies the conditions for the corresponding point of theorem \ref{thm:long_exact_seqs}. For point $(1)$, the given conditions ensure that $C^\bullet(G,Z)$ is the stabiliser of the basepoint of $C^\bullet(G,Q)$ under the action of $C^\bullet(G,U)$, and that the action of $C^\bullet(G,U)$ on $C^\bullet(G,Q)$ is transitive in degree $0$. This was all our proof of $(1)$ required, so we are done.
\end{proof}
\end{corollary}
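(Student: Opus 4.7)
The plan is to deduce each of the four parts of the corollary directly from the corresponding part of Theorem \ref{thm:long_exact_seqs}, applied to the sequence $C^\bullet(G,-)$ of cosimplicial groups of continuous cochains. The identification $\pi^i(C^\bullet(G,U)) = \H^i(G,U)$ for $i=0,1$, and for all $i \geq 0$ in the abelian case, was already recalled in Section \ref{s:homotopical_algebra}; these identifications are evidently functorial in $U$, compatible with all the relevant group structures and actions, and, by the discussion of twists, compatible with Serre twisting. So the only remaining work is to verify that the hypotheses of Theorem \ref{thm:long_exact_seqs} hold for the induced cosimplicial sequences.

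For parts (2), (3), (4), I would first check that $1 \to C^\bullet(G,Z) \to C^\bullet(G,U) \to C^\bullet(G,Q) \to 1$ is exact in each degree. Injectivity of $C^n(G,Z) \hookrightarrow C^n(G,U)$ and exactness in the middle are immediate from the fact that $Z \subseteq U$ has the subspace topology, so that a continuous map $G^n \to U$ factors (as a continuous map) through $Z$ exactly when it factors set-theoretically. Surjectivity of $C^n(G,U) \to C^n(G,Q)$ is the one step requiring a hypothesis: given $f \colon G^n \to Q$ continuous, I would lift it to $\sigma \circ f \colon G^n \to U$, where $\sigma \colon Q \to U$ is a continuous set-theoretic splitting provided by the hypothesis. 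Centrality in (3) and abelianness in (4) pass immediately through the functor $C^\bullet(G,-)$. This gives (2), (3), (4) at once.

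For part (1), I would verify that $C^\bullet(G,U)$ acts on $C^\bullet(G,Q)$ in the obvious way, with stabiliser of the constant cosimplicial set $*$ given by $C^\bullet(G,Z)$—this is clear, since $Z \subseteq U$ is the stabiliser of $*$ and inherits the subspace topology. The crucial point is then that, although Theorem \ref{thm:long_exact_seqs}(1) is stated for degree-wise transitive actions, an inspection of its proof shows that only transitivity in degree $0$ is actually used: the connecting map $\delta$ is defined by lifting an element $q_0 \in \pi^0(Q^\bullet) \subseteq Q^0$ to some $u_0 \in U^0$, and the verification of exactness similarly only requires lifting degree-$0$ elements of $Q^\bullet$. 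Degree-$0$ transitivity for $C^\bullet(G,U)$ acting on $C^\bullet(G,Q)$ is precisely the given transitivity of $U$ acting on $Q$, so Theorem \ref{thm:long_exact_seqs}(1) applies.

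I do not expect any real obstacle here: the substantive content has already been absorbed into the cosimplicial framework in Section \ref{s:homotopical_algebra}, and this corollary is essentially a dictionary translation. The only subtle point is the one flagged above, namely that part (1) of the corollary does not explicitly assume a continuous splitting of $U \to Q$ (in contrast to part (2)), so the proof must rely on the fact that the theorem's proof only needs transitivity in degree $0$.
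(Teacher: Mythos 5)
Your proposal is correct and follows essentially the same route as the paper: apply theorem \ref{thm:long_exact_seqs} to the cochain cosimplicial groups $C^\bullet(G,-)$, using the continuous splitting for degreewise surjectivity in parts (2)--(4), and observing for part (1) that the proof of theorem \ref{thm:long_exact_seqs}(1) only ever uses transitivity in degree $0$ --- which is exactly the point the paper's proof also makes. The extra details you spell out (subspace topology giving exactness at $C^\bullet(G,Z)$ and $C^\bullet(G,U)$, lifting cochains via $\sigma\circ f$) are the intended, implicit verifications.
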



\subsection{Cosimplicial algebras and cdgas}

In order to construct the cosimplicial groups that will be of interest later, it will be necessary to have some tools to allow us to define certain cosimplicial algebras (the cosimplicial group then being produced by evaluating a group scheme on these algebras). In many cases, the construction in definition \ref{def:cosimplicial_objects} of a cosimplicial algebra from a semi-cosimplicial algebra will suffice, but a few of our constructions are best described in terms of (graded\nobreakdash-)commutative differential graded algebras following the cosimplicial monoidal Dold--Kan correspondence.

Recall from \cite{castiglioni-cortinas} that if $A^\bullet$ is a dga over some base ring $R$, then we can identify the terms of the cosimplicial $R$-module $\Kan^\bullet(A^\bullet)$ corresponding to $A^\bullet$ under the Dold--Kan correspondence as\[\Kan^n(A^\bullet)\cong\bigoplus_j\bigwedge\!{}^jR^n\otimes_R A^j\]and that this inherits a natural $R$-algebra structure\footnote{\cite{castiglioni-cortinas} works only over the base ring $\Z$, but it is easy to see that the ring structure they define is automatically an $R$-algebra structure when the dg-ring is replaced by an $R$-dga} from the algebra structures on $A^\bullet$ and $\bigwedge^\bullet R^n$. Moreover, when $A^\bullet$ is graded-commutative, it is easy to see that this algebra structure is commutative, so that according to \cite{castiglioni-cortinas} $\Kan^\bullet(A^\bullet)$ is then a cosimplicial (commutative) $R$-algebra.

\begin{remark}
For our purposes, we will treat this construction simply as a tool for producing cosimplicial algebras, but it is worth recalling that this fits into the wider context of the \emph{cosimplicial monoidal Dold--Kan correspondence}. Specifically, the denormalisation functor\[\Kan^\bullet\colon\CoCh_+(R)\rightarrow\Mod_R^\Delta\]is a lax symmetric monoidal functor, meaning that there is a natural transformation called the \emph{Eilenberg--Zilber} or \emph{shuffle} map\footnote{\emph{Eilenberg--Zilber} because the corresponding map $N^\bullet(\Kan^\bullet(A^\bullet)\otimes\Kan^\bullet(B^\bullet))\rightarrow N^\bullet\Kan^\bullet(A^\bullet\otimes B^\bullet)\cong A^\bullet\otimes B^\bullet\cong N^\bullet\Kan^\bullet(A^\bullet)\otimes N^\bullet\Kan^\bullet(B^\bullet)$ is the dual map to the simplicial Eilenberg--Zilber map, and \emph{shuffle} because the simplicial Eilenberg--Zilber map has an explicit description in terms of shuffles \cite[Theorem VIII.8.5]{maclane}.}\[\EZ\colon\Kan^\bullet(A^\bullet)\otimes\Kan^\bullet(B^\bullet)\rightarrow\Kan^\bullet(A^\bullet\otimes B^\bullet)\]compatible with interchange of $A^\bullet$ and $B^\bullet$ (with respect to the usual symmetric monoidal structure on both sides). For purely abstract reasons, this means that $\Kan^\bullet$ takes (c)dgas to cosimplicial (commutative) $R$-algebras, whose multiplication is given by the composite\[\Kan^\bullet(A^\bullet)\otimes\Kan^\bullet(A^\bullet)\overset{\EZ}\longrightarrow\Kan^\bullet(A^\bullet\otimes A^\bullet)\overset{\Kan^\bullet(\mathrm{mult})}\longrightarrow\Kan^\bullet(A^\bullet).\]

However, unlike the classical Dold--Kan correspondence, $\Kan^\bullet$ does not induce an equivalence of categories, and (for our purposes) worse, the canonical dga structure on $N^\bullet(A^\bullet)$ for a cosimplicial $R$-algebra $A^\bullet$ can fail to be commutative, even when $A^\bullet$ is (since the monoidal structure on $N^\bullet$, induced from the dual Alexander--Whitney map, is not symmetric). In order to circumvent these problems properly, one needs to pass to appropriate homotopy categories on either side, and replace $N^\bullet$ with the \emph{functor of Thom-Sullivan cochains}, as explained in \cite[Section 1.1]{katzarkov-pantev-toen}.
\end{remark}

\begin{example}\label{ex:B_st[epsilon]}
The particular $\Q_p$-cdga we will be interested in is $\B_\st[\boldsymbol\varepsilon]^\bullet$, whose underlying cochain complex is defined to be\[\B_\st\overset N\rightarrow\B_\st\]and whose graded algebra structure is the one induced by the obvious $\B_\st$-module structure on $\B_\st$.

Following through the definition of the algebra structure in \cite{castiglioni-cortinas}, we see that the objects of its associated cosimplicial topological $\Q_p$-algebra are\[\Kan^n(\B_\st[\boldsymbol\varepsilon]^\bullet)=\B_\st[\varepsilon_1,\dots,\varepsilon_n]\]where $\varepsilon_j$ are variables subject to the relation $\varepsilon_j\varepsilon_k=0$ for all $j,k$.

The cdga $\B_\st[\boldsymbol\varepsilon]^\bullet$ carries certain extra structures: a topology, continuous $G_K$-action, and continuous $G_K$-equivariant semilinear crystalline Frobenius~$\varphi^\bullet$. The topology and $G_K$-action are just those induced from the topology and action on~$\B_\st$. The crystalline Frobenius on $\B_\st[\boldsymbol\varepsilon]^\bullet$ is a little more subtle: it acts as the usual crystalline Frobenius~$\varphi$ on $\B_\st[\boldsymbol\varepsilon]^0=\B_\st$, but as $p\varphi$ on $\B_\st[\boldsymbol\varepsilon]^1=\B_\st$. The appearance of the extra factor of~$p$ here derives from the identity $N\varphi=p\varphi N$ satisfied by the crystalline Frobenius on~$\B_\st$: the factor of~$p$ is necessary to ensure that~$\varphi^\bullet$ is a cdga automorphism.

These extra structures induce corresponding structures on $\Kan^\bullet(\B_\st[\boldsymbol\varepsilon]^\bullet)$: it is a cosimplicial topological $\Q_p$-algebra endowed with a continuous action of~$G_K$ and a continuous $G_K$-equivariant semilinear crystalline Frobenius~$\varphi$. On $\Kan^n(\B_\st[\boldsymbol\varepsilon]^\bullet)=\B_\st[\varepsilon_1,\dots,\varepsilon_n]$, this crystalline Frobenius is given by
\[
\varphi\left(a+\sum_{i=1}^na_i\varepsilon_i\right) = \varphi(a)+p\sum_{i=1}^n\varphi(a_i)\varepsilon_i
\]
with~$\varphi$ also denoting the usual crystalline Frobenius on~$\B_\st$.
\end{example}

Finally, we will also need to hybridise our two constructions to produce cosimplicial algebras out of semi-cosimplicial cdgas $A^{\bullet,\bullet}$.

\begin{construction}\label{cons:cosimplicial_algebras_from_semi-cosimplicial_cdgas}
Let $A^{\bullet,\bullet}$ be a semi-cosimplicial $R$-cdga. We produce a cosimplicial $R$-algebra $d^\bullet(\Kan^{\bullet,\bullet}(A^{\bullet,\bullet}))$ as follows. We start by replacing $A^{\bullet,\bullet}$ with the cosimplicial $R$-cdga cogenerated by it as in definition \ref{def:cosimplicial_objects}, and then replacing each of its terms by its associated cosimplicial $R$-algebra as above. This produces us a bi-cosimplicial $R$-algebra $\Kan^{\bullet,\bullet}(A^{\bullet,\bullet})$, and we let $d^\bullet(\Kan^{\bullet,\bullet}(A^{\bullet,\bullet}))$ be the restriction to the diagonal.

It is easy to see that this simultaneously generalises both the earlier constructions, corresponding to the case when each of the individual cdgas is concentrated in degree $0$, and the case when the semi-cosimplicial cdga itself is concentrated in degree $0$.
\end{construction}
\section{Unipotent Bloch--Kato theory}
\label{c:bloch-kato}

Our aim in this section is to explain in detail how one can use cosimplicial groups and cosimplicial algebras to extend the analysis of local Bloch--Kato Selmer groups to the pro-unipotent setting. All of our arguments are then essentially just rewritings of abelian arguments in such a way that they apply equally well in the unipotent setting, and the reader who is less familiar with the non-abelian theory should be able to follow the trace of these unwritten abelian arguments in the proofs below. As remarked in the introduction, the unifying theme of this section will be the definition of certain cosimplicial $\Q_p$-algebras $\B^\bullet_*$ for $*\in\{e,f,g,g/e,f/e\}$ such that for a de Rham representation of $G_K$ on a finitely generated pro-unipotent group $U/\Q_p$, the corresponding local Bloch--Kato Selmer set or quotient $\H^1_*(G_K,U(\Q_p))$ is computed by the first cohomotopy of the cosimplicial group $U(\B^\bullet_*)^{G_K}$.

From the point of view of our application, the main result in this section is corollary \ref{cor:long_exact_seqs_of_quotients}, which gives a long exact sequence relating the Bloch--Kato quotients $\H^1_{g/e}$ for central extensions of representations on finitely generated pro-unipotent groups, and plays the same role in the proof of theorem \ref{thm:main_theorem_p-adic} as the long exact sequence in non-abelian Galois cohomology does in the proof of theorem \ref{thm:main_theorem_l-adic} in \S\ref{c:main_theorem_l-adic}.

\subsection{The cosimplicial algebras $\B^\bullet_e$, $\B^\bullet_f$ and $\B^\bullet_g$}

When $V$ is a de Rham (linear) representation of $G_K$, the Bloch--Kato Selmer group $\H^1_e(G_K,V)$ is typically studied via the Bloch--Kato exponential sequence
\[
0 \to V^{G_K} \to \D_\dR^+(V)\oplus\D_\cris^{\varphi=1}(V) \to \D_\dR(V) \to \H^1_e(G_K,V) \to 0,
\]
which  arises by tensoring $V$ with the short exact sequence
\[
0\rightarrow\Q_p\rightarrow\B_\dR^+\oplus\B_\cris^{\varphi=1}\rightarrow\B_\dR\rightarrow0
\]
and taking the long exact sequence in Galois cohomology \cite[Corollary 3.8.4]{bloch-kato}. Put another way, if one defines the $2$-term cochain complex $\mathsf C^\bullet_e$ to be\[\mathsf C^\bullet_e := (\B_\dR^+\oplus\B_\cris^{\varphi=1}\rightarrow\B_\dR)\](with the map sending $(x,y)$ to $x-y$), then one has a canonical identification\[\H^1\!\left((\mathsf C^\bullet_e\otimes V)^{G_K}\right)\cong\coker\left(\D_\dR^+(V)\oplus\D_\cris^{\varphi=1}(V)\to\D_\dR(V)\right)=\H^1_e(G_K,V).\]

The advantage of this latter cohomological point of view, as noted in \cite[Theorem 1.23]{nekovar}, is that one can treat both $\H^1_f(G_K,V)$ and $\H^1_g(G_K,V)$ on an equal footing by defining cochain complexes $\mathsf C^\bullet_f$ and $\mathsf C^\bullet_g$ by
\begin{align*}
\mathsf C^\bullet_f &:= (\B_\dR^+\oplus\B_\cris\rightarrow\B_\dR\oplus\B_\cris),\text{ and} \\
\mathsf C^\bullet_g &:= (\B_\dR^+\oplus\B_\st\rightarrow\B_\dR\oplus\B_\st\oplus\B_\st\rightarrow\B_\st),
\end{align*}
where the map in the former complex sends $(x,y)$ to $(x-y,(\varphi-1)y)$, and in the latter complex the first map sends $(x,y)$ to $(x-y,(\varphi-1)y,Ny)$ and the second map sends $(x,y,z)$ to $(p\varphi-1)z-Ny$. With these definitions, one then has a canonical identification \cite[Theorem 1.23]{nekovar}\[\H^1\!\left((\mathsf C^\bullet_g\otimes V)^{G_K}\right)\cong\H^1_g(G_K,V),\]and similarly for $\H^1_f$. Thus, in all cases, the cochain complex $(\mathsf C^\bullet_*\otimes V)^{G_K}$ is a homological-algebraic invariant associated to the representation $V$ which refines the Bloch--Kato Selmer groups $\H^1_*(G_K,V)$.

\begin{remark}\label{rmk:cochains_acyclic}
All three cochain complexes $\mathsf C^\bullet_e$, $\mathsf C^\bullet_f$, $\mathsf C^\bullet_g$ defined above are resolutions of $\Q_p$: their cohomology is $\Q_p$ in degree $0$ and vanishes in higher degrees. For $\mathsf C^\bullet_e$ and $\mathsf C^\bullet_f$, this is simply a rephrasing of the two well-known exact sequences \cite[Proposition 1.17]{bloch-kato}, except that we are using different conventions for the signs of the morphisms, while for $\mathsf C^\bullet_g$ the surjectivity of $N\colon\B_\st\rightarrow\B_\st$ reduces it to the case of $\mathsf C^\bullet_f$.
\end{remark}

In order to generalise this idea to the unipotent setting, we will replace the cochain complexes $\mathsf C^\bullet_*$ with certain cosimplicial $\Q_p$-algebras $\B^\bullet_*$ with $G_K$-action, such that when $U$ is a de Rham representation of $G_K$ on a finitely generated pro-unipotent group, the cosimplicial group $U(\B^\bullet_*)^{G_K}$ recovers the non-abelian Bloch--Kato Selmer set $\H^1_*(G_K,U(\Q_p))$ as its first cohomotopy. The algebras $\B^\bullet_*$ themselves are quite straightforward to define, being simply built out of copies of the standard period rings.

\begin{definition}\label{def:bloch-kato_algebras}
We define three cosimplicial $\Q_p$-algebras $\B^\bullet_e$, $\B^\bullet_f$ and $\B^\bullet_g$ as follows:
\begin{itemize}
	\item $\B^\bullet_e$ is the cosimplicial $\Q_p$-algebra cogenerated by the semi-cosimplicial $\Q_p$-algebra\[\B_\dR^+\times\B_\cris^{\varphi=1}\rightrightarrows\B_\dR\](with higher terms the zero ring) where $d^0(x,y)=x$ and $d^1(x,y)=y$;
	\item $\B^\bullet_f$ is the cosimplicial $\Q_p$-algebra cogenerated by the semi-cosimplicial $\Q_p$-algebra\[\B_\dR^+\times\B_\cris\rightrightarrows\B_\dR\times\B_\cris\]where $d^0(x,y)=(x,\varphi y)$, $d^1(x,y)=(y,y)$;
	\item $\B^\bullet_g$ is the cosimplicial $\Q_p$-algebra constructed from the semi-cosimplicial $\Q_p$-cdga\[\B_\dR^+\times\B_\st[\boldsymbol\varepsilon]^\bullet\rightrightarrows\B_\dR\times\B_\st[\boldsymbol\varepsilon]^\bullet\]as in construction \ref{cons:cosimplicial_algebras_from_semi-cosimplicial_cdgas}. Here $\B_\st[\boldsymbol\varepsilon]^\bullet$ is the $\Q_p$-cdga in example \ref{ex:B_st[epsilon]}, and the maps in the semi-cosimplicial $\Q_p$-cdga are given by $d^0(x,y)=(x,\varphi y)$ and $d^1(x,y)=(y,y)$.
\end{itemize}
All of these cosimplicial $\Q_p$-algebras inherit a topology and an action of $G_K$ from those on $\B_\dR$, $\B_\st$ and $\B_\cris$. This action is degree-wise continuous.
\end{definition}


One important property of the algebras $\B^\bullet_e$, $\B^\bullet_f$ and $\B^\bullet_g$ is that they are resolutions of $\Q_p$, in the sense that their $0$th cohomotopy is canonically identified with $\Q_p$ (as a topological $\Q_p$-algebra with $G_K$-action), and their higher cohomotopy vanishes. Moreover, we will also see that this immediately implies that $U(\B^\bullet_*)$ is a resolution of $U(\Q_p)$ whenever $U/\Q_p$ is a finitely generated pro-unipotent group -- this is analogous to the fact that tensoring with a linear representation is exact.

\begin{proposition}\label{prop:acyclic_algebras}
We have $\pi^0(\B_e^\bullet)=\pi^0(\B_f^\bullet)=\pi^0(\B_g^\bullet)=\Q_p$ (as a topological $\Q_p$-algebra with $G_K$-action), and $\pi^i(\B^\bullet_e)=\pi^i(\B^\bullet_f)=\pi^i(\B^\bullet_g)=0$ for $i\geq1$ (as a $\Q_p$-vector space/abelian group).
\begin{proof}
For $\B^\bullet_e$ and $\B^\bullet_f$, by lemma \ref{lem:dold-kan} their cohomotopy groups agree with the cohomotopy groups of the generating semi-cosimplicial $\Q_p$-algebra, which are, respectively, the cohomology groups of the cochains $\mathsf C^\bullet_e$ and $\mathsf C^\bullet_f$ above. But these are $\Q_p$ in degree $0$ and vanish in higher degrees by remark \ref{rmk:cochains_acyclic}.

For $\B^\bullet_g$, by lemma \ref{lem:eilenberg-zilber} its cohomotopy groups agree with the cohomotopy groups of the totalisation of the bicomplex
\begin{center}
\begin{tikzcd}[ampersand replacement=\&,column sep = large]
\B_\dR^+\oplus\B_\st \arrow[r,"{\begin{pmatrix} 1 & -1 \\ 0 & \varphi-1 \end{pmatrix}}"]\arrow[d,"(0\:N)"] \& \B_\dR\oplus\B_\st \arrow[d,"(0\:N)"] \\
\B_\st \arrow[r,"p\varphi-1"] \& \B_\st,
\end{tikzcd}
\end{center}
namely $\mathsf C^\bullet_g$. Again, this is a resolution of $\Q_p$ by remark \ref{rmk:cochains_acyclic}.

One checks easily that the algebra structure, topology and $G_K$-action on $\pi^0(\B^\bullet_e)$, $\pi^0(\B^\bullet_f)$ and $\pi^0(\B^\bullet_g)$ agree with those on $\Q_p$.
\end{proof}
\end{proposition}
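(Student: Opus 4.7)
The plan is to reduce each cohomotopy calculation to the cohomology of one of the Nekov\'a\u r complexes $\mathsf C^\bullet_e$, $\mathsf C^\bullet_f$, $\mathsf C^\bullet_g$ appearing in the preceding discussion, after which Remark \ref{rmk:cochains_acyclic} gives the desired acyclicity; the remaining work is just a check on the extra structures carried by $\pi^0$.

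For $\B^\bullet_e$ and $\B^\bullet_f$, each is by construction the cosimplicial $\Q_p$-algebra cogenerated by a two-term semi-cosimplicial abelian group. I would therefore apply Lemma \ref{lem:dold-kan}, which identifies the cohomotopy of $\Kan^\bullet(-)$ with that of the semi-cosimplicial generator, and then identify the latter (by definition) with the cohomology of the alternating-sum Moore complex $d^1 - d^0$. Written out, these Moore complexes are exactly the two-term complexes $\mathsf C^\bullet_e$ and $\mathsf C^\bullet_f$ (up to an overall sign that does not affect cohomology), and both are resolutions of $\Q_p$ by Remark \ref{rmk:cochains_acyclic}.

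For $\B^\bullet_g$ the generator is a semi-cosimplicial cdga rather than a semi-cosimplicial algebra, so I would invoke Lemma \ref{lem:eilenberg-zilber} in place of Lemma \ref{lem:dold-kan}: this identifies the cohomotopy of the diagonal cosimplicial algebra with the cohomology of the totalisation of the Moore bicomplex, whose horizontal direction comes from $d^1 - d^0$ and whose vertical direction comes from the cdga differential on $\B_\st[\boldsymbol\varepsilon]^\bullet$, namely multiplication by $N$. Using that Frobenius on $\B_\st[\boldsymbol\varepsilon]^1$ is $p\varphi$ (Example \ref{ex:B_st[epsilon]}), the horizontal differential in cdga-degree $1$ becomes $p\varphi - 1$, so the totalised bicomplex coincides with $\mathsf C^\bullet_g = (\B_\dR^+ \oplus \B_\st \to \B_\dR \oplus \B_\st \oplus \B_\st \to \B_\st)$, which is again a resolution of $\Q_p$ by Remark \ref{rmk:cochains_acyclic}.

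It remains to identify $\pi^0$ as a topological $\Q_p$-algebra with $G_K$-action. The algebra structure and $G_K$-action are inherited automatically, since $\pi^0$ is the equaliser of two $\Q_p$-algebra maps (resp.\ $G_K$-equivariant maps) between the degree-zero terms. For the topology, $\pi^0 \cong \Q_p$ sits inside the Hausdorff space $\B_\dR^+ \times \B_\cris^{\varphi=1}$ (or the analogous product for $\B^\bullet_f$, $\B^\bullet_g$) as a closed subspace; since $\Q_p$ is locally compact, the subspace topology here must coincide with the standard $p$-adic one. The only real obstacle is the bookkeeping needed to identify the Moore (bi)complex with $\mathsf C^\bullet_*$ with correct signs and factors; once this is settled the proposition reduces entirely to the known abelian acyclicity results.
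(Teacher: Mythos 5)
Your proposal is correct and follows essentially the same route as the paper: Lemma \ref{lem:dold-kan} for $\B^\bullet_e$ and $\B^\bullet_f$, Lemma \ref{lem:eilenberg-zilber} for $\B^\bullet_g$, identification of the resulting (bi)complexes with $\mathsf C^\bullet_e$, $\mathsf C^\bullet_f$, $\mathsf C^\bullet_g$ and acyclicity from Remark \ref{rmk:cochains_acyclic}, together with the same local-compactness argument for the topology on $\pi^0$. The only cosmetic difference is your sign convention on the Moore differential, which, as you note, does not affect cohomology.
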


\begin{lemma}\label{lem:acyclic_groups}
Let $U/\Q_p$ be a finitely generated pro-unipotent group, and $\B^\bullet$ a cosimplicial topological $\Q_p$-algebra such that $\pi^0(\B^\bullet)=\Q_p$ (as a topological $\Q_p$-algebra) and $\pi^i(\B^\bullet)=0$ for $i>0$ (as a $\Q_p$-vector space). Then $\pi^0(U(\B^\bullet))= U(\Q_p)$ (as a topological group), $\pi^1(U(\B^\bullet))=1$, and $\pi^i(U(\B^\bullet))=1$ for $i>1$ when $U$ is abelian.
\begin{proof}
If $U$ is abelian, then it is $\G_a^d$ for some $d$, so it suffices to consider the case $U=\G_a$. But then $U(\B^\bullet)\cong\B^\bullet$ as cosimplicial topological abelian groups, so the result follows by assumption.

For $U$ non-abelian and finite-dimensional, we proceed by induction, writing $U$ as a central extension\[1\rightarrow Z\centarrow U\rightarrow Q\rightarrow1\]where we already know the result for $Z$ and $Q$. Since the sequence\[1\rightarrow Z(\B^\bullet)\centarrow U(\B^\bullet)\rightarrow Q(\B^\bullet)\rightarrow1\]is a central extension in each degree, we have by \ref{thm:long_exact_seqs} an exact sequence\[1\rightarrow\pi^0(Z(\B^\bullet))\centarrow\pi^0(U(\B^\bullet))\rightarrow\pi^0(Q(\B^\bullet))\rightarrow\pi^1(Z(\B^\bullet))\actsarrow\pi^1(U(\B^\bullet))\rightarrow\pi^1(Q(\B^\bullet))\]
which by our inductive assumption gives the desired description of $\pi^0(U(\B^\bullet))$ and $\pi^1(U(\B^\bullet))$. That the topology on $\pi^0(U(\B^\bullet))$ is the desired one follows, since after identifying $U$ with its Lie algebra $\Lie(U)$ we see that $\pi^0(U(\B^\bullet))$ is homeomorphic to $\pi^0\left(\B^\bullet\otimes\Lie(U)\right)$, which is homeomorphic to $\Lie(U)\cong U(\Q_p)$ by the abelian case.

For general $U$, we write $U=\liminv U_n$ as an inverse limit of finite-dimensional quotients. Since $\pi^0$ commutes with inverse limits of topological groups, we have the desired description of $\pi^0(U(\B^\bullet))$. To prove triviality of $\pi^1(U(\B^\bullet))$, it suffices to prove that the map $\pi^1(U(\B^\bullet))\rightarrow\liminv\pi^1(U_n(\B^\bullet))$ has trivial kernel. Thus suppose $\alpha\in\Cycle^1(U(\B^\bullet))=\liminv\Cycle^1(U(\B^\bullet))$ is an inverse limit of coboundaries. For each $n$, we write $T_n$ for the set of elements of $U_n(\B^0)$ whose coboundary is the image of $\alpha$ in $\Cycle^1(U(\B^\bullet))$. $T_n$ is then a left torsor under $\pi^0(U_n(\B^\bullet))=U_n(\Q_p)$, so that the maps $T_m\rightarrow T_n$ for $m\geq n$ are all surjective and $\liminv T_n\neq\emptyset$. An element of the inverse limit is then an element $u\in\liminv U_n(\B^0)=U(\B^0)$ whose coboundary is $\alpha$, so that $[\alpha]=*$ as desired.
\end{proof}
\end{lemma}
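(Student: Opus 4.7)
The plan is to handle the finite-dimensional case first by induction on the nilpotency class (or equivalently on dimension, since every quotient along the descending central series is abelian), and then to bootstrap up to the general case by a Mittag--Leffler argument.

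First I would dispose of the abelian case. Since $U$ is unipotent abelian, it is isomorphic to $\G_{a,\Q_p}^d$ as a group scheme, so $U(\B^\bullet) \cong (\B^\bullet)^d$ as a cosimplicial topological abelian group, and the conclusion follows directly from the hypothesis on $\pi^\ast(\B^\bullet)$ (here the computation of $\pi^i$ for all $i \geq 0$ is just the statement that a finite product of resolutions of $\Q_p$ is a resolution of $\Q_p^d$). For a general finite-dimensional unipotent $U$, I would proceed by induction by writing $U$ as a central extension $1 \to Z \centarrow U \to Q \to 1$ with $Z \cong \G_{a,\Q_p}$ and $Q$ of strictly smaller nilpotency class (e.g.\ by picking $Z$ inside the centre of $U$). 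Applying $(-)(\B^\bullet)$ preserves exactness degree-wise (the map $U \to Q$ has a scheme-theoretic section by general nonsense for unipotent groups, or simply because we can use the exponential to identify both sides with their Lie algebras and then pick a linear splitting), so theorem \ref{thm:long_exact_seqs}(3) gives an exact sequence whose known terms, by the inductive hypothesis, immediately force $\pi^0(U(\B^\bullet)) = U(\Q_p)$ as an abstract group and $\pi^1(U(\B^\bullet)) = 1$.

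To verify that the topology on $\pi^0(U(\B^\bullet))$ really is the standard one on $U(\Q_p)$, I would observe that the exponential identifies $U$ as an $\Q_p$-scheme with the affine space underlying $\Lie(U)$, hence $U(\B^\bullet) \cong \B^\bullet \otimes_{\Q_p} \Lie(U)$ as a cosimplicial topological space (with $\B^\bullet$ being, in each degree, a free finitely-generated module over the corresponding degree of $\B^\bullet_{\text{scalars}}$ - actually, since $\Lie(U)$ is a finite-dimensional $\Q_p$-vector space, this tensor product is just a finite product of copies of $\B^\bullet$). Thus the abelian case handles the topology, and the inductive step preserves it because equalisers in topological groups are computed on underlying spaces.

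Finally, to pass to pro-unipotent $U$, write $U = \liminv U_n$ with each $U_n$ finite-dimensional unipotent. The functor $\pi^0$ is a right adjoint (as explained earlier in the paper) and therefore commutes with inverse limits of topological groups, giving $\pi^0(U(\B^\bullet)) = \liminv U_n(\Q_p) = U(\Q_p)$ with the correct topology. For the triviality of $\pi^1$, it suffices to show the map $\pi^1(U(\B^\bullet)) \to \liminv \pi^1(U_n(\B^\bullet))$ has trivial kernel (since the target is trivial by the finite-dimensional case). Given a cocycle $\alpha \in \Cycle^1(U(\B^\bullet))$ mapping to trivial classes in each $U_n$, let $T_n \subseteq U_n(\B^0)$ be the set of elements whose coboundary is the image of $\alpha$. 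The $T_n$ are non-empty (by hypothesis) and each is a torsor under the group $\pi^0(U_n(\B^\bullet)) = U_n(\Q_p)$; since the transition maps $U_m(\Q_p) \to U_n(\Q_p)$ are surjective (unipotent groups are scheme-theoretically affine space), the torsor maps $T_m \to T_n$ are also surjective, so $\liminv T_n$ is non-empty. An element of $\liminv T_n = U(\B^0)$ witnesses that $\alpha$ is a coboundary in $U(\B^\bullet)$. The main obstacle here, and the step that requires genuine care rather than routine symbol-pushing, is exactly this Mittag--Leffler argument in the pro-step: the triviality of $\pi^1$ is not preserved a priori under arbitrary inverse limits of cosimplicial groups, and one has to use the specific fact that the stabilisers (themselves computed by $\pi^0$) are unipotent and hence have surjective transition maps.
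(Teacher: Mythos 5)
Your proposal is correct and follows essentially the same route as the paper: the abelian case by reduction to $\G_{a,\Q_p}$, induction over central extensions using theorem \ref{thm:long_exact_seqs}, the topology on $\pi^0$ via the exponential identification with $\Lie(U)$, and the pro-unipotent case via the Mittag--Leffler argument with the torsors $T_n$ under $U_n(\Q_p)$. The only cosmetic difference is that you fix $Z\cong\G_{a,\Q_p}$ in the inductive step and make the degree-wise scheme-theoretic splitting explicit, neither of which changes the argument.
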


\subsection{The non-abelian Bloch--Kato exponential}
\label{s:exponential}

Combining proposition \ref{prop:acyclic_algebras} and lemma \ref{lem:acyclic_groups} shows that, for $U/\Q_p$ a representation of $G_K$ on a finitely generated pro-unipotent group, there is a $G_K$-equivariant exact sequence\[1\rightarrow U(\Q_p)\rightarrow U(\B^0_*)\actsarrow\Cycle^1(U(\B^\bullet_*))\rightarrow1\]for $*\in\{e,f,g\}$. When $*=e$, the terms are given explicitly by\[1\rightarrow U(\Q_p)\rightarrow U(\B_\dR^+)\times U(\B_\cris^{\varphi=1})\actsarrow U(\B_\dR)\rightarrow1\](where the group structure on $U(\B_\dR)$ plays no role). This suggests, following the construction in the abelian case, that a non-abelian analogue of the Bloch--Kato exponential sequence could be obtained by taking the long exact sequence on non-abelian Galois cohomology.

In order to perform this construction, we will need the following non-abelian variant of \cite[Lemma 3.8.1]{bloch-kato}.

\begin{proposition}\label{prop:cohomological_injectivity}
Let $U/\Q_p$ be a de Rham representation of $G_K$ on a finitely generated pro-unipotent group. Then the map $\H^1(G_K,U(\B_\dR^+))\rightarrow\H^1(G_K,U(\B_\dR))$ has trivial kernel.
\begin{proof}
When $U$ is finite-dimensional, we may proceed inductively. The case that $U$ is abelian is \cite[Lemma 3.8.1]{bloch-kato}, so we need only consider the case when $U$ is a $G_K$-equivariant central extension\[1\rightarrow Z\centarrow U\rightarrow Q\rightarrow1\]and we already know the lemma for $Z$ and $Q$. Since all the varieties involved are affine spaces, this sequence has a splitting as varieties (not necessarily compatible with the group structure), so that $Z(\B_\dR^+)\subseteq U(\B_\dR^+)$ has the subspace topology and $U(\B_\dR^+)\rightarrow Q(\B_\dR^+)$ has a continuous splitting, and similarly for $\B_\dR$. Hence we obtain (part of) an exact sequence in cohomology:
\begin{center}
\begin{tikzcd}
\H^1(G_K,Z(\B_\dR^+)) \arrow[r,"\acts"]\arrow[d] & \H^1(G_K,U(\B_\dR^+)) \arrow[r]\arrow[d] & \H^1(G_K,Q(\B_\dR^+)) \arrow[d] \\
\H^1(G_K,Z(\B_\dR)) \arrow[r,"\acts"] & \H^1(G_K,U(\B_\dR)) \arrow[r] & \H^1(G_K,Q(\B_\dR)).
\end{tikzcd}
\end{center}

However, the assumption that $U$ is de Rham implies that $Z$ and $Q$ are too, and hence that the sequence\[1\rightarrow\D_\dR(Z)\centarrow\D_\dR(U)\rightarrow\D_\dR(Q)\rightarrow1\]is exact. Thus the coboundary map $\D_\dR(Q)(K)=\H^0(G_K,Q(\B_\dR))\rightarrow\H^1(G_K,Z(\B_\dR))$ is zero and $\H^1(G_K,Z(\B_\dR))\rightarrow\H^1(G_K,U(\B_\dR))$ has trivial kernel.

With this observation, it is a straightforward diagram-chase to verify as desired that $\H^1(G_K,U(\B_\dR^+))\rightarrow\H^1(G_K,U(\B_\dR))$ has trivial kernel. If $u_1\in\H^1(G_K,U(\B_\dR^+))$ is in the kernel, then its image in $\H^1(G_K,Q(\B_\dR^+))$ lies in the corresponding kernel, so that inductively this image is trivial and hence $u_1$ is the image of some $z_1\in\H^1(G_K,Z(\B_\dR^+))$. Since the map $\H^1(G_K,Z(\B_\dR))\rightarrow\H^1(G_K,U(\B_\dR))$ has trivial kernel, it follows that $z_1=*$ and hence $u_1=*$ also.

For general $U$, we write $U=\liminv U_n$ as an inverse limit of finite-dimensional $G_K$-equivariant quotients, which are de Rham. Since the composite\[\H^1(G_K,U(\B_\dR^+))\rightarrow\liminv\H^1(G_K,U_n(\B_\dR^+))\rightarrow\liminv\H^1(G_K,U_n(\B_\dR))\](where the latter map has trivial kernel by the finite-dimensional case) factors through $\H^1(G_K,U(\B_\dR^+))\rightarrow\H^1(G_K,U(\B_\dR))$, it would suffice for us to prove that the map $\H^1(G_K,U(\B_\dR^+))\rightarrow\liminv\H^1(G_K,U_n(\B_\dR^+))$ has trivial kernel.

To do this, suppose that $\alpha\in\Cycle^1(G_K,U(\B_\dR^+))$ is a continuous cocycle whose image in each $\Cycle^1(G_K,U_n(\B_\dR^+))$ is a coboundary, and write $T_n$ for the set of all elements of $U_n(\B_\dR^+)$ whose coboundary is the image of $\alpha$. Then $T_n$ is a left torsor under $U_n(\B_\dR^+)^{G_K}=\D_\dR^+(U_n)(K)$. Since the $\D_\dR^+(U_n)$ are unipotent groups over $K$, it follows that the sets $(T_n)_{n\in\N}$ satisfy the Mittag--Leffler condition, and hence $\liminv T_n\neq\emptyset$. By inspection, an element of $\liminv T_n\subseteq\liminv U_n(\B_\dR^+)=U(\B_\dR^+)$ is an element whose coboundary is $\alpha$, so that $[\alpha]\in\H^1(G_K,U(\B_\dR^+))$ is trivial, as desired.
\end{proof}
\end{proposition}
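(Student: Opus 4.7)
The plan is to reduce to the case where $U$ is finite-dimensional unipotent, handle that case by induction on the length of a central series, and use Bloch--Kato's classical result \cite[Lemma 3.8.1]{bloch-kato} as the base case.

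For the finite-dimensional case, write $U$ as a central extension $1 \to Z \to U \to Q \to 1$ of unipotent groups by $Z = \G_a^d$, where the statement is already known for both $Z$ (by the abelian case) and $Q$ (inductively). Since $U$ is de Rham, so are $Z$ and $Q$; moreover, since all the groups are affine spaces as varieties, the sequences of $\B_\dR^+$-points and $\B_\dR$-points both admit continuous set-theoretic splittings, so we obtain the relevant portion of the long exact sequence in non-abelian cohomology:
\[
\H^1(G_K, Z(\mathsf B)) \actsarrow \H^1(G_K, U(\mathsf B)) \to \H^1(G_K, Q(\mathsf B))
\]
for $\mathsf B \in \{\B_\dR^+, \B_\dR\}$, compatibly in $\mathsf B$. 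Then one would proceed by a standard diagram chase: given $[u_1] \in \H^1(G_K, U(\B_\dR^+))$ mapping to the basepoint in $\H^1(G_K, U(\B_\dR))$, push it to $Q$, use the inductive hypothesis there to deduce it comes from some $[z_1] \in \H^1(G_K, Z(\B_\dR^+))$, and then use the abelian base case to conclude. Making the diagram chase work requires the coboundary $\H^0(G_K, Q(\B_\dR)) \to \H^1(G_K, Z(\B_\dR))$ to vanish; this follows because $U$ being de Rham forces exactness of $1 \to \D_\dR(Z) \to \D_\dR(U) \to \D_\dR(Q) \to 1$, and this sequence of pro-unipotent $K$-schemes remains exact on $K$-points.

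To pass from the finite-dimensional case to the general finitely generated pro-unipotent case, write $U = \liminv U_n$ as an inverse limit of finite-dimensional de Rham quotients. The triangle
\[
\H^1(G_K, U(\B_\dR^+)) \to \liminv \H^1(G_K, U_n(\B_\dR^+)) \to \liminv \H^1(G_K, U_n(\B_\dR))
\]
reduces the problem to showing that $\H^1(G_K, U(\B_\dR^+)) \to \liminv \H^1(G_K, U_n(\B_\dR^+))$ has trivial kernel. For a continuous cocycle $\alpha$ in the kernel, the set $T_n$ of elements of $U_n(\B_\dR^+)$ whose coboundary is $\alpha_n$ is a left torsor under $\D_\dR^+(U_n)(K)$. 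Each $\D_\dR^+(U_n)$ is a unipotent $K$-group and hence connected, so the system $\bigl(\D_\dR^+(U_n)(K)\bigr)_n$ satisfies the Mittag--Leffler condition, forcing $\liminv T_n \neq \emptyset$ and producing a trivialising element in $U(\B_\dR^+)$.

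The main obstacle will likely be ensuring that the long exact sequences in each inductive step have enough continuity properties to apply (which needs the splittings of $U \twoheadrightarrow Q$ on $\B_\dR^+$- and $\B_\dR$-points to be continuous, not merely set-theoretic) and, in the pro-unipotent step, correctly handling the interplay between continuous cochains and inverse limits, which is where the unipotence of the fixed-point schemes $\D_\dR^+(U_n)$ becomes essential for the Mittag--Leffler argument.
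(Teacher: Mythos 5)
Your proposal is correct and follows essentially the same route as the paper's proof: induction through a central extension with the abelian Bloch--Kato lemma as base case, the coboundary vanishing deduced from exactness of $1\rightarrow\D_\dR(Z)\rightarrow\D_\dR(U)\rightarrow\D_\dR(Q)\rightarrow1$ on $K$-points, and the passage to the pro-unipotent case via the Mittag--Leffler argument on the torsors $T_n$ under $\D_\dR^+(U_n)(K)$. The only cosmetic difference is your brief justification of Mittag--Leffler (connectedness of unipotent groups versus the paper's appeal to the torsors being under points of unipotent $K$-groups, i.e.\ stabilisation of a descending chain of algebraic subgroups), which amounts to the same mechanism.
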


\begin{corollary}\label{cor:pi^1_is_bloch-kato}
Let $U/\Q_p$ be a de Rham representation of $G_K$ on a finitely generated pro-unipotent group, and let $\Cycle^1(U(\B^\bullet_g))$ be the $1$-cocycles of the cosimplicial group $U(\B^\bullet_g)$. Then the twisted conjugation action of $U(\B^0_g)$ on $\Cycle^1(U(\B^\bullet_g))$ is transitive with point-stabiliser $U(\Q_p)$, so that by corollary \ref{cor:long_exact_seq_group_cohomology} we have an exact sequence\[1\rightarrow U(\Q_p)^{G_K}\rightarrow U(\B^0_g)^{G_K}\actsarrow\Cycle^1(U(\B^\bullet_g))^{G_K}\overset\delta\rightarrow\H^1(G_K,U(\Q_p))\rightarrow\H^1(G_K,U(\B^0_g)).\]Furthermore, the image of the coboundary map is the Bloch--Kato cohomology set $\H^1_g(G_K,U(\Q_p))$. The same is true of $\B^\bullet_f$ (for $\H^1_f$) and $\B^\bullet_e$ (for $\H^1_e$).
\begin{proof}
We will just prove this for $\B_g^\bullet$, the other cases being similar. By proposition \ref{prop:acyclic_algebras}, the cosimplicial topological $\Q_p$-algebra $\B_g^\bullet$ satisfies the conditions of lemma \ref{lem:acyclic_groups}, which tells us that the action of $U(\B^0_g)$ on $\Cycle^1(U(\B^\bullet_g))$ is transitive, with point-stabiliser $U(\Q_p)$, so that we have the claimed exact sequence.

To conclude, we want to determine the kernel of the right-hand map in the exact sequence. Since $\B^0_g=\B_\dR^+\times\B_\st$, the kernel of this map is equal to the intersection of the kernels of the two maps
\[
\H^1(G_K,U(\Q_p)) \rightarrow \H^1(G_K,U(\B_\dR^+)) \hspace{0.4cm}\text{and}\hspace{0.4cm} \H^1(G_K,U(\Q_p)) \rightarrow \H^1(G_K,U(\B_\st)).
\]
But it follows from Proposition~\ref{prop:cohomological_injectivity} and Lemma~\ref{lem:our_H^1_g_is_right} that these two maps have the same kernel (which is also the kernel of $\H^1(G_K,U(\Q_p))\rightarrow\H^1(G_K,U(\B_\dR))$). Hence the kernel of $\H^1(G_K,U(\Q_p))\to\H^1(G_K,U(\B_g^0))$ is $\H^1_g(G_K,U(\Q_p))$ as desired.
\end{proof}
\end{corollary}

A convenient way to view the non-abelian Bloch--Kato exponential is as giving a canonical description of the cohomotopy of $U(\B^\bullet_*)^{G_K}$, as follows.

\begin{theorem}\label{thm:bloch-kato_sets_as_cohomotopy}
Let $U/\Q_p$ be a de Rham representation of $G_K$ on a finitely generated pro-unipotent group. Then the cohomotopy groups/sets of the cosimplicial groups $U(\B^\bullet_e)^{G_K}$, $U(\B^\bullet_f)^{G_K}$ and $U(\B^\bullet_g)^{G_K}$ are canonically identified as
\begin{align*}
\pi^i(U(\B^\bullet_e)^{G_K}) &= \begin{cases} U(\Q_p)^{G_K} & \text{if $i=0$,} \\ \H^1_e(G_K,U(\Q_p)) & \text{if $i=1$,} \\ 1 & \text{if $i>1$ and $U$ abelian;}\end{cases} \\
\pi^i(U(\B^\bullet_f)^{G_K}) &= \begin{cases} U(\Q_p)^{G_K} & \text{if $i=0$,} \\ \H^1_f(G_K,U(\Q_p)) & \text{if $i=1$,} \\ 1 & \text{if $i>1$ and $U$ abelian;}\end{cases} \\
\pi^i(U(\B^\bullet_g)^{G_K}) &= \begin{cases} U(\Q_p)^{G_K} & \text{if $i=0$,} \\ \H^1_g(G_K,U(\Q_p)) & \text{if $i=1$,} \\ \D_\cris^{\varphi=1}(U(\Q_p)^\dual(1))^\dual & \text{if $i=2$ and $U$ abelian,} \\ 1 & \text{if $i>2$ and $U$ abelian.}\end{cases}
\end{align*}
\begin{proof}
Let us do the case of $U(\B^\bullet_g)^{G_K}$, the other cases being similar (and simpler, since one need only work with $\Q_p$-algebras rather than $\Q_p$-cdgas). For $\pi^0$ and $\pi^1$, this is exactly the content of corollary \ref{cor:pi^1_is_bloch-kato} since $\Cycle^1(U(\B^\bullet_g))^{G_K}=\Cycle^1(U(\B^\bullet_g)^{G_K})$.

It remains to calculate the higher cohomotopy when $U=V$ is abelian. In this case, $U(\B^\bullet_g)=\B^\bullet_g\otimes V$ is the diagonal in the bi-cosimplicial $\Q_p$-representation cogenerated by the bi-semi-cosimplicial $\Q_p$-representation
\begin{center}
\begin{tikzcd}
(\B_\dR^+\otimes V)\oplus(\B_\st\otimes V) \arrow[r,shift right]\arrow[r,shift left]\arrow[d,swap,"(0\:0)",shift right]\arrow[d,"(0\:-N)",shift left] & (\B_\dR\otimes V)\oplus(\B_\st\otimes V) \arrow[d,swap,"(0\:0)",shift right]\arrow[d,"(0\:-N)",shift left] \\
\B_\st\otimes V \arrow[r,shift right]\arrow[r,shift left] & \B_\st\otimes V.
\end{tikzcd}
\end{center}
Since the operations of passing to the associated bi-cosimplicial representation and restricting to the diagonal commute with taking $G_K$-invariants, the same remains true after taking $G_K$-invariants, and hence by lemma \ref{lem:eilenberg-zilber} the cohomotopy of $(\B^\bullet_g\otimes V)^{G_K}$ is canonically isomorphic to the cohomology of the cochain $(\mathsf C^\bullet_g\otimes V)^{G_K}$, where $\mathsf C^\bullet_g$ is the cochain at the beginning of \S\ref{s:exponential}. Concretely, this is a cochain complex\[\D_\dR^+(V)\oplus\D_\st(V)\rightarrow\D_\dR(V)\oplus\D_\st(V)\oplus\D_\st(V)\rightarrow\D_\st(V),\]so the cohomology vanishes in degrees $>2$, and the cohomology in degree $2$ is the $K_0$-linear dual of the kernel of\[(0,p\varphi^*-1,-N^*)\colon\D_\st(V)^\dual\rightarrow\D_\dR(V)^\dual\oplus\D_\st(V)^\dual\oplus\D_\st(V)^\dual,\]which is $\left(\D_\st(V)^\dual\right)^{N=0,p\varphi=1}=\D_\cris^{\varphi=1}(V^\dual(1))$ as desired.
\end{proof}
\end{theorem}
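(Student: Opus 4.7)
The proof naturally splits into the identifications in degrees $0$ and $1$, which hold for any de Rham representation $U$, and the higher-degree vanishing (together with the degree-$2$ computation for $\B^\bullet_g$) in the abelian case. For $\pi^0$ I would use that the equalizer defining $\pi^0$ commutes with $G_K$-invariants, so $\pi^0(U(\B^\bullet_*)^{G_K}) = \pi^0(U(\B^\bullet_*))^{G_K}$, which equals $U(\Q_p)^{G_K}$ by combining proposition \ref{prop:acyclic_algebras} with lemma \ref{lem:acyclic_groups}. The analogous fact for $\Cycle^1$ means that $\pi^1(U(\B^\bullet_*)^{G_K})$ is the orbit-space of the twisted conjugation action of $U(\B^0_*)^{G_K}$ on $\Cycle^1(U(\B^\bullet_*))^{G_K}$, and corollary \ref{cor:pi^1_is_bloch-kato} directly identifies this orbit-space with $\H^1_*(G_K, U(\Q_p))$.

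For the higher cohomotopy computations when $U = V$ is abelian, the key observation is that for a finite-dimensional $V$ both $V \otimes -$ and $(-)^{G_K}$ commute with finite products and equalizers, hence with $\Kan^\bullet$ and with restriction to the diagonal of a bi-cosimplicial object. Thus for $\B^\bullet_e$ and $\B^\bullet_f$, the cosimplicial group $(V \otimes \B^\bullet_*)^{G_K}$ is $\Kan^\bullet$ applied to the two-term cochain obtained by applying the relevant Dieudonn\'e functors to the defining semi-cosimplicial algebras. Lemma \ref{lem:dold-kan} then presents $\pi^i$ as the cohomology of a two-term complex, which automatically vanishes for $i > 1$.

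For $\B^\bullet_g$ the same idea with lemma \ref{lem:eilenberg-zilber} in place of lemma \ref{lem:dold-kan} identifies $\pi^i((V \otimes \B^\bullet_g)^{G_K})$ with the cohomology of the Moore totalization of the bicomplex obtained from the defining bi-semi-cosimplicial cdga by applying the Dieudonn\'e functors. Unravelling the definitions, this totalization is precisely the three-term complex $\mathsf{C}^\bullet_g(V)^{G_K}$ of \cite[Section 1.19]{nekovar}, so $\pi^i$ vanishes for $i > 2$, and $\pi^2$ is the cokernel of the final differential $(x, y, z) \mapsto (p\varphi - 1)z - Ny$. The $K_0$-linear dual of this cokernel is the kernel of the transposed map, namely $(\D_\st(V)^\dual)^{N = 0,\, p\varphi = 1}$, which the standard duality $\D_\st(V^\dual(1)) \cong \D_\st(V)^\dual$ (suitably twisted by the Tate factor entering through $\D_\cris(\Q_p(1))$) identifies with $\D_\cris^{\varphi = 1}(V^\dual(1))$; dualizing once more yields the stated formula. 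The main bookkeeping challenge lies in keeping straight the sign conventions and Tate twists in this last identification, but the substantive content --- the identification of $\pi^1$ with $\H^1_*$ in the general possibly non-abelian case --- is already contained in corollary \ref{cor:pi^1_is_bloch-kato}, so what remains is a mechanical application of the cosimplicial Dold--Kan and Eilenberg--Zilber machinery developed in section \ref{s:homotopical_algebra}.
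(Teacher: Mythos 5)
Your proposal is correct and follows essentially the same route as the paper: degrees $0$ and $1$ are read off from corollary \ref{cor:pi^1_is_bloch-kato} (using that cocycles and equalisers commute with $G_K$-invariants), and the abelian higher-degree computation uses the cosimplicial Dold--Kan and Eilenberg--Zilber lemmas to reduce to the Nekov\'a\u r-style complexes $(\mathsf C^\bullet_*\otimes V)^{G_K}$, with the same dualisation and Tate-twist bookkeeping identifying $\pi^2$ of the $g$-case with $\D_\cris^{\varphi=1}(V^\dual(1))^\dual$.
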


\subsubsection{Compatibilities}\label{ss:compatibilities}

In our study of Bloch--Kato quotients, we will need to know various compatibilities between the descriptions in theorem \ref{thm:bloch-kato_sets_as_cohomotopy}. The implicit natural isomorphisms in that theorem already tell us that the descriptions are compatible with homomorphisms of representations, but we will also want to know that the descriptions of $\pi^i(U(\B^\bullet_e)^{G_K})$, $\pi^i(U(\B^\bullet_f)^{G_K})$ and $\pi^i(U(\B^\bullet_g)^{G_K})$ are compatible with one another, and that they are compatible with Serre twists. Since this section consists entirely of verifying that certain obvious compatibilities are in fact true, we advise the reader to skip this section on the first reading.

To make this first compatibility precise, we observe that there are canonical $G_K$-equivariant cosimplicial $\Q_p$-algebra homomorphisms $\B^\bullet_e\hookrightarrow\B^\bullet_f\hookrightarrow\B^\bullet_g$, where the second embedding is induced by the obvious embedding
\begin{center}
\begin{tikzcd}
\B_\dR^+\times\B_\cris \arrow[r,hook]\arrow[d,shift left]\arrow[d,shift right] & \B_\dR^+\times\B_\st[\boldsymbol\varepsilon]^\bullet \arrow[d,shift left]\arrow[d,shift right] \\
\B_\dR\times\B_\cris \arrow[r,hook] & \B_\dR\times\B_\st[\boldsymbol\varepsilon]^\bullet
\end{tikzcd}
\end{center}
of semi-cosimplicial $\Q_p$-cdgas using construction \ref{cons:cosimplicial_algebras_from_semi-cosimplicial_cdgas}. There is a minor subtlety in the definition of the morphism $\B^\bullet_e\hookrightarrow\B^\bullet_f$, in that it is not induced by a morphism of the defining semi-cosimplicial $\Q_p$-algebras. But by the universal property of cosimplicial algebras cogenerated by semi-cosimplicial algebras, to construct this morphism it suffices to define a morphism of semi-cosimplicial $\Q_p$-algebras from $\B^\bullet_e$ to the semi-cosimplicial $\Q_p$-algebra defining $\B^\bullet_f$. Such a morphism is provided in degrees $0$ and $1$ by the maps
\begin{align*}
\B^0_e=\B_\dR^+\times\B_\cris^{\varphi=1}&\longrightarrow\B_\dR^+\times\B_\cris \\
(x,y)&\longmapsto(x,y) \\
\B^1_e=\B_\dR^+\times\B_\cris^{\varphi=1}\times\B_\dR&\longrightarrow\B_\dR\times\B_\cris \\
(x,y,z)&\longmapsto(z,y)
\end{align*}
and in higher degrees by the zero morphism to the zero ring. It is then a straightforward chasing of definitions to check that this indeed defines a morphism of semi-cosimplicial $\Q_p$-algebras.

Equipped with these embeddings $\B^\bullet_e\hookrightarrow\B^\bullet_f\hookrightarrow\B^\bullet_g$, we can now state the compatibility result for theorem \ref{thm:bloch-kato_sets_as_cohomotopy}.

\begin{theorem}\label{thm:compatibility_for_bloch-kato_cohomotopy}
With respect to the descriptions in theorem \ref{thm:bloch-kato_sets_as_cohomotopy}, the maps on cohomotopy induced by the maps $U(\B^\bullet_e)^{G_K}\hookrightarrow U(\B^\bullet_f)^{G_K}\hookrightarrow U(\B^\bullet_g)^{G_K}$ coming from evaluating $U$ on $\B^\bullet_e\hookrightarrow\B^\bullet_f\hookrightarrow\B^\bullet_g$ are identified as:
\begin{itemize}
	\item the identity maps $U(\Q_p)^{G_K}=U(\Q_p)^{G_K}=U(\Q_p)^{G_K}$ on $\pi^0$;
	\item the natural inclusions $\H^1_e(G_K,U(\Q_p))\hookrightarrow\H^1_f(G_K,U(\Q_p))\hookrightarrow\H^1_g(G_K,U(\Q_p))$ on $\pi^1$;
	\item the zero maps $1=1\hookrightarrow\D_\cris^{\varphi=1}(U(\Q_p)^\dual(1))^\dual$ on $\pi^2$ when $U$ is abelian;
	\item the zero maps $1=1=1$ on the higher $\pi^i$ when $U$ is abelian.
\end{itemize}
\begin{proof}
The extra compatibility relations when $U$ is abelian are automatic. For $\pi^0$ and $\pi^1$, we just need to recall that the exact sequences assigned in theorem \ref{thm:long_exact_seqs} are functorial, and hence the exact sequences in corollary \ref{cor:pi^1_is_bloch-kato} fit together into a commuting diagram
\begin{center}
\begin{tikzcd}[column sep=small]
1 \arrow[r] & U(\Q_p)^{G_K} \arrow[r]\arrow[d,equals] & U(\B^0_e)^{G_K} \arrow[r,"\acts"]\arrow[d] & \Cycle^1(U(\B^\bullet_e))^{G_K} \arrow[r]\arrow[d] & \H^1(G_K,U(\Q_p)) \arrow[d,equals] \\
1 \arrow[r] & U(\Q_p)^{G_K} \arrow[r]\arrow[d,equals] & U(\B^0_f)^{G_K} \arrow[r,"\acts"]\arrow[d] & \Cycle^1(U(\B^\bullet_f))^{G_K} \arrow[r]\arrow[d] & \H^1(G_K,U(\Q_p)) \arrow[d,equals] \\
1 \arrow[r] & U(\Q_p)^{G_K} \arrow[r] & U(\B^0_g)^{G_K} \arrow[r,"\acts"] & \Cycle^1(U(\B^\bullet_g))^{G_K} \arrow[r] & \H^1(G_K,U(\Q_p)).
\end{tikzcd}
\end{center}
The induced maps on $\pi^1$ (resp.\ $\pi^0$) are exactly the induced maps on the cokernel (resp.\ kernel) of the middle horizontal arrow, and thus are $\H^1_e(G_K,U(\Q_p))\hookrightarrow\H^1_f(G_K,U(\Q_p))\hookrightarrow\H^1_g(G_K,U(\Q_p))$ (resp.\ the identity) as claimed.
\end{proof}
\end{theorem}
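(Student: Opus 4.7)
The plan is to deduce all four bullets from functoriality of the constructions underlying Theorem~\ref{thm:bloch-kato_sets_as_cohomotopy}. The inclusions $\B^\bullet_e \hookrightarrow \B^\bullet_f \hookrightarrow \B^\bullet_g$ are $G_K$-equivariant maps of cosimplicial $\Q_p$-algebras, unital in each degree, so applying $U(-)^{G_K}$ yields compatible inclusions of cosimplicial groups whose induced maps on cohomotopy are exactly what we wish to describe. For the abelian-case bullets concerning $\pi^2$ and the higher $\pi^i$, there will in fact be nothing to do: Theorem~\ref{thm:bloch-kato_sets_as_cohomotopy} already identifies both $\pi^i(U(\B^\bullet_e)^{G_K})$ and $\pi^i(U(\B^\bullet_f)^{G_K})$ with the trivial group for $i \geq 2$, so any morphism out of these sources is automatically zero, regardless of its target.

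For the $\pi^0$ and $\pi^1$ bullets I would appeal directly to the functoriality of the long exact sequence of Theorem~\ref{thm:long_exact_seqs}(1). The identifications in Corollary~\ref{cor:pi^1_is_bloch-kato} arose from exact sequences of the form
\[
1 \to U(\Q_p)^{G_K} \to U(\B^0_\ast)^{G_K} \actsarrow \Cycle^1(U(\B^\bullet_\ast))^{G_K} \to \H^1(G_K, U(\Q_p)) \to \H^1(G_K, U(\B^0_\ast)),
\]
and naturality of this construction in the cosimplicial algebra will package the three sequences for $\ast \in \{e, f, g\}$ into a commuting ladder whose leftmost term $U(\Q_p)^{G_K}$ is preserved across the three rows by identity maps and whose fourth term $\H^1(G_K, U(\Q_p))$ is likewise preserved by identities. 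Since $\pi^0(U(\B^\bullet_\ast)^{G_K})$ is the kernel on the far left of this sequence and $\pi^1(U(\B^\bullet_\ast)^{G_K}) = \H^1_\ast(G_K, U(\Q_p))$ is the image on the far right, the induced map on $\pi^0$ is forced to be the identity, while the induced map on $\pi^1$ is forced to be the inclusion of images inside $\H^1(G_K, U(\Q_p))$, which by the very definition of the Bloch--Kato Selmer sets is the natural inclusion $\H^1_e \hookrightarrow \H^1_f \hookrightarrow \H^1_g$.

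I do not anticipate a real obstacle: the argument is essentially formal once one has the requisite naturality in hand. The only mildly delicate point, which I would flag but not labour over, is checking that the map $\B^\bullet_e \hookrightarrow \B^\bullet_f$ really is unital in each cosimplicial degree, even though (as noted in Section~\ref{sss:compatibilities}) the obvious map between the generating semi-cosimplicial $\Q_p$-algebras fails to be unital in degree~$1$; this reduces to the observation that unitality in degree~$0$ suffices to force unitality of the denormalised map in every degree.
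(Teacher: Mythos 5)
Your proposal is correct and follows essentially the same route as the paper: both exploit functoriality of the exact sequences of theorem \ref{thm:long_exact_seqs} to assemble the sequences of corollary \ref{cor:pi^1_is_bloch-kato} into a commuting ladder, read off the identity on $\pi^0$ as the induced map on kernels and the inclusions $\H^1_e\hookrightarrow\H^1_f\hookrightarrow\H^1_g$ as the induced maps on images inside $\H^1(G_K,U(\Q_p))$, and note that the abelian higher-degree statements are vacuous since the sources are trivial. Your remark on unitality of $\B^\bullet_e\hookrightarrow\B^\bullet_f$ matches the paper's own discussion preceding the theorem, so nothing is missing.
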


The second compatibility we will be interested in is compatibility with Serre twists, which is just a straightforward chasing of definitions.

\begin{lemma}\label{lem:twist_compatibility}
Let $U/\Q_p$ be a de Rham representation of $G_K$ on a finitely generated pro-unipotent group. Fix an element of $\H^1_g(G_K,U(\Q_p))\cong\pi^1\left(U(\B^\bullet_g)^{G_K}\right)$, and pick representatives as a Galois cocycle $\alpha\in\Cycle^1(G_K,U(\Q_p))$ and as a cocycle $\beta\in\Cycle^1(U(\B^\bullet_g)^{G_K})$. Then there is an isomorphism\[{}_\beta\left(U(\B^\bullet_g)^{G_K}\right)\isoarrow\left({}_\alpha U(\B^\bullet_g)^{G_K}\right)\]of cosimplicial groups such that the induced map on $\pi^1$ makes the diagram
\begin{center}
\begin{tikzcd}[column sep=small]
\pi^1\left({}_\beta\left(U(\B^\bullet_g)^{G_K}\right)\right) \arrow[r,"\sim"]\arrow[d,"\wr"] & \pi^1\left({}_\alpha U(\B^\bullet_g)^{G_K}\right) \arrow[r,"\sim"] & \H^1_g(G_K,{}_\alpha U(\Q_p)) \arrow[d,"\wr"] \\
\pi^1\left(U(\B^\bullet_g)^{G_K}\right) \arrow[rr,"\sim"] & & \H^1_g(G_K,U(\Q_p))
\end{tikzcd}
\end{center}
commute, where the rightmost horizontal isomorphisms are those from theorem \ref{thm:bloch-kato_sets_as_cohomotopy} (note that ${}_\alpha U$ is de Rham by proposition \ref{prop:admissibility_of_twists}) and the vertical maps are the canonical identifications.

The same is true for $\H^1_f$ (resp.\ $\H^1_e$) when $\B^\bullet_g$ is replaced with $\B^\bullet_f$ (resp.\ $\B^\bullet_e$).
\begin{proof}
We prove only the case of $\H^1_g$, the other cases being similar. Note that it suffices to prove the result for any single choice of representative $\beta$ of our fixed class, since then if $\beta'$ were another choice, we would have a cosimplicial group isomorphism ${}_{\beta'}\left(U(\B^\bullet_g)^{G_K}\right)\isoarrow{}_\beta\left(U(\B^\bullet_g)^{G_K}\right)$ by proposition \ref{prop:trivial_twists}, so an isomorphism ${}_\beta\left(U(\B^\bullet_g)^{G_K}\right)\isoarrow\left({}_\alpha U(\B^\bullet_g)^{G_K}\right)$ would allow us to produce a similar isomorphism for $\beta'$; commutativity of the desired rectangle for $\beta'$ follows from that for $\beta$ and proposition \ref{prop:trivial_twists}.

Thus fix any choice of $\alpha$. Since by assumption the class of $\alpha$ lies in $\H^1_g(G_K,U(\Q_p))=\ker\left(\H^1(G_K,U(\Q_p))\rightarrow\H^1(G_K,U(\B^0_g))\right)$ (see the proof of corollary \ref{cor:pi^1_is_bloch-kato}), we may choose some $u\in U(\B^0_g)$ whose coboundary is the Galois cocycle $\alpha$. We then consider the cocycle $\beta\in\Cycle^1(U(\B^\bullet_g))$ which is the coboundary of $u^{-1}$, i.e.\ $\beta:=d^1(u)d^0(u)^{-1}$. Since the coface and coboundary maps in $U(\B^\bullet_g)$ are $G_K$-equivariant, we thus have by straightforward calculation that\[\beta^{-1}\sigma(\beta)=d^0(u)d^1(\alpha(\sigma))d^0(\alpha(\sigma))^{-1}d^0(u)^{-1}=1\]for all $\sigma\in G_K$, where the final equality comes from the fact that $\alpha$ takes values in $U(\Q_p)$, the equaliser of $d^0,d^1\colon U(\B^0_g)\rightrightarrows U(\B^1_g)$. Hence $\beta$ is $G_K$-fixed, so is an element of $\Cycle^1(U(\B^\bullet_g))^{G_K}=\Cycle^1(U(\B^\bullet_g)^{G_K})$. Moreover, the description of the coboundary map in theorem \ref{thm:long_exact_seqs} shows that the class of $\beta$ maps to that of $\alpha$ under our isomorphism $\pi^1\left(U(\B^\bullet_g)\right)\isoarrow\H^1_g(G_K,U(\Q_p))$.

Thus we need only prove the result for this $\beta$ and this $\alpha$, using $u$ to construct the desired isomorphism. Indeed, it suffices to construct from $u$ a $G_K$-equivariant isomorphism\[{}_\beta\left(U(\B^\bullet_g)\right)\isoarrow{}_\alpha U(\B^\bullet_g)\]of cosimplicial groups, since the $G_K$-invariants of the left-hand cosimplicial group is ${}_\beta\left(U(\B^\bullet_g)^{G_K}\right)$ as $\beta$ is $G_K$-fixed.

To do this, since $\beta$ is the coboundary of $u^{-1}$, proposition \ref{prop:trivial_twists} provides an isomorphism of cosimplicial groups\[\psi_u\colon{}_\beta\left(U(\B^\bullet_g)\right)\isoarrow U(\B^\bullet_g)\]which need not be $G_K$-equivariant (as $u^{-1}$ need not be $G_K$-fixed). However, ${}_\alpha U(\B^\bullet_g)$ has the same underlying cosimplicial group as $U(\B^\bullet_g)$, and we claim that $\psi_u$ is $G_K$-equivariant when its codomain is given the $\alpha$-twisted $G_K$-action, so that by taking $G_K$-fixed points we obtain the desired result.

Verifying this is a simple chasing of definitions. If $g\in G_K$ and $u_n\in{}_\beta\left(U(\B^n_g)\right)=U(\B^n_g)$, then we have
\begin{align*}
\psi_u(gu_n) &= d^n\cdots d^1(u^{-1})\cdot gu_n\cdot d^n\cdots d^1(u^{-1})^{-1} \\
 &= d^n\cdots d^1(u^{-1}g(u))\cdot g(\psi_u(u_n))\cdot d^n\cdots d^1(u^{-1}g(u))^{-1} \\
 &= d^n\cdots d^0(\alpha(g))\cdot g(\psi_u(u_n))\cdot d^n\cdots d^0(\alpha(g))^{-1}
\end{align*}
which is the $\alpha$-twisted action on ${}_\alpha U$, since the composite $d^n\cdots d^0\colon\pi^0(U(\B^\bullet_g))\rightarrow U(\B^n_g)$ is just the canonical inclusion $U(\Q_p)\hookrightarrow U(\B^n_g)$. This establishes the $G_K$-equivariance of $\psi_u$, and hence concludes the proof.
\end{proof}
\end{lemma}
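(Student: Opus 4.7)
The plan is to explicitly construct the cosimplicial group isomorphism by lifting the Galois cocycle $\alpha$ to an element of $U(\B^0_g)$, and using this lift to produce both a convenient representative $\beta$ and an isomorphism ${}_\beta(U(\B^\bullet_g)) \isoarrow {}_\alpha U(\B^\bullet_g)$ via proposition \ref{prop:trivial_twists}; the main substance will then lie in verifying Galois equivariance and compatibility with the relevant identifications.

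First, I would observe that we are free to choose a convenient representative $\beta$ of our class in $\pi^1(U(\B^\bullet_g)^{G_K})$: if the result holds for one such $\beta$, then for any other $\beta'$ proposition \ref{prop:trivial_twists} supplies a canonical cosimplicial group isomorphism ${}_{\beta'}(U(\B^\bullet_g)^{G_K}) \isoarrow {}_\beta(U(\B^\bullet_g)^{G_K})$ inducing on $\pi^1$ the right-multiplication isomorphism from proposition \ref{prop:pi^1_of_twists}, which composed with the claimed isomorphism for $\beta$ gives the claimed isomorphism for $\beta'$. Next, using that $[\alpha] \in \H^1_g(G_K, U(\Q_p))$ is trivial in $\H^1(G_K, U(\B^0_g))$ (this was shown in the proof of corollary \ref{cor:pi^1_is_bloch-kato}), I would choose $u \in U(\B^0_g)$ whose Galois coboundary is $\alpha$, i.e.\ $\alpha(\sigma) = u^{-1}\sigma(u)$, and set $\beta := d^1(u) d^0(u)^{-1}$, which is the cosimplicial coboundary of $u^{-1}$.

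A direct computation, using that $\alpha(\sigma) \in U(\Q_p)$ is the equaliser of $d^0, d^1$ (so $d^0(\alpha(\sigma)) = d^1(\alpha(\sigma))$), shows that $\sigma(\beta) = \beta$, so $\beta$ is a $G_K$-fixed $1$-cocycle; the explicit description of the connecting map in theorem \ref{thm:long_exact_seqs} shows that $[\beta] \in \pi^1(U(\B^\bullet_g)^{G_K})$ corresponds to $[\alpha]$ under the identification of theorem \ref{thm:bloch-kato_sets_as_cohomotopy}. Since $\beta$ is the cosimplicial coboundary of $u^{-1}$, proposition \ref{prop:trivial_twists} yields an isomorphism $\psi_u \colon {}_\beta(U(\B^\bullet_g)) \isoarrow U(\B^\bullet_g)$ given in degree $n$ by conjugation by $d^n \cdots d^1(u^{-1})$. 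This $\psi_u$ is not $G_K$-equivariant for the original action on the target, but it becomes so once we twist the target by $\alpha$: this is the heart of the argument, and reduces to the identity $\sigma(d^n \cdots d^1(u)) = d^n \cdots d^1(u) \cdot d^n \cdots d^0(\alpha(\sigma))$, where the absorbed factor $d^n \cdots d^0(\alpha(\sigma))$ is precisely the image of $\alpha(\sigma)$ under the canonical embedding $U(\Q_p) \hookrightarrow U(\B^n_g)$ that defines the twisted action $\sigma \colon u_n \mapsto \alpha(\sigma) \sigma(u_n) \alpha(\sigma)^{-1}$.

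Taking $G_K$-invariants of $\psi_u$ then yields the desired isomorphism ${}_\beta(U(\B^\bullet_g)^{G_K}) \isoarrow {}_\alpha U(\B^\bullet_g)^{G_K}$. Commutativity of the target diagram is essentially tautological: by proposition \ref{prop:pi^1_of_twists} the top-left horizontal map on $\pi^1$ is right-multiplication by $\beta$ (which represents $[\alpha]$), the middle horizontal is the theorem \ref{thm:bloch-kato_sets_as_cohomotopy} identification for the twist, the bottom horizontal is the same identification for $U$, and the right vertical is the Serre twisting bijection of proposition \ref{prop:admissibility_of_twists}, which is also defined as right-multiplication by $\alpha$ on cocycles. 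The remaining cases $\H^1_f$ and $\H^1_e$ proceed identically with $\B^\bullet_g$ replaced by $\B^\bullet_f$ or $\B^\bullet_e$. The main subtlety throughout will be keeping straight the interplay between the two different notions of coboundary (Galois and cosimplicial) and checking that the single element $u \in U(\B^0_g)$ simultaneously trivialises $\alpha$ for the Galois action and exhibits $\beta$ as a cosimplicial coboundary, which is exactly what makes the $\alpha$-twisted action absorb the failure of $G_K$-equivariance of $\psi_u$.
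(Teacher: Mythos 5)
Your proposal is correct and follows essentially the same route as the paper's proof: choose $u\in U(\B^0_g)$ trivialising $\alpha$, take $\beta$ to be the cosimplicial coboundary of $u^{-1}$, check $\beta$ is $G_K$-fixed and represents $[\alpha]$, and show the trivial-twist isomorphism $\psi_u$ of proposition \ref{prop:trivial_twists} becomes $G_K$-equivariant for the $\alpha$-twisted action on the target, then take invariants. Your explicit remarks on the reduction to a single representative $\beta$ and on the final diagram chase match the paper's argument.
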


\subsection{Bloch--Kato quotients}

We can also use similar cosimplicial techniques to analyse the Bloch--Kato quotients $\H^1_{g/e}$ and $\H^1_{f/e}$ in the non-abelian setting (the author is unfortunately currently unaware of how to make the right definitions to conduct a similar analysis of $\H^1_{g/f}$, but understanding the two other quotients will suffice for our purposes). Our preferred method for carrying out this analysis is to define cosimplicial algebras $\B^\bullet_{g/e}$ and $\B^\bullet_{f/e}$, so that the corresponding Bloch--Kato quotients can be described in terms of the first cohomotopy of $U(\B^\bullet_*)^{G_K}$, just as theorem \ref{thm:bloch-kato_quotients_as_cohomotopy} gave the corresponding description for local Bloch--Kato Selmer sets.

\begin{definition}
\label{def:bloch-kato_quotient_algebras}
We define two cosimplicial $\Q_p$-algebras $\B^\bullet_{f/e}$ and $\B^\bullet_{g/e}$ as follows:
\begin{itemize}
	\item $\B^\bullet_{f/e}$ is the cosimplicial $\Q_p$-algebra cogenerated by the semi-cosimplicial topological $\Q_p$-algebra\[\B_\cris\rightrightarrows\B_\cris\]where $d^0=\varphi$ and $d^1=1$;
	\item $\B^\bullet_{g/e}$ is  the cosimplicial $\Q_p$-algebra constructed from the semi-cosimplicial $\Q_p$-cdga\[\B_\st[\boldsymbol\varepsilon]^\bullet\rightrightarrows\B_\st[\boldsymbol\varepsilon]^\bullet\]as in construction \ref{cons:cosimplicial_algebras_from_semi-cosimplicial_cdgas}. Here $\B_\st[\boldsymbol\varepsilon]^\bullet$ is the $\Q_p$-cdga in example \ref{ex:B_st[epsilon]}, and $d^0=\varphi$ and $d^1=1$.
\end{itemize}
Both of these cosimplicial $\Q_p$-algebras inherit an action of $G_K$ from those on $\B_\st$ and $\B_\cris$. They also carry a topology, though the results we have proven up to this point mean that we will not have to concern ourselves with their topologies.
\end{definition}

\begin{theorem}\label{thm:bloch-kato_quotients_as_cohomotopy}
Let $U/\Q_p$ be a de Rham representation of $G_K$ on a finitely generated pro-unipotent group. Then the cohomotopy groups/sets of the cosimplicial topological groups $U(\B^\bullet_{f/e})^{G_K}$ and $U(\B^\bullet_{g/e})^{G_K}$ are canonically identified as
\begin{align*}
\pi^i(U(\B^\bullet_{f/e})^{G_K}) &= \begin{cases}\D_\cris^{\varphi=1}(U)(\Q_p) & \text{if $i=0$,} \\ \H^1_{f/e}(G_K,U(\Q_p)) & \text{if $i=1$,} \\ 1 & \text{if $i>1$ and $U$ abelian.} \end{cases} \\
\pi^i(U(\B^\bullet_{g/e})^{G_K}) &= \begin{cases}\D_\cris^{\varphi=1}(U)(\Q_p) & \text{if $i=0$,} \\ \H^1_{g/e}(G_K,U(\Q_p)) & \text{if $i=1$,} \\ \D_\cris^{\varphi=1}(U(\Q_p)^\dual(1))^\dual & \text{if $i=2$ and $U$ abelian,} \\ 1 & \text{if $i>2$ and $U$ abelian.} \end{cases}
\end{align*}
Moreover, with respect to the descriptions in theorem \ref{thm:bloch-kato_sets_as_cohomotopy}, the natural maps on cohomotopy induced from $U(\B^\bullet_f)^{G_K}\rightarrow U(\B^\bullet_{f/e})^{G_K}$ and $U(\B^\bullet_g)^{G_K}\rightarrow U(\B^\bullet_{g/e})^{G_K}$ are the canonical inclusion $U(\Q_p)^{G_K}\hookrightarrow\D_\cris^{\varphi=1}(U)(\Q_p)$ in degree $0$, the defining quotient map $\H^1_f(G_K,U(\Q_p))\twoheadrightarrow\H^1_{f/e}(G_K,U(\Q_p))$ (and similarly for $g/e$) in degree $1$, and the identity in higher degrees.
\end{theorem}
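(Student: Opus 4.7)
The plan is to follow the blueprint of Theorem \ref{thm:bloch-kato_sets_as_cohomotopy}, treating in detail the $\B^\bullet_{g/e}$ case; the $\B^\bullet_{f/e}$ case is strictly easier since it involves only $\Q_p$-algebras rather than cdgas. The computation of $\pi^0$ is immediate: since $\pi^0$ is an equalizer and commutes with $G_K$-invariants, one has $\pi^0(U(\B^\bullet_{g/e})^{G_K}) = U(\pi^0(\B^\bullet_{g/e}))^{G_K}$, and unwinding Construction \ref{cons:cosimplicial_algebras_from_semi-cosimplicial_cdgas} shows $\pi^0(\B^\bullet_{g/e}) = \B_\st^{\varphi=1,N=0} = \B_\cris^{\varphi=1}$, yielding $\D_\cris^{\varphi=1}(U)(\Q_p)$. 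The compatibility of the induced maps $\pi^0(U(\B^\bullet_f)^{G_K}) \to \pi^0(U(\B^\bullet_{f/e})^{G_K})$ and similarly for $g$ with the canonical inclusion $U(\Q_p)^{G_K} \hookrightarrow \D_\cris^{\varphi=1}(U)(\Q_p)$ is then automatic from functoriality of the equalizer description.

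The substantive content is the identification of $\pi^1$. I use the natural projection $\B^\bullet_g \twoheadrightarrow \B^\bullet_{g/e}$ of cosimplicial $\Q_p$-algebras, coming from projecting the generating semi-cosimplicial cdgas onto their $\B_\st[\boldsymbol\varepsilon]^\bullet$-factors, which induces a map $\phi \colon \H^1_g(G_K,U(\Q_p)) \to \pi^1(U(\B^\bullet_{g/e})^{G_K})$; it then suffices to show (a) $\phi$ is surjective and (b) the fibers of $\phi$ are exactly the $\sim_{\H^1_e}$-equivalence classes. For (b), a Serre twisting argument extending Lemma \ref{lem:twist_compatibility} verbatim to $\B^\bullet_{g/e}$ reduces to showing that the preimage of the basepoint is $\H^1_e(G_K,U(\Q_p))$. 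For this, given $[\alpha] \in \H^1_g$ with lift $u = (u_\dR, u_\st) \in U(\B_\dR^+) \times U(\B_\st) = U(\B^0_g)$ and the resulting cocycle $\beta = d^1(u) d^0(u)^{-1}$ representing $[\alpha]$ as in Corollary \ref{cor:pi^1_is_bloch-kato}, the projection $\bar\beta \in \Cycle^1(U(\B^\bullet_{g/e})^{G_K})$ has semi-cosimplicial component $u_\st \varphi(u_\st)^{-1}$; the condition that $\bar\beta$ be a coboundary amounts to the existence of $v_0 \in U(\B_\st)^{G_K}$ with $u_\st v_0 \in U(\B_\st)^{\varphi=1} = U(\B_\cris^{\varphi=1})$ (the last equality holding because $\B_\st^{\varphi=1} = \B_\cris^{\varphi=1}$), which in turn expresses $\alpha$ as the Galois coboundary of an element of $U(\B_\cris^{\varphi=1})$, i.e., $[\alpha] \in \H^1_e$; the converse follows by reversing the argument.

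For (a), I appeal to the isocrystal interpretation of Lemma \ref{lem:isocrystal_interpretation_H^1_g/e}: a class in $\pi^1(U(\B^\bullet_{g/e})^{G_K})$ naturally defines a torsor under $\D_\st(U)$ with a compatible $(\varphi, N)$-structure on its affine ring, and the Colmez--Fontaine-based argument given there (endow such a torsor with a compatible Hodge filtration via the graded trivialisation of Proposition \ref{prop:graded_isomorphism} to obtain a weakly admissible filtered $(\varphi, N)$-torsor, hence a semistable $U$-torsor) lifts every such isocrystal torsor to an element of $\H^1_g(G_K, U(\Q_p))$. It then follows that $\phi$ factors through the claimed bijection $\H^1_{g/e}(G_K, U(\Q_p)) \isoarrow \pi^1(U(\B^\bullet_{g/e})^{G_K})$, compatibly with the defining quotient map.

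Finally, the higher cohomotopy in the abelian case $U = V$ falls out of a direct application of Lemma \ref{lem:eilenberg-zilber}: $\pi^i(V(\B^\bullet_{g/e})^{G_K})$ is the $i$th cohomology of the totalisation of the $(2 \times 2)$-bicomplex of $\D_\st(V)$'s, with horizontal differentials $1-\varphi$, $1-p\varphi$ and vertical differential $N$; this vanishes for $i > 2$ by support, and in degree $2$ the cokernel dualises (with Tate twist) to $\D_\cris^{\varphi=1}(V^\dual(1))^\dual$, matching Theorem \ref{thm:bloch-kato_sets_as_cohomotopy}, while the projection $\B^\bullet_g \to \B^\bullet_{g/e}$ restricts to the identity on the $\B_\st$-components that contribute to $\pi^2$, giving the identity map as claimed. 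I expect the main obstacle to be the verification in step (b): keeping track of how the cdga (monodromy) direction of $\B^\bullet_{g/e}$ interacts with the Serre-twist identification, and confirming that it contributes no additional constraints beyond the semi-cosimplicial calculation outlined above.
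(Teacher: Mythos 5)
Your computations of $\pi^0$, of the basepoint fibre in step (b), and of the abelian higher cohomotopy are essentially sound, and in places more direct than the paper: the paper does not compute $\pi^0(\B^\bullet_{g/e})$ as an equalizer, but instead realises $\B^\bullet_{g/e}$ as the quotient in a levelwise split exact sequence $0\to\B^\bullet_\dR\to\B^\bullet_g\to\B^\bullet_{g/e}\to0$ of (non-unital) cosimplicial algebras, computes the cohomotopy of $U(\B^\bullet_\dR)^{G_K}$ from the companion sequence $0\to\B^\bullet_\dR\to\B^\bullet_e\to\B_\cris^{\varphi=1}\to0$, and then gets $\pi^0$ by a five-lemma chase between the two resulting long exact sequences, $\pi^1$ from the second one together with the same twisting argument you sketch, and the abelian statements for free from the same sequence. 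Your flagged worry about the monodromy ($\varepsilon$) direction in (b) is in fact not an obstacle: for the containment $\ker\phi\subseteq\H^1_e$ you only need the $\varepsilon$-free component of the coboundary equation (project along the algebra map $\B^1_{g/e}\to\B_\st$ setting $\varepsilon_1=0$), and for the reverse containment a trivialisation with values in $U(\B_\cris^{\varphi=1})$ is fixed by $\varphi$ and killed by $N$, so all components vanish.

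The genuine gap is in your surjectivity step (a). Lemma \ref{lem:isocrystal_interpretation_H^1_g/e} is stated and proved only for \emph{semistable} $U$, whereas the theorem assumes only that $U$ is de Rham; for merely de Rham $U$ the group $\D_\st(U)$ is too small and the dictionary between classes and $(\varphi,N)$-torsors under $\D_\st(U)$ breaks down, so your argument does not cover the general case, and reducing to a finite extension $L/K$ over which $U$ becomes semistable (corollary \ref{cor:potential_semistability_p-adic}) would require a descent statement for $\pi^1(U(\B^\bullet_{g/e})^{G_L})\rightsquigarrow\pi^1(U(\B^\bullet_{g/e})^{G_K})$ that you have not supplied. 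Moreover, even in the semistable case you assert without proof that a class in $\pi^1(U(\B^\bullet_{g/e})^{G_K})$ ``naturally defines'' a $(\varphi,N)$-torsor in a way that is \emph{injective} and compatible both with $\phi$ and with the forgetful map of lemma \ref{lem:isocrystal_interpretation_H^1_g/e}; this is precisely what you need to conclude $\phi([\alpha])=c$ after lifting the torsor, and it amounts to proving a cosimplicial classification statement of comparable difficulty to the theorem itself. The paper's route avoids both problems: surjectivity of $\H^1_g(G_K,U(\Q_p))=\pi^1(U(\B^\bullet_g)^{G_K})\to\pi^1(U(\B^\bullet_{g/e})^{G_K})$ falls out of the cohomotopy long exact sequence of the split levelwise-exact sequence above together with lemma \ref{lem:codimensional_vanishing}, since $U(\B^\bullet_\dR)$ is cogenerated in degree $\leq1$ --- an argument valid for any de Rham $U$. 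To repair your proposal, either adopt that mechanism for (a), or restrict your isocrystal argument to semistable $U$ and add a genuine Galois-descent step for the de Rham case.
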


Before we embark on a proof of this theorem, let us collect a few straightforward consequences. Firstly, we obtain a natural criterion for the equality of $\H^1_e$ and $\H^1_f$, thereby making good on our promise in remark \ref{rmk:what_about_f/e}.

\begin{corollary}\label{cor:H^1_e=H^1_f}
Let $U/\Q_p$ be a de Rham representation of $G_K$ on a unipotent group. Then $\H^1_e(G_K,U(\Q_p))=\H^1_f(G_K,U(\Q_p))$ if and only if $\D_\cris^{\varphi=1}(U)(\Q_p)=1$.
\begin{proof}
Note that $\D_\cris^{\varphi=1}(U)(\Q_p)$ and $\H^1_{f/e}(G_K,U(\Q_p))$ are, respectively, the $0$th and $1$st cohomotopy of the cosimplicial group $U(\B_{f/e}^\bullet)^{G_K}$, which is cogenerated by the semi-cosimplicial group\[\D_\cris(U)(K_0)\rightrightarrows\D_\cris(U)(K_0)\]with coface maps $d^0=\varphi$ and $d^1=1$, as defined in \S\ref{ss:extra_structure}. Hence by remark \ref{rmk:non-abelian_dold-kan} we see that $\H^1_{f/e}(G_K,U(\Q_p))=1$ iff the twisted conjugation action $u\colon w\mapsto u^{-1}w\varphi(u)$ of $\D_\cris(U)(K_0)$ on itself is transitive, and that $\H^1_e(G_K,U(\Q_p))=\H^1_f(G_K,U(\Q_p))$ iff it has trivial point-stabiliser.

Thus it would suffice to show that for any unipotent group $D/K_0$ with a semilinear Frobenius automorphism $\varphi$, the twisted conjugation self-action of $D$ is transitive iff it has trivial point-stabiliser. This is established by the same argument as in proposition \ref{prop:H^1_nr=1}.
\end{proof}
\end{corollary}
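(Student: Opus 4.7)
The plan is to reduce the corollary to a purely group-theoretic statement about the unipotent group $D:=\D_\cris(U)(K_0)$ over $K_0$ equipped with its semilinear Frobenius $\varphi$, and then prove that statement by induction on $\dim_{K_0}(D)$ via central extensions, following closely the argument in the proof of Proposition \ref{prop:H^1_nr=1}.

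First, by Theorem \ref{thm:bloch-kato_quotients_as_cohomotopy} combined with Remark \ref{rmk:non-abelian_dold-kan}, both $\D_\cris^{\varphi=1}(U)(\Q_p)$ and $\H^1_{f/e}(G_K,U(\Q_p))$ can be computed as $\pi^0$ and $\pi^1$ of the semi-cosimplicial group $D\rightrightarrows D$ with $d^0=\varphi$ and $d^1=1$. Unwinding the definitions identifies $\pi^0$ with the equaliser $D^{\varphi=1}$ and $\pi^1$ with the orbit space of the twisted conjugation self-action $u\cdot w=u^{-1}w\varphi(u)$, whose point stabiliser at the basepoint $1\in D$ is again $D^{\varphi=1}$. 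Since $\H^1_e$ and $\H^1_f$ are by definition the $\sim_{\H^1_e}$- and $\sim_{\H^1_f}$-equivalence classes of the distinguished point in $\H^1(G_K,U(\Q_p))$, the equality $\H^1_e=\H^1_f$ holds iff $\H^1_{f/e}$ consists of a single point, iff this twisted conjugation action is transitive. The corollary therefore reduces to showing that for any unipotent $D/K_0$ with a semilinear Frobenius $\varphi$, the twisted conjugation self-action is transitive iff $D^{\varphi=1}=1$.

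I would prove this by induction on $\dim_{K_0}(D)$. In the abelian base case, writing $D$ additively turns the action into translation by $(\varphi-1)(u)$, so transitivity (respectively, stabiliser-triviality) amounts to surjectivity (respectively, injectivity) of $\varphi-1$; since $\varphi-1$ is a $\Q_p$-linear endomorphism of the finite-dimensional $\Q_p$-vector space $D$, rank-nullity gives the equivalence. For the inductive step, I would factor $D$ through a central extension $1\to Z\to D\to Q\to 1$ with $Z$ abelian and $Q$ of strictly smaller dimension. Running the diagram chase exactly as in Proposition \ref{prop:H^1_nr=1}: assuming transitivity for $D$, the quotient action on $Q$ is also transitive, so inductively $Q^{\varphi=1}=1$; then any $z\in Z$ written as $u^{-1}\varphi(u)$ for $u\in D$ must in fact have $u\in Z$ (since the image $\bar u\in Q^{\varphi=1}=1$), proving transitivity for $Z$ and hence $Z^{\varphi=1}=1$, which combines to give $D^{\varphi=1}=1$. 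The converse implication runs this argument in reverse, using centrality of $Z$ to adjust lifts by elements of $Z$ realising prescribed values of $(z')^{-1}\varphi(z')$. The main point where care is needed is this last backwards chase, but centrality of $Z$ means all the required adjustments take place inside an abelian group and cause no trouble.
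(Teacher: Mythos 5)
Your argument is correct and follows essentially the same route as the paper: identify $\D_\cris^{\varphi=1}(U)(\Q_p)$ and $\H^1_{f/e}(G_K,U(\Q_p))$ with $\pi^0$ and $\pi^1$ of the cosimplicial group cogenerated by $\D_\cris(U)(K_0)\rightrightarrows\D_\cris(U)(K_0)$, reduce to showing that the twisted conjugation self-action of a unipotent group over $K_0$ with semilinear Frobenius is transitive iff its point-stabiliser is trivial, and prove this by induction through central extensions with the abelian case handled by a dimension count for $\varphi-1$. The induction you spell out is precisely the argument of proposition \ref{prop:H^1_nr=1}, which the paper simply cites rather than repeating.
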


\begin{remark}\label{rmk:D_cris^phi=1_is_trivial}
Just as in remark \ref{rmk:H^1_nr=1}, we have that $\D_\cris^{\varphi=1}(U)(\Q_p)=1$ whenever $U$ is the $\Q_p$-pro-unipotent fundamental group of a smooth connected variety. This is again a consequence of a suitable weight--monodromy property for the fundamental group \cite[Theorem~1.3(1)]{me-daniel:weight-monodromy}. Again, in the case that $U$ is the fundamental group of~$L^\times=L\setminus0$ for $L$ a line bundle on an abelian variety~$A/K$, this can be argued more directly. By exact sequence \ref{eq:pi_1_of_L} $\D_\cris^{\varphi=1}(U)(\Q_p)$ sits in an exact sequence\[1\rightarrow\D_\cris^{\varphi=1}(\Q_p(1))\rightarrow\D_\cris^{\varphi=1}(U)(\Q_p)\rightarrow\D_\cris^{\varphi=1}(V_pA)\]in which the outer terms vanish by \cite[Examples 3.9 \& 3.11]{bloch-kato} and corollary \ref{cor:H^1_e=H^1_f}.
\end{remark}

The second consequence of theorem \ref{thm:bloch-kato_quotients_as_cohomotopy} is that we can make good on our promise in remark \ref{rmk:inverse_limits_of_H^1_g} and prove that $\H^1_{g/e}$ preserves inverse limits.

\begin{lemma}\label{lem:inverse_limits_of_H^1_g}
Let $U/\Q_p$ be a representation of $G_K$ on a finitely generated pro-unipotent group, and write $U=\liminv U_n$ as an inverse limit of finite-dimensional $G_K$-equivariant quotients. Then the natural map\[\H^1_g(G_K,\liminv U_n(\Q_p))\rightarrow\liminv\H^1_g(G_K,U_n(\Q_p))\]is bijective. The same holds with $\H^1_g$ replaced with $\H^1_f$, $\H^1_e$, $\H^1_{g/e}$ or $\H^1_{f/e}$.
\begin{proof}
Let us prove the lemma for $\H^1_{g/e}$, the other cases being similar. By theorem \ref{thm:bloch-kato_quotients_as_cohomotopy}, it suffices to prove that the natural map\[\pi^1\left(U(\B^\bullet_{g/e})^{G_K}\right)\rightarrow\liminv\pi^1\left(U_n(\B^\bullet_{g/e})^{G_K}\right)\]is bijective.

To show surjectivity, we pick an element of $\liminv\pi^1\left(U_n(\B^\bullet_{g/e})^{G_K}\right)$ and, for each $n$, write $S_n$ for the set of elements of $\Cycle^1\left(U_n(\B^\bullet_{g/e})^{G_K}\right)$ whose class is the chosen element of $\pi^1\left(U_n(\B^\bullet_{g/e})^{G_K}\right)$. Each $S_n$ comes with a transitive action of $U_n(\B^0_{g/e})^{G_K}=\D_\st(U_n)(K_0)$. Since the system $\left(\D_\st(U_n)(K_0)\right)_{n\in\N}$ satisfies the Mittag--Leffler condition, so too does $(S_n)_{n\in\N}$, so that $\liminv S_n\neq\emptyset$. But any element of $\liminv S_n\subseteq\Cycle^1\left(U(\B^\bullet_{g/e})^{G_K}\right)$ represents an element of $\pi^1\left(U(\B^\bullet_{g/e})^{G_K}\right)$ mapping to the chosen element of $\liminv\pi^1\left(U_n(\B^\bullet_{g/e})^{G_K}\right)$, proving surjectivity.

To show that the map has trivial kernel, pick an element $\alpha\in\Cycle^1\left(U(\B^\bullet_{g/e})^{G_K}\right)$ whose class lies in the kernel, i.e.\ such that the image of $\alpha$ in each $\Cycle^1\left(U_n(\B^\bullet_{g/e})^{G_K}\right)$ is a coboundary. Writing $T_n\subseteq U_n(\B^0_{g/e})^{G_K}$ for the set of elements whose coboundary is the image of $\alpha$, we observe that each $T_n$ is a left torsor under $\pi^0\left(U_n(\B^\bullet_{g/e})^{G_K}\right)=\D_\cris^{\varphi=1}(U_n)(\Q_p)$, so that as before $(T_n)_{n\in\N}$ satisfies the Mittag--Leffler condition and $\liminv T_n\neq\emptyset$. Again, an element of $\liminv T_n\subseteq U(\B^0_{g/e})^{G_K}$ is an element whose coboundary is $\alpha$, so we have proved triviality of the kernel.

Finally, to show injectivity, as in the proof of lemma \ref{lem:cohomology_of_inverse_limits}, it suffices to show that the map also has trivial kernel when the cosimplicial group $U(\B^\bullet_{g/e})^{G_K}$ is replaced by the twist by a cocycle $\alpha\in\Cycle^1\left(U(\B^\bullet_{g/e})^{G_K}\right)$ (and each $U_n(\B^\bullet_{g/e})^{G_K}$ is compatibly twisted). For this, the same argument as above applies once we observe that each $U_n(\B^\bullet_{g/e})^{G_K}$ and hence ${}_\alpha\left(U_n(\B^\bullet_{g/e})^{G_K}\right)$ is the $\Q_p$-points of a cosimplicial unipotent group $\D_{g/e}^\bullet(U_n)/\Q_p$, so that the subgroups $\pi^0\left({}_\alpha\left(U_n(\B^\bullet_{g/e})^{G_K}\right)\right)$ are the $\Q_p$-points of (unipotent) algebraic subgroups of ${}_\alpha\D_{g/e}^\bullet(U_n)$, and hence satisfy the Mittag--Leffler condition.
\end{proof}
\end{lemma}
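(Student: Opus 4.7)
The plan is to reduce the lemma to a statement about cohomotopy of cosimplicial groups by applying the descriptions from theorems \ref{thm:bloch-kato_sets_as_cohomotopy} and \ref{thm:bloch-kato_quotients_as_cohomotopy}. Concretely, for each of the decorations $*\in\{e,f,g,g/e,f/e\}$, these theorems identify $\H^1_*(G_K,U(\Q_p))$ with $\pi^1(U(\B^\bullet_*)^{G_K})$, naturally in $U$. Since each $\B^n_*$ is a fixed $\Q_p$-algebra not depending on $U$, and taking points of the pro-unipotent group $U$ in a ring commutes with the inverse limit $U=\liminv U_n$, the question reduces to showing that the natural map
\[\pi^1\!\left(U(\B^\bullet_*)^{G_K}\right)\rightarrow\liminv_n\pi^1\!\left(U_n(\B^\bullet_*)^{G_K}\right)\]
is bijective. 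I will carry this out for $*=g/e$; the remaining cases follow by the same argument with $\B^\bullet_g$, $\B^\bullet_f$, $\B^\bullet_e$ or $\B^\bullet_{f/e}$ in place of $\B^\bullet_{g/e}$.

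For surjectivity, given a compatible system of classes $([\beta_n])_n$, the sets $S_n\subseteq\Cycle^1(U_n(\B^\bullet_{g/e})^{G_K})$ of representatives form a transitive $U_n(\B^0_{g/e})^{G_K}$-system. By theorem \ref{thm:bloch-kato_quotients_as_cohomotopy} this acting group is $\D_\cris(U_n)(K_0)$ in the relevant cases (or $\D_\st(U_n)(K_0)$), which is the $K_0$-points of a finite-dimensional unipotent group, and so the transition maps between these acting groups are automatically surjective on $K_0$-points. Consequently $(S_n)$ satisfies Mittag--Leffler (with surjective transitions), so $\liminv S_n\neq\emptyset$, and any element of the inverse limit gives a cocycle for $U(\B^\bullet_{g/e})^{G_K}$ projecting to the given classes. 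For triviality of the kernel, I would take a cocycle $\alpha$ whose image in every $\Cycle^1(U_n(\B^\bullet_{g/e})^{G_K})$ is a coboundary, and let $T_n\subseteq U_n(\B^0_{g/e})^{G_K}$ be the set of elements with coboundary the image of $\alpha$; each $T_n$ is a torsor under $\pi^0(U_n(\B^\bullet_{g/e})^{G_K})=\D_\cris^{\varphi=1}(U_n)(\Q_p)$, which is again the $\Q_p$-points of a unipotent group, so Mittag--Leffler applies and $\liminv T_n$ is non-empty, producing a global coboundary for $\alpha$.

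To upgrade triviality of the kernel to outright injectivity, I will use the twisting formalism of propositions \ref{prop:pi^1_of_twists} and \ref{prop:trivial_twists}: if $[\alpha]$ and $[\alpha']$ have equal image in the inverse limit, then right-multiplication by $\alpha$ identifies $\pi^1\!\left({}_\alpha U(\B^\bullet_{g/e})^{G_K}\right)\isoarrow\pi^1\!\left(U(\B^\bullet_{g/e})^{G_K}\right)$ and likewise on each finite level, so it suffices to rerun the trivial-kernel argument with the twisted cosimplicial group. The main thing to be checked here, and what I expect to be the only subtlety, is that the $\pi^0$ of the twisted cosimplicial group at each level is still the $\Q_p$-points of a unipotent group so that Mittag--Leffler still applies; this follows from the observation that $U_n(\B^\bullet_{g/e})^{G_K}$ is the $\Q_p$-points of a cosimplicial unipotent $\Q_p$-group scheme $\mathsf D^\bullet_{g/e}(U_n)$, and twisting by a cocycle is an algebraic construction, so ${}_\alpha\mathsf D^\bullet_{g/e}(U_n)$ remains a cosimplicial unipotent group. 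Accepting this, the stabiliser of the basepoint of the twisted $\pi^1$ is an algebraic subgroup of a unipotent group, hence its $\Q_p$-points satisfy Mittag--Leffler, and the same Mittag--Leffler argument as before closes the proof.
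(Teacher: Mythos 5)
Your proposal is correct and follows essentially the same route as the paper: reduce via theorem \ref{thm:bloch-kato_quotients_as_cohomotopy} to cohomotopy of $U(\B^\bullet_{g/e})^{G_K}$, run Mittag--Leffler arguments on the sets of cocycle representatives (surjectivity) and on the torsors under $\pi^0$ (trivial kernel), and then upgrade to injectivity by twisting, using exactly the paper's key observation that each $U_n(\B^\bullet_{g/e})^{G_K}$ is the $\Q_p$-points of a cosimplicial unipotent group so that the twisted $\pi^0$'s remain points of unipotent algebraic subgroups. The only quibble is your identification of the degree-zero acting group ($\B^0_{g/e}=\B_\st$, so it is $\D_\st(U_n)(K_0)$, with $\D_\cris(U_n)(K_0)$ only in the $f/e$ case) and the claim that the transition maps are ``automatically surjective''; neither affects the argument, since all that is needed is the Mittag--Leffler condition, which holds because these are points of finite-dimensional unipotent groups.
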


The final consequence of theorem \ref{thm:bloch-kato_quotients_as_cohomotopy} is two exact sequences relating the Bloch--Kato quotients in central extensions, which will allow us to analyse these sets in the same way we analysed non-abelian Galois cohomology in \S\ref{c:main_theorem_l-adic}.

\begin{corollary}\label{cor:long_exact_seqs_of_quotients}
Let\[1\rightarrow Z\centarrow U\rightarrow Q\rightarrow1\]be a central extension of de Rham representations of $G_K$ on unipotent groups over $\Q_p$. Then there are functorially assigned exact sequences
\begin{center}
\begin{tikzcd}[column sep=small]
1 \arrow[r] & Z(\Q_p)^{G_K} \arrow[r,"\cent"] & U(\Q_p)^{G_K} \arrow[r] \arrow[d, phantom, ""{coordinate, name=Z}] & Q(\Q_p)^{G_K} \arrow[dll,rounded corners,to path={ -- ([xshift=2ex]\tikztostart.east) |- (Z) [near end]\tikztonodes -| ([xshift=-2ex]\tikztotarget.west) -- (\tikztotarget)}] \\
 & \H^1_g(G_K,Z(\Q_p)) \arrow[r,"\acts"] & \H^1_g(G_K,U(\Q_p)) \arrow[r] \arrow[d, phantom, ""{coordinate, name=X}] & \H^1_g(G_K,Q(\Q_p)) \arrow[dll,rounded corners,to path={ -- ([xshift=2ex]\tikztostart.east) |- (X) [near end]\tikztonodes -| ([xshift=-2ex]\tikztotarget.west) -- (\tikztotarget)}] \\
 & \D_\cris^{\varphi=1}(Z(\Q_p)^\dual(1))^\dual & {} & 
\end{tikzcd}
\end{center}
and
\begin{center}
\begin{tikzcd}[column sep=small]
1 \arrow[r] & \D_\cris^{\varphi=1}(Z)(\Q_p) \arrow[r,"\cent"] & \D_\cris^{\varphi=1}(U)(\Q_p) \arrow[r] \arrow[d, phantom, ""{coordinate, name=Z}] & \D_\cris^{\varphi=1}(Q)(\Q_p) \arrow[dll,rounded corners,to path={ -- ([xshift=2ex]\tikztostart.east) |- (Z) [near end]\tikztonodes -| ([xshift=-2ex]\tikztotarget.west) -- (\tikztotarget)}] \\
 & \H^1_{g/e}(G_K,Z(\Q_p)) \arrow[r,"\acts"] & \H^1_{g/e}(G_K,U(\Q_p)) \arrow[r] \arrow[d, phantom, ""{coordinate, name=X}] & \H^1_{g/e}(G_K,Q(\Q_p)) \arrow[dll,rounded corners,to path={ -- ([xshift=2ex]\tikztostart.east) |- (X) [near end]\tikztonodes -| ([xshift=-2ex]\tikztotarget.west) -- (\tikztotarget)}] \\
 & \D_\cris^{\varphi=1}(Z(\Q_p)^\dual(1))^\dual. & {} & 
\end{tikzcd}
\end{center}
Moreover, when $U$ and $Q$ are abelian, both exact sequences continue on with the terms $\D_\cris^{\varphi=1}(U(\Q_p)^\dual(1))^\dual\rightarrow\D_\cris^{\varphi=1}(Q(\Q_p)^\dual(1))^\dual\rightarrow1$.
\begin{proof}
Take the long exact sequence in cohomotopy of the two exact sequences (which are central extensions) in proposition \ref{prop:long_exact_seqs_of_quotients_prep} which follows.
\end{proof}
\end{corollary}

\begin{proposition}\label{prop:long_exact_seqs_of_quotients_prep}
Let\[1\rightarrow Z\rightarrow U\rightarrow Q\rightarrow 1\]be an exact sequence of de Rham representations of $G_K$ on finitely generated pro-unipotent groups. Then the sequences\[1\rightarrow Z(\B^\bullet_g)^{G_K}\rightarrow U(\B^\bullet_g)^{G_K}\rightarrow Q(\B^\bullet_g)^{G_K}\rightarrow1\]\[1\rightarrow Z(\B^\bullet_{g/e})^{G_K}\rightarrow U(\B^\bullet_{g/e})^{G_K}\rightarrow Q(\B^\bullet_{g/e})^{G_K}\rightarrow1\]are exact in each degree.
\begin{proof}
We will just prove this for $\B^\bullet_g$, the other case being similar. Observe that, by construction, each term of $U(\B^\bullet_g)$ is a product of terms of the form $U(\B_\dR)$, $U(\B_\dR^+)$ and $U(\B_\st[\boldsymbol\varepsilon]^n)$, where $\B_\st[\boldsymbol\varepsilon]^n=\B_\st[\varepsilon_1,\dots,\varepsilon_n]$ is as in example \ref{ex:B_st[epsilon]}, so that it suffices to show that the sequences
\[1\rightarrow Z(\B_\dR)^{G_K}\rightarrow U(\B_\dR)^{G_K}\rightarrow Q(\B_\dR)^{G_K}\rightarrow1\]
\[1\rightarrow Z(\B_\dR^+)^{G_K}\rightarrow U(\B_\dR^+)^{G_K}\rightarrow Q(\B_\dR^+)^{G_K}\rightarrow1\]
\[1\rightarrow Z(\B_\st[\boldsymbol\varepsilon]^n)^{G_K}\rightarrow U(\B_\st[\boldsymbol\varepsilon]^n)^{G_K}\rightarrow Q(\B_\st[\boldsymbol\varepsilon]^n)^{G_K}\rightarrow1\]
are exact.

But now we can identify these sequences with the sequences
\[1\rightarrow\D_\dR(Z)(K)\rightarrow\D_\dR(U)(K)\rightarrow\D_\dR(Q)(K)\rightarrow1\]
\[1\rightarrow\D_\dR^+(Z)(K)\rightarrow\D_\dR^+(U)(K)\rightarrow\D_\dR^+(Q)(K)\rightarrow1\]
\[1\rightarrow\D_\st(Z)(K_0[\boldsymbol\varepsilon]^n)\rightarrow\D_\st(U)(K_0[\boldsymbol\varepsilon]^n)\rightarrow\D_\st(Q)(K_0[\boldsymbol\varepsilon]^n)\rightarrow1\]
where $K_0[\boldsymbol\varepsilon]^n=K_0[\varepsilon_1,\dots,\varepsilon_n]$ subject to $\varepsilon_i\varepsilon_j=0$ for all $i,j$. The exactness of these sequences is the content of lemma \ref{lem:seqs_of_de_Rham_reps}.
\end{proof}
\end{proposition}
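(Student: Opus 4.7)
The plan is to follow the reduction strategy that the author begins to sketch: decompose each degree $\B^n_g$ (resp.\ $\B^n_{g/e}$) as a finite direct product of $\Q_p$-algebras each of the form $\B_\dR$, $\B_\dR^+$, or $\B_\st[\boldsymbol\varepsilon]^m := \B_\st[\varepsilon_1, \dots, \varepsilon_m]$, reduce the exactness assertion to one per factor, and then appeal to lemma~\ref{lem:seqs_of_de_Rham_reps}.

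First I would unpack the definitions. By definition~\ref{def:Bloch--Kato_algebras}, $\B^\bullet_g$ is built from the semi-cosimplicial $\Q_p$-cdga $\B_\dR^+ \times \B_\st[\boldsymbol\varepsilon]^\bullet \rightrightarrows \B_\dR \times \B_\st[\boldsymbol\varepsilon]^\bullet$ via construction~\ref{cons:cosimplicial_algebras_from_semi-cosimplicial_cdgas}, so at level $n$ it is a product of copies of the four algebras $\B_\dR, \B_\dR^+$ and the denormalisation $\Kan^m(\B_\st[\boldsymbol\varepsilon]^\bullet) = \B_\st[\varepsilon_1, \dots, \varepsilon_m]$ of example~\ref{ex:B_st[epsilon]}, for various $m \leq n$. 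The case of $\B^\bullet_{g/e}$ is similar, omitting the $\B_\dR, \B_\dR^+$ factors.

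Second, I would observe that the functor $U(-) = \Hom_{\Q_p}(\Spec(-), U)$ converts finite products of commutative algebras into products of groups (as $\Spec$ converts products to coproducts), and that $(-)^{G_K}$ likewise preserves products. Hence the sequence $1 \to Z(\B^n_g)^{G_K} \to U(\B^n_g)^{G_K} \to Q(\B^n_g)^{G_K} \to 1$ is a direct product of sequences, one for each factor, and it suffices to verify exactness factor by factor. For the factors $\B_\dR$ and $\B_\dR^+$ this is precisely the content of lemma~\ref{lem:seqs_of_de_Rham_reps} (noting that the Dieudonn\'e-functor identifications $U(\B_\dR)^{G_K} = \D_\dR(U)(K)$ and similarly for $\B_\dR^+$ are exactly those used in that lemma).

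Third, for the remaining factor $\B_\st[\boldsymbol\varepsilon]^m$, I would identify $U(\B_\st[\boldsymbol\varepsilon]^m)^{G_K} = \D_\st(U)(K_0[\boldsymbol\varepsilon]^m)$, where $K_0[\boldsymbol\varepsilon]^m = K_0[\varepsilon_1, \dots, \varepsilon_m]/(\varepsilon_i \varepsilon_j)$. Using the canonical $\Q_p$-scheme isomorphism $U \cong \Lie(U)$ (Baker--Campbell--Hausdorff), together with the fact that $\varepsilon_i \varepsilon_j = 0$ truncates the BCH expansion, one gets a natural identification $U(\B_\st[\boldsymbol\varepsilon]^m) \cong \Lie(U) \otimes_{\Q_p} \B_\st \oplus \bigoplus_{i=1}^m \varepsilon_i \cdot (\Lie(U) \otimes_{\Q_p} \B_\st)$ as a set, with a group law linear in the $\varepsilon_i$ factors. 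Taking $G_K$-invariants, this decomposes $\D_\st(U)(K_0[\boldsymbol\varepsilon]^m)$ as a set into $\D_\st(\Lie(U)) \oplus \bigoplus_{i=1}^m \varepsilon_i \cdot \D_\st(\Lie(U))$, and exactness of the sequence for $U, Z, Q$ at each of these factors then reduces to exactness of $1 \to \D_\st(\Lie(Z)) \to \D_\st(\Lie(U)) \to \D_\st(\Lie(Q)) \to 1$, which is the Lie algebra instance of lemma~\ref{lem:seqs_of_de_Rham_reps}.

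The only mild obstacle is keeping the bookkeeping straight and ensuring that exactness at the Lie algebra level propagates correctly to exactness at the group level through the possibly nontrivial BCH-group law on the $\varepsilon$-factors; this is handled by observing that any surjection $U \twoheadrightarrow Q$ of pro-unipotent groups admits a section as schemes (so that $U(A) \twoheadrightarrow Q(A)$ is surjective for every $\Q_p$-algebra $A$), and that the kernel $Z(A) \leq U(A)$ is set-theoretically cut out by the Lie algebra exactness in a manner compatible with the $G_K$-action.
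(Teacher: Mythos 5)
Your proposal is correct and follows essentially the same route as the paper: decompose each degree of $\B^\bullet_g$ (resp.\ $\B^\bullet_{g/e}$) into factors $\B_\dR$, $\B_\dR^+$, $\B_\st[\boldsymbol\varepsilon]^m$, identify the $G_K$-invariants with $\D_\dR(U)(K)$, $\D_\dR^+(U)(K)$, $\D_\st(U)(K_0[\boldsymbol\varepsilon]^m)$, and invoke lemma \ref{lem:seqs_of_de_Rham_reps}. The extra Baker--Campbell--Hausdorff bookkeeping in your third step is not needed, since exactness of a sequence of pro-unipotent groups over $K_0$ is equivalent to exactness on $A$-points for every $K_0$-algebra $A$ (in particular $A=K_0[\boldsymbol\varepsilon]^m$), as recorded in the corollaries of section \ref{s:preliminaries}.
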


The remainder of this section is devoted to a proof of theorem \ref{thm:bloch-kato_quotients_as_cohomotopy}. We will restrict our attention to the calculation of the cohomotopy of $U(\B^\bullet_{g/e})^{G_K}$, the case of $U(\B^\bullet_{f/e})^{G_K}$ being similar (and simpler).

In making the argument precise, it will be convenient for this section only to permit our $\Q_p$-algebras (or more accurately, their homomorphisms) to be non-unital\footnote{We follow the usual convention that ``non-unital'' means ``not necessarily unital''.} and to define, for a non-unital $\Q_p$-algebra $\B$ and unipotent group $U/\Q_p$, the group $U(\B)$ to be the kernel\[U(\B):=\ker\left(U(\Q_p\oplus\B)\rightarrow U(\Q_p)\right),\]where $\Q_p\oplus\B$ is given the obvious algebra structure with unit $(1,0)$, and $\Q_p\oplus\B\rightarrow\Q_p$ is the natural projection. This is clearly functorial in $\B$ with respect to non-unital $\Q_p$-algebra homomorphisms, and extends the usual definition on unital algebras, since for a unital $\B$ there is a canonical isomorphism $\Q_p\oplus\B\cong\Q_p\times\B$.

With this definition, we will let $\B_\dR^\bullet$ denote the (non-unital) cosimplicial $\Q_p$-algebra cogenerated by the semi-cosimplicial $\Q_p$-algebra\[\B_\dR^+\rightrightarrows\B_\dR\]where $d^0=0$ and $d^1$ is the canonical inclusion. There are then $G_K$-equivariant sequences\[0\rightarrow\B^\bullet_\dR\rightarrow\B^\bullet_e\rightarrow\B_\cris^{\varphi=1}\rightarrow0\]\[0\rightarrow\B^\bullet_\dR\rightarrow\B^\bullet_g\rightarrow\B^\bullet_{g/e}\rightarrow0\]of cosimplicial $\Q_p$-algebras, where $\B_\cris^{\varphi=1}$ denotes the constant cosimplicial $\Q_p$-algebra which is $\B_\cris^{\varphi=1}$ in each degree. We claim these sequences are levelwise exact, and split by a $G_K$-equivariant $\Q_p$-algebra homomorphism in each degree.

To see why this is, at least for the second sequence (the first being similar), we consider the diagram
\begin{center}
\begin{tikzcd}
0 \arrow[r] &\B_\dR^+ \arrow[r,hook]\arrow[d,shift left]\arrow[d,shift right] & \B_\st[\boldsymbol\varepsilon]^\bullet\times\B_\dR^+ \arrow[r,two heads]\arrow[d,shift left]\arrow[d,shift right] & \B_\st[\boldsymbol\varepsilon]^\bullet \arrow[d,shift left]\arrow[d,shift right] \arrow[r] & 0 \\
0 \arrow[r] & \B_\dR \arrow[r,hook] & \B_\st[\boldsymbol\varepsilon]^\bullet\times\B_\dR \arrow[r,two heads] & \B_\st[\boldsymbol\varepsilon]^\bullet \arrow[r] & 0
\end{tikzcd}
\end{center}
of semi-cosimplicial $\Q_p$-cdgas giving rise to $0\rightarrow\B^\bullet_\dR\rightarrow\B^\bullet_g\rightarrow\B^\bullet_{g/e}\rightarrow0$ through construction \ref{cons:cosimplicial_algebras_from_semi-cosimplicial_cdgas}. It is clear that each row is exact and $G_K$-equivariantly split by a homomorphism of $\Q_p$-cdgas, so that the corresponding sequence of bi-cosimplicial $\Q_p$-algebras is $G_K$-equivariantly split in each bidegree by a homomorphism of $\Q_p$-algebras. Hence the same is true in each degree for the diagonal sequence $0\rightarrow\B^\bullet_\dR\rightarrow\B^\bullet_g\rightarrow\B^\bullet_{g/e}\rightarrow0$ as claimed.

Using this it follows that the sequences\[1\rightarrow U(\B^\bullet_\dR)\rightarrow U(\B^\bullet_e)\rightarrow U(\B_\cris^{\varphi=1})\rightarrow1\]\[1\rightarrow U(\B^\bullet_\dR)\rightarrow U(\B^\bullet_g)\rightarrow U(\B^\bullet_{g/e})\rightarrow1\]are exact and $G_K$-equivariantly split in each degree, and hence that they remain exact after taking $G_K$-invariants. By taking the long exact sequence in cohomotopy of the $G_K$-invariants of the first sequence, we obtain a description of the cohomotopy groups of $U(\B_\dR^\bullet)^{G_K}$ as:
\begin{itemize}
	\item $\pi^0(U(\B_\dR^\bullet)^{G_K})=1$;
	\item $\pi^1(U(\B_\dR^\bullet)^{G_K})$ sits in an exact sequence\setcounter{equation}{0}\begin{equation}\label{eq:long_exact_seq_for_dR}1\!\rightarrow\! U(\Q_p)^{G_K}\!\rightarrow\!\D_\cris^{\varphi=1}(U)(\Q_p)\!\actsarrow\!\pi^1(U(\B_\dR^\bullet)^{G_K})\!\rightarrow\!\H^1_e(G_K,U(\Q_p))\!\rightarrow\!1;\end{equation}
	\item $\pi^i(U(\B_\dR^\bullet)^{G_K})=1$ for $i>1$ when $U$ is abelian.
\end{itemize}
Here we are using the description of the cohomotopy of $U(\B^\bullet_e)^{G_K}$ in theorem \ref{thm:bloch-kato_sets_as_cohomotopy} and the fact that the higher cohomotopy of $U(\B_\cris^{\varphi=1})^{G_K}$ is trivial (as it is a constant cosimplicial group).

Taking the long exact sequence in cohomotopy of the $G_K$-invariants of the second sequence, we find that we have an exact sequence (with the final $1$ coming from lemma \ref{lem:codimensional_vanishing})

\begin{equation}\label{eq:long_exact_seq_for_g/e}
\begin{tikzcd}
 1 \arrow[r] & U(\Q_p)^{G_K} \arrow[r]
	\arrow[d,draw=none,""{coordinate, name=Z}, anchor=center]{}
 & \pi^0(U(\B^\bullet_{g/e})^{G_K}) \arrow[rounded corners,
	to path={ -- ([xshift=2ex]\tikztostart.east)
		|- (Z) \tikztonodes
		-| ([xshift=-2ex]\tikztotarget.west)
		-- (\tikztotarget)},dll,pos=1.03,swap,"\acts"] & \\
\pi^1(U(\B^\bullet_\dR)^{G_K}) \rar & \H^1_g(G_K,U(\Q_p)) \arrow[r] & \pi^1(U(\B^\bullet_{g/e})^{G_K}) \arrow[r] & 1
\end{tikzcd}
\end{equation}
and that $\pi^2(U(\B^\bullet_{g/e})^{G_K})=\D_\cris^{\varphi=1}(U(\Q_p)^\dual(1))^\dual$ and $\pi^i(U(\B^\bullet_{g/e})^{G_K})=1$ for $i>2$ when $U$ is abelian (this proves all the extra claims in the abelian case).

Now we observe that the natural map $\B^\bullet_e\hookrightarrow\B^\bullet_g$ defined in \S\ref{ss:compatibilities} fits into a diagram
\begin{center}
\begin{tikzcd}
0 \arrow[r] & \B^\bullet_\dR \arrow[r]\arrow[d, equals] & \B^\bullet_e \arrow[r]\arrow[d, hook] & \B_\cris^{\varphi=1} \arrow[r]\arrow[d, hook] & 0 \\
0 \arrow[r] & \B^\bullet_\dR \arrow[r] & \B^\bullet_g \arrow[r] & \B^\bullet_{g/e} \arrow[r] & 0,
\end{tikzcd}
\end{center}
from which we obtain a morphism of exact sequences
\begin{center}
\begin{tikzcd}[column sep=small]
1 \arrow[r] & U(\Q_p)^{G_K} \arrow[r]\arrow[d,equals] & \D_\cris^{\varphi=1}(U)(\Q_p) \arrow[r,"\acts"]\arrow[d] & \pi^1(U(\B_\dR^\bullet)^{G_K}) \arrow[r]\arrow[d,equals] & \H^1_e(G_K,U(\Q_p)) \arrow[r]\arrow[d,hook] & 1 \\
1 \arrow[r] & U(\Q_p)^{G_K} \arrow[r] & \pi^0(U(\B^\bullet_{g/e})^{G_K}) \arrow[r,"\acts"] & \pi^1(U(\B_\dR^\bullet)^{G_K}) \arrow[r] & \H^1_g(G_K,U(\Q_p)) & 
\end{tikzcd}
\end{center}
whose top row is sequence \ref{eq:long_exact_seq_for_dR} and whose bottom row is part of sequence \ref{eq:long_exact_seq_for_g/e} (using theorem \ref{thm:compatibility_for_bloch-kato_cohomotopy} for the description of the vertical arrows). A quick five-lemma-style diagram chase shows that the map $\D_\cris^{\varphi=1}(U)(\Q_p)\rightarrow\pi^0(U(\B^\bullet_{g/e})^{G_K})$ is an isomorphism, giving us the desired description of $\pi^0$.

For $\pi^1$, we note that surjectivity of the upper-right horizontal arrow in the above diagram shows that the image of $\pi^1(U(\B_\dR^\bullet)^{G_K})\rightarrow\H^1_g(G_K,U(\Q_p))$ is $\H^1_e(G_K,U(\Q_p))$, and hence we obtain from sequence \ref{eq:long_exact_seq_for_g/e} an exact sequence of pointed sets\[1\rightarrow\H^1_e(G_K,U(\Q_p))\rightarrow\H^1_g(G_K,U(\Q_p))\rightarrow\pi^1(U(\B^\bullet_{g/e})^{G_K})\rightarrow1.\]
As usual, this doesn't imply that $\H^1_g(G_K,U(\Q_p))\twoheadrightarrow\pi^1\left(U(\B^\bullet_{g/e})^{G_K}\right)$ induces an isomorphism $\H^1_{g/e}(G_K,U(\Q_p))\isoarrow\pi^1(U(\B^\bullet_{g/e})^{G_K})$, and we have to consider twists of $U$ to complete the proof.

Thus let $\alpha$ be a de Rham cocycle of $U$, i.e.\ one whose class lies in $\H^1_g(G_K,U(\Q_p))$. As per the proof of lemma \ref{lem:twist_compatibility}, we may pick some $u\in U(\B^0_g)$ of which $\alpha$ is the coboundary, and write $\beta=d^1(u)d^0(u)^{-1}\in\Cycle^1\left(U(\B^\bullet_g)^{G_K}\right)$ for the coboundary of $u^{-1}$ in the cosimplicial group $U(\B^\bullet_g)$. Writing $\bar u\in U(\B^\bullet_{g/e})$ and $\bar\beta\in\Cycle\left(U(\B^\bullet_{g/e})^{G_K}\right)$ for the images of $u$ and $\beta$ respectively, the same argument as in the proof of lemma \ref{lem:twist_compatibility}	shows that $\bar u$ induces a $G_K$-equivariant isomorphism\[{}_{\bar\beta}\left(U(\B^\bullet_{g/e})\right)\isoarrow{}_\alpha U(\B^\bullet_{g/e}),\]compatible with the corresponding isomorphism for $\B^\bullet_g$. Taking $G_K$-invariants and $\pi^1$, we obtain a commuting diagram
\begin{center}
\begin{tikzcd}[column sep=small]
\H^1_g(G_K,{}_\alpha U(\Q_p)) \arrow[r,"\sim"]\arrow[dd,"\wr"] & \pi^1\left({}_\alpha U(\B^\bullet_g)^{G_K}\right)\arrow[r,two heads] & \pi^1\left({}_\alpha U(\B^\bullet_{g/e})^{G_K}\right) \\
& \pi^1\left({}_\beta\left(U(\B^\bullet_g)^{G_K}\right)\right) \arrow[r,two heads]\arrow[d,"\wr"]\arrow[u,swap,"\wr"] & \pi^1\left({}_{\bar\beta}\left(U(\B^\bullet_{g/e})^{G_K}\right)\right) \arrow[d,"\wr"]\arrow[u,swap,"\wr"] \\
\H^1_g(G_K,U(\Q_p)) \arrow[r,"\sim"] & \pi^1\left(U(\B^\bullet_g)^{G_K}\right) \arrow[r,two heads] & \pi^1\left(U(\B^\bullet_{g/e})^{G_K}\right).
\end{tikzcd}
\end{center}
The upper-right square commutes already on the level of cosimplicial groups by the above discussion, and the lower-right square commutes by naturality of the twisting isomorphism in proposition \ref{prop:pi^1_of_twists}; that the left-hand rectangle commutes is part of lemma \ref{lem:twist_compatibility}.

Since by proposition \ref{prop:admissibility_of_twists} ${}_\alpha U$ is also de Rham, the composite of the top row has kernel $\H^1_e(G_K,{}_\alpha U(\Q_p))$, so that by the commutativity of the above diagram and lemma \ref{lem:twists_give_cosets}, the fibre of $\H^1_g(G_K,U(\Q_p))\twoheadrightarrow\pi^1\left(U(\B^\bullet_{g/e})^{G_K}\right)$ containing $[\alpha]$ is exactly its $\sim_{\H^1_e}$ equivalence class. Thus the map $\H^1_g(G_K,U(\Q_p))\twoheadrightarrow\pi^1\left(U(\B^\bullet_{g/e})^{G_K}\right)$ factors through an isomorphism $\H^1_{g/e}(G_K,U(\Q_p))\isoarrow\pi^1(U(\B^\bullet_{g/e})^{G_K})$ as desired.\hfill\qed
\section{The main theorem ($p$-adic version)}
\label{c:main_theorem_p-adic}

With the assembled theory of the preceding sections, we are now able to exactly translate the proof of theorem \ref{thm:main_theorem_l-adic} to give a proof of its $p$-adic analogue, theorem \ref{thm:main_theorem_p-adic}. Let us fix an abelian variety $A/K$, and let us denote by $(L,\tilde0)$ a pair of a varying line bundle $L/A$ and a basepoint $\tilde0\in L^\times(K)$ in the complement $L^\times=L\setminus0$ of the zero section lying over $0\in A(K)$. We will write $U/\Q_p$ for the $\Q_p$-pro-unipotent fundamental group of $L^\times$ based at $\tilde0$.

As in section \ref{c:main_theorem_l-adic}, $U$ is a $G_K$-equivariant central extension\[1\rightarrow\Q_p(1)\centarrow U\rightarrow V_pA\rightarrow1\]arising from the fibration sequence $(\G_m,1)\hookrightarrow(L^\times,\tilde0)\twoheadrightarrow(A,0)$.

By \cite[Theorem~1.4(1)]{me:local_constancy}, all the groups involved are de Rham, so that by corollary \ref{cor:long_exact_seqs_of_quotients} we have an exact sequence\[\D_\cris^{\varphi=1}(V_pA)\rightarrow\H^1_{g/e}(G_K,\Q_p(1))\actsarrow\H^1_{g/e}(G_K,U(\Q_p))\rightarrow\H^1_{g/e}(G_K,V_pA).\]Since $\H^1_g$ and $\H^1_e$ agree for the Tate module of an abelian variety, the final term vanishes, which implies by the usual dimension formulae for abelian local Bloch--Kato Selmer groups that the first term vanishes too, and thus the middle map $\H^1_{g/e}(G_K,\Q_p(1))\rightarrow\H^1_{g/e}(G_K,U(\Q_p))$ is bijective. We know e.g.\ by \cite[Theorem~1.2]{me:local_constancy} that the image of the non-abelian Kummer map is contained in $\H^1_g$, so we have a well-defined composite\[\lambda_L\colon L^\times(K)\rightarrow\H^1_{g/e}(G_K,U(\Q_p))\leftisoarrow\H^1_{g/e}(G_K,\Q_p(1))\isoarrow\Q_p,\]where the final isomorphism arises from the explicit description of the local Bloch--Kato Selmer groups of $\Q_p(1)$.

It remains to show that the $\lambda_L$ are the N\'eron log-metrics on our varying line bundle $L$, for which it suffices to show that the conditions of lemma \ref{lem:neron_log-metrics} are satisfied. The argument for conditions \ref{condn:additivity}--\ref{condn:normalisation} is formal and proceeds exactly as in \S\ref{c:main_theorem_l-adic}, replacing $\ell$ with $p$ and $\H^1$ with $\H^1_{g/e}$ throughout. For condition \ref{condn:local_constancy}, see \cite[Theorem~1.2]{me:local_constancy}.
\section{The main theorem (archimedean version)}
\label{c:main_theorem_archimedean}

To complete the circle of ideas sketched in the introduction, let us now prove theorem \ref{thm:main_theorem_archimedean}, the archimedean instance of our results relating local heights to Kummer maps. One can prove this through direct calculation, but we will instead imitate the proofs of our other two main theorems and show that the description in theorem \ref{thm:main_theorem_archimedean} gives a well-defined family of functions $(\lambda_L)_L$ which satisfies a certain list of conditions uniquely defining the archimedean N\'eron log-metric.

Central to this discussion is the mixed Hodge structure carried by the fundamental groupoids of smooth complex varieties, constructed by Hain \cite{hain_MHS} using Chen's non-abelian de Rham theorem\footnote{Strictly speaking, \cite{hain_MHS} only defines the mixed Hodge structure when $z=y$; \cite[Proposition 3.21]{hain-zucker} provides the construction in general.}. More precisely, if $Y/\C$ is a smooth connected variety, Hain constructs an ind-mixed Hodge structure\label{term:ind-MHS}\footnote{By an \emph{ind-mixed Hodge structure} we simply mean an ind-object in the category of mixed Hodge structures \cite[Definition 3.1]{peters-steenbrink} over an appropriate ring (for us, always $\Z$, $\Q$ or $\R$). Since morphisms of mixed Hodge structures are strict for the weight and Hodge filtrations \cite[Corollary 3.6]{peters-steenbrink}, an ind-mixed Hodge structure $V$ is equivalently a $\Z$-module (resp.\ $\Q$- or $\R$-vector space) endowed with a weight filtration on $V_\Q$ and a Hodge filtration on $V_\C$ such that $V$ is a filtered union of finitely generated submodules $V_n$ such the restriction of the weight and Hodge filtrations to $V_n$ define a mixed Hodge structure in the usual sense. Morphisms of ind-mixed Hodge structures are then just $\Z$-module (resp.\ $\Q$- or $\R$-vector space) homomorphisms compatible with the weight and Hodge filtrations -- these are automatically strict for both filtrations by the corresponding result for mixed Hodge structures \cite[Corollary 3.6]{peters-steenbrink}.} on the decompleted dual\[\Z\pi_1(Y(\C);y,z)^{\wedge\reddual}:=\limdir\Hom(\Z\pi_1(Y(\C);y,z)/J^{n+1},\Z)\]of the completed path-torsor coalgebra for each pair of points $y,z\in Y(\C)$. Here $J^\bullet$ denotes the filtration on $\Z\pi_1(Y(\C);y,z)$ induced by the filtration on $\Z\pi_1(Y(\C);y)$ (equivalently $\Z\pi_1(Y(\C);z)$) defined by powers of the augmentation ideal. This mixed Hodge structure is compatible with the algebra structure on $\Z\pi_1(Y(\C);y,z)^{\wedge\reddual}$ and with the path-composition and path-reversal maps, so that in particular when $y=z$, $\Z\pi_1(Y(\C);y)^{\wedge\reddual}$ carries an ind-mixed Hodge structure compatible with its Hopf algebra structure.

In this setup, the ind-mixed Hodge structures on the path-torsors $\pi_1(Y(\C);y,z)$ for varying $z$ are compatible in an appropriate sense with that on $\pi_1(Y(\C);y)$, so that there is a natural non-abelian Kummer map\[Y(\C)\rightarrow\H^1(\MHS_\Z,\pi_1(Y(\C);y))\]to the moduli set of such torsors, which we will later identify as the inverse limit of the higher Albanese maps of \cite{hain-zucker}. However, it turns out that when $Y=L^\times$ is the complement of zero in a line bundle on an abelian variety, this non-abelian Kummer map will be bijective \cite[Remarks and Examples 5.45]{hain-zucker}\footnote{\cite{hain} only asserts this for $L$ with non-trivial first Chern class, but in fact this is true without this restriction.}, so is too fine an invariant to be useful to us.

Instead, we will work with the real Mal\u cev completion\[U=\Spec(\R\pi_1(Y(\C);y)^{\wedge\reddual})\]of the Betti fundamental group and its associated non-abelian Kummer map\[Y(\C)\rightarrow\H^1(\MHS_\R,U(\R)),\]and it will turn out that this map will be sufficiently coarse to allow us to recover exactly the N\'eron log-metric on such $\G_m$-torsors $L^\times$.

\subsection{Pro-unipotent groups with mixed Hodge structures}
\label{s:unipotent_MHS}

Before analysing the non-abelian Kummer map for the real unipotent fundamental group in more detail, let us briefly set out a few basic properties of unipotent groups $U$ with mixed Hodge structure in the abstract. The main objective here is to show that, under a certain natural condition, the moduli set $\H^1(\MHS_\R,U(\R))$ parametrising torsors under $U$ with compatible mixed Hodge structure canonically has the structure of a real manifold, which we will construct via an explicit description of this set.

\begin{definition}\label{def:MHS_on_groups}
Let $U/\R$ (resp.\ $U/\Q$) be a finitely generated pro-unipotent group. By a \emph{mixed Hodge structure} on $U$ we shall mean an ind-mixed Hodge structure on $\O(U)$ compatible with the Hopf algebra operations. By a \emph{torsor (with mixed Hodge structure)} under such a group $U$, we shall mean a torsor $P$ under the underlying group scheme endowed with an ind-mixed Hodge structure on $\O(P)$ which is compatible with the $\O(U)$-comodule algebra structure. By a morphism of such groups or torsors, we shall mean a morphism of the underlying group schemes or torsors which respects the weight and Hodge filtrations on the affine rings.
\end{definition}

\begin{proposition}\label{prop:J_n_is_sub-MHS}
Let $U/\R$ (resp.\ $U/\Q$) be a finitely generated pro-unipotent group with mixed Hodge structure. Then each $J_\bullet\O(U)$ is a mixed Hodge substructure of $\O(U)$. If $P$ is a torsor under $U$, then each $J_\bullet\O(P)$ is a mixed Hodge substructure of $\O(P)$, and the canonical isomorphism $\gr^J_\bullet\O(P)\isoarrow\gr^J_\bullet\O(U)$ (see proposition \ref{prop:graded_isomorphism}) is an isomorphism of ind-mixed Hodge structures.
\begin{proof}
It is obvious from definition \ref{def:J-filtration} that each $J_n\O(U)$ and each $J_n\O(P)$ is stable under the action of the Tannaka group of $\MHS_\R$ (resp.\ $\MHS_\Q$), so are mixed Hodge substructures. Since the isomorphism $\gr^J_\bullet\O(P)\isoarrow\gr^J_\bullet\O(U)$ from proposition \ref{prop:graded_isomorphism} is independent of the choice of element $\gamma$, it is equivariant for the action of this Tannaka group, and hence is an isomorphism of ind-mixed Hodge structures.
\end{proof}
\end{proposition}

\subsection{Weight and Hodge filtrations on torsors}
\label{ss:MHS_filtrations}

Central to our explicit determination of the moduli set $\H^1(\MHS_\R,U(\R))$ is a description of mixed Hodge structures on torsors directly on the torsors themselves, as opposed to on their affine rings. The key definition that allows us to study mixed Hodge structures is as follows.

\begin{definition}\label{defn:reduction_of_structure_on_torsors}
Let $U/\R$ be a finitely generated pro-unipotent group with mixed Hodge structure, and $P$ a torsor under $U$ with mixed Hodge structure. We define subsets $W_0P(\R)\subseteq P(\R)=\O(P)^{\dual,\gplike}$, $W_0P(\C)\subseteq P(\C)=\O(P)_\C^{\dual,\gplike}$ and $F^0W_0P(\C)\subseteq P(\C)=\O(P)_\C^{\dual,\gplike}$ by:
\begin{align*}
W_0P(\R) &= \O(P)^{\dual,\gplike}\cap W_0; \\
W_0P(\C) &= \O(P)_\C^{\dual,\gplike}\cap W_0; \\
F^0W_0P(\C) &= \O(P)_\C^{\dual,\gplike}\cap F^0\cap W_0.
\end{align*}
We clearly have $W_0P(\R)\subseteq W_0P(\C)\supseteq F^0W_0P(\C)$.

We will see in proposition \ref{prop:filtrations_on_torsors} that in the particular case that $P=U$, the subsets $W_0U(\R)$, $W_0U(\C)$ and $F^0W_0U(\C)$ are subgroups, and that in general the subsets $W_0P(\R)$, $W_0P(\C)$ and $F^0W_0P(\C)$ are torsors under each of these respective groups.

We also make the same definitions over the base field $\Q$ by replacing $\R$ with $\Q$ throughout.
\end{definition}

Since the proof of this result (in particular, the non-emptiness of $W_0P(\R)$ and $F^0W_0P(\C)$) is relatively technical, let us for the time being describe a converse construction.

\begin{construction}\label{cons:MHS_from_reduction_of_structure}
Let $U/\R$ be a finitely generated pro-unipotent group and $P$ a torsor under $U$. Suppose that a mixed Hodge structure on $U$ is given, along with subsets $W_0P(\R)\subseteq P(\R)$, $W_0P(\C)\subseteq P(\C)$ and $F^0W_0P(\C)\subseteq P(\C)$ which are torsors under $W_0U(\R)$, $W_0U(\C)$ and $F^0W_0U(\C)$ respectively, and such that $W_0P(\R)\subseteq W_0P(\C)\supseteq F^0W_0P(\C)$. We will describe how to construct an ind-mixed Hodge structure on $\O(P)$ which is compatible with the $\O(U)$-comodule algebra operations, thereby making $P$ into a torsor under $U$ with mixed Hodge structure.

To construct the weight filtration on $\O(P)$, pick an element $q\in W_0P(\R)$, which gives rise to an isomorphism of schemes $U\isoarrow P$ by the action on $q$. We declare the weight filtration on $\O(P)$ to be the filtration corresponding to the weight filtration on $\O(U)$ under the induced isomorphism $\O(P)\isoarrow\O(U)$.

This filtration is independent of the choice of $q$, for, if $q'$ were another choice, then it would differ from $q$ by the action of some $u\in W_0U(\R)$, and hence the two isomorphisms $\O(P)\isoarrow\O(U)$ would differ by the algebra automorphism of $\O(U)$ induced by right-multiplication by $u$ on $U$. Since $u\in W_0U(\R)$, this automorphism strictly preserves the weight filtration (this is easiest to see on the dual $\O(U)^\dual$, where $u^{\pm1}\in W_0\O(U)^\dual$), and hence $q$ and $q'$ induce the same weight filtration on $\O(P)$.

Similarly, to construct the Hodge filtration on $\O(P)_\C$ we pick any element $r\in F^0W_0P(\C)$ and declare the Hodge filtration on $\O(P)_\C$ to be the filtration corresponding to the Hodge filtration on $\O(U)_\C$ under the induced isomorphism $\O(P)_\C\isoarrow\O(U)_\C$. The same proof as above shows that this is independent of the choice of $r$. It also follows that the isomorphism $\O(P)_\C\isoarrow\O(U)_\C$ induced by $r$ is (strictly) compatible with the weight filtration induced by our choice of $q$ above; this is since $q$ and $r$ both belong to the $W_0U(\C)$-torsor $W_0P(\C)$, and hence induce the same weight filtration by the same argument as above.

We will see in proposition \ref{prop:construction_gives_MHS} that these filtrations do indeed define a mixed Hodge structure on $P$. One may make the same construction with $\R$ replaced with $\Q$ throughout.
\end{construction}

\begin{proposition}\label{prop:MHS_on_torsors}
The constructions in definition \ref{defn:reduction_of_structure_on_torsors} and construction \ref{cons:MHS_from_reduction_of_structure} are mutually inverse. That is, let $U/\R$ be a finitely generated pro-unipotent group and $P$ a torsor under $U$, and suppose that a mixed Hodge structure on $U$ (but not $P$) is given. Then definition \ref{defn:reduction_of_structure_on_torsors} and construction \ref{cons:MHS_from_reduction_of_structure} provide a canonical bijection between:
\begin{itemize}
	\item ind-mixed Hodge structures on $\O(P)$ compatible with the $\O(U)$-comodule algebra operations; and
	\item subsets $W_0P(\R)\subseteq P(\R)$, $W_0P(\C)\subseteq P(\C)$ and $F^0W_0P(\C)\subseteq P(\C)$ which are torsors under $W_0U(\R)$, $W_0U(\C)$ and $F^0W_0U(\C)$ respectively, such that $W_0P(\R)\subseteq W_0P(\C)\supseteq F^0W_0P(\C)$.
\end{itemize}
\begin{proof}
This is a straightforward verification from the definitions. In the one direction, given subsets $W_0P(\R)$ and $F^0W_0P(\C)$ as above, the weight and Hodge filtrations on $\O(P)$ were constructed from choices of elements $q\in W_0P(\R)$ and $r\in F^0W_0P(\C)$ in such a way that $q\in W_0\O(P)^\dual$ and $r\in F^0W_0\O(P)^\dual_\C$. Hence $W_0P(\R)$ meets $\O(P)^{\dual,\gplike}\cap W_0$ non-trivially, and hence they are equal since they are both torsors under $W_0U(\R)$. By exactly the same argument, we also have $F^0W_0P(\C)=\O(P)^{\dual,\gplike}_\C\cap F^0\cap W_0$, so that $W_0P(\R)$ and $F^0W_0P(\C)$ are indeed the subsets in definition \ref{defn:reduction_of_structure_on_torsors} associated to the ind-mixed Hodge structure constructed by construction \ref{cons:MHS_from_reduction_of_structure}.

In the other direction, suppose that we are given an ind-mixed Hodge structure on $\O(P)$ as above. Since the map $\O(P)^\dual\hat\otimes\O(U)^\dual\isoarrow\O(P)^\dual\hat\otimes\O(P)^\dual$ given by $(q,u)\mapsto(q,qu)$ is a bijective morphism of pro-mixed Hodge structures\footnote{That is, pro-objects in the category of mixed Hodge structures.}, it is strict for the weight and Hodge filtrations, and hence both the action map $\O(P)^\dual\hat\otimes\O(U)^\dual\rightarrow\O(P)^\dual$ and the difference map $\O(P)^\dual\hat\otimes\O(P)^\dual\rightarrow\O(U)^\dual$ are morphisms of pro-mixed Hodge structures. In particular, if we choose any $q\in \O(P)^{\dual,\gplike}\cap W_0$ then the isomorphism $\O(P)\isoarrow\O(U)$ induced by $q$ \emph{strictly} preserves the weight filtration, so that the weight filtration on $\O(P)$ constructed by construction \ref{cons:MHS_from_reduction_of_structure} from the subset $\O(P)^\dual\cap W_0$ agrees with the original weight filtration. Exactly the same argument applies to the Hodge filtration, which completes the proof that the constructions are mutually inverse.
\end{proof}
\end{proposition}

\begin{corollary}\label{cor:explicit_H^1(MHS)}
Let $U/\R$ be a finitely generated pro-unipotent group with mixed Hodge structure. Then there is a canonical bijection\[\H^1(\MHS_\R,U(\R))\isoarrow W_0U(\R)\backslash W_0U(\C)/F^0W_0U(\C),\]where $\H^1(\MHS_\R,U(\R))$ denotes the set of isomorphism classes of torsors under $U$.

The same holds for $\R$ replaced with $\Q$ throughout.
\begin{proof}
The map sends a torsor $P$ under $U$ to the double coset containing the element $q^{-1}r\in U(\C)$ where $q\in W_0P(\R)$ and $r\in F^0P(\C)$. That this gives a bijection is left as an easy exercise to the reader.
\end{proof}
\end{corollary}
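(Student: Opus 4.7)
The plan is to use lemma \ref{lem:MHS_on_torsors} to rephrase the problem entirely in terms of triples $(P,W_0P,F^0P_\C)$, and then to construct and analyse the bijection at the level of such triples. First I would observe that an element $q\in W_0P(\R)\subseteq P(\C)$ and an element $r\in F^0P(\C)\subseteq P(\C)$ together determine a unique element $u=q^{-1}r\in U(\C)$, namely the element such that $q\cdot u=r$ under the right $U(\C)$-action on the torsor $P(\C)$; this uses nothing beyond the fact that $P_\C$ is a torsor under $U_\C$. The assignment $P\mapsto q^{-1}r$ is the proposed map.

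To check well-definedness, first I would note that if we replace $q$ by $qw$ for some $w\in W_0U(\R)$, the element $q^{-1}r$ becomes $w^{-1}q^{-1}r$, and similarly replacing $r$ by $rf$ for $f\in F^0U(\C)$ multiplies $q^{-1}r$ on the right by $f$, so the double coset is independent of the choices of $q,r$. Since lemma \ref{lem:MHS_on_torsors} ensures that $W_0P(\R)\ne\emptyset$ and $F^0P(\C)\ne\emptyset$, such choices exist. Next, any isomorphism $\phi\colon P\isoarrow P'$ of torsors with mixed Hodge structure takes $W_0P$ to $W_0P'$ and $F^0P_\C$ to $F^0P'_\C$ (by naturality of the constructions), so the double coset depends only on the isomorphism class of $P$.

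For surjectivity, given $u\in U(\C)$, I would define a torsor by taking $P=U$ with its right-translation action and equipping it with $W_0P:=W_0U$ (a torsor under $W_0U$ containing $1$) and $F^0P_\C:=u\cdot F^0U_\C$ (a torsor under $F^0U_\C$ containing $u$); by lemma \ref{lem:MHS_on_torsors} this produces a torsor with mixed Hodge structure, and evaluating the map at $q=1$, $r=u$ recovers the class $[u]$. For injectivity, if $P$ and $P'$ yield the same double coset, choose representatives and use the double-coset relation to replace $q'$ and $r'$ so that $q^{-1}r=q'^{-1}r'$. The unique isomorphism $\phi\colon P\isoarrow P'$ of $U$-torsors sending $q$ to $q'$ then also sends $r$ to $r'$, and hence sends $W_0P(\R)$ bijectively to $W_0P'(\R)$ and $F^0P(\C)$ bijectively to $F^0P'(\C)$; by Zariski density of $\R$-points (resp.\ $\C$-points) in the pro-unipotent torsors $W_0P$ (resp.\ $F^0P_\C$), this implies $\phi$ identifies the subschemes $W_0P$ with $W_0P'$ and $F^0P_\C$ with $F^0P'_\C$, whence $\phi$ is an isomorphism of triples and hence of torsors with mixed Hodge structure by lemma \ref{lem:MHS_on_torsors}.

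The main obstacle I anticipate is purely bookkeeping: keeping straight the distinction between the set-theoretic torsor structure on $\R$- or $\C$-points and the scheme-theoretic torsor structures on $W_0P$ and $F^0P_\C$, and in particular justifying the passage from bijections of $\R$-points to isomorphisms of the ambient closed subschemes. The Zariski-density argument in the injectivity step is where this is most delicate, but it is immediate from the fact that $W_0P$ is a torsor under the pro-unipotent group $W_0U$.
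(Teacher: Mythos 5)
Your proposal is correct and is exactly the argument the paper has in mind: the paper defines the same map $P\mapsto[q^{-1}r]$ with $q\in W_0P(\R)$, $r\in F^0P(\C)$ and leaves the verification as an exercise, which you carry out via lemma \ref{lem:MHS_on_torsors} in the natural way (well-definedness under the $W_0U(\R)$- and $F^0U(\C)$-ambiguities, surjectivity via the triple $(U,W_0U,u\cdot F^0U_\C)$, injectivity via the torsor isomorphism sending $q\mapsto q'$). The only cosmetic remark is that the Zariski-density step in your injectivity argument can be bypassed, since $W_0P$ is precisely the orbit subscheme $q\cdot W_0U$ (and $F^0P_\C=r\cdot F^0U_\C$), so the isomorphism of triples is immediate once $\phi(q)=q'$ and $\phi(r)=r'$.
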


\begin{remark}
Corollary \ref{cor:explicit_H^1(MHS)} is a non-abelian analogue of the classical result that if $V$ is an $\R$-mixed Hodge structure then\[\Ext^1(\R(0),V)\cong\frac{W_0V_\C}{W_0V+F^0W_0V_\C}.\]See for instance \cite[Theorem 3.31]{peters-steenbrink} (with $\Z$ replaced by $\R$).
\end{remark}

It remains to prove that the subsets defined in definition \ref{defn:reduction_of_structure_on_torsors} are indeed subgroups and torsors as claimed, and that the weight and Hodge filtrations in construction \ref{cons:MHS_from_reduction_of_structure} do indeed define a mixed Hodge structure on the torsor $P$. The key technical input in the latter proof is a criterion which describes exactly when certain extensions of mixed Hodge structures are also mixed Hodge structures.

\begin{lemma}\label{lem:MHSs_in_exact_sequences}
Let\[0\rightarrow V''\rightarrow V\rightarrow V'\rightarrow0\]be an exact sequence of finite dimensional $\R$-vector spaces, each endowed with an increasing filtration $W_\bullet$, and with a decreasing filtration $F^\bullet$ on its complexification -- we assume both filtrations are separated and exhaustive. Suppose that the morphisms in the exact sequence preserve both filtrations, and that the filtrations define $\R$-mixed Hodge structures on $V''$ and $V'$ respectively. Then the following are equivalent:
\begin{enumerate}
	\item the filtrations define a mixed Hodge structure on $V$;
	\item the morphisms in the exact sequence are strict for the filtration $W_\bullet$, and the morphisms in the induced exact sequences\[0\rightarrow\gr^W_nV''_\C\rightarrow\gr^W_nV_\C\rightarrow\gr^W_nV'_\C\rightarrow0\]are strict for the filtration $F^\bullet$ (this implies that the original sequence was strict for the filtration $F^\bullet$ after tensoring with $\C$); and
	\item the exact sequence possesses a splitting after tensoring with $\C$ which is strict for both filtrations simultaneously.
\end{enumerate}

The same holds for $\R$ replaced with $\Q$ throughout.
\end{lemma}

\begin{remark}
A version of this lemma appears as \cite[Criterion 3.10]{peters-steenbrink}, where it is erroneously stated that a necessary and sufficient condition for $V$ to be a mixed Hodge structure is that the morphisms in the exact sequence be strict for the weight and Hodge filtrations separately. In fact, strictness for the filtrations is necessary but insufficient for $V$ to be a mixed Hodge structure.
\end{remark}

\begin{proof}[Proof of lemma \ref{lem:MHSs_in_exact_sequences}]
We will prove the lemma only over $\R$, the case of $\Q$ following by the same argument.

For the implication $(1)\Rightarrow(3)$, it is simplest to use the Deligne splitting \cite[Lemma-Definition 3.4]{peters-steenbrink}, which is a bigrading $V^{p,q}$ of $V_\C$ that simultaneously splits the weight and Hodge filtrations in that\[F^pV_\C=\bigoplus_{i\geq p}V^{i,j}\]and\[W_n V_\C=\bigoplus_{i+j\leq n}V^{i,j}.\]The Deligne splitting is functorial with respect to morphisms of mixed Hodge structures, and hence\[0\rightarrow V''_\C\rightarrow V_\C\rightarrow V'_\C\rightarrow0\]is an exact sequence of bigraded vector spaces with respect to the Deligne splittings; taking any splitting of the sequence compatible with the bigradings gives a splitting compatible with the filtrations.

For the implication $(3)\Rightarrow(2)$, the existence of the splitting ensures that the morphisms in the sequence\[0\rightarrow V''_\C\rightarrow V_\C\rightarrow V'_\C\rightarrow0\]are strict for the filtration $W_\bullet$, and hence so too are the morphisms in the original sequence before tensoring with $\C$. The splitting also ensures that\[0\rightarrow\gr^W_nV''_\C\rightarrow\gr^W_nV_\C\rightarrow\gr^W_nV'_\C\rightarrow0\]is split compatibly with the filtration $F^\bullet$, and hence the morphisms are strict for $F^\bullet$.

Finally, for the implication $(2)\Rightarrow(1)$, it suffices to prove the special case that $V=\gr^W_nV$, so that $V'$ and $V''$ are pure Hodge structures of weight $n$, and we want to prove that so too is $V$.

We define $V^{p,q}:=F^pV_\C\cap\bar{F^qV_\C}$ for $p+q=n$ (and similarly for $V'$ and $V''$), so that $V^{p,q}$ is canonically isomorphic to the kernel of the map $F^pV_\C\oplus\bar{F^qV_\C}\rightarrow V_\C$ given by the sum of the inclusions. Applying the snake lemma to the diagram
\begin{center}
\begin{tikzcd}
0 \arrow[r] & F^pV''_\C\oplus\bar{F^q V''_\C} \arrow[r]\arrow[d] & F^pV_\C\oplus\bar{F^q V_\C} \arrow[r]\arrow[d] & F^pV'_\C\oplus\bar{F^q V'_\C} \arrow[r]\arrow[d] & 0 \\
0 \arrow[r] & V''_\C \arrow[r] & V_\C \arrow[r] & V'_\C \arrow[r] & 0
\end{tikzcd}
\end{center}
we obtain a short exact sequence
\[
0\rightarrow (V'')^{p,q}\rightarrow V^{p,q}\rightarrow (V'')^{p,q}\rightarrow0,
\]
where the final $0$ comes from the fact that $F^pV''_\C+\bar{F^qV''_\C}=V''_\C$ since $V''$ is a pure Hodge structure of weight $n$. Since $V'_\C=\bigoplus_{p+q=n}(V')^{p,q}$ and similarly for $V''$, we deduce from the above exact sequences that $V_\C=\bigoplus_{p+q=n} V^{p,q}$ also, so that $V$ is a pure Hodge structure of weight $n$, as desired.
\end{proof}

\begin{proposition}\label{prop:construction_gives_MHS}
The weight and Hodge filtrations constructed in construction \ref{cons:MHS_from_reduction_of_structure} define a mixed Hodge structure on $P$ (i.e.\ an ind-mixed Hodge structure on $\O(P)$ compatible with the $\O(U)$-comodule algebra structure).
\begin{proof}
Compatibility with the $\O(U)$-comodule algebra structure morphisms is clear from the construction, since the isomorphisms $\O(P)\isoarrow\O(U)$ and $\O(P)_\C\isoarrow\O(U)_\C$ induced from our choices of $q$ and $r$ are isomorphisms of $\O(U)$-comodule algebras.

It remains to show that the weight and Hodge filtrations define an ind-mixed Hodge structure on $\O(P)$. To do this, note that by construction the induced weight and Hodge filtrations on $\gr^J_\bullet\O(P)$ agree with those on $\gr^J_\bullet\O(U)$ under the canonical isomorphism $\gr^J_\bullet\O(P)\isoarrow\gr^J_\bullet\O(U)$, and hence define an ind-mixed Hodge structure on $\gr^J_\bullet\O(P)$. We now show by induction on $n$ that each $J_n\O(P)$ is a mixed Hodge structure (starting from the trivial base case $n=-1$), using the exact sequences
\begin{equation}\label{eq:J-filtration_on_torsors}
0\rightarrow J_{n-1}\O(P)\rightarrow J_n\O(P)\rightarrow \gr^J_n\O(P)\rightarrow0.
\end{equation}
The right-hand term is a mixed Hodge structure, and our inductive hypothesis ensures that so too is the left-hand term. Now our choice of $q$ gives rise to an isomorphism $\O(P)_\C\isoarrow\O(U)_\C$ strictly compatible with the weight, Hodge and conilpotency filtrations, and hence the sequence \ref{eq:J-filtration_on_torsors} becomes isomorphic to the corresponding sequence
\begin{equation}\label{eq:J-filtration_on_groups}
0\rightarrow J_{n-1}\O(U)\rightarrow J_n\O(U)\rightarrow \gr^J_n\O(U)\rightarrow0
\end{equation}
after tensoring with $\C$, in a manner strictly compatible with the weight and Hodge filtrations. Since the sequence \ref{eq:J-filtration_on_groups} splits over $\C$ compatibly with both filtrations, so too does the sequence \ref{eq:J-filtration_on_torsors}, and hence $J_n\O(P)$ is a mixed Hodge structure by lemma \ref{lem:MHSs_in_exact_sequences}. This completes the construction of the mixed Hodge structure on $P$.
\end{proof}
\end{proposition}

\begin{proposition}\label{prop:filtrations_on_torsors}
Let $U/\R$ be a finitely generated pro-unipotent group with mixed Hodge structure and $P$ a torsor under $U$ (with mixed Hodge structure). Then the subsets $W_0U(\R)\subseteq U(\R)$, $W_0U(\C)\subseteq U(\C)$ and $F^0W_0U(\C)\subseteq U(\C)$ defined in definition \ref{defn:reduction_of_structure_on_torsors} are subgroups, and the subsets $W_0P(\R)$, $W_0P(\C)$ and $F^0W_0P(\C)$ are torsors under $W_0U(\R)$, $W_0U(\C)$ and $F^0W_0U(\C)$ respectively.

The same holds when $\R$ is replaced with $\Q$ throughout.
\begin{proof}
We will just deal with the case of $F^0W_0U(\C)$ and $F^0W_0P(\C)$, the other cases being simpler. To see that $F^0W_0U(\C)\subseteq U(\C)$ is a subgroup, we simply note that the group operations on $U(\C)=\O(U)_\C^{\dual,\gplike}$ are induced by the multiplication and antipode in the Hopf algebra $\O(U)_\C^\dual$, and that these operations preserve both the weight and Hodge filtrations, so that $F^0W_0U(\C)=\O(U)_\C^{\dual,\gplike}\cap F^0\cap W_0$ is a subgroup.

To show that $F^0W_0P(\C)$ is a torsor under $F^0W_0U(\C)$, we note firstly that the coalgebra isomorphism $\O(P)^\dual\hat\otimes\O(U)^\dual\isoarrow\O(P)^\dual\hat\otimes\O(P)^\dual$ induced by the isomorphism $P\times U\isoarrow P\times P$ given by $(q,u)\mapsto(q,qu)$ is a morphism of pro-mixed Hodge structures, and hence is strict for the Hodge and weight filtrations. Thus for $q\in F^0W_0P(\C)$ we have that $qu\in F^0W_0P(\C)$ if and only if $u\in F^0W_0U(\C)$, so that the action of $F^0W_0U(\C)$ on $P(\C)$ restricts to a transitive action on $F^0W_0P(\C)$.

It remains to show that $F^0W_0P(\C)$ is non-empty, i.e.\ that there is an algebra homomorphism $\O(P)_\C\rightarrow\C$ compatible with both the weight and Hodge filtrations. For $n\in\N_0$ let $U_n$ denote the maximal $n$-step unipotent quotient of $U$ and $P_n:=P\times^UU_n$ the pushout. It follows from lemma \ref{lem:J-filtration_is_descending_central_series} that $\O(P_n)_\C\leq\O(P)_\C$ is the subalgebra generated by $J_n\O(P)_\C$, and hence by proposition \ref{prop:J_n_is_sub-MHS} is a sub-ind-mixed Hodge structure. We claim that each inclusion $\O(P_{n-1})_\C\hookrightarrow\O(P_n)_\C$ admits a retraction which is an algebra homomorphism compatible with the weight and Hodge filtrations. This completes the proof of the proposition, since then the algebra inclusion $\C=\O(P_0)_\C\hookrightarrow\O(P)_\C=\bigcup_n\O(P_n)_\C$ also admits such a retraction, as desired.

To prove the claim, we choose by lemma \ref{lem:MHSs_in_exact_sequences}\footnote{In the analogous proof for $W_0P(\R)$ one uses instead the fact that an exact sequence of $\R$-mixed Hodge structures is split compatibly with the weight filtration, since all the morphisms are strict.} a retraction of the inclusion $\O(P_{n-1})_\C\cap J_n\O(P)_\C\hookrightarrow J_n\O(P)_\C$ which preserves both the weight and Hodge filtrations. The chosen retraction then induces a morphism\[\phi\colon\Sym^\bullet\left(J_n\O(P)_\C\right)\rightarrow\O(P_{n-1})_\C\]of algebras, compatible with the weight and Hodge filtrations. Now if $x\in J_i\O(P)$ and $y\in J_j\O(P)$, we have that $\phi([x][y])=\phi([x])\phi([y])=\phi([xy])$ -- if $i,j<n$, this is since by construction $\phi$ restricts to the inclusion $\O(P_{n-1})_\C\cap J_n\O(P)_\C\hookrightarrow\O(P_{n-1})_\C$, while if $i=0$ or $j=0$, this is since $J_0\O(P)_\C=\C$ and $\phi$ is $\C$-linear. We also trivially have that $\phi([1])=1=\phi(1)$.

It thus follows by lemma \ref{lem:J-filtration_is_descending_central_series} (and the fact that $\O(P)_\C\simeq\O(U)_\C$ as $J$-filtered algebras) that $\phi$ factors uniquely through an algebra homomorphism $\bar\phi\colon\O(P_n)_\C\rightarrow\O(P_{n-1})_\C$. Since the quotient map $\Sym^\bullet(J_n\O(P)_\C)\twoheadrightarrow\O(P_n)_\C$ is the complexification of a morphism of ind-mixed Hodge structures, it is strict for the weight and Hodge filtrations, and hence $\bar\phi$ also preserves these filtrations. Finally, we note by construction that $\bar\phi$ restricts to the canonical inclusion $J_{n-1}\O(P)_\C\hookrightarrow\O(P_{n-1})_\C$, and hence is a retraction of the inclusion $\O(P_{n-1})_\C\hookrightarrow\O(P_n)_\C$, as desired.
\end{proof}
\end{proposition}

\subsubsection{Cosimplicial groups and twists for mixed Hodge structures}

\begin{remark}\label{rmk:cosimplicial_MHS}
If $U$ is a finitely generated pro-unipotent group with mixed Hodge structure, then the description of $\H^1(\MHS_\R,U(\R))$ in corollary \ref{cor:explicit_H^1(MHS)} can be expressed in cosimplicial language, by considering the cosimplicial group $U_\MHS^\bullet(\R)$ cogenerated by the semi-cosimplicial group\[F^0W_0U(\C)\times W_0U(\R)\rightrightarrows W_0U(\C)\]where $d^0(u,w)=u$ and $d^1(u,w)=w$. In this language, we have a canonical identification of the cohomotopy as\[\pi^i(U_\MHS^\bullet(\R))=\begin{cases}F^0W_0U(\R)&\text{if $i=0$,}\\\H^1(\MHS_\R,U(\R))&\text{if $i=1$,}\\1&\text{if $i\geq2$ and $U$ abelian.}\end{cases}\]

Thus one is tempted to call an element $\alpha\in W_0U(\C)$ a \emph{cocycle} for $U$, and define the \emph{twist} ${}_\alpha U$ of $U$ by $\alpha$ to be the finitely generated pro-unipotent group with mixed Hodge structure whose underlying group scheme is $U$, whose weight filtration on $\O({}_\alpha U)=\O(U)$ is that of $U$, and whose Hodge filtration on $\O({}_\alpha U)_\C$ is given by $F^p\O({}_\alpha U)_\C=\Ad_\alpha^*F^p\O(U)_\C$, where $\Ad_\alpha\colon U_\C\isoarrow U_\C$ is (left\nobreakdash-)conjugation by $\alpha$. To check that this defines a mixed Hodge structure on ${}_\alpha U$, we note that $\Ad_\alpha^*$ is a Hopf algebra automorphism which acts as the identity on $\gr^J_\bullet(\O(U))$, so the induced weight and Hodge filtrations of $\gr^J_\bullet(\O({}_\alpha U))$ define a mixed Hodge structure on each graded piece. To show that the filtrations in fact define a mixed Hodge structure on each $J_n\O({}_\alpha U)$, one argues inductively as in construction \ref{cons:MHS_from_reduction_of_structure}, using the fact that $\Ad_\alpha^*$ sets up an isomorphism $\O({}_\alpha U)_\C\isoarrow\O(U)_\C$ compatible with the weight and Hodge filtrations simultaneously.

It follows from proposition  \ref{prop:pi^1_of_twists} that, as expected, there is a canonical bijection $\H^1(\MHS_\R,{}_\alpha U(\R))\isoarrow\H^1(\MHS_\R,U(\R))$ (induced from the map ${}_\alpha U(\C)\isoarrow U(\C)$ given by right-multiplication by $\alpha$).
\end{remark}

Using this cosimplicial description, we immediately deduce a result describing how the sets $\H^1(\MHS_\R,U(\R))$ behave in central extensions, generalising the long exact sequence on Ext-groups $\Ext^i(\R(0),V)$ for usual (abelian) mixed Hodge structures $V$. The non-abelian version does not, to the best of the author's knowledge, appear in the literature, though undoubtedly results of this kind are already known to experts.

\begin{corollary}\label{cor:exact_sequence_archimedean}
Let\[1\rightarrow Z\centarrow U\rightarrow Q\rightarrow1\]be a central extension of unipotent groups over $\R$ with mixed Hodge structure. Then there is a functorially assigned exact sequence
\begin{center}
\begin{tikzcd}[column sep=small]
1 \arrow[r] & F^0W_0Z(\R) \arrow[r,"\msf z"] & F^0W_0U(\R) \arrow[r] \arrow[d, phantom, ""{coordinate, name=Z}] & F^0W_0Q(\R) \arrow[dll,rounded corners,to path={ -- ([xshift=2ex]\tikztostart.east) |- (Z) [near end]\tikztonodes -| ([xshift=-2ex]\tikztotarget.west) -- (\tikztotarget)}] & \\
 & \H^1(\MHS_\R,Z(\R)) \arrow[r,"\acts"] & \H^1(\MHS_\R,U(\R)) \arrow[r] & \H^1(\MHS_\R,Q(\R)) \arrow[r] & 1
\end{tikzcd}
\end{center}
where the horizontal arrows are the induced maps.
\begin{proof}
Consider the sequence of cosimplicial groups\[1\rightarrow Z_\MHS^\bullet\centarrow U_\MHS^\bullet\rightarrow Q_\MHS^\bullet\rightarrow1,\]where $U_\MHS^\bullet$, $Q_\MHS^\bullet$ and $Z_\MHS^\bullet$ are the cosimplicial groups described in remark \ref{rmk:cosimplicial_MHS}. Identifying $W_0U(\R)$ (resp.\ $W_0U(\C)$, resp.\ $F^0W_0U(\C)$) with $W_0\Lie(U)$ (resp.\ $W_0\Lie(U)_\C$, resp.\ $F^0W_0\Lie(U)_\C$) via the logarithm map, we see that the above sequence is a degree-wise central extension. The long exact sequence on cohomotopy from theorem \ref{thm:long_exact_seqs} is then the desired long exact sequence.
\end{proof}
\end{corollary}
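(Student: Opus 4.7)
The plan is to deduce this long exact sequence by applying the cohomotopy machinery of Section~\ref{s:homotopical_algebra} to the cosimplicial group constructions from remark~\ref{rmk:cosimplicial_MHS}. Concretely, for each of $Z$, $U$, $Q$, form the cosimplicial group $Z^\bullet_{\MHS}(\R)$, $U^\bullet_{\MHS}(\R)$, $Q^\bullet_{\MHS}(\R)$ cogenerated by the semi-cosimplicial groups $F^0(-)(\C)\times W_0(-)(\R)\rightrightarrows (-)(\C)$. Remark~\ref{rmk:cosimplicial_MHS} identifies $\pi^0$ with $F^0W_0(-)(\R)$ and $\pi^1$ with $\H^1(\MHS_\R,(-)(\R))$, so it suffices to produce a degree-wise central short exact sequence of cosimplicial groups and then invoke the long exact sequence of theorem~\ref{thm:long_exact_seqs}(3), extended on the right by lemma~\ref{lem:codimensional_vanishing}.

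The first key point will be to verify that the sequence
\[1\to Z^\bullet_{\MHS}(\R)\to U^\bullet_{\MHS}(\R)\to Q^\bullet_{\MHS}(\R)\to 1\]
is a degree-wise central extension. Because $Z\centarrow U$ is central, it is enough to check exactness of the relevant pieces at the generating semi-cosimplicial level, i.e.\ that the sequences
\[1\to W_0Z(\R)\to W_0U(\R)\to W_0Q(\R)\to 1,\qquad 1\to F^0Z(\C)\to F^0U(\C)\to F^0Q(\C)\to 1\]
and the analogue without any filtration are all exact. The unfiltered case follows because the sequence of Lie algebras $0\to\Lie(Z)\to\Lie(U)\to\Lie(Q)\to 0$ is exact and the logarithm $\log\colon U\isoarrow\Lie(U)$ identifies $U(A)$ with $\Lie(U)\otimes A$ for any $\R$-algebra $A$. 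The filtered cases then reduce to exactness of $0\to W_0\Lie(Z)\to W_0\Lie(U)\to W_0\Lie(Q)\to 0$ and $0\to F^0\Lie(Z)_\C\to F^0\Lie(U)_\C\to F^0\Lie(Q)_\C\to 0$, both of which hold because the sequence of Lie algebras is a short exact sequence in the abelian category of ind-mixed Hodge structures, and because $W_\bullet$ and $F^\bullet$ are exact functors on (ind-)MHS (morphisms of MHS being strict for both filtrations).

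With the degree-wise central extension in hand, theorem~\ref{thm:long_exact_seqs}(3) immediately produces the seven-term exact sequence
\[1\to\pi^0(Z^\bullet_{\MHS}(\R))\centarrow\pi^0(U^\bullet_{\MHS}(\R))\to\pi^0(Q^\bullet_{\MHS}(\R))\to\pi^1(Z^\bullet_{\MHS}(\R))\actsarrow\pi^1(U^\bullet_{\MHS}(\R))\to\pi^1(Q^\bullet_{\MHS}(\R)),\]
which, after substituting the identifications of $\pi^0$ and $\pi^1$, is precisely the required sequence up to the final arrow. To append the terminal $1$, I will apply lemma~\ref{lem:codimensional_vanishing}: since $Z^\bullet_{\MHS}(\R)$ is by construction cogenerated by a semi-cosimplicial group supported in degrees $\leq 1$, it is cogenerated in degree $\leq 1$ in the sense required, so the lemma applies and forces $\pi^1(U^\bullet_{\MHS}(\R))\to\pi^1(Q^\bullet_{\MHS}(\R))$ to be surjective. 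Functoriality of the entire construction in the central extension is automatic from the functoriality of $\Kan^\bullet$ and of theorem~\ref{thm:long_exact_seqs}.

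The main technical check, as anticipated, will be the exactness on $W_0$ and $F^0$; once one adopts the viewpoint of viewing $U(\C)$ as $\Lie(U)_\C$ through the logarithm, however, this reduces to the formal fact that short exact sequences of (ind-)MHS are strict for the weight and Hodge filtrations, so no serious obstacle arises. The only other subtle point is ensuring that the action of $\pi^1(Z^\bullet_{\MHS}(\R))=\H^1(\MHS_\R,Z(\R))$ on $\pi^1(U^\bullet_{\MHS}(\R))=\H^1(\MHS_\R,U(\R))$ coming out of theorem~\ref{thm:long_exact_seqs} matches the expected pushout action of torsors under $Z$, but this is built into the general cosimplicial formalism and requires no extra argument.
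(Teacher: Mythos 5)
Your proposal is correct and follows essentially the same route as the paper: form the cosimplicial groups of remark \ref{rmk:cosimplicial_MHS}, check levelwise central exactness by passing to Lie algebras via the logarithm and using strictness of MHS morphisms for $W_\bullet$ and $F^\bullet$, then apply the cohomotopy long exact sequence of theorem \ref{thm:long_exact_seqs}. The only (harmless) deviation is that you obtain the terminal surjectivity from lemma \ref{lem:codimensional_vanishing}, whereas the paper gets it from the vanishing $\pi^2(Z^\bullet_\MHS(\R))=1$ for the abelian (central) $Z$ recorded in remark \ref{rmk:cosimplicial_MHS}; both are valid.
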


\subsubsection{Manifold structures on moduli of torsors}

Armed with the explicit description in corollary \ref{cor:explicit_H^1(MHS)}, we now come to the main point of this section: that the set $\H^1(\MHS_\R,U(\R))$ for a unipotent group $U/\R$ with mixed Hodge structure carries a canonical topology, namely the quotient topology from its description as a double-coset space. Most crucially, under mild conditions on $U$, it even has canonically the structure of a $C^\infty$ real manifold.

\begin{definition}\label{def:negative_weights}
Let $U/\R$ (resp.\ $U/\Q$) be a pro-unipotent group with mixed Hodge structure. We say that $U$ \emph{has only negative weights} just when:
\begin{itemize}
	\item $W_0\O(U)=\R$ (resp.\ $=\Q$); or equivalently
	\item $W_{-1}\Lie(U)=\Lie(U)$.
\end{itemize}
\end{definition}

\begin{remark}\label{rmk:only_negative_weights}
The condition that $U$ has only negative weights is automatically satisfied for the $\R$-pro-unipotent Betti fundamental group of any smooth connected $\C$-variety (for instance by the description in \cite[Section 5]{hain_MHS}). In particular, any finite-dimensional mixed-Hodge-theoretic quotient of such a $U$ also only has negative weights.
\end{remark}

It is this condition of negativity of weights which provides the $C^\infty$ real manifold structure on $\H^1(\MHS_\R,U(\R))$, as follows.

\begin{proposition}\label{prop:H^1(MHS)_is_manifold}
Let $U/\R$ be a unipotent group with mixed Hodge structure with only negative weights. Then the $C^\infty$ Lie group action of $F^0U(\C)\times U(\R)$ on $U(\C)$ by $x\cdot(u,w)=w^{-1}xu$ is free and proper, so that by the quotient manifold theorem \cite[Theorem 21.10]{lee} $\H^1(\MHS_\R,U(\R))\cong U(\R)\backslash U(\C)/F^0U(\C)$ is canonically a $C^\infty$ real manifold.
\begin{proof}
In the case that $U$ is abelian, the condition on the weights implies that $U(\R)$ and $F^0U(\C)$ meet trivially in $U(\C)$. The given action is the translation by the subgroup $F^0U(\C)\oplus U(\R)$, which is necessarily free and proper.

In general we may proceed inductively, writing $U$ as a central extension\[1\rightarrow Z\centarrow U\rightarrow Q\rightarrow1\]of unipotent groups with mixed Hodge structures, where we know the result for $Q$ and for $Z$. The action is free, since if some $\mbf u\in F^0U(\C)\times U(\R)$ had a fixed point in $U(\C)$, its image in $F^0Q(\C)\times Q(\R)$ would have a fixed point, and hence $\mbf u$ would lie in $F^0Z(\C)\times Z(\R)=F^0Z(\C)\oplus Z(\R)$. But $U(\C)\twoheadrightarrow Q(\C)$ is a $Z(\C)$-torsor by translation, so that the only way $\mbf u$ could have a non-trivial fixed point is if it were the identity already. This proves freeness of the action.

To prove properness, it is equivalent by \cite[Proposition 21.5]{lee} to prove that whenever we are given sequences $(x_i)$ in $U(\C)$ and $(\mbf u_i)$ in $F^0U(\C)\times U(\R)$ such that $(x_i)$ and $(x_i\cdot\mbf u_i)$ converge, there is a convergent subsequence of $(\mbf u_i)$. Given such sequences, our inductive hypothesis shows that the image of $(\mbf u_i)$ in $F^0Q(\C)\times Q(\R)$ has a convergent subsequence, so we may suppose without loss of generality that this image sequence converges. Since the map $F^0U(\C)\times U(\R)\twoheadrightarrow F^0Q(\C)\times Q(\R)$, being the map on $\R$-points of a morphism of affine spaces over $\R$, is continuously split, we may thus write $\mbf u_i=\mbf u_i'\mbf z_i$, where $(\mbf u_i')$ is a convergent sequence in $F^0U(\C)\times U(\R)$ and $\mbf z_i\in F^0Z(\C)\times Z(\R)$.

Now the sequences $(x_i')=(x_i\cdot\mbf u_i')$ and $(x_i'\cdot\mbf z_i)=(x_i\cdot\mbf u_i)$ both converge. But the action of $F^0Z(\C)\times Z(\R)$ on $U(\C)$ can be identified with the translation action of the abelian Lie subgroup $F^0Z(\C)\times Z(\R)$ on $U(\C)$ (up to an automorphism of $F^0Z(\C)\times Z(\R)$), and hence is proper, so that $\mbf z_i$ has a convergent subsequence. Hence $(\mbf u_i)=(\mbf u_i'\mbf z_i)$ has a convergent subsequence, as desired.
\end{proof}
\end{proposition}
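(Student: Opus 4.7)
My plan is to proceed by induction on the nilpotency class of $U$, reducing the assertion in each stage to checking that the given action is both free and proper (after which the quotient manifold theorem does the rest of the work). The base case will be the abelian setting, where the negative-weight hypothesis gives the key input; the inductive step will exploit a central extension $1 \to Z \to U \to Q \to 1$ of unipotent groups with mixed Hodge structure, both factors having strictly negative weights so the inductive hypothesis applies.

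In the abelian case, the action is just translation by the image of the homomorphism $F^0 U(\C) \times U(\R) \to U(\C)$, $(u,w) \mapsto w^{-1}u$. The condition $W_{-1}U = U$ forces $F^0 U(\C) \cap U(\R) = 0$ (since the intersection is a subspace concentrated simultaneously in nonnegative Hodge degree and in real weights $\leq 0$, which for a pure negative-weight Hodge structure is zero). Thus the map $F^0U(\C) \oplus U(\R) \to U(\C)$ is injective, and translation by a closed Lie subgroup of a finite-dimensional real vector group is automatically free and proper.

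For the inductive step, freeness is almost immediate: a nontrivial $\mathbf{u}$ fixing some $x \in U(\C)$ would descend to a fixing element in $F^0Q(\C) \times Q(\R)$ acting on $Q(\C)$, which by induction forces $\mathbf{u}$ into $F^0Z(\C) \times Z(\R)$, and then the abelian case (applied to the fibre $x \cdot Z(\C)$, a torsor under $Z(\C)$) forces $\mathbf{u} = 1$. For properness I plan to use the sequential criterion \cite[Proposition 21.5]{lee}: given $(x_i)$ in $U(\C)$ and $\mathbf{u}_i \in F^0U(\C) \times U(\R)$ with both $(x_i)$ and $(x_i \cdot \mathbf{u}_i)$ convergent, I must extract a convergent subsequence from $(\mathbf{u}_i)$. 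Pushing down to $Q$ and applying the inductive properness, the image sequence has a convergent subsequence; after passing to this subsequence I need to \emph{lift} the convergent image back to $F^0U(\C) \times U(\R)$ in a continuous way so as to factor $\mathbf{u}_i = \mathbf{u}_i' \mathbf{z}_i$ with $(\mathbf{u}_i')$ convergent and $\mathbf{z}_i \in F^0Z(\C) \times Z(\R)$.

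The technical obstacle, and the only real content beyond formal manipulation, is the existence of this continuous splitting of $F^0U(\C) \times U(\R) \twoheadrightarrow F^0Q(\C) \times Q(\R)$. This is true because both sides are $\R$-points of pro-affine spaces (under the logarithm identifications $U \cong \Lie(U)$, $F^0U_\C \cong F^0 \Lie(U_\C)$, and similarly for $Q$), and the surjection arises from a surjection of finite-dimensional $\R$-vector spaces after picking any vector-space-theoretic splittings of $W_0\Lie(U) \twoheadrightarrow W_0\Lie(Q)$ and of $F^0\Lie(U_\C) \twoheadrightarrow F^0\Lie(Q_\C)$. Once the splitting is in hand, we write $x_i' = x_i \cdot \mathbf{u}_i'$, observe that $(x_i')$ and $(x_i' \cdot \mathbf{z}_i)$ both converge, and conclude by the abelian case applied to the residual action of $F^0Z(\C) \times Z(\R) \cong F^0Z(\C) \oplus Z(\R)$ on $U(\C)$ (which factors through translations along a single torsor fibre, and hence is proper). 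This yields a convergent subsequence of $\mathbf{z}_i$, and hence of $\mathbf{u}_i$, completing the proof.
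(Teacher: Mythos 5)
Your proposal is correct and follows essentially the same route as the paper's own proof: induction on a central extension $1\to Z\to U\to Q\to 1$, the abelian base case via $F^0U(\C)\cap U(\R)=0$ from negativity of weights, freeness by descending to $Q$ and reducing to the central $Z$-part, and properness via the sequential criterion together with a continuous splitting of $F^0U(\C)\times U(\R)\twoheadrightarrow F^0Q(\C)\times Q(\R)$ and properness of the residual translation action of $F^0Z(\C)\times Z(\R)$ on $U(\C)$. Your extra justification of the splitting via the logarithm and linear sections (implicitly using strictness of morphisms of mixed Hodge structures for surjectivity of the $F^0$-part) is a harmless elaboration of what the paper leaves tacit.
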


Before we conclude this section, let us briefly analyse some of the functoriality properties of $\H^1(\MHS_\R,-)$, for use in our later proof of theorem \ref{thm:main_theorem_archimedean}.

\begin{lemma}\label{lem:maps_on_H^1_smooth}
If $U\rightarrow W$ is a morphism of unipotent groups over $\R$ with mixed Hodge structure, both with only negative weights, then the induced map\[\H^1(\MHS_\R,U(\R))\rightarrow\H^1(\MHS_\R,W(\R))\]is a $C^\infty$ morphism of $C^\infty$ manifolds. If $U\rightarrow W$ is injective (resp.\ surjective), then it is an immersion (resp.\ a submersion)\footnote{In fact, the map is even an injective immersion (resp.\ surjective submersion)}.
\begin{proof}
Since $U(\C)\rightarrow\H^1(\MHS_\R,U(\R))$ is a locally trivial torsor for $F^0U(\C)\times U(\R)$ (for example from the proof of \cite[Theorem 21.10]{lee}), the first point follows from the fact that $U(\C)\rightarrow W(\C)$ is $C^\infty$ (even algebraic). Now suppose that $f\colon U\twoheadrightarrow W$ is surjective. If $\alpha\in U(\C)$ is any point, then we have a commuting square
\begin{center}
\begin{tikzcd}
\H^1(\MHS_\R,{}_\alpha U(\R)) \arrow[r]\arrow[d,"\wr"] & \H^1(\MHS_\R,{}_{f(\alpha)}W(\R)) \arrow[d,"\wr"] \\
\H^1(\MHS_\R,U(\R)) \arrow[r] & \H^1(\MHS_\R,W(\R))
\end{tikzcd}
\end{center}
of $C^\infty$ manifolds, where ${}_\alpha U$ and ${}_{f(\alpha)}W$ are the twists described in remark \ref{rmk:cosimplicial_MHS}. Since the leftmost vertical map takes the distinguished point to $[\alpha]$, to prove that $\H^1(\MHS_\R,U(\R))\rightarrow\H^1(\MHS_\R,W(\R))$ is a submersion it suffices to prove (for general $U$, $W$) that it is surjective on tangent spaces at the distinguished point. But the map on tangent spaces is\[\Lie(U)_\C/(F^0+\Lie(U)_\R)\rightarrow\Lie(W)_\C/(F^0+\Lie(W)_\R),\]which is clearly surjective.

If instead $f$ were injective, by the same argument it would suffice to prove that the above map is injective. But we can identify the sides as the Yoneda Ext-groups $\Ext^1_{\MHS_\R}(\R,\Lie(U))$ (resp.\ $\Ext^1_{\MHS_\R}(\R,\Lie(W))$), so that the kernel of the map is a surjective image of $\Hom_{\MHS_\R}(\R,\Lie(W)/\Lie(U))=F^0W_0\left(\Lie(W)/\Lie(U)\right)_\R$. But since $\Lie(W)/\Lie(U)$ has only negative weights, this vector space vanishes, and hence the desired map was indeed injective.
\end{proof}
\end{lemma}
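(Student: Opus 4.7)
My approach would be to exploit the explicit description of the moduli sets as double-coset spaces from corollary \ref{cor:explicit_H^1(MHS)}. Since $U$ has only negative weights, $W_0 U = U$, so $\H^1(\MHS_\R, U(\R)) \cong U(\R) \backslash U(\C) / F^0 U(\C)$, and similarly for $W$. By proposition \ref{prop:H^1(MHS)_is_manifold} and the quotient manifold theorem, the quotient map $\pi_U \colon U(\C) \twoheadrightarrow \H^1(\MHS_\R, U(\R))$ is a surjective submersion which is locally a trivial principal bundle for the free and proper action of $U(\R) \times F^0 U(\C)$, and similarly for $\pi_W$.

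First I would verify smoothness of the induced map. The morphism $f \colon U \to W$ defines an algebraic, hence $C^\infty$, map $U(\C) \to W(\C)$ which is equivariant for the homomorphism $U(\R) \times F^0 U(\C) \to W(\R) \times F^0 W(\C)$. Choosing local smooth sections of $\pi_U$, one sees that the descended map on quotients is $C^\infty$.

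For the immersion and submersion statements, the main reduction is to check the property at the distinguished point only. Given any $[\alpha] \in \H^1(\MHS_\R, U(\R))$ represented by some $\alpha \in U(\C)$, the twisting construction of remark \ref{rmk:cosimplicial_MHS} yields twists ${}_\alpha U$ and ${}_{f(\alpha)} W$ (both still having only negative weights, since twisting does not alter the weight filtration) together with a commuting square of $C^\infty$ manifolds whose vertical maps are the diffeomorphisms induced by right-multiplication by $\alpha$ (respectively $f(\alpha)$) and which carry the distinguished points to $[\alpha]$ and $[f(\alpha)]$. Hence it suffices to verify the relevant property at the distinguished point for every such pair $(U, W)$. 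At this point, local triviality of $\pi_U$ identifies the tangent space with the cokernel $\Lie(U)_\C / (F^0 \Lie(U)_\C + \Lie(U)_\R)$, which in turn is canonically the Yoneda Ext group $\Ext^1_{\MHS_\R}(\R, \Lie(U))$.

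Surjectivity of the map on tangent spaces when $f$ is surjective is then immediate from right-exactness of $\Ext^1_{\MHS_\R}(\R, -)$. For $f$ injective, the kernel of the induced map on $\Ext^1$ is a quotient of $\Hom_{\MHS_\R}(\R, \Lie(W)/\Lie(U)) = F^0 W_0(\Lie(W)/\Lie(U))_\R$; since $\Lie(W)$, and hence its quotient $\Lie(W)/\Lie(U)$, has only negative weights, this Hom group vanishes, giving the desired injectivity. The only substantive step is really the twisting reduction combined with the weight-filtration observation on the quotient; I do not expect a serious obstacle.
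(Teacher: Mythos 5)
Your proposal is correct and follows essentially the same route as the paper: smoothness via local triviality of the quotient $U(\C)\rightarrow\H^1(\MHS_\R,U(\R))$, reduction to the distinguished point by twisting (noting the twist keeps negative weights), identification of the tangent map with $\Lie(U)_\C/(F^0+\Lie(U)_\R)\rightarrow\Lie(W)_\C/(F^0+\Lie(W)_\R)\cong\Ext^1_{\MHS_\R}(\R,-)$, and the vanishing of $\Hom_{\MHS_\R}(\R,\Lie(W)/\Lie(U))$ by negativity of weights for injectivity. The only cosmetic difference is that you justify surjectivity via right-exactness of $\Ext^1_{\MHS_\R}(\R,-)$ where the paper simply observes the quotient map of cokernels is obviously surjective; both are fine.
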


\begin{remark}
When discussing the topology on $\H^1(\MHS_\R,U(\R))$, we have focused on unipotent groups rather than finitely generated pro-unipotent ones, even though $\R$-pro-unipotent fundamental groups will in general be infinite-dimensional (though still finitely generated). The principal reason for this is that one can reduce the study of $\H^1(\MHS_\R,U(\R))$ for $U$ finitely generated pro-unipotent to the finite-dimensional case: if we write such a $U=\liminv U_n$ as an inverse limit of finite-dimensional quotients with compatible mixed Hodge structures, then the natural map\[\H^1(\MHS_\R,U(\R))\rightarrow\liminv\H^1(\MHS_\R,U_n(\R))\]is bijective (the same proof as lemma \ref{lem:inverse_limits_of_H^1_g} works, using the cosimplicial description in remark \ref{rmk:cosimplicial_MHS}). Hence if $U$ has only negative weights, $\H^1(\MHS_\R,U(\R))$ is canonically a pro-object in the category of $C^\infty$ manifolds (this structure is independent of the choice of $U_n$), whose underlying topology can be checked to agree with the quotient topology on $\H^1(\MHS_\R,U(\R))=U(\R)\backslash U(\C)/F^0U(\C)$ induced from the pro-analytic topology on $U(\C)$.
\end{remark}

\subsection{Higher Albanese manifolds and the $\R$-pro-unipotent Kummer map}
\label{s:Kummer_map_archimedean}

In the previous section, we examined $\Q$- and $\R$-unipotent groups with mixed Hodge structure, but one can equally well study an integral version of this theory, by studying finitely generated (discrete) groups $\Pi$ with a ind-mixed Hodge structure on the decompleted dual $\Z\Pi^{\wedge\reddual}:=\limdir\Hom\left(\Z\Pi/J^{n+1},\Z\right)$ compatible with the Hopf algebra operations, or equivalently a $\Q$-mixed Hodge structure on the $\Q$-Mal\u cev completion $\Pi_\Q:=\Spec(\Q\Pi^{\wedge\reddual})$ -- we call such a structure a \emph{mixed Hodge structure} on $\Pi$. Here, as usual, $J$ denotes the augmentation ideal of $\Z\Pi$.

If $P$ is a torsor under such a group, we define a \emph{mixed Hodge structure} on $P$ to be an ind-mixed Hodge structure on $\Z P^{\wedge\reddual}:=\limdir\Hom\left(\Z P/J^{n+1},\Z\right)$ compatible with the algebra operations and $\Z\Pi^{\wedge\reddual}$-comodule structure, where $J^{n+1}$ denotes the filtration of $\Z P$ arising from the conilpotency filtration on $\Z\Pi$ under any trivialisation $\Pi\isoarrow P$ of $\Pi$-torsors. We will refer to a $P$ endowed with such a structure as a \emph{torsor under $\Pi$}, where the mixed Hodge structure is taken as implied.

With these definitions, one can develop an integral version of the theory of the previous section, and again we obtain an explicit description of the classifying space for torsors.

\begin{proposition}\label{prop:explicit_H^1_integral}
Let $\Pi$ be a finitely generated nilpotent group with mixed Hodge structure. Then there is a canonical identification of the set $\H^1(\MHS_\Z,\Pi)$ of isomorphism classes of $\Pi$-torsors with the quotient of $\Pi(\Q)\times W_0\Pi(\C)$ by the action of $F^0W_0\Pi(\C)\times W_0\Pi(\Q)\times\Pi$ given by $(x,y)\cdot(u,v,w)=(\bar w^{-1}xv,v^{-1}yu)$, where $\bar w$ denotes the image of $w$ under the map $\Pi\rightarrow\Pi(\Q)$. Here $\Pi(\Q)$ (resp.\ $\Pi(\C)$) denotes the $\Q$-points (resp.\ $\C$-points) of the Mal\u cev completion of $\Pi$.

In particular, if $\Pi$ has only negative weights, then we have a canonical bijection\[\H^1(\MHS_\Z,\Pi)\cong\Pi\backslash\Pi(\C)/F^0\Pi(\C).\]
\begin{proof}
Let us first describe the classifying map\[\H^1(\MHS_\Z,\Pi)\rightarrow\left(\Pi(\Q)\times W_0\Pi(\C)\right)/\left(F^0W_0\Pi(\C)\times W_0\Pi(\Q)\times\Pi\right).\] Given a $\Pi$-torsor $P$ with mixed Hodge structure, we may choose by proposition \ref{prop:MHS_on_torsors} elements $q\in P$, $x\in\Pi_\Q(\Q)$ and $y\in W_0\Pi_\Q(\C)$ such that
\[
W_0P_\Q(\Q)=\bar qxW_0\Pi_\Q(\Q)
\text{ and }
F^0W_0P_\Q(\C)=\bar qxyF^0W_0\Pi_\Q(\C),
\]
where $\bar q\in P_\Q(\Q)$ denotes the image of $q$ under the canonical map $P\rightarrow P_\Q(\Q)$. We declare the image $[P]$ of $P$ under the classifying map to be the orbit of the pair $(x,y)$ under the specified action -- this is easily seen to be independent of choices.

It remains to check that the classifying map is bijective. It is certainly surjective, since for any $(x,y)\in\Pi(\Q)\times W_0\Pi(\C)$, the above construction with $P=\Pi$ (as a $\Pi$-torsor) and $q=1$ specifies a mixed Hodge structure on $P_\Q$ (and hence on $P$) by proposition \ref{prop:MHS_on_torsors}. For injectivity, suppose that we are given two $\Pi$-torsors $P$ and $P'$ with mixed Hodge structure, and choose elements $q,x,y$ and $q',x',y'$ as above. If $[P]=[P']$, then $(x',y')=(\bar w^{-1}xv,v^{-1}yu)$ for some $u\in F^0W_0\Pi(\C)$, $v\in W_0\Pi(\Q)$ and $w\in\Pi$, and we let $\phi\colon P\isoarrow P'$ be the unique isomorphism of $\Pi$-torsors taking $q$ to $q'w^{-1}$. It follows that
\[\phi\left(W_0P_\Q(\Q)\right)=\phi(\bar qx)W_0\Pi_\Q(\Q)=\bar q'x'v^{-1}W_0\Pi_\Q(\Q)=W_0P'_\Q(\Q)
\]
and similarly for $F^0W_0P_\Q(\C)$, so that $\phi$ is an isomorphism of $\Pi$-torsors with mixed Hodge structures. This proves injectivity, as desired.
\end{proof}
\end{proposition}

\begin{remark}
Proposition \ref{prop:explicit_H^1_integral} is a non-abelian generalisation of the explicit description of Ext-groups of torsion-free $\Z$-mixed Hodge structures, in particular that\[\Ext^1(\Z(0),V)\cong\frac{V_\Q\oplus W_0V_\C}{F^0W_0V_\C+W_0V_\Q+V_\Z}\]for any torsion-free $\Z$-mixed Hodge structure $V$, where $W_0V_\Q$ is embedded in $V_\Q\oplus W_0V_\C$ diagonally.\footnote{This description disagrees with the description in \cite[Theorem 3.31]{peters-steenbrink}, which erroneously describes the Ext-groups as $W_0V_\C/(F^0W_0V_\C+V)$. This latter group in fact classifies extensions of $\Z(0)$ by $V$ which admit integral splittings compatible with the weight filtration (which isn't automatic in general).}
\end{remark}

\begin{corollary}\label{cor:realification_of_H^1}
Let $\Pi$ be a finitely generated nilpotent group with mixed Hodge structure with only negative weights. Then\[\H^1(\MHS_\Z,\Pi)\cong \Pi\backslash\Pi(\C)/F^0\Pi(\C)\]is canonically a complex manifold, and the natural forgetful map $\H^1(\MHS_\Z,\Pi)\rightarrow\H^1(\MHS_\R,\Pi(\R))$ is a $C^\infty$ map of $C^\infty$ real manifolds.

Explicitly, the isomorphism $\H^1(\MHS_\Z,\Pi)\isoarrow\Pi\backslash\Pi(\C)/F^0\Pi(\C)$ sends the class of a torsor $P$ to $q^{-1}r\in\Pi(\C)$, where $q\in P$ and $r\in F^0\Pi(\C)$ are any choices of elements.
\begin{proof}
Let $\Pi^\tf=\Pi/\Pi_\tors$ denote the image of $\Pi$ under the natural map $\Pi\rightarrow\Pi(\Q)$. It is a discrete subgroup of the Lie group $\Pi(\R)$, so that the action of $\Pi^\tf\times F^0\Pi(\C)$ on $\Pi(\C)$ is free and proper by proposition \ref{prop:H^1(MHS)_is_manifold}. Hence\[\Pi\backslash\Pi(\C)/F^0\Pi(\C)=\Pi^\tf\backslash\Pi(\C)/F^0\Pi(\C)\]is canonically a complex manifold. The fact that the forgetful map is $C^\infty$ is clear, and the explicit description of the map follows from the proof of proposition \ref{prop:explicit_H^1_integral}.
\end{proof}
\end{corollary}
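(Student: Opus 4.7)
The plan is to combine the explicit double-coset description from the preceding proposition with the manifold-structure machinery of proposition \ref{prop:H^1(MHS)_is_manifold}. First, I would derive the claimed identification $\H^1(\MHS_\Z,\Pi)\cong\Pi\backslash\Pi(\C)/F^0\Pi(\C)$ by noting that under the hypothesis $W_{-1}\Pi_\Q=\Pi_\Q$ one has $W_0\Pi(\Q)=\Pi(\Q)$, so the $W_0\Pi(\Q)$-action on the $\Pi(\Q)$-factor of $\Pi(\Q)\times\Pi(\C)$ is already free and transitive; quotienting out amounts to normalising $x=1$, leaving only the residual action of $\Pi\times F^0\Pi(\C)$ on $\Pi(\C)$ by $(v,u)\colon y\mapsto v^{-1}yu$.

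Next, I would equip this double quotient with a complex-manifold structure. The action of $\Pi$ on $\Pi(\C)$ factors through its image $\Pi(\Z)\subseteq\Pi(\Q)$, which, being finitely generated torsion-free nilpotent, embeds as a discrete subgroup of the simply connected real Mal'cev completion $\Pi(\R)$ by Mal'cev's classical theorem. Proposition \ref{prop:H^1(MHS)_is_manifold} (applicable by the negative-weight hypothesis) then provides a free and proper $C^\infty$ action of $\Pi(\R)\times F^0\Pi(\C)$ on $\Pi(\C)$; restricting to the closed subgroup $\Pi(\Z)\times F^0\Pi(\C)$ preserves freeness and properness, so the quotient manifold theorem gives a canonical smooth structure. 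To upgrade to a complex manifold, I would first take the quotient by the closed complex Lie subgroup $F^0\Pi(\C)$ acting on the right (holomorphic, free, proper), producing the complex manifold $\Pi(\C)/F^0\Pi(\C)$; the residual left action of $\Pi(\Z)$ is by left translation, hence by biholomorphisms, and remains free and proper, so the double quotient inherits a canonical complex structure.

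For the fibration statement, the forgetful map is realised in the double-coset picture as the surjection $\Pi(\Z)\backslash\Pi(\C)/F^0\Pi(\C)\to\Pi(\R)\backslash\Pi(\C)/F^0\Pi(\C)$ induced by $\Pi(\Z)\hookrightarrow\Pi(\R)$. Since the $\Pi(\R)$-action on $\Pi(\C)/F^0\Pi(\C)$ is free and proper (hence a principal $\Pi(\R)$-bundle by the quotient manifold theorem), every fibre of the forgetful map is diffeomorphic to the compact nilmanifold $\Pi(\Z)\backslash\Pi(\R)$, and local triviality descends from the local trivialisations of this principal bundle, exhibiting the forgetful map as a $C^\infty$ fibration.

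The main obstacle I anticipate is justifying that $\Pi(\Z)$ really is discrete in $\Pi(\R)$: this requires invoking Mal'cev's theorem on lattices in simply-connected nilpotent Lie groups (after passing from $\Pi$ to $\Pi/\mathrm{torsion}$), and checking that the effective group acting on $\Pi(\C)$ coincides with this discrete image. Once this point is in hand, the rest of the argument is a formal consequence of proposition \ref{prop:H^1(MHS)_is_manifold} together with standard quotient-manifold arguments; a minor secondary check is that the complex structure is independent of the order in which one quotients, but this is automatic since the two actions commute.
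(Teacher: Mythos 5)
Your proposal is correct and follows essentially the same route as the paper: identify $\H^1(\MHS_\Z,\Pi)$ with $\Pi(\Z)\backslash\Pi(\C)/F^0\Pi(\C)$, note that $\Pi(\Z)$ is discrete in $\Pi(\R)$ so the restricted action of $\Pi(\Z)\times F^0\Pi(\C)$ inherits freeness and properness from proposition \ref{prop:H^1(MHS)_is_manifold}, apply the quotient manifold theorem (quotienting by $F^0\Pi(\C)$ first for the complex structure), and read off the fibration from the double-coset descriptions. The details you supply---Mal\u cev's theorem for discreteness of $\Pi(\Z)$ and the principal-$\Pi(\R)$-bundle argument exhibiting the fibres as nilmanifolds $\Pi(\Z)\backslash\Pi(\R)$---are precisely the points the paper's proof asserts or leaves as an easy consequence.
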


In fact, these classifying manifolds for torsors with integral mixed Hodge structures have already been studied in great detail by Hain and Zucker in \cite{hain-zucker} (see also \cite{hain}). When $\Pi_n$ is the $n$-step nilpotent quotient of the fundamental group of a smooth connected variety $Y/\C$ at a basepoint $y\in Y(\C)$ (which has only negative weights), Hain and Zucker define a series of \emph{higher Albanese manifolds}\[\Alb^n(Y):=\Pi_n\backslash\Pi_n(\C)/F^0\Pi_n(\C)\]and \emph{higher Albanese maps} $Y(\C)\rightarrow\Alb^n(Y)$, generalising the usual Albanese variety and map of $Y$. Let us explicitly make the link between our description and theirs.

\begin{proposition}\label{prop:higher_Albanese_is_Kummer}
Let $Y/\C$ be a smooth connected variety, and let $\Pi_n$ be the $n$-step nilpotent quotient of the fundamental group of $Y$ based at a point $y\in Y(\C)$, so that we have a canonical identification $\H^1(\MHS_\Z,\Pi_n)\cong\Pi_n\backslash\Pi_n(\C)/F^0\Pi_n(\C)=\Alb^n(Y)$. Then, under this identification, the non-abelian Kummer map\[Y(\C)\rightarrow\H^1(\MHS_\Z,\Pi_n)\](that is, the map assigning to each point $z\in Y(\C)$ the torsor of paths\footnote{I.e.\ set of homotopy classes of paths, viewed as a torsor under the fundamental group.} from $y$ to $z$ in $Y(\C)$, pushed out along $\pi_1^\topp(Y(\C);y)\twoheadrightarrow\Pi_n$ and endowed with the mixed Hodge structure given in \cite[Section 3]{hain-zucker}) is identified with the higher Albanese map $Y(\C)\rightarrow\Alb^n(Y)$.

In particular, the non-abelian Kummer map is complex analytic, and the $\R$-unipotent non-abelian Kummer map\[Y(\C)\rightarrow\H^1(\MHS_\R,\Pi_n(\R))\](the composite of the above map with the map $\H^1(\MHS_\Z,\Pi_n)\twoheadrightarrow\H^1(\MHS_\R,\Pi_n(\R))$) is $C^\infty$.
\begin{proof}
Let us recall first Hain and Zucker's construction of the higher Albanese map. We let $\mathbb V_n$ denote the $n$th canonical variation on $Y$ with basepoint $y$. It is a variation of mixed Hodge structure, whose fibre at $z$ is canonically identified with $\Z\pi_1(Y;y,z)/J^{n+1}$ with its mixed Hodge structure. The corresponding holomorphic vector bundle $V_n=\O_Y\otimes_\Z\mathbb V_n$ on $Y(\C)$ then carries a flat holomorphic connection $\nabla_\C$ and Hodge filtration by holomorphic subbundles. Hain and Zucker prove in \cite[Proposition 5.24]{hain-zucker} that there is a $C^\infty$ frame for $V_n$, compatible with the weight and Hodge filtrations, such that for any path $\gamma$ from $y$ to any $z$, the parallel transport of the frame for $z^*V_n$ to $y^*V_n=\C\Pi_n/J^{n+1}$ along $\gamma^{-1}$ is given by left-multiplying\footnote{In \cite[Proposition 5.24]{hain-zucker}, the action is on the right, since their conventions for path-composition are opposite to ours.} the frame for $y^*V_n$ by some $\tau\in\Pi_n(\C)$. The image of $z$ under the higher Albanese map is then declared to be the double-coset of $\tau$.

Let now $P_n$ denote the torsor of paths from $y$ to $z$, pushed out to $\Pi_n$, so that there is a canonical identification $z^*V_n=\C P_n/J^{n+1}$. It is easy to see that the parallel transport $\C P_n/J^{n+1}\isoarrow\C\Pi_n/J^{n+1}$ along $\gamma^{-1}$ is just given by left-multiplication by $[\gamma]^{-1}$. It follows from the preceding paragraph that the identification $\C\Pi_n/J^{n+1}\isoarrow\C P_n/J^{n+1}$ arising from the $C^\infty$ frame for $V_n$ is given by left-multiplication by some $r\in P_n(\C)$ -- the $\C$-points of the Mal\u cev completion of $P_n$. Then $\tau=[\gamma]^{-1}r$, with $[\gamma]\in P_n(\Z)$ and $r\in F^0P_n(\C)$ (since the $C^\infty$ frame for $V_n$ respects the Hodge filtration), and hence by the description in corollary \ref{cor:realification_of_H^1} the class of the torsor $P_n$ is also represented by $\tau$.

That the non-abelian Kummer map is complex analytic follows from the corresponding assertion for the higher Albanese map \cite[Section 5]{hain-zucker}, and that the $\R$-unipotent non-abelian Kummer map is $C^\infty$ follows from corollary \ref{cor:realification_of_H^1}.
\end{proof}
\end{proposition}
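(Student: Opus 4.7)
The plan is to compute directly the class of the path-torsor $P_n$ in the double-coset description $\H^1(\MHS_\Z,\Pi_n)\cong\Pi_n\backslash\Pi_n(\C)/F^0\Pi_n(\C)$ from corollary \ref{cor:realification_of_H^1}, and recognise the resulting element as the one Hain and Zucker attach to $z$ via their canonical variation $\mathbb V_n$. First I would recall that the higher Albanese map is defined by means of $\mathbb V_n$, whose fibre at any $z$ is canonically $\Z\pi_1(Y;y,z)/J^{n+1}$ as a MHS, and by Hain--Zucker's key technical input \cite[Proposition 5.24]{hain-zucker}: the associated holomorphic vector bundle $V_n=\O_Y\otimes\mathbb V_n$ admits a $C^\infty$ frame respecting both the weight and Hodge filtrations, such that parallel transport of the frame at $z$ along $\gamma^{-1}$ (for any path $\gamma$ from $y$ to $z$) acts on the standard frame at $y=y^*V_n=\C\Pi_n/J^{n+1}$ by left-multiplication by some $\tau\in\Pi_n(\C)$; the higher Albanese map is by definition $z\mapsto[\tau]$.

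Next I would unwind the double-coset representative of the torsor $P_n$. Since $\Pi_n$ has only negative weights (remark \ref{rmk:only_negative_weights}) we have $W_0\Pi_n(\Q)=\Pi_n(\Q)$, so by lemma \ref{lem:MHS_on_torsors} the mixed Hodge structure on $P_n$ is recorded by the pair $(P_n,F^0P_{n,\C})$; after choosing a point $[\gamma]\in P_n(\Z)$ coming from any actual path $\gamma$ from $y$ to $z$, the induced trivialisation $P_{n,\C}\isoarrow\Pi_{n,\C}$ carries any chosen $r\in F^0P_n(\C)$ to an element of $\Pi_n(\C)$ representing the class $[P_n]$ in the double coset. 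The main step is then identifying the Hodge filtration on $P_n(\C)$ (coming from the MHS on the completed path algebra) with the fibre at $z$ of the Hodge filtration on the bundle $V_n$ --- but this is essentially built into the construction of $\mathbb V_n$ as the canonical variation whose fibres are the completed path-torsors. Given this, the frame of $V_n$ at $z$ produces a distinguished $r\in F^0P_n(\C)$, and the trivialisation $P_n(\C)\isoarrow\Pi_n(\C)$ induced by $[\gamma]$ is exactly the parallel transport along $\gamma^{-1}$, so $r$ is carried to $\tau$. Thus $[P_n]=[\tau]$, matching the higher Albanese map.

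For the ``in particular'' clauses, complex analyticity of the non-abelian Kummer map is now inherited from holomorphicity of the higher Albanese map established in \cite[Section 5]{hain-zucker}. For the realified Kummer map, note that the realified Kummer map factors as the non-abelian Kummer map $Y(\C)\to\H^1(\MHS_\Z,\Pi_n)$ followed by the forgetful map $\H^1(\MHS_\Z,\Pi_n)\to\H^1(\MHS_\R,\Pi_n(\R))$, which is a $C^\infty$ fibration by corollary \ref{cor:realification_of_H^1} (once one checks that the manifold structures match up, which is immediate from the explicit double-coset descriptions on both sides). The composite is therefore $C^\infty$ as claimed.

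The main obstacle in making this rigorous is the compatibility between two \emph{a priori} independent constructions of the mixed Hodge structure on the fibre $z^*\mathbb V_n=\Z\pi_1(Y;y,z)/J^{n+1}$: the one coming from Hain's original construction for the path-torsor $P_n$ \cite{hain_MHS} as a MHS on a torsor under $\Pi_n$, and the one coming from Hain--Zucker's construction of $\mathbb V_n$ as a variation of MHS on $Y$. That these agree is essentially the content of \cite[Section 3]{hain-zucker}, so I would cite it rather than redoing the verification; the rest of the argument is then a chasing of definitions through the double-coset description.
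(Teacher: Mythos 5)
Your proposal is correct and follows essentially the same route as the paper: the same appeal to the canonical variation $\mathbb V_n$ and the $C^\infty$ filtered frame of \cite[Proposition 5.24]{hain-zucker}, the same identification of the fibre $z^*V_n$ with the completed path-torsor, and the same double-coset chase producing $\tau=[\gamma]^{-1}r$ with $[\gamma]\in P_n(\Z)$ and $r\in F^0P_n(\C)$, followed by the same deductions of analyticity and of smoothness of the realified map via corollary \ref{cor:realification_of_H^1}. The compatibility you flag at the end is not an extra input here, since the proposition's statement already takes the mixed Hodge structure on the path-torsor to be the one of \cite[Section 3]{hain-zucker}, exactly as in the paper.
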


\subsection{Proof of the main theorem}

Having set up all this abstract theory and made the link to Hain and Zucker's, we are now almost ready to prove theorem \ref{thm:main_theorem_archimedean} in an exactly analogous way to the proofs of theorems \ref{thm:main_theorem_l-adic} and \ref{thm:main_theorem_p-adic}. The only part remaining before we can copy our earlier proof methods is a characterisation of the archimedean N\'eron log-metric, analogous to the non-archimedean characterisation in lemma \ref{lem:neron_log-metrics}.

\begin{lemma}\label{lem:archimedean_neron_log-metrics}
Let $A/\C$ be an abelian variety. Then the N\'eron log-metrics\[\lambda_L\colon L^\times(\C)\rightarrow\R\]associated to pairs $(L,\tilde0)$ consisting of a line bundle $L/A$ and a basepoint $\tilde0\in L^\times(\C)$ lying over $0\in A(\C)$,are uniquely characterised by the following properties:
\begin{enumerate}\setcounter{enumi}{-1}
	\item $\lambda_L$ only depends on $(L,\tilde0)$ up to isomorphism;
	\item\label{condn:archimedean_local_boundedness} $\lambda_L$ is locally bounded (for the analytic topology on $L^\times(\C)$);
	\item\label{condn:archimedean_additivity} $\lambda_{L_1\otimes L_2}(P_1\otimes P_2)=\lambda_{L_1}(P_1)+\lambda_{L_2}(P_2)$;
	\item\label{condn:archimedean_pullbacks} $\lambda_{[2]^* L}(P)=\lambda_L([2]P)$, where by abuse of notation we also denote by $[2]$ the topmost map in the pullback square
	\begin{center}
	\begin{tikzcd}
	{[2]}^*L^\times \arrow[r]\arrow[d] & L^\times \arrow[d] \\
	A \arrow[r,"{[2]}"] & A;
	\end{tikzcd}
	\end{center}
	\item\label{condn:archimedean_trivial_bundle} when $L=A\times_\C\A^1$ is the trivial line bundle, $\lambda_L$ is the composite\[L^\times(\C)=A(\C)\times\C^\times\rightarrow\C^\times\overset v\rightarrow\R\]where $v=-\log|\cdot|$; and
	\item\label{condn:archimedean_normalisation} $\lambda_L(\tilde 0)=0$.
\end{enumerate}
In fact, the N\'eron log-metrics are even $C^\infty$, not just locally bounded.
\begin{proof}
That the N\'eron log-metric satisfies the above properties follows from the corresponding properties of N\'eron functions \cite[Theorem 11.1.1]{lang}.

That the N\'eron log-metric is uniquely characterised by these properties proceeds in exactly the same way as in the proof of lemma \ref{lem:neron_log-metrics}, with the small adjustment that the function $\delta_L$ is locally bounded with bounded image instead of being locally constant with finite image.
\end{proof}
\end{lemma}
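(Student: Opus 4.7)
The plan is to mimic closely the proof of the non-archimedean characterization (lemma \ref{lem:neron_log-metrics}), with the key modification that local constancy plus finite image is replaced by local boundedness, which yields global boundedness once we descend to the compact group $A(\C)$, and the finite-image argument is then replaced by a contraction argument via the doubling map.

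First I would verify that the N\'eron log-metric itself satisfies all of the listed conditions. This is a straightforward translation of the corresponding properties of N\'eron functions \cite[Theorem 11.1.1]{lang}, together with the defining formula $\lambda_L(\alpha z) = \lambda_L(z) - \log|\alpha|$ for the behaviour along fibres; the smoothness assertion follows from the explicit description of archimedean N\'eron functions in terms of theta functions (or equivalently, from the fact that the canonical metric on $L$ is smooth).

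For the uniqueness statement, suppose $(\lambda'_L)_{(L,\tilde 0)}$ is another family valued in $\R$ satisfying properties $(0)$--$(5)$, and set $\delta_L := \lambda_L - \lambda'_L$. Combining condition \ref{condn:archimedean_trivial_bundle} with condition \ref{condn:archimedean_additivity} (applied to $L \otimes (A\times\A^1)$) shows that both $\lambda_L$ and $\lambda'_L$ satisfy $\lambda_L(\alpha z) = \lambda_L(z) - \log|\alpha|$, so $\delta_L$ is constant on the fibres of $L^\times(\C) \twoheadrightarrow A(\C)$ and descends to a locally bounded function $\bar\delta_L \colon A(\C) \to \R$. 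By condition \ref{condn:archimedean_normalisation} we have $\bar\delta_L(0) = 0$, and the function depends only on the isomorphism class of $L$ (not on $\tilde 0$). By condition \ref{condn:archimedean_additivity}, the assignment $L \mapsto \bar\delta_L$ is a group homomorphism from $\Pic(A)(\C)$ to the group of locally bounded functions $A(\C) \to \R$ vanishing at $0$. Since $A(\C)$ is compact, each $\bar\delta_L$ is in fact globally bounded.

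Now condition \ref{condn:archimedean_pullbacks} gives $\bar\delta_L([2]P) = \bar\delta_{[2]^*L}(P)$ for all $P \in A(\C)$. If $L$ is symmetric, then $[2]^*L \simeq L^{\otimes 4}$, so the homomorphism property yields $\bar\delta_L([2]P) = 4\bar\delta_L(P)$; iterating, $\bar\delta_L(P) = 4^{-n} \bar\delta_L([2]^n P)$, which together with the global bound on $\bar\delta_L$ forces $\bar\delta_L \equiv 0$. The same contraction argument with factor $2$ in place of $4$ handles antisymmetric $L$ (where $[2]^*L \simeq L^{\otimes 2}$). Since $L^{\otimes 2}$ always decomposes as a tensor product of a symmetric and an antisymmetric line bundle, we conclude $\bar\delta_{L^{\otimes 2}} \equiv 0$, and hence $\bar\delta_L \equiv 0$ for every $L$, so $\lambda'_L = \lambda_L$ as desired.

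The main obstacle, such as it is, lies not in the contraction argument but in ensuring that the hypotheses really do suffice to descend $\delta_L$ to $A(\C)$ and to guarantee global boundedness; this is where compactness of $A(\C)$ (the key feature distinguishing the archimedean case from the $p$-adic one) enters essentially. The smoothness assertion is a separate input not used in the characterization, and is best obtained by citing the known explicit formulas for N\'eron functions on complex abelian varieties.
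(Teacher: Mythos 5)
Your proposal is correct and follows essentially the same route as the paper: verify the properties via Lang's characterisation of N\'eron functions, then for uniqueness descend $\delta_L=\lambda_L-\lambda'_L$ to $A(\C)$, use additivity in $L$ and the pullback condition for $[2]$, and kill $\delta_L$ by combining boundedness (from compactness of $A(\C)$) with the scaling by $4$ (symmetric case) resp.\ $2$ (antisymmetric case) and the symmetric/antisymmetric decomposition of $L^{\otimes2}$. This is exactly the paper's adaptation of the proof of the non-archimedean lemma, with locally bounded/bounded image replacing locally constant/finite image.
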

\vspace{-0.5ex}
With this characterisation of the N\'eron log-metric, theorem \ref{thm:main_theorem_archimedean} can now be proved by a more-or-less direct translation of the proofs in \S\ref{c:main_theorem_l-adic}. Specifically, given a line bundle $L$ on an abelian variety $A/\C$, the fibration $\G_m\rightarrow L^\times\rightarrow A$ gives us a central extension\[1\rightarrow\R(1)\centarrow U\rightarrow VA\rightarrow1\]of $\R$-unipotent fundamental groups, where $VA=\H_1(A(\C),\R)$, and hence by corollary \ref{cor:exact_sequence_archimedean} an exact sequence\[F^0W_0VA\rightarrow\H^1(\MHS_\R,\R(1))\actsarrow\H^1(\MHS_\R,U(\R))\rightarrow\H^1(\MHS_\R,VA).\]

But $VA$, being pure of weight $-1$, satisfies $F^0W_0VA=0$ and $\H^1(\MHS_\R,VA)=VA_\C/F^0VA_\C\oplus VA=0$, and hence the map $\H^1(\MHS_\R,\R(1))\rightarrow\H^1(\MHS_\R,U(\R))$ is bijective. Indeed, it is a $C^\infty$ diffeomorphism, since it is an immersion by lemma \ref{lem:maps_on_H^1_smooth} and hence a local $C^\infty$ diffeomorphism by counting dimensions.

Now we also have a canonical $C^\infty$ diffeomorphism $\H^1(\MHS_\R,\R(1))\isoarrow\R(1)\backslash\C\cong\R$ (the final isomorphism being minus the obvious one) and so we define a map $\lambda_L$ as the composite\[\lambda_L\colon L^\times(\C)\rightarrow\H^1(\MHS_\R,U(\R))\leftisoarrow\H^1(\MHS_\R,\R(1))\isoarrow\R.\]It remains to show that the $\lambda_L$ are the N\'eron log-metrics, by verifying the properties in lemma \ref{lem:archimedean_neron_log-metrics}.

We've already checked that the $\lambda_L$ are continuous (in fact $C^\infty$), and the verification of the remaining properties is purely formal, following exactly the pattern in the proof of theorem \ref{thm:main_theorem_l-adic} (with $\H^1(\MHS_\R,-)$ replacing the non-abelian Galois cohomology). The only thing we need to check is that we have normalised the isomorphism $\H^1(\MHS_\R,\R(1))\isoarrow\R$ correctly, for the proof of point \ref{condn:archimedean_normalisation}. But this follows immediately from the fact that the abelian Kummer map $\C^\times\rightarrow\H^1(\MHS_\Z,\Z(1))\cong\Z(1)\backslash\C$ is the logarithm map, so that the composite $\C^\times\rightarrow\H^1(\MHS_\Z,\Z(1))\rightarrow\H^1(\MHS_\R,\R(1))\isoarrow\R$ is given by $z\mapsto-\Re\log(z)=-\log|z|$, which is the desired normalisation.


\bibliography{references}
\bibliographystyle{alpha}

\end{document}